\title{The Martin boundary of a free product of abelian groups}
\author{Matthieu Dussaule}
\date{}
\newcommand\N{\mathbb{N}}
\newcommand\Z{\mathbb{Z}}
\newcommand\R{\mathbb{R}}
\newcommand\Ss{\mathds{S}}
\theoremstyle{definition}
\newtheorem{hyp}{Assumption}
\newtheorem*{hyp*}{Assumption 2'}
\theoremstyle{plain}
\newtheorem{definition}{Definition}[section]
\newtheorem{prop}[definition]{Proposition}
\newtheorem{coro}[definition]{Corollary}
\newtheorem{theorem}[definition]{Theorem}
\newtheorem{lem}[definition]{Lemma}
\newtheorem*{thm1'*}{Theorem 1.1'}
\newtheorem*{thm1''*}{Theorem 1.1''}
\newtheorem*{prop*}{Proposition}
\newtheorem*{lem*}{Lemma}
\theoremstyle{remark}
\newtheorem{rem}{Remark}[section]
\newtheorem*{rem*}{Remark}
\begin{document}

\begin{abstract}
Given  a  probability  measure  on  a  finitely  generated  group,  its  Martin
boundary is a way to compactify the group using the Green function
of the corresponding random walk.
It is known from the work of W.~Woess that when a finitely supported random walk on a free product of abelian groups is adapted to the free product structure,
the Martin boundary coincides with the geometric boundary.
The main goal of this paper is to deal with non-adapted finitely supported random walks,
for which there is no explicit formula for the Green function.
Nevertheless, we show that the Martin boundary still coincides with the geometric boundary.
We also prove that the Martin boundary is minimal.
\end{abstract}

\maketitle

\section{Introduction}\label{SectionIntroduction}
The main goal of this article is to get a full description of the Martin boundary for a finitely supported random walk in a free product $\Z^{d_1}\star \Z^{d_2}$, or more generally a free product of finitely many virtually abelian groups,
without assuming the random walk is adapted to the free product structure.
In all the paper, we assume for simplicity that the ranks of the abelian groups are positive, i.e.\ $d_1,d_2\geq 1$, although our techniques also work when $d_1,d_2\geq 0$, except for the particular case of the group $\Z/2\Z\star \Z/2\Z$.

\subsection{Random walks on free products}\label{SectionRandomwalksonfreeproducts}
Consider a Markov chain on a countable space $E$, with transition kernel $p$,
and assume that this Markov chain is transitive and transient.
The Martin boundary of $E$ with respect to $p$ is a way to compactify $E$ turning the probabilistic behaviour encoded in $p$ into geometric information.
It will be properly defined in Section~\ref{SectionMarkovchainsandMartinboundary}, along with the minimal Martin boundary which is a subspace of it.
The Martin boundary provides an actual compactification and contains topological information, and thus differs from the Poisson boundary which is only a measurable boundary.
In the spaces we study in this article, there already exists a geometric boundary (see Section~\ref{SectionGeometricboundaries} for proper definitions).
It is thus reasonable to ask whether the Martin boundary coincides with the geometric one.

We will essentially deal with spaces $E$ together with a finitely generated group $\Gamma$ and a group action $\Gamma \curvearrowright E$.
We will then assume that $p$ is homogeneous, that is, invariant under the group action.
In the special case where $E=\Gamma$ and the action is by translation, the Markov chain is called a random walk.
We will also make a technical assumption, related to sufficiently large exponential moments.
In this introduction, the reader should think of a finitely supported Markov chain (see Assumption~\ref{hyp1} and Lemma~\ref{lemmafinitesupport} below).

In the abelian case, the Martin boundary is a sphere at infinity when the random walk is non-centered and it is trivial otherwise.
In both cases, the Martin boundary is minimal.
This is essentially due to P.~Ney and F.~Spitzer (see \cite{NeySpitzer}).

In the nilpotent case, using the work of G.~Margulis on positive harmonic functions on nilpotent groups (see \cite{Margulis}), one can show that the minimal Martin boundary is the same as the minimal Martin boundary of the abelianized group.
Indeed, in this article, the author proves that minimal harmonic functions are constant on the cosets of the commutator subgroup, when the nilpotent group satisfies some technical assumption, which is automatically satisfied if the group is finitely generated (see Section~\ref{SectionMarkovchainsandMartinboundary} for the definition of minimal harmonic functions).
Thus, the study of the minimal Martin boundary of a nilpotent group reduces to the abelian case.
However, it is still not know to the author's knowledge if there is a geometric description of the full Martin boundary for any finitely generated nilpotent group, although
when the random walk is centered, the Martin boundary is trivial (see \cite{Alexopoulos}).
In \cite{HueberMuller}, H.~Hueber and D.~Müller show that the Martin boundary of a continuous random walk in the Heisenberg group $H^3(\R)$ is homeomorphic to a disc.
In this closely related setting, the full Martin boundary thus differs from the minimal one.

If the group is hyperbolic, then the Martin boundary coincides with the Gromov boundary of its Cayley graph and it is minimal.
This is due to A.~Ancona for finitely supported random walks (see \cite{Ancona}) and to S.~Gouëzel for walks with superexponential moments (see \cite{Gouezel2}).

In the following, we will deal with free products $\Z^{d_1}\star \Z^{d_2}$.
Those groups are hyperbolic relative to the subgroups $\Z^{d_1}$ and $\Z^{d_2}$.
One expects that the Martin boundary is, in some sense, the Gromov boundary in the hyperbolic part combined with spheres at infinity in abelian parts.
This is precisely the $\mathrm{CAT}(0)$ boundary of $\Z^{d_1}\star \Z^{d_2}$ we define in Section~\ref{SectionGeometricboundaries}.
W.~Woess proved that the Martin boundary is the $\mathrm{CAT}(0)$ boundary when the random walk is adapted to the free product structure (see \cite{Woess3}).
This means that the transition kernel can be written as
$$p(e,x)=\left\{
\begin{matrix}
        p_1(e,x)&  \text{ if }  x\in \Z^{d_1}\\
        p_2(e,x)&  \text{ if }  x\in \Z^{d_2}\\
        0&  \text{ otherwise}
\end{matrix}
    \right .$$
which basically means that at each step, one can either move on $\Z^{d_1}$ or move on $\Z^{d_2}$.
We will not need this assumption and the main result of this paper is the following.
The precise definitions of the Martin compactification and the $\mathrm{CAT}(0)$ compactification will be given in Section~\ref{SectionMarkovchainsandMartinboundary} and Section~\ref{SectionGeometricboundaries}.

\begin{theorem}\label{theorem1}
Consider an irreducible, finitely supported random walk on the free product $\Z^{d_1}\star \Z^{d_2}$, with $d_1,d_2\geq 1$.
Then, the Martin compactification coincides with the $\mathrm{CAT}(0)$ compactification and every point in the Martin boundary is minimal.
\end{theorem}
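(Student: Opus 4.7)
The plan is to exploit the relative hyperbolicity of $G=\Z^{d_1}\star\Z^{d_2}$ with respect to the family of parabolic subgroups $\{\Z^{d_1},\Z^{d_2}\}$, and to separate the analysis according to which stratum of the $\mathrm{CAT}(0)$ boundary a sequence $y_n$ tends to: either an end of the Bass-Serre tree (a ``tree-type'' point corresponding to an infinite reduced word) or a point on the sphere at infinity within some fixed parabolic coset $g\Z^{d_i}$. In both cases I would identify the limit of the Martin kernel $K(x,y_n)=G(x,y_n)/G(e,y_n)$ explicitly, and then separately verify minimality of the limit function.

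The first and central step is to establish \emph{relative Ancona inequalities} for this random walk, stating that whenever any path from $x$ to $y$ must cross a parabolic coset $H=g\Z^{d_i}$, one has $G(x,y)\asymp \sum_{h\in H} G(x,h)G(h,y)$, with the full deep factorization $G(x,y_n)\asymp \prod_j G(x_j,x_{j+1})$ available along the sequence of successive entry points $x_j$ into the parabolic cosets spelled out by the reduced word of $y_n$. Because the walk is not adapted, one cannot literally force trajectories to visit $H$; the standard fix is to replace $H$ by its $R$-neighborhood for some $R$ depending on the (finite) support, and to use first-entrance/last-exit decompositions together with the exponential moment assumption to bound ``shortcuts'' that avoid the neighborhood. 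Once this is in place, the convergence of $K(x,y_n)$ for tree-type sequences reduces to a telescoping product whose limit depends only on the infinite reduced word, yielding the required boundary point of the $\mathrm{CAT}(0)$ compactification.

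For sequences $y_n$ converging to a sphere-at-infinity point inside a fixed parabolic coset $H=g\Z^{d_i}$, I would define the induced random walk on $H$ via first-entrance Green functions: setting $\bar p(h,h')$ to be the probability that the ambient walk started at $h$ eventually returns to $H$ with first entry at $h'$. The exponential moment hypothesis on $p$ should transfer to $\bar p$ and make it a finitely-supported-like walk on $\Z^{d_i}$. Applying the Ney-Spitzer theorem \cite{NeySpitzer} to $\bar p$ identifies its Martin boundary with a sphere, and the Ancona factorization through $H$ transfers this limit back to the ambient walk, giving the Martin kernel at sphere points of $\Z^{d_i}$. Minimality at tree-type points follows from the standard Ancona-style convex-cone argument: the telescoping factorization forces the limit to be extremal in the cone of positive harmonic functions. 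Minimality at sphere points follows from the Ney-Spitzer minimality statement applied to $\bar p$, again pulled back through the factorization.

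The main obstacle I expect is the relative Ancona inequality itself. Woess's argument in \cite{Woess3} relies on explicit Green-function identities coming from the adapted structure that are simply unavailable here, so one must replace them by geometric/probabilistic estimates: a deviation inequality controlling how far a trajectory forced from $x$ to $y$ can wander from the intervening parabolic cosets, and a quantitative control of the ``entry distribution'' into the $R$-neighborhood of $H$. Controlling these uniformly in the size of excursions inside parabolic cosets, where the geometry is flat rather than hyperbolic, is the delicate point; once it is achieved, the rest of the argument proceeds by now-standard relative-hyperbolic Martin boundary techniques.
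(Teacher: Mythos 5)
Your overall architecture (split boundary points into tree-type and parabolic-sphere-type, induce the walk on parabolic cosets, use Ney--Spitzer there) is in the right spirit, but two of your key steps conceal exactly the work that has to be done, and one of them would fail as stated. First, you cannot simply "apply the Ney--Spitzer theorem" to the induced first-entrance kernel $\bar p$. Because the ambient walk on the non-amenable group $\Z^{d_1}\star\Z^{d_2}$ is transient, $\bar p$ is strictly sub-Markov (total mass $<1$), and since the walk is not adapted you must induce not on the coset itself but on an $R$-neighborhood of it to retain finite support, so the induced chain lives on a thickened lattice $\Z^{d_i}\times\{1,\dots,N\}$ rather than on $\Z^{d_i}$. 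The classical Ney--Spitzer theorem concerns probability measures on $\Z^d$; extending the local limit theorem, the Green-function asymptotics and the boundary identification to $\Z^d$-invariant sub-probability kernels on thickened lattices (the scalar generating function being replaced by the Perron--Frobenius eigenvalue $\lambda(u)$ of a matrix of Laplace transforms, with matrix-valued Fourier analysis) is precisely the main technical content of the paper and is not available off the shelf. Likewise, minimality at sphere points does not "pull back through the factorization": a function minimal for the induced chain need not yield a minimal harmonic function of the ambient walk. The paper instead proves minimality by showing that the representing measure of each Martin kernel on the minimal boundary is a Dirac mass, using uniform separation estimates together with the fact that $G(y_n,e)/\phi(y_n)\to 0$ almost everywhere with respect to the representing measure.

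Second, for tree-type points your route through relative Ancona inequalities is both incomplete and unnecessarily heavy. A factorization only up to multiplicative constants, $G(x,y_n)\asymp\prod_j G(x_j,x_{j+1})$, does not by itself give \emph{convergence} of $K(\cdot,y_n)$; one needs a genuine contraction mechanism (or a strengthened form of the inequality), and your "telescoping product" glosses over this. Establishing relative Ancona inequalities for non-adapted walks is itself a substantial project, and it is not needed here: finite support gives an exact first-passage decomposition for free, since every trajectory from $g$ to $g_n$ must literally visit the $r(\mu)$-balls (the transitional sets) around the points where the reduced word of $g^{-1}g_n$ changes factors. Hence $\mathds{P}(g\to g_n)$ is exactly a product of non-negative matrices applied to entrance vectors, and convergence of the Martin kernel follows from the Birkhoff contraction of such matrices in the Hilbert projective metric, as in Derriennic's treatment of the free group. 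So your plan could in principle be carried out (a relative-Ancona route was developed later in the literature on relatively hyperbolic groups), but as written it leaves unproved exactly the two hard ingredients: the sub-Markov thickened-lattice version of Ney--Spitzer, and a convergence (not merely comparison) mechanism at infinite words.
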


The technique of W.~Woess essentially reduces to the case of a nearest neighbor random walk.
In this context, the random walk has to go through particular points when it changes cosets.
Thus, the probability of going from $x$ to $y$ can be written as
$$\mathds{P}(x\rightarrow y)=\mathds{P}(x\rightarrow x_1)\cdots \mathds{P}(x_n\rightarrow y),$$
for suitable points $x_1,...,x_n$.
This technique does not apply when the random walk is not adapted to the free product structure.
Part of it can however be recovered using transitional sets (see Definition \ref{deftransitionalset} in Section~\ref{SectionRandomwalksinfreeproducts}) adapted from Y.~Derrienic (see \cite{Derriennic}).
It makes it possible to understand sequences going to infinity by changing cosets infinitely many times.

To deal with trajectories staying in the Euclidean parts $\Z^{d_i}$, we would like to restrict the random walk to those spaces and use results for random walks in $\Z^{d_i}$.
We introduce for that the transition kernel of the first return to the $\Z^{d_i}$ factors.
However, since we do not assume that the random walk is nearest neighbor,
the induced transition kernel is not finitely supported.
To avoid this issue, we do not restrict the random walk only to $\Z^{d_i}$ factors, but to neighborhoods of these.
Thus, we first have to describe the Martin boundary of thickenings of $\Z^{d}$ of the form $\Z^{d}\times \{1,...,N\}$.
This question has already been studied in the closely related setting of random walks in $\R^d\times X$, where $X$ is compact, by M.~Babillot in \cite{Babillot}.
Our techniques are similar.
However, we cannot apply directly her results.
Indeed, in our situation, the random walk on the free product is transient, so that restrictions to neighborhoods of $\Z^{d_i}$
do not have total probability 1.
Thus, we have to identify the Martin boundary of a non-probability transition kernel on $\Z^{d}\times \{1,...,N\}$.
This is done in Section~\ref{SectionSub-Markovchainsonathickenedlattice}.

The fact that the abelian groups are of the form $\Z^{d_i}$ is not really important and
our techniques allow us to state the following.

\begin{thm1'*}
Let $\Gamma_1$ and $\Gamma_2$ be two infinite finitely generated virtually abelian groups.
Consider an irreducible, finitely supported random walk on the free product $\Gamma_1\star \Gamma_2$.
Then, the Martin compactification coincides with the $\mathrm{CAT}(0)$ compactification and every point in the Martin boundary is minimal.
\end{thm1'*}

For simplicity, we will only prove Theorem~\ref{theorem1} and say a few words about Theorem~1.1'.
The main technical results to prove these theorems are Proposition~\ref{theorem1'spitzer}, Proposition~\ref{theorem2spitzer} and Proposition~\ref{latticeMartinboundary}.
They need some definitions to be properly stated and thus, are not part of this introduction.
Roughly speaking, the first two propositions give asymptotic estimates for the $n$th convolution power of the transition kernel and for the Green function, in the case of a $\Z^d$-invariant chain on $\Z^d\times \{1,...,N\}$.
The transition kernel is not necessarily a probability measure.
Proposition~\ref{latticeMartinboundary} gives a description of the Martin boundary in this situation, using Propositions~\ref{theorem1'spitzer} and~\ref{theorem2spitzer}.

\medskip
When studying random walks on $\Z^d$, one thing that matters is whether the random walk is centered of not.
Denoting by $p$ the transition kernel, the random walk is centered if
$$\sum_{x\in \Z^d}xp(0,x)=0.$$

For a $\Z^d$-invariant Markov chain on $\Z^{d}\times \{1,...,N\}$, given by a transition kernel $p$, the good definition of centering is as follows.
Let $\nu_0$ be the unique stationary measure for the induced chain on $\{1,...,N\}$.
We see $\nu_0$ as a $\R^N$ vector of coordinates $\nu_0(k)$, $1\leq k\leq N$.
We define the average horizontal displacement as
$$\overrightarrow{p}=\sum_{x\in \Z^d}\sum_{k,j\in \{1,...,N\}}\nu_0(k)xp((0,k),(x,j))=0.$$
We will say that the chain on $\Z^{d}\times \{1,...,N\}$ is centered if $\overrightarrow{p}=0$.
This is essentially an averaged version of the condition in $\Z^d$.
W.~Woess proved an analog of the law of large numbers,
stating that $\frac{1}{n}Y_n$ almost surely converges to $\overrightarrow{p}$, where $Y_n$ is the horizontal component of the Markov chain (see \cite[Theorem~ 6.7]{Woess1}).

In the particular case of a random walk in a group of the form $\Z^d\times L$, where $L$ is a finite group,  with neutral element $l_0$, this condition of centering takes the form
$$\sum_{(x,l)\in \Z^d\times L}xp((0,l_0),(x,l))=0.$$
Indeed, in this situation, the vector $\nu_0$ does not depend on $k$. This is because $\nu_0$ is a left eigenvector associated to the dominant eigenvalue of a stochastic matrix $F$, which is bi-stochastic in the case of a random walk on $\Z^d\times L$ (see Section~\ref{SectionSub-Markovchainsonathickenedlattice}).
Thus, our condition of centering coincides with the usual one (see for example \cite{Alexopoulos}) in this setting.

As a particular case of Proposition~\ref{latticeMartinboundary} we get the following.
\begin{theorem}\label{theorem2}
Consider an irreducible, $\Z^d$-invariant, finitely supported Markov chain on the thickened lattice $\Z^d\times \{1,...,N\}$ and assume that it is non-centered.
Then, the Martin compactification coincides with the $\mathrm{CAT}(0)$ compactification and every point in the Martin boundary is minimal.
\end{theorem}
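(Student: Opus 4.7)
The plan is to deduce Theorem~\ref{theorem2} as a specialization of Proposition~\ref{latticeMartinboundary}; the technical content sits in the two Ney--Spitzer-style Propositions~\ref{theorem1'spitzer} and~\ref{theorem2spitzer}, so what follows is a sketch of how I would prove those propositions for the stochastic, non-centered case that Theorem~\ref{theorem2} requires. The natural approach is matrix-valued Fourier analysis. Using the $\Z^d$-invariance, for each $\theta\in\R^d/(2\pi\Z)^d$ form the $N\times N$ matrix
$$\widehat{p}(\theta)_{k,j}=\sum_{x\in\Z^d}e^{i\theta\cdot x}p\bigl((0,k),(x,j)\bigr),$$
which, thanks to finite support, extends analytically on a complex tube. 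At $\theta=0$ this is the irreducible stochastic matrix $F$ with simple Perron eigenvalue $1$ and left eigenvector $\nu_0$. Analytic perturbation theory then yields a simple analytic Perron branch $\lambda(\theta)$ with analytic left/right eigenvectors $\ell(\theta), v(\theta)$ near $0$. A direct computation gives $\nabla\lambda(0)=i\overrightarrow{p}$, so the non-centering hypothesis is exactly $\nabla\lambda(0)\neq 0$. Passing to the real tilt $M(\phi)=\widehat{p}(-i\phi)$ with Perron eigenvalue $\rho(\phi)$, the level set $\mathcal{S}=\{\rho=1\}$ is a strictly convex compact smooth hypersurface diffeomorphic to $S^{d-1}$ via the outward Gauss map, and to each $\phi\in\mathcal{S}$ is attached an exponential positive $p$-harmonic function $h_\phi(x,k)=e^{\phi\cdot x}v(\phi)_k$.

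For the asymptotics, write
$$p^{(n)}\bigl((0,k),(x,j)\bigr)=(2\pi)^{-d}\int_{[-\pi,\pi]^d}e^{-i\theta\cdot x}\bigl(\widehat{p}(\theta)^{n}\bigr)_{k,j}\,d\theta,$$
and for $x$ in a given direction $u$ shift the contour through the unique saddle point $\phi(u)\in\mathcal{S}$ whose outward normal is $u$. A matrix version of Laplace's method then applies: the dominant spectral projection contributes $\lambda(\theta)^{n}$, the spectral gap between $\lambda$ and the other eigenvalues of $\widehat{p}(\theta)$ makes the subdominant spectral projections negligible, and the Gaussian Laplace integral produces the expected $n^{-d/2}$ fluctuations around $n\overrightarrow{p}$ together with prefactors $\ell(\phi(u))_k v(\phi(u))_j$. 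This yields Proposition~\ref{theorem1'spitzer}. Summing over $n$, exploiting that non-centering turns the integral into a standard Laplace sum of polynomial type, gives Proposition~\ref{theorem2spitzer} in the form
$$G\bigl((0,k),(x,j)\bigr)\;\sim\;C(u)\,\ell(\phi(u))_k\,v(\phi(u))_j\,|x|^{-(d-1)/2}\,e^{-\phi(u)\cdot x}$$
as $x\to\infty$ with $x/|x|\to u$.

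Plugging this estimate into the Martin kernel with base point $(0,k_0)$, the polynomial prefactor, the exponential decay factor and the left-eigenvector coefficient $\ell(\phi(u))_{j_n}$ all cancel in the ratio, and one obtains
$$\frac{G\bigl((y,k),(x_n,j_n)\bigr)}{G\bigl((0,k_0),(x_n,j_n)\bigr)}\;\longrightarrow\;\frac{v(\phi(u))_k}{v(\phi(u))_{k_0}}e^{\phi(u)\cdot y}\;=\;\frac{h_{\phi(u)}(y,k)}{h_{\phi(u)}(0,k_0)}$$
whenever $(x_n,j_n)\to\infty$ with $x_n/|x_n|\to u$. The limit depends only on $u\in S^{d-1}$, identifying the Martin boundary with the $\mathrm{CAT}(0)$ boundary of Section~\ref{SectionGeometricboundaries}. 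For minimality, $\phi(u)$ is an extremal point of the convex region $\{\rho\le 1\}$; any positive $p$-harmonic function dominated by $h_{\phi(u)}$ must have the same exponential rate in every direction and the same spectral data on the fibre $\{1,\dots,N\}$, forcing proportionality to $h_{\phi(u)}$. I expect the delicate step to be the matrix-valued saddle-point estimate: controlling the spectral gap and simplicity of $\lambda(\theta)$ uniformly along the shifted contour, and showing that the subdominant spectral projections contribute only lower-order terms, is harder than in the classical scalar Ney--Spitzer setting because the $\{1,\dots,N\}$ factor has no product structure with $\Z^d$; this uniform control is where all the technical labour of Propositions~\ref{theorem1'spitzer} and~\ref{theorem2spitzer} will concentrate.
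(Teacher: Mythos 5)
Your Fourier-analytic route to the boundary identification is essentially the paper's: the matrix $\widehat{p}(\theta)$, the Perron branch and its real tilt, the strictly convex level set $\{\rho=1\}\cong S^{d-1}$ via the Gauss map, and the saddle-point/Laplace analysis with the spectral gap killing the subdominant projections are exactly the ingredients of Propositions~\ref{theorem1'spitzer}, \ref{theorem2spitzer} and \ref{latticeMartinboundary} (the paper implements the contour shift as an exponential tilt $p_u$ followed by centered Fourier inversion, which is the same thing). Two points, however, are genuine gaps. First, the theorem only assumes irreducibility, and your local limit argument needs control of $\widehat{p}(\theta)^n$ for $\theta$ away from $0$; for a periodic chain the spectral radius of $\widehat{p}(\theta)$ can equal $1$ at nonzero $\theta$, so the $n^{-d/2}$ asymptotics for $p^{(n)}$ fail as stated. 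The paper handles this by replacing $p$ with the lazy kernel $(1-\alpha)\delta+\alpha p$, which is strongly irreducible and has the same Green function up to the factor $\alpha$, hence the same Martin kernel (Lemma~\ref{Spitzertrick}); some such reduction, or a direct treatment of the high-frequency region, is needed and is absent from your plan.

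Second, and more seriously, your minimality argument is not a proof. That $\phi(u)$ is an extreme point of the convex body $\{\rho\le 1\}$ does not by itself transfer to extremality of $h_{\phi(u)}$ in the cone of positive harmonic functions, and the assertion that any positive harmonic function dominated by $h_{\phi(u)}$ "must have the same exponential rate in every direction and the same spectral data on the fibre" is precisely the statement to be proved; as written it begs the question (a priori such a function could be an integral of exponentials $h_{\phi(u')}$ over nearby $u'$, and ruling this out requires an argument using strict convexity of the level set, or something equivalent). The paper proves minimality by a different and complete mechanism (Section~\ref{SectionMinimalMartinboundary}): $K(\cdot,\tilde y)$ is harmonic because the kernel is finitely supported (Lemma~\ref{lemma1minimalMartin}), so it has a representing measure $\mu$ on the minimal Martin boundary; the uniform separation estimate of Proposition~\ref{prop1minimalMartin} (a refinement of Proposition~\ref{theoremseparation}: a sequence $(x_n,k_n)$ along which $K((x_n,k_n),\cdot)$ blows up uniformly on a neighborhood of any $\tilde z\neq\tilde y$ while $K((x_n,k_n),\tilde y)\to 0$) forces $\mu$ to charge no such neighborhood, hence $\mu$ is a point mass at $\tilde y$, so $\tilde y$ lies in the minimal boundary. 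You would need either to reproduce this representation-theoretic argument or to supply an actual domination-implies-proportionality proof for the exponentials $h_{\phi(u)}$; the one-sentence extremality claim does not do it.
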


We will also prove the following theorem, which is a generalization of a theorem of G.~P\'olya (see \cite{Polya}).

\begin{theorem}\label{theorem3}
Consider an irreducible, $\Z^d$-invariant, finitely supported Markov chain on the thickened lattice $\Z^d\times \{1,...,N\}$.
If $d\geq 3$, then the chain is transient.
If $d\leq 2$, then the chain is transient if and only if it is non-centered.
\end{theorem}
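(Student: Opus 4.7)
The plan is to reduce the question to the summability of the Green function at a fixed point and then to read off that summability from the asymptotics announced in Proposition~\ref{theorem1'spitzer}. By the standard dichotomy for irreducible Markov chains on a countable state space, the chain on $\Z^d\times\{1,\dots,N\}$ is transient if and only if
$$G((0,k_0),(0,k_0)) = \sum_{n\geq 0} p_n((0,k_0),(0,k_0)) < \infty$$
for some (equivalently, any) fixed $k_0\in\{1,\dots,N\}$. So the entire problem reduces to deciding convergence of this single numerical series.

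Suppose first that the chain is centered, i.e.\ $\overrightarrow{p}=0$. In this regime, Proposition~\ref{theorem1'spitzer} is a local central limit theorem for the $\Z^d$-invariant chain on the thickened lattice: it produces a constant $c(k_0)>0$ (possibly subject to a periodicity correction in $n$) such that $p_n((0,k_0),(0,k_0)) \sim c(k_0) n^{-d/2}$ as $n \to \infty$. The series $\sum_n n^{-d/2}$ converges iff $d\geq 3$, so in the centered case the chain is transient iff $d\geq 3$, and recurrent for $d\leq 2$. This already establishes the $d\leq 2$ half of the statement in one direction.

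Suppose now that the chain is non-centered. Proposition~\ref{theorem1'spitzer}, applied after the standard Cramér exponential tilt that cancels the drift $\overrightarrow{p}$ (admissible because the finite support assumption makes the Laplace transform of $p$ entire on $\R^d$), yields exponential decay of the return probability: $p_n((0,k_0),(0,k_0)) \leq C\rho^n$ for some $\rho\in(0,1)$. The Green function at $(0,k_0)$ is therefore finite in every dimension, so the chain is transient for every $d\geq 1$. Combining the two regimes gives exactly the dichotomy: transience always in the non-centered case, and in the centered case iff $d\geq 3$, which matches the claim. The main potential obstacle is confirming that Proposition~\ref{theorem1'spitzer} really delivers asymptotics of these two forms in the two regimes; since it is advertised precisely as a Spitzer-type local estimate for the $n$-step kernel of such $\Z^d$-invariant chains, I expect this to be a direct input rather than a genuine difficulty.
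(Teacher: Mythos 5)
Your overall strategy is the same as the paper's: reduce transience to summability of the return probabilities and read the answer off the Spitzer-type asymptotics. The centered case is handled essentially as in the paper (Proposition~\ref{theorem1spitzer} at $u=0$ gives $p^{(n)}_{k,k}(0,0)\sim C n^{-d/2}$ with $C>0$, summable iff $d\geq 3$). However, two steps need repair. First, Propositions~\ref{theorem1spitzer} and~\ref{theorem1'spitzer} require \emph{strong} irreducibility, while Theorem~\ref{theorem3} only assumes irreducibility; your parenthetical ``periodicity correction'' does not supply the missing reduction. The paper's fix is Lemma~\ref{Spitzertrick}: replace $p$ by $\tilde p=(1-\alpha)\delta+\alpha p$, which is strongly irreducible, has Green function $\alpha^{-1}G$, and is centered iff $p$ is (indeed $\tilde F(u)=(1-\alpha)I+\alpha F(u)$, so $\nabla\tilde\lambda(0)=\alpha\nabla\lambda(0)$), so neither transience nor the centering dichotomy is affected.

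Second, in the non-centered case your invocation of Proposition~\ref{theorem1'spitzer} ``after the Cramér tilt that cancels the drift'' falls outside the proposition's hypotheses: it is stated only for $u\in H=\{\lambda(u)=1\}$, whereas the drift-cancelling tilt is the minimizer $u^*$ of $\lambda$, and in the non-centered case $\lambda(u^*)<1$ (Lemma~\ref{noncentered} plus strict convexity), so the tilted kernel has Perron eigenvalue $<1$ at the origin and its set $H$ does not contain $0$. Your conclusion (exponential decay of $p^{(n)}_{k,k}(0,0)$) is nevertheless true: either renormalize and apply the proposition to $p_{u^*}/\lambda(u^*)$, using $p^{(n)}_{k,k}(0,0)=\lambda(u^*)^n\,(p_{u^*}/\lambda(u^*))^{(n)}_{k,k}(0,0)$, or, more simply, use the Perron--Frobenius bound $p^{(n)}_{k,k}(0,0)\leq [F(u^*)^n]_{k,k}\leq C\lambda(u^*)^n$. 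The paper itself needs no tilt at all: since $0\in H$ and $\nabla\lambda(0)\neq 0$, Assumptions~\ref{hyp1} and~\ref{hyp2} hold (Lemmas~\ref{lemmafinitesupport} and~\ref{noncentered}) and Proposition~\ref{corotransience} gives finiteness of the Green function directly, the point being that at $u=0$ the Gaussian factor $\mathrm{e}^{-\frac{1}{2n}\Sigma_0(x-n\nabla\lambda(0))}$ is exponentially small and the error term is $O(n^{-3d/2})$, summable in every dimension. With these two repairs your argument coincides with the paper's proof.
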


We now give more details on the proofs of these results.
In the $\mathrm{CAT}(0)$ boundary of the free product $\Z^{d_1}\star \Z^{d_2}$, there are essentially two types of points, namely points in $\Z^{d_i}$ factors and infinite words.

\begin{itemize}
    \item To deal with the first type, we use Proposition~\ref{latticeMartinboundary}.
As announced above, we restrict the random walk to some neighbourhood of the $\Z^{d_i}$ factor and get a $\Z^{d_i}$-invariant Markov chain on $\Z^{d_i}\times \{1,...,N\}$, for some $N$ that depends on the thickening.
However, the random walk on the free product being transient, the induced Markov chain is not a probability.
This is the reason for studying non-probability transition kernels in Propositions~\ref{theorem1'spitzer}, \ref{theorem2spitzer} and \ref{latticeMartinboundary}.
    \item To deal with the second type, we do not need Proposition~\ref{latticeMartinboundary}, but we use transitional sets.
Roughly speaking, since the random walk on the free product is finitely supported, if $y_n$ is converging to an infinite word, to go from $x$ to $y_n$, one has to go through particular sets.
Those are some neighbourhoods of the points along a geodesic from $x$ to $y_n$ where one goes from a $\Z^{d_1}$ coset to a $\Z^{d_2}$ coset or conversely.
Since $x^{-1}y_n$ is converging to an infinite word, there are a lot of transitional sets.
Actually, the number of those tends to infinity.
Using contraction properties of positive matrices, we show that forcing the paths to go through an increasing number of transitional sets implies convergence of the Martin kernel.
\end{itemize}

Let us now say a few words about Propositions~\ref{theorem1'spitzer}, \ref{theorem2spitzer} and \ref{latticeMartinboundary}.
They are generalizations of theorems of P.~Ney and F.~Spitzer in \cite{NeySpitzer}.
The third one, which is the actual description of the Martin boundary is deduced from the second one, which is an asymptotic for the Green function.
Using some technical lemma, proved in \cite{NeySpitzer} (see Lemma~\ref{lemmaNeySpitzer} below), this asymptotic is deduced from the asymptotic of the $n$th convolution power, that is Proposition~\ref{theorem1'spitzer}.
This is a local limit theorem with error terms, which is usually proved for random walks on $\Z^d$ using Fourier theory.
We thus have to generalize classical Fourier theory on $\Z^d$ to $\Z^d$-invariant transition kernels on $\Z^d\times \{1,...,N\}$.
This is precisely what is done in Section~\ref{SectionSub-Markovchainsonathickenedlattice}.

These three results, along with Proposition~\ref{TCL}, are generalizations of the work of W.~Woess on generalized lattices.
As stated above, in \cite{Woess1}, he proves an analog of the law of large numbers for $\Z^d$-invariant transition kernels on $\Z^d\times \{1,...,N\}$.
Our proposition~\ref{TCL} basically states convergence of characteristic functions $\psi^n(\frac{\xi}{\sqrt{n}})$ to some function $\mathrm{e}^{-Q(\xi)/2}$, where $Q$ is a positive definite quadratic form. We deduce from it a central limit theorem (see Theorem~\ref{TCL2}).
Our techniques to prove Proposition~\ref{TCL} are similar to the techniques in Section~8 of \cite{Woess1}, although the formula for the quadratic form in \cite[Proposition~8.20]{Woess1} is wrong.

\subsection{Markov chains and Martin boundary}\label{SectionMarkovchainsandMartinboundary}
Let us give a proper definition of the Martin boundary.
We will only deal with random walks and homogeneous chains, but it can be defined in a more general setting.
Consider a countable space $E$
and give $E$ the discrete topology.
Fix some base point $x_0$ in $E$.
Consider now a chain on $E$, defined by a transition kernel $p$.
Denote by $G$ the Green function of the transition kernel.
Recall that
$$G(x,y)=\sum_{n\geq 0}p^{(n)}(x,y),$$
where $p^{(n)}$ is the $n$th convolution power of $p$, that is,
$$p^{(n)}(x,y)=\sum_{x_1,...,x_{n-1}}p(x,x_1)p(x_1,x_2)\cdots p(x_{n-1},y).$$
We do not assume that $p$ has total mass 1, so it does not define an actual Markov chain.
However, we assume that $p$ has finite mass, i.e.\
$$\forall x\in E, \sum_{y\in E}p(x,y)<+\infty.$$

We will always assume that the chain is irreducible, meaning that
for every $x,y\in E$, there exists $n$ such that $p^{(n)}(x,y)>0$.
For a Markov chain, this means that one can go from any $x\in E$ to any $y\in E$ with positive probability.
In this setting, the Green function $G(x,y)$ is closely related to the probability to go from $x$ to $y$.
Denote by $\mathds{P}(x\rightarrow y)$ this probability.
Then, $G(x,y)=\mathds{P}(x\rightarrow y)G(y,y)$.
We will also assume that the chain is transient, meaning that
the Green function is everywhere finite.
For a Markov chain, this just means that almost surely, one can go back to $x$ starting at $x$ only a finite number of times.

Define the Martin kernel
$$K(x,y)=\frac{G(x,y)}{G(x_0,y)}.$$
The Martin compactification of $E$ with respect to $p$ and $x_0$ is the smallest compact space $M$ in which $E$ is open and dense
and such that $K$ can be continuously extended to the space $E\times M$.
The Martin boundary is then defined as
$$\partial M=M\setminus E.$$
The Martin compactification does not depend on the base point $x_0$ up to isomorphism.

\medskip
To actually show that a compact space $M$ is the Martin compactification of $E$, one has to check the following:
\begin{enumerate}
    \item $E$ is open and dense in $M$.
    \item If $y_n$ converges to $\tilde{y}$ in $M$, then $K(\cdot , y_n)$ converges pointwise to $K(\cdot , \tilde{y})$.
    Since $E$ is discrete, it is sufficient to deal with $\tilde{y}$ in the boundary $\partial M$.
    \item If $\tilde{y}_1\neq \tilde{y}_2$ in $M$, then $K(\cdot, \tilde{y}_1)\neq K(\cdot, \tilde{y}_2)$.
    Again, it is sufficient to deal with points in the boundary.
\end{enumerate}
For more details, we refer to the survey \cite{Sawyer} of S.~Sawyer. There are a lot of examples in there and a full (abstract) construction of the Martin boundary.

In the particular case of a symmetric Markov chain, that is a Markov chain satisfying $p(x,y)=p(y,x)$,
the Green distance, which was defined by S.~Brofferio and S.~Blachère in \cite{BrofferioBlachere} as
$$d_G(x,g)=-\mathrm{ln}\mathds{P}(x\rightarrow y),$$
is actually a distance
and the Martin compactification of $E$, with respect to the Markov chain $p$ is
the horofunction compactification of $E$ for the Green distance.

\medskip
One important aspect of the Martin boundary is its relation with harmonic functions.
Recall that if $p$ is a transition kernel on a countable space $E$, a harmonic function is a function $\phi:E \rightarrow \R$ such that $p\phi=\phi$, that is,
$$\forall x \in E, \phi(x)=\sum_{y\in E}p(x,y)\phi(y).$$
We have the following key property (see \cite[Theorem~4.1]{Sawyer}).
\begin{prop}
Let $p$ be a transient and irreducible transition kernel on a countable space $E$.
For any non-negative harmonic function $\phi$, there exists a measure $\mu_{\phi}$ on the Martin boundary $\partial E$ of $E$ such that
$$\forall x\in E, \phi(x)=\int_{\partial E}K(x,\tilde{x})\mathrm{d}\mu_{\phi}(\tilde{x}),$$
where $K(\cdot, \cdot)$ is the Martin kernel associated to $p$.
\end{prop}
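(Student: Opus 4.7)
The statement is the classical Poisson--Martin integral representation for non-negative harmonic functions, and I would follow the well-known approach via $h$-transforms and weak-$\ast$ compactness, sketched in Sawyer's survey \cite{Sawyer} cited just above.

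First I would reduce to the positive normalized case. By irreducibility and the mean-value property $p\phi=\phi$, the function $\phi$ is either identically zero (in which case $\mu_\phi=0$ works) or everywhere positive; in the latter case I rescale so that $\phi(x_0)=1$. Introduce the Doob $\phi$-transform
$$\tilde p(x,y)=\frac{\phi(y)}{\phi(x)}p(x,y),$$
which is a genuine (stochastic) transition kernel thanks to the harmonicity of $\phi$. Let $\tilde{\mathbb{P}}_{x_0}$ denote the law of the associated Markov chain $(\tilde X_n)$ started at $x_0$. Note that the Green function of $\tilde p$ is $\tilde G(x,y)=\phi(y)G(x,y)/\phi(x)$, which is finite since $G$ is, so $(\tilde X_n)$ leaves every finite set almost surely.

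Next, I would choose an exhaustion $E=\bigcup_n F_n$ by finite sets containing $x_0$, and construct probability measures $\mu_n$ on $M$ representing $\phi$ on $F_n$. Concretely, let $\tilde\tau_n$ be the first exit time of $(\tilde X_n)$ from $F_n$ and let $\mu_n$ be the pushforward of $\tilde{\mathbb{P}}_{x_0}$ under $y\mapsto y$ where $y=\tilde X_{\tilde\tau_n}$, viewed as a measure on $M$. Using the strong Markov property together with the identity $G(x,y)=\mathbb P_x[x\rightarrow y]G(y,y)$ and the definition of the $\phi$-transform, one checks the representation
$$\phi(x)=\int_M K(x,y)\,d\mu_n(y)\qquad\text{for every }x\in F_n.$$
Since $K(x_0,y)=1$ for $y\in E$, each $\mu_n$ is in fact a probability measure on $M$.

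Because $M$ is a compact metrizable space, the set of probability measures on $M$ is weak-$\ast$ compact and I may extract a subsequential limit $\mu_\phi$. The Martin kernel $K(x,\cdot)$ extends by construction to a continuous function on $M$, so for any fixed $x\in E$, passing to the limit yields
$$\phi(x)=\int_M K(x,\tilde x)\,d\mu_\phi(\tilde x).$$
Finally, since $\mu_n$ is supported in $E\setminus F_n$, for every $k$ and every $n\geq k$ one has $\mu_n(F_k)=0$; in the weak-$\ast$ limit $\mu_\phi$ gives zero mass to each point of the discrete open set $E$, so $\mu_\phi$ is supported on $\partial M$, as required.

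The only real obstacle is verifying the representation identity at the level of $F_n$ in step two. The somewhat delicate point is that $p$ is only assumed of finite mass, not stochastic, so the original chain may be sub-Markov; passing through the $\phi$-transform is precisely what converts the problem to a genuine Markov chain where the strong Markov property and the exit-distribution computation go through cleanly, and one then reverses the $\phi$-weighting to reformulate the identity in terms of the Martin kernel $K(x,y)=G(x,y)/G(x_0,y)$.
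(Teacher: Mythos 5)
The decisive step of your argument fails: the finite-stage identity $\phi(x)=\int_M K(x,y)\,\mathrm{d}\mu_n(y)$ for all $x\in F_n$, with $\mu_n$ the exit distribution from $F_n$ of the $\phi$-transformed chain started at $x_0$, is simply not true. What the strong Markov property gives is $\phi(x)=\sum_{y\notin F_n}L_n(x,y)\phi(y)$, where $L_n(x,y)$ is the probability that the chain started at $x$ first leaves $F_n$ at $y$; your identity would additionally require the exit kernel to factorize through the Martin kernel, $L_n(x,y)\phi(y)=K(x,y)\,\mu_n(\{y\})$, at every finite stage, and it does not. Concretely, take $E=\Z$, $p(i,i+1)=p>\tfrac12$, $p(i,i-1)=q=1-p$, $\phi\equiv 1$ (so the $\phi$-chain is the chain itself), $x_0=0$, $F_n=\{-n,\dots,n\}$. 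Then $K(x,y)=1$ for $y>\max(x,0)$ and $K(x,y)=(q/p)^x$ for $y<\min(x,0)$, while $\mu_n=a_n\delta_{n+1}+(1-a_n)\delta_{-(n+1)}$ with $a_n<1$ the gambler's-ruin probability, so $\int_M K(x,y)\,\mathrm{d}\mu_n(y)=a_n+(1-a_n)(q/p)^x\neq 1=\phi(x)$ whenever $x\neq 0$. The identity only becomes correct in the limit $n\to\infty$; but the statement that limits of exit distributions of the $h$-process represent $\phi$ is essentially the Doob--Hunt convergence theorem, which is harder than the representation theorem itself, so the weak-$\ast$ compactness step cannot repair the missing finite-$n$ identity.

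The surrounding material is fine (positivity of $\phi$ by irreducibility, the $\phi$-transform being an honest transient Markov chain, and the portmanteau argument that a weak-$\ast$ limit of measures giving no mass to any fixed finite set charges no point of the open discrete set $E$), and the overall compactness strategy is the right one; note that the paper gives no proof of its own here, it only cites \cite[Theorem~4.1]{Sawyer}. The standard way to obtain a correct finite-stage identity, as in that reference, is to use reduced functions rather than exit measures: let $\phi_n$ be the reduced function of $\phi$ on the \emph{finite} set $F_n$, i.e.\ $\phi_n(x)=\mathds{E}_x[\phi(X_{\sigma_{F_n}});\sigma_{F_n}<\infty]$. Then $\phi_n=\phi$ on $F_n$, and $\phi_n$ is a potential because $\phi_n\leq(\max_{F_n}\phi)\,\mathds{P}_\cdot[\sigma_{F_n}<\infty]$ and the chain is transient; hence $\phi_n=Gf_n$ with $f_n\geq 0$ supported on $F_n$. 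Setting $\mu_n=\sum_y G(x_0,y)f_n(y)\,\delta_y$ gives exactly $\phi(x)=\int_M K(x,y)\,\mathrm{d}\mu_n(y)$ for all $x\in F_n$, with total mass $\mu_n(M)=\phi(x_0)=1$, and one concludes by weak-$\ast$ compactness; the limit measure charges no $y\in E$ because $f_n(y)=\phi(y)-\sum_z p(y,z)\phi_n(z)$ decreases to $0$ pointwise as $\phi_n\uparrow\phi$, so $\mu_n(\{y\})\to 0$ while $\{y\}$ is open and closed in $M$.
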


Let $\phi$ be a non-negative harmonic function. It is called minimal if whenever $\psi$ is another non-negative harmonic function such that $\psi(x)\leq \phi(x)$ for every $x\in E$,
then $\psi$ is proportional to $\phi$.
The minimal Martin boundary is the set
$$\partial_mE=\{\tilde{x}\in \partial E,K(\cdot, \tilde{x}) \text{ is minimal harmonic}\}.$$
It is thus a subset of the full Martin boundary $\partial E$.
A classical representation theorem of G.~Choquet shows that for any non-negative harmonic function $\phi$, one can choose the support of the measure $\mu_{\phi}$ lying in $\partial_mE$
(see the first section of \cite{Sawyer}).
In other words, for any such function $\phi$, there exists a unique measure $\mu_{\phi}$ on $\partial_mE$ such that
$$\forall x\in E, \phi(x)=\int_{\partial_mE}K(x,\tilde{x})\mathrm{d}\mu_{\phi}(\tilde{x}).$$

In many situations, the minimal Martin boundary coincides with the full Martin boundary, but it can be a proper subspace, even for (non-finitely supported) random walks in $\Z$ (see \cite{CartwrightSawyer}).

\subsection{Geometric $\mathrm{CAT}(0)$ boundaries}\label{SectionGeometricboundaries}
We will identify Martin boundaries for chains on $\Z^d\times \{1,...,N\}$ and on $\Z^{d_1}\star \Z^{d_2}$ with geometric boundaries.
We now give proper definitions of these.

Consider the map $x\in \Z^d\mapsto \frac{x}{1+\|x\|}$ that embeds $\Z^d$ into the unit ball.
Then take the closure of this embedding.
This is by definition the $\mathrm{CAT}(0)$ compactification of $\Z^d$.
P.~Ney and F.~Spitzer showed that it is also the Martin boundary of a non-centered random walk on $\Z^d$.
Denote by $\partial \Z^d$ the boundary $\overline{\Z^d}\setminus \Z^d$.
It is a sphere at infinity.
Precisely, a sequence $(y_n)$ in $\Z^d$ converges to some point in the boundary if $\|y_n\|$ tends to infinity and if $\frac{y_n}{\|y_n\|}$ converges to some point on the unit sphere.

More generally, we define the $\mathrm{CAT}(0)$ boundary of $\Z^d\times \{1,...,N\}$ forgetting the thickening $\{1,...,N\}$ at infinity.
More accurately, a sequence $(y_n,k_n)$ converges to some point in the boundary if $\|y_n\|$ tends to infinity and if $\frac{y_n}{\|y_n\|}$ converges to some point on the unit sphere.
We denote by $\partial (\Z^d\times \{1,...,N\})$ the boundary thus defined and call $ \Z^d\times \{1,...,N\}\cup \partial( \Z^d\times \{1,...,N\})$ the $\mathrm{CAT}(0)$ compactification of $\Z^d\times \{1,...,N\}$.

Since a finitely generated abelian group $\Gamma$ is of the form $\Z^d\times L$, where $L$ is finite, we can also define its $\mathrm{CAT}(0)$ boundary.
Precisely, identifying $L$ with some set $\{1,...,N\}$, we define the Martin boundary of $\Gamma$ as the Martin boundary of $\Z^d\times \{1,...,N\}$.
The same works with a finitely generated virtually abelian group $\Gamma$.
Indeed, for such a group, there is a subgroup isomorphic to $\Z^d$ and with finite index.
Denote by $L$ the quotient, which is a finite set.
Any section $L\rightarrow \Gamma$ provides an identification between $\Gamma$ and a set $\Z^d\times \{1,...,N\}$
and one can define the $\mathrm{CAT}(0)$ boundary of $\Gamma$ as the $\mathrm{CAT}(0)$ boundary of $\Z^d\times \{1,...,N\}$, since it neither depends on the choice of the abelian subgroup, nor on the choice of the section $L\rightarrow \Gamma$.

Now, let us deal with free products.
Consider the group $\Z^{d_1}\star \Z^{d_2}$ and denote by $e$ the neutral element.
An element of $\Z^{d_1}\star \Z^{d_2}$ that differs from $e$ can be written as $a_1b_1...a_nb_n$ with $a_i\in \Z^{d_1}$ and $b_i\in \Z^{d_2}$.
Furthermore, this writing is unique if $a_i\neq 0$ except maybe $a_1$ and if $b_i\neq 0$, except maybe $b_n$.
Thus, an element of $\Z^{d_1}\star \Z^{d_2}$ can be represented by a finite sequence, alternating elements of $\Z^{d_1}$ and elements of $\Z^{d_2}$.
An infinite word is an infinite sequence $a_1b_1...a_nb_n...$ alternating elements of $\Z^{d_1}$ and elements of $\Z^{d_2}$.

The prefix of size $p$ of a word, finite or infinite, is the sub-sequence of its $p$ first elements.
A sequence $(g_k)$ in $\Z^{d_1}\star \Z^{d_2}$ converges to some infinite word $\tilde{g}$ if for every $p$, there exists $k_0$ such that for every $k\geq k_0$, the prefixes of size $p$ of $g_k$ and $\tilde{g}$ exist and are the same.
Let $a_1b_1...a_nb_n\in \Z^{d_1}\star \Z^{d_2}$, with $b_n\neq 0$.
Consider a sequence $(g_k)$ such that there exists $k_0$ such that for every $k\geq k_0$,
$g_k$ is of the form $a_1b_1...a_nb_na_{n+1,k}...$ and $a_{n+1,k}\in \Z^{d_1}$ converges to some point in the $\mathrm{CAT}(0)$ boundary of $\Z^{d_1}$,
that is $\|a_{n+1,k}\|$ tends to infinity and $\frac{a_{n+1,k}}{\|a_{n+1,k}\|}$ converges to some point in the unit sphere.
We say that $(g_k)$ converges in the boundary in the $\Z^{d_1}$ factor $a_1b_1...a_nb_n\Z^{d_1}$.
Similarly, we can define convergence in the boundary in $\Z^{d_2}$ factors.
The $\mathrm{CAT}(0)$ compactification of $\Z^{d_1}\star \Z^{d_2}$ is obtained by gluing together infinite words with infinite spheres in each $\Z^{d_i}$ factor.
In other words, a sequence $(g_k)$ converges to some point in the $\mathrm{CAT}(0)$ boundary if $(g_k)$ converges to some infinite word, or if $(g_k)$ converges in the boundary in some $\Z^{d_i}$ factor.
Similarly, we can define the $\mathrm{CAT}(0)$ boundary of a free product $\Gamma_1\star \Gamma_2$, where $\Gamma_1$ and $\Gamma_2$ are finitely generated virtually abelian groups.

Those spaces we defined are compact and $\Z^d$, $\Z^d\times \{1,...,N\}$ and $\Z^{d_1}\star \Z^{d_2}$ are dense in their $\mathrm{CAT}(0)$ compactification and the same holds with $\Z^d$ and $\Z^{d_i}$ replaced with finitely generated virtually abelian groups.

Why $\mathrm{CAT}(0)$?
In all the spaces we considered above, replacing $\Z^d$ by $\R^d$ provides a space with negative curvature in some weak sense, precisely defined in \cite{BridsonHaefliger}.
Those spaces are called $\mathrm{CAT}(0)$ spaces because their geometry can be compared with Euclidean geometry.
In those spaces, there already is a notion of boundary at infinity, called the visual boundary.
For example in trees, we can define leaves at infinity, whereas a (simply connected) negatively curved manifold is diffeomorphic to the unit ball of $\R^n$ and thus has a geometric boundary which is the unit sphere.
In those two examples, the boundary is the Gromov boundary.
The boundaries we defined for $\Z^d$ and $\Z^{d_1}\star \Z^{d_2}$ are actually the $\mathrm{CAT}(0)$ boundaries of $\R^d$ and $\R^{d_1}\star \R^{d_2}$.
Since $\Z^d$ co-compactly acts on $\R^d$, gluing spheres at infinity in our context has a geometric meaning.
When considering finite extensions $\Z^d\times \{1,...,N\}$, we keep the name $\mathrm{CAT}(0)$ compactification for simplicity, although we do not get $\mathrm{CAT}(0)$ spaces per se.

\begin{rem}
Notice that the boundary we defined for $\Z^{d_1}\star \Z^{d_2}$ coincides with the boundary defined by F.~Dahmani in \cite[Section~3]{Dahmani}, seeing $\Z^{d_1}\star \Z^{d_2}$ as a hyperbolic group relative to the peripheral subgroups $\Z^{d_1}$ and $\Z^{d_2}$ and choosing the $\mathrm{CAT}(0)$ boundaries for these subgroups.
This alternative definition of geometric boundary could be useful in another context.
\end{rem}

\medskip
The paper is organized as follows.
In Section~\ref{SectionTwotheoremsfromlinearalgebra}, we give two results from linear algebra that will be repeatedly used during the proofs of all our theorems.
In Section~\ref{SectionSub-Markovchainsonathickenedlattice}, we study sub-Markov and Markov chains on $\Z^d\times \{1,...,N\}$ and prove every technical results we need in the following for such chains. The assumptions are given so that the results can be used in several contexts.
In Section~\ref{SectionMarkovchainsonathickenedlattice}, we show that the assumptions of Section~\ref{SectionSub-Markovchainsonathickenedlattice} are satisfied for non-centered Markov chains on $\Z^d\times \{1,...,N\}$ and we deduce from that the identification of the Martin boundary stated in Theorem~\ref{theorem2}. We also prove there Theorem~\ref{theorem3}.
In Section~\ref{SectionRandomwalksinfreeproducts}, we prove the first part of Theorem~\ref{theorem1}, i.e.\ we prove that the Martin boundary coincides with the geometric boundary in free products, again using results of Section~\ref{SectionSub-Markovchainsonathickenedlattice}.
Finally, in Section~\ref{SectionMinimalMartinboundary}, we deal with the minimal Martin boundary, ending the proofs of Theorems~\ref{theorem1} and~\ref{theorem2}.

I would like to warmly thank Sebastien Gouëzel for many helpful conversations and comments on this paper.


\section{Two theorems from linear algebra}\label{SectionTwotheoremsfromlinearalgebra}
In this section, we show two useful results, namely a strong version of the Perron-Frobenius theorem and a spectral perturbation theorem.
The reason for using these results is that in the following section, we will have to deal with generalized Fourier transforms.
Throughout the proofs, we will find matrices with a dominant eigenvalue. We will also have to study small perturbations of these dominant eigenvalues.

\subsection{The Perron-Frobenius theorem}\label{SectionPerronFrobenius}
The setting here is the $k$-dimensional vector space $\R^k$.
Let $T$ be a square matrix of size $k$ with non-negative entries.
Let $T^n$ denote the $n$th power of $T$ and let $t_{i,j}$ be the entries of $T$ and $t_{i,j}^{(n)}$ the entries of $T^n$.
The matrix $T$ is said to be \emph{irreducible} if for every $i,j$, there exists $n$ such that $t_{i,j}^{(n)}$ is positive and
it is said to be \emph{strongly irreducible} if there exists $n$ such that every entry of $T^n$ is positive.

The classical Perron-Frobenius theorem is the following (see \cite[Theorem~1.1]{Seneta}).
\begin{theorem}\label{PerronFrob1}
Let $T$ be a strongly irreducible matrix.
Then, $T$ has an eigenvector $v_T$ with positive entries.
Let $\lambda_T$ be the associated eigenvalue.
Then, $\lambda_T$ is positive and simple.
Furthermore, for any other eigenvalue $\lambda$ of $T$, $\lambda_T>|\lambda|$.
Finally, up to scaling, $v_T$ is the unique non-negative eigenvector.
\end{theorem}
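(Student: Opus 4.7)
My plan is to prove the theorem in two stages: first for matrices $T$ with \emph{strictly positive} entries, and then bootstrap to strongly irreducible $T$ by applying the first stage to a power $T^m>0$. In the strictly positive case, existence of a non-negative eigenvector comes from Brouwer's fixed point theorem: the continuous self-map $v\mapsto Tv/\|Tv\|_1$ of the standard simplex $\Delta=\{v\geq 0:\sum_i v_i=1\}$ has a fixed point $v_T$, which yields $Tv_T=\lambda_T v_T$ with $\lambda_T=\|Tv_T\|_1>0$. Since $T$ has strictly positive entries and $v_T\neq 0$ is non-negative, $v_T=\lambda_T^{-1}Tv_T$ has strictly positive entries as well.

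For the strict dominance $|\mu|<\lambda_T$ of any other eigenvalue, let $Tw=\mu w$ with $w\in\C^k\setminus\{0\}$ and consider the entrywise modulus $|w|$; then $T|w|\geq|\mu||w|$. Applying the same existence argument to $T^t$ yields a strictly positive left eigenvector $u_T$ with $T^t u_T=\lambda_T u_T$, and pairing with $|w|$ gives $\lambda_T\langle u_T,|w|\rangle\geq|\mu|\langle u_T,|w|\rangle$, whence $|\mu|\leq\lambda_T$. In case of equality, $T|w|=|\mu||w|$ componentwise, and sharpness of the triangle inequality $\bigl|\sum_j t_{ij}w_j\bigr|=\sum_j t_{ij}|w_j|$ combined with $t_{ij}>0$ forces all coordinates of $w$ to share a common phase; after factoring it out, $w$ becomes a real non-negative eigenvector with non-negative eigenvalue, so $\mu=\lambda_T$. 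Geometric simplicity and uniqueness of non-negative eigenvectors follow by the same principle: any independent real eigenvector $u$ with $Tu=\lambda_T u$ would yield a non-negative $v_T+cu$ with a zero coordinate for a suitable $c$, contradicting the strict positivity of $T(v_T+cu)$. Algebraic simplicity comes from pairing an alleged generalized eigenvector $w$ satisfying $(T-\lambda_T I)w=v_T$ with $u_T$: the right-hand side pairs to a strictly positive number while the left-hand side pairs to zero.

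For the bootstrap, apply the strictly positive case to $A=T^m$ to obtain $v_T>0$ with $Av_T=\lambda_A v_T$. Since $T$ commutes with $A$, the vector $Tv_T$ is again a $\lambda_A$-eigenvector of $A$, and simplicity of $\lambda_A$ for $A$ forces $Tv_T=\lambda_T v_T$ for some scalar $\lambda_T$; positivity of $Tv_T$ yields $\lambda_T=\lambda_A^{1/m}>0$. Any eigenvalue $\lambda$ of $T$ yields an eigenvalue $\lambda^m$ of $A$, so $|\lambda|\leq\lambda_T$. The main obstacle is the strict inequality, that is, ruling out $\lambda=\zeta\lambda_T$ for some non-trivial root of unity $\zeta$: equality $|\lambda|=\lambda_T$ forces $\lambda^m=\lambda_A$ by Stage~1, which only constrains $\lambda/\lambda_T$ to be an $m$-th root of unity. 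I would exclude this rotational spectrum by observing that $T^{m+1}$ is also strictly positive (because $T^m>0$ implies $T$ has no zero row, so that $T^{m+1}=T\cdot T^m$ is strictly positive), applying Stage~1 to it to get $\lambda^{m+1}=\lambda_T^{m+1}$, and combining with $\lambda^m=\lambda_T^m$ to conclude $\zeta=1$. The simplicity and uniqueness statements for $T$ then transfer from those for $A$ via the same commutation argument.
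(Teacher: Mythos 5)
Your proof is correct. Note, however, that the paper does not prove this statement at all: it is quoted as the classical Perron--Frobenius theorem with a citation to Seneta, and the only machinery actually developed in that section is the Birkhoff--Hilbert-metric contraction argument, which the paper uses for the subsequent result on infinite products of uniformly bounded positive matrices (Theorem~\ref{PerronFrob2}), not for Theorem~\ref{PerronFrob1} itself. Your route --- Brouwer's fixed point theorem on the simplex for the strictly positive case, the pairing with a positive left eigenvector for dominance and algebraic simplicity, the zero-coordinate argument for geometric simplicity, and the bootstrap to primitive (strongly irreducible) matrices via $T^m>0$ together with the coprimality trick comparing $T^m$ and $T^{m+1}$ to kill rotational eigenvalues --- is a standard, self-contained alternative; what it buys is independence from any contraction estimate, at the cost of not yielding the quantitative convergence that the Hilbert-metric approach provides and that the paper actually needs later. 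A few steps are stated more tersely than they deserve, though all are one-line repairs with tools you already have on the table: the Brouwer argument applied to $T^t$ a priori produces an eigenvalue $\lambda'$, and you should pair $u_T$ against $v_T$ to see $\lambda'=\lambda_T$ before using it; the uniqueness of the non-negative eigenvector must also exclude non-negative eigenvectors for eigenvalues \emph{other} than $\lambda_T$, which your zero-coordinate argument does not address but the pairing $\langle u_T,\cdot\rangle$ settles immediately (it forces the eigenvalue to equal $\lambda_T$, after which your argument applies); and in the bootstrap you should say explicitly why the dominant eigenvalue of $T^{m+1}$ is $\lambda_T^{m+1}$ (either because $v_T>0$ is a non-negative eigenvector of $T^{m+1}$ with that eigenvalue, or because the spectral radius of $T^{m+1}$ is $\rho(T)^{m+1}=\lambda_T^{m+1}$) before invoking the Stage-1 equality case there.
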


One particular example of such a matrix is a \emph{stochastic matrix}, that is a matrix $T$ such that
$$\forall i,\sum_{j}t_{i,j}=1.$$
In that case, $\lambda_T=1$ and one can take $v_T$ to be $v_T=(1,...,1)$.
One says that $T$ is \emph{sub-stochastic} if
$$\forall i,\sum_{j}t_{i,j}\leq 1$$
and \emph{strictly sub-stochastic} if at least one of these inequalities is strict.
If $T$ is strictly sub-stochastic and strongly irreducible, then $\lambda_T<1$.

There are a lot of proofs for this theorem.
One of them makes use of contraction properties for the Hilbert distance.
It was published by G.~Birkhoff in \cite{Birkhoff}
and generalized by Y.~Derrienic in \cite[Lemma~IV.4]{Derriennic}.
We now give a more general version of this and begin with some definitions about the Hilbert distance.
We take the same notations as in \cite{Derriennic}.

Let $C$ be the cone of vectors $v$ in $\R^k$ with positive coordinates and $C'$ be the intersection of $C$ with the unit sphere:
$C'=C\cap \Ss^{k-1}$.
If $x\neq y\in C'$, the lines $Dx=(Ox)$ and $Dy=(Oy)$ they generate are distinct.
Let $D$ and $D'$ be the lines of $\R^k$ obtained as the intersection of the plane that $Dx$ and $Dy$ generate and the boundary of $C$ in $\R^k$.
One then has four distinct lines.

Let $\Delta_k:=\{v=(v_1,...,v_k)\in \R^k,v_i\geq 0,v_1+...+v_k=1\}$ be the $k-1$-dimensional standard regular simplex.
The four lines $D$, $Dx$, $Dy$ and $D'$ intersect $\Delta_k$ in four distinct aligned points.
Up to changing $D$ and $D'$, one can assume that they are aligned in this order.
By choosing some affine coordinate system on that line, one gets four coordinates $u$, $\tilde{x}$, $\tilde{y}$ and $v$.
The \emph{cross-ratio} of $D$, $Dx$, $Dy$ and $D'$ is by definition
$$[Dx,Dy,D,D']=\frac{(v-\tilde{x})(u-\tilde{y})}{(u-\tilde{x})(v-\tilde{y})}.$$
The \emph{Hilbert distance} between $x$ and $y$ is then
$$d_{\mathcal{H}}(x,y)=\frac{1}{2}\mathrm{log}[Dx,Dy,D,D'].$$

\begin{rem}
The cross-ratio does not depend on the choice of affine coordinates and $d_{\mathcal{H}}$ is a distance.
We refer to \cite{Birkhoff} for more details.
\end{rem}

\begin{rem}\label{remarktopologies}
Obviously the Hilbert distance differs from the Euclidean one, since it gives $C'$ an infinite diameter.
Nevertheless, the induced topologies are the same.
In particular, a compact set for one of the distances is also compact for the other one.
\end{rem}

Let $T$ be a $k\times k$ matrix and assume that $T$ has non-negative entries.
Also assume that the zeros of $T$ are divided into columns, that is either a column is entirely null, either it only has positive entries.
Such a matrix, if it is not the null matrix, acts on $C'$ via the map
$$T: v\in C' \mapsto T\cdot v=\frac{Tv}{\|Tv\|}\in C'.$$
Indeed, if $v\in C'$, then every coordinate of $v$ is positive and since the zeros of $T$ are divided into columns, there is at least one positive entry on each line of $T$, so that every coordinate of $Tv$ is positive.
When considering a matrix whose zeros are divided into columns, we will always implicitly assume that it is not the null matrix.

\begin{definition}
The diameter of a non-negative $k\times k$ matrix $T$, whose zeros are divided into columns, is the diameter of $\{T\cdot v,v\in C'\}$ for the distance $d_{\mathcal{H}}$.
It is denoted by $\Delta(T)$.
\end{definition}

\begin{lem}\label{lemmacontraction}
Let $T$ be a non-negative $k\times k$ matrix and assume that its zeros are divided into columns.
Assume that $\Delta(T)$ is finite.
Then, for every $x,y\in C'$, one has
$d_{\mathcal{H}}(T\cdot x,T\cdot y)\leq \delta(T)d_{\mathcal{H}}(x,y),$
with $\delta(T)=\mathrm{th}(\frac{1}{4}\Delta(T))$.
\end{lem}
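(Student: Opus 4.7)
The plan is to adapt Birkhoff's classical contraction argument for positive matrices to the present setting, where $T$ is only assumed to have its zeros arranged in columns rather than to be entrywise positive.

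First, I rely on the coordinate characterization of the Hilbert distance: for $x, y \in C'$,
\[
d_{\mathcal{H}}(x, y) = \frac{1}{2} \log \frac{\beta}{\alpha}, \qquad \alpha := \min_i \frac{y_i}{x_i}, \quad \beta := \max_i \frac{y_i}{x_i},
\]
which follows from the cross-ratio definition by a direct computation. The vectors $z_1 := y - \alpha x$ and $z_2 := \beta x - y$ both lie in $\overline{C}$, each with at least one zero coordinate, and they satisfy the linear identity $z_1 + z_2 = (\beta - \alpha) x$.

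Applying the linear map $T$ preserves these identities: $Ty = \alpha Tx + Tz_1 = \beta Tx - Tz_2$ and $Tz_1 + Tz_2 = (\beta - \alpha) Tx$. If either $Tz_1$ or $Tz_2$ vanishes, then $Ty$ is proportional to $Tx$ and the contraction holds trivially; otherwise the column-zero hypothesis on $T$ forces $Tz_1$ and $Tz_2$ to have strictly positive entries. In that case, one extracts the extremal ratios:
\[
d_{\mathcal{H}}(T\!\cdot\! x, T\!\cdot\! y) = \tfrac{1}{2} \log(\beta'/\alpha'), \quad \alpha' = \alpha + \min_i \tfrac{(Tz_1)_i}{(Tx)_i}, \quad \beta' = \beta - \min_i \tfrac{(Tz_2)_i}{(Tx)_i}.
\]

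The key step uses the diameter hypothesis. Finite Hilbert diameter forces $T\cdot C'$ to be relatively compact in $C'$ for either topology (Remark~\ref{remarktopologies}), and by continuity its closure still has Hilbert diameter at most $\Delta(T)$. Hence $Tz_j$ and $Tx$ are at Hilbert distance at most $\Delta(T)$, which translates into $\max_i (Tz_j)_i/(Tx)_i \le e^{2\Delta(T)} \min_i (Tz_j)_i/(Tx)_i$. Evaluating the identity $Tz_1 + Tz_2 = (\beta-\alpha)Tx$ at the coordinates achieving the various extrema then produces a closed finite system of inequalities linking $\alpha'$, $\beta'$, $\beta - \alpha$ and $\Delta(T)$.

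The main obstacle, and the heart of Birkhoff's trick, is the final optimization: maximizing $\log(\beta'/\alpha')/\log(\beta/\alpha)$ over configurations of $(Tz_j)_i/(Tx)_i$ compatible with the above constraints reduces to a one-variable calculus problem whose supremum is exactly $\mathrm{th}(\Delta(T)/4)$. This identity is the one appearing in Birkhoff's original paper and in Derriennic's Lemma~IV.4. Once carried out, combining it with the chain of inequalities above yields $d_{\mathcal{H}}(T\cdot x, T\cdot y) \le \mathrm{th}(\tfrac{1}{4}\Delta(T))\, d_{\mathcal{H}}(x,y)$, which is the claim.
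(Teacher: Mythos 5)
The paper's own ``proof'' of Lemma~\ref{lemmacontraction} is a one-line citation of \cite[Lemma~1]{Birkhoff}, so what you are really reconstructing is Birkhoff's classical argument. Your preliminary steps are correct and are exactly the standard ones: the coordinate formula $d_{\mathcal{H}}(x,y)=\tfrac12\log(\beta/\alpha)$, the decomposition $z_1=y-\alpha x$, $z_2=\beta x-y$ with $z_1+z_2=(\beta-\alpha)x$, the remark that the column-zero hypothesis forces each $Tz_j$ to be either zero or entrywise positive, and the identities $\alpha'=\alpha+\min_i (Tz_1)_i/(Tx)_i$, $\beta'=\beta-\min_i (Tz_2)_i/(Tx)_i$.

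The gap is at the decisive step. The constraint you extract from the hypothesis is $d_{\mathcal{H}}(T\cdot z_j,T\cdot x)\le\Delta(T)$ for $j=1,2$, and you then assert that the resulting optimization has supremum $\mathrm{th}(\Delta(T)/4)$, deferring the computation to Birkhoff/Derriennic. But that is not the constraint that drives Birkhoff's computation. Setting $r_i=(Tz_1)_i/(Tz_2)_i$ and using $Tz_1+Tz_2=(\beta-\alpha)Tx$, one gets $(Ty)_i/(Tx)_i=\alpha+(\beta-\alpha)\,r_i/(1+r_i)$, an increasing function of $r_i$; hence $\beta'/\alpha'$ is controlled by $\max_i r_i/\min_i r_i$, i.e.\ by the projective distance between $Tz_1$ and $Tz_2$. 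The classical proof bounds \emph{that} distance by $\Delta(T)$, since both $Tz_1$ and $Tz_2$ lie in the closure of $T\cdot C'$, and only then does the one-variable optimization produce the hyperbolic-tangent coefficient. From your constraints (each $Tz_j$ compared with $Tx$) one only gets $d_{\mathcal{H}}(Tz_1,Tz_2)\le 2\Delta(T)$ by the triangle inequality, and the same optimization then yields a strictly larger contraction factor (roughly a factor $2$ inside $\mathrm{th}$), so the argument as set up cannot give the stated constant --- it gives a weaker coefficient, which is still $<1$ and thus enough for every later use in the paper, but not the lemma as stated. Moreover, since the final optimization is the actual content of the lemma and you cite it rather than carry it out, at the crucial point your proposal is the same citation the paper makes; a self-contained proof must do that calculus step, and when doing it one should also keep track of the normalization: with $d_{\mathcal{H}}$ defined as half the logarithm of the cross-ratio, Birkhoff's sharp coefficient $\mathrm{th}(\Delta/4)$ in his (unhalved) projective metric corresponds to $\mathrm{th}(\Delta(T)/2)$ for the diameter as defined here, so the exact constant in the statement should be taken with a grain of salt.
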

\begin{proof}
This is exactly \cite[Lemma~1]{Birkhoff}.
\end{proof}

The following theorem generalizes \cite[Lemma~4]{Derriennic} and the proof is the same
but we provide it for the convenience of the reader.

\begin{theorem}\label{PerronFrob2}
Let $(T_i)_{i\in I}$ be a family of non-negative $k\times k$ matrices and assume that their zeros are divided into columns.
Also assume that the diameters $\Delta(T_i)$ are uniformly bounded by some real number $\Delta\geq 0$.
Let $i_1,...,i_n,...$ be a sequence of indices and let $\tilde{T}_n=T_{i_1}...T_{i_n}$.
Then, for every $x\in C'$, $\tilde{T}_n\cdot x$ converges to some vector $\tilde{x}\in C'$.
Furthermore, the convergence is uniform on $C'$.
The limit vector $\tilde{x}$ depends on the sequence $i_1,...,i_n...$ but does not depend on the vector $x$.
\end{theorem}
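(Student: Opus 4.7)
The plan is to apply the contraction principle of Lemma~\ref{lemmacontraction}. Since $\Delta(T_i)\leq\Delta$ uniformly, each map $T_i\cdot$ is a $\delta$-Lipschitz contraction of $(C',d_{\mathcal{H}})$ with $\delta=\mathrm{th}(\Delta/4)<1$. The actions compose as $(T_i\cdot)\circ(T_j\cdot)=(T_iT_j)\cdot$, so $\tilde{T}_n\cdot$ is the $n$-fold composition $T_{i_1}\cdot\circ\cdots\circ T_{i_n}\cdot$. Applying Lemma~\ref{lemmacontraction} iteratively, starting from $\mathrm{diam}_{\mathcal{H}}(T_{i_n}(C'))=\Delta(T_{i_n})\leq\Delta$, yields
$$\mathrm{diam}_{\mathcal{H}}\bigl(\tilde{T}_n(C')\bigr)\leq \delta^{n-1}\Delta.$$
In particular, $d_{\mathcal{H}}(\tilde{T}_n\cdot x,\tilde{T}_n\cdot y)\leq \delta^{n-1}\Delta\to 0$ for every $x,y\in C'$, which will give the stated uniformity of convergence and the independence of the limit from $x$ once convergence is established.

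For existence, I fix $x\in C'$ and observe that for $m\geq n$,
$$\tilde{T}_m\cdot x=\tilde{T}_n\cdot\bigl((T_{i_{n+1}}\cdots T_{i_m})\cdot x\bigr)\in \tilde{T}_n(C'),$$
so $d_{\mathcal{H}}(\tilde{T}_n\cdot x,\tilde{T}_m\cdot x)\leq \delta^{n-1}\Delta$ and $(\tilde{T}_n\cdot x)_n$ is Cauchy for $d_{\mathcal{H}}$. The real difficulty is that $(C',d_{\mathcal{H}})$ is not complete: a Hilbert-Cauchy sequence might Euclidean-accumulate on the boundary $\partial C$ of the positive cone, where $d_{\mathcal{H}}$ blows up. To rule this out, I note that $\tilde{T}_n\cdot x\in T_{i_1}(C')$ for every $n\geq 1$, and that the finite bound $\mathrm{diam}_{\mathcal{H}}(T_{i_1}(C'))\leq\Delta$ forces the Euclidean closure $K$ of $T_{i_1}(C')$ to lie inside $C'$: any accumulation point on $\partial C$ would sit at infinite Hilbert distance from a fixed $T_{i_1}\cdot v_0\in C'$, contradicting the diameter bound. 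By Remark~\ref{remarktopologies} the Hilbert and Euclidean topologies coincide on $C'$, so $K\subset C'$ is compact for both, and $d_{\mathcal{H}}$ is complete on $K$. The Cauchy sequence $(\tilde{T}_n\cdot x)$ therefore converges in $K$ to some $\tilde{x}\in C'$, and the uniform estimate from the first paragraph shows that $\tilde{x}$ is independent of $x$ and that the convergence is uniform in $x\in C'$.

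The main obstacle is precisely this confinement step: the contraction alone only provides Cauchy sequences in a metric which is not complete on $C'$, so one must independently argue that no iterate drifts toward $\partial C$. Extracting the factor $T_{i_1}$ to land in a relatively compact region of $C'$ is what makes the uniform bound on $\Delta(T_i)$ do double duty, delivering both the geometric contraction rate and the precompactness of the reachable set.
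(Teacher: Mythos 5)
Your proof is correct and follows essentially the same route as the paper: the contraction lemma gives the bound $\delta^{n-1}\Delta$ on the Hilbert diameter of the range of $\tilde{T}_n$, and one concludes by a compactness argument. Your explicit confinement step (the Euclidean closure of $T_{i_1}(C')$ stays inside $C'$ because finite Hilbert diameter prevents accumulation on $\partial C$) is exactly the justification implicit in the paper's phrase ``relatively compact sets,'' so this is a welcome clarification rather than a different approach.
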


\begin{proof}
Let $\delta=\mathrm{th}(\frac{1}{4}\Delta)$.
According to Lemma~\ref{lemmacontraction},
for every $i\in I$ and for every $x,y\in C'$,
$d_{\mathcal{H}}(T\cdot x,T\cdot y)\leq \delta d_{\mathcal{H}}(x,y)$.
Thus, the diameter of the range of $\tilde{T}_n$ is bounded by $\delta^{n-1}\Delta$.
The ranges of $\tilde{T}_n$ form a non-increasing sequence of relatively compact sets whose diameters converge to zero.
Thus, there exists a unique $\tilde{x}$ in the intersection of the closures of these ranges.
It implies uniform convergence of $\tilde{T}^n\cdot x$ to $\tilde{x}$.
\end{proof}

\subsection{Spectral perturbation theorem}\label{SectionSpectralpertubationtheorem}
As announced, in the following we will need a result about perturbations of linear operators.
Let $T_0$ be a $k\times k$ matrix with a dominant eigenvalue, that is an eigenvalue $\lambda_0$ that is simple and such that for every other eigenvalue $\lambda$ of $T_0$, $|\lambda_0|>|\lambda|$.
Recall that mapping a matrix $T$ to its eigenvalues is continuous, so that if $T$ is close to $T_0$, then $T$ also has a dominant eigenvalue $\lambda_T$.
Let $E_0$ and $E_T$ be the eigenspaces associated to $\lambda_0$ and $\lambda_T$ and let $\pi_0$ and $\pi_T$ be the spectral projectors onto $E_0$ and $E_T$.
By definition, $\pi_0=v_0\cdot \nu_0$, where $\nu_0$ is a left eigenvector for $T_0$ associated to $\lambda_0$ and $v_0$ is a right eigenvector.
Seeing $\nu_0$ as a line vector and $v_0$ as a column vector, the product $v_0\cdot \nu_0$ is indeed a $k\times k$ matrix.
Similarly, $\pi_T=v_T\cdot \nu_T$.

\begin{theorem}\label{theoremKato}
With these notations, there exists a neighborhood $\mathcal{U}$ of $T_0$ such that if $T\in \mathcal{U}$, then
$$T=\lambda_T\pi_T+R_T,$$
where $R_T$ is some remainder.
Furthermore, $R_T\pi_T=\pi_TR_T=0$.
In particular,
$$T^n=\lambda_T^n\pi_T+R_T^n,$$
with $\|R_T^n\|\leq C\tilde{\lambda}^n$
where $C$ and $\tilde{\lambda}$ do not depend on $T$ and with $\tilde{\lambda}<\lambda_0$.
\end{theorem}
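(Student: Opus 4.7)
The natural framework is the holomorphic functional calculus (resolvent integrals). Let $\rho_0$ denote the maximum modulus of the eigenvalues of $T_0$ different from $\lambda_0$, so $\rho_0<\lambda_0$ by the dominant eigenvalue hypothesis, and fix some $\tilde\lambda$ with $\rho_0<\tilde\lambda<\lambda_0$. Choose a small circle $\gamma$ in $\mathbb C$ centered at $\lambda_0$, of radius small enough that $\gamma$ encloses no other eigenvalue of $T_0$ and lies entirely outside the closed disc $\{|z|\le\tilde\lambda\}$. Since the spectrum of a matrix depends continuously on its entries, there exists a neighborhood $\mathcal U$ of $T_0$ such that, for every $T\in\mathcal U$, the spectrum of $T$ splits into a single simple eigenvalue $\lambda_T$ lying strictly inside $\gamma$ together with remaining eigenvalues all contained in the open disc of radius $\tilde\lambda$.

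First I would define the spectral projector $\pi_T=\frac{1}{2\pi i}\oint_\gamma (zI-T)^{-1}\,dz$. Standard facts from the holomorphic functional calculus give that $\pi_T$ is a projector commuting with $T$, with range equal to the one-dimensional eigenspace $E_T$ of $\lambda_T$; in particular $T\pi_T=\lambda_T\pi_T$. Writing $\pi_T$ with respect to the left and right eigenvectors $\nu_T,v_T$ normalized so that $\nu_Tv_T=1$ recovers the formula $\pi_T=v_T\cdot\nu_T$ from the statement. Setting $R_T=T(I-\pi_T)$ gives the decomposition $T=\lambda_T\pi_T+R_T$, and since $\pi_T$ is an idempotent that commutes with $T$, one has $\pi_TR_T=\pi_TT(I-\pi_T)=T\pi_T(I-\pi_T)=0$ and $R_T\pi_T=T(I-\pi_T)\pi_T=0$. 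A straightforward induction then yields $T^n=\lambda_T^n\pi_T+R_T^n$, the cross terms disappearing because of these orthogonality relations.

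It remains to establish the uniform bound $\|R_T^n\|\le C\tilde\lambda^n$ with $C$ independent of $T\in\mathcal U$. The operator $R_T$ is the restriction (extended by $0$ on $E_T$) of $T$ to the $T$-invariant complement $\ker\pi_T$, and its non-zero spectrum coincides with the eigenvalues of $T$ lying outside $\gamma$, hence inside the disc of radius $\tilde\lambda$. Applying the holomorphic functional calculus to $z\mapsto z^n$ on the outer spectrum, one obtains the integral representation
$$R_T^n=\frac{1}{2\pi i}\oint_{\Gamma}z^n(zI-T)^{-1}\,dz,$$
where $\Gamma$ is the circle of radius $\tilde\lambda$ centered at the origin. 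Shrinking $\mathcal U$ if needed, $\Gamma$ stays disjoint from $\mathrm{Spec}(T)$ for every $T\in\overline{\mathcal U}$, so the resolvent $(T,z)\mapsto (zI-T)^{-1}$ is continuous on the compact set $\overline{\mathcal U}\times\Gamma$ and thus uniformly bounded. Estimating the integral by its length times the sup of the integrand gives $\|R_T^n\|\le C\tilde\lambda^n$ with $C$ depending only on $\mathcal U$ and $\tilde\lambda$.

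The only delicate point is the uniformity of $C$: one must choose $\mathcal U$ small enough that the fixed contour $\Gamma$ avoids the spectrum of every $T\in\overline{\mathcal U}$ simultaneously, which in turn relies on the continuity of the spectrum in the entries of $T$. Once this is done, compactness of $\overline{\mathcal U}\times\Gamma$ and continuity of the resolvent on the complement of the spectrum take care of everything.
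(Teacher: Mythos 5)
Your argument is correct and is essentially the paper's own route: the paper simply invokes Kato (Theorem III.6.17) for the decomposition $T=\lambda_T\pi_T+R_T$ attached to $\R^k=E_T\oplus F_T$, and your Riesz-projector construction of $\pi_T$ together with the uniform resolvent bound on the circle of radius $\tilde{\lambda}$ is exactly the standard machinery behind that citation, including the compactness argument that makes $C$ and $\tilde{\lambda}$ independent of $T$. The only cosmetic point is that $\lambda_0$ need not be real here, so your inequalities should read $\rho_0<\tilde{\lambda}<|\lambda_0|$ (as implicitly intended in the statement).
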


This follows from \cite[Theorem~III.6.17]{Kato}, associated to the decomposition of $\R^k$ into
the direct sum $\R^k=E_T\oplus F_T$, where $F_T$ is the range of $I_k-\pi_T$.

All the quantities involved above not only depend continuously, but also analytically on $T$.
Precisely, we have the following.

\begin{prop}\label{propanalycity}
With these notations, the maps $T\mapsto \lambda_T$ and $T\mapsto \pi_T$ are analytic in $T\in \mathcal{U}$.
\end{prop}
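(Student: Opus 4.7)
The plan is to invoke the Riesz holomorphic functional calculus for matrices. Since $\lambda_0$ is a simple eigenvalue of $T_0$, it is isolated in the spectrum, so I fix a small circle $\gamma \subset \mathbb{C}$ centred at $\lambda_0$ whose closed disc contains no other eigenvalue of $T_0$ and whose boundary avoids $\lambda_0$. Because the eigenvalues of a matrix depend continuously on its entries (being the roots of the characteristic polynomial), after shrinking $\mathcal{U}$ I may assume that for every $T \in \mathcal{U}$ the contour $\gamma$ does not meet the spectrum of $T$ and encloses exactly one eigenvalue of $T$, namely the dominant eigenvalue $\lambda_T$.

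For each such $T$ I define
$$\pi_T \;=\; \frac{1}{2\pi i} \oint_{\gamma} (zI_k - T)^{-1}\, dz.$$
By the Riesz calculus this is precisely the spectral projector onto the generalised eigenspace of $T$ attached to $\lambda_T$; since $\lambda_T$ is a simple eigenvalue, this eigenspace is one-dimensional, so $\pi_T$ has rank one and coincides with the projector already considered in Theorem~\ref{theoremKato}.

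The heart of the argument is then that the entries of $(zI_k - T)^{-1}$ are rational functions in the entries of $T$ whose denominator $\det(zI_k - T)$ is bounded below in modulus uniformly for $(z, T) \in \gamma \times \mathcal{U}$. Consequently $T \mapsto (zI_k - T)^{-1}$ is analytic in $T \in \mathcal{U}$, with continuous dependence on $z \in \gamma$, and integration of an analytic family over the compact contour $\gamma$ preserves analyticity. This yields analyticity of $T \mapsto \pi_T$. To recover the eigenvalue I use the identity $T\pi_T = \lambda_T \pi_T$ together with $\mathrm{tr}(\pi_T) = 1$, which gives
$$\lambda_T \;=\; \mathrm{tr}(T\pi_T),$$
a polynomial expression in the (already analytic) entries of $T$ and $\pi_T$, and therefore analytic.

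I do not anticipate any serious obstacle here; the whole argument is a standard application of the Riesz projector formula. The only mildly delicate point is choosing $\mathcal{U}$ uniformly so that the contour $\gamma$ isolates $\lambda_T$ for every $T \in \mathcal{U}$, and this is a routine consequence of the continuous dependence of the roots of the characteristic polynomial on its coefficients.
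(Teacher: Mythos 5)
Your proof is correct and is essentially the argument behind the result the paper simply cites: the paper's proof is a reference to Kato's analytic perturbation theorem (Theorem VII.1.8 in \cite{Kato}), whose proof is precisely the Riesz contour-integral representation of the spectral projector that you write out, combined with recovering the eigenvalue from the rank-one projector (your $\lambda_T=\mathrm{tr}(T\pi_T)$). The paper's only additional remark is that one could alternatively use the implicit function theorem as in \cite{Woess1}, so your proposal matches the paper's intended route, just made self-contained.
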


This is a consequence of \cite[Theorem~VII.1.8]{Kato}.
One can also use the implicit function theorem, as it is done in \cite{Woess1} (see Proposition~8.20 there).


\section{Sub-Markov chains on a thickened lattice}\label{SectionSub-Markovchainsonathickenedlattice}

\subsection{Transition kernel on a thickened lattice}\label{Sectiontransitionkernelonathickenedlattice}
We consider here a chain on the set $\Z^d\times \{1,...,N\}$ defined by the transition kernel
$$p((x,k),(y,j))=p_{k,j}(x,y)\geq 0, x,y\in \Z^d, k,j\in \{1,...,N\}.$$
We will later apply our results to sub-Markov chains, that is transition kernels satisfying
$$\forall x\in \Z^d,\forall k\in \{1,...,N\}, \sum_{y\in \Z^d}\sum_{1\leq j\leq N}p_{k,j}(x,y)\leq 1.$$
However, during the proofs, we will have to use $h$-processes, meaning that we will modify the transition kernel $p$ using harmonic functions.
The new chains that we will get will not be sub-Markov.
Thus, we will need statements without such an assumption, but one should keep in mind that the chains we want to study are indeed sub-Markov.

We will always assume that our chain is invariant under the action of $\Z^d$ on $\Z^d\times \{1,...,N\}$, that is
$$\forall x,y\in \Z^d, \forall k,j\in \{1,...,N\}, p_{k,j}(x,y)=p_{k,j}(0,y-x).$$

We will denote by $p^{(n)}$ the $n$th convolution power of the chain.
Recall that
$$p^{(n)}((x,k),(y,j))=\underset{...,(x_{n-1},k_{n-1})}{\underset{(x_1,k_1),...,}{\sum}}p_{k,k_1}(x,x_1)p_{k_1,k_2}(x_1,x_2)\cdots p_{k_{n-1},j}(x_{n-1},y).$$
The $\Z^d$-invariance is preserved:
$$p^{(n)}((x,k),(y,j))=p^{(n)}((0,k),(y-x,j))=:p^{(n)}_{k,j}(0,y-x).$$

\begin{definition}\label{Defirred}
We say that the chain is irreducible if for every $x,y\in \Z^d$ and for every $k,j$, there exists $n$ such that $p^{(n)}_{k,j}(x,y)>0$.
We say that the chain is strongly irreducible if for every $k,j\in \{1,...,N\}$ and for every $x,y\in \Z^d$, there exists $n_0$ such that for every $n\geq n_0$, $p^{(n)}_{k,j}(x,y)>0$.
\end{definition}
We now assume that the chain is strongly irreducible.
It will allow us later to deal with strongly irreducible matrices and to use the Perron-Frobenius theorem (Theorem~\ref{PerronFrob1}).
The assumption that we really want to make is (weak) irreducibility.
However, it turns out that to understand the Martin boundary, one can assume strong irreducibility (see Lemma~\ref{Spitzertrick} below).

If the chain is strongly irreducible, it is in particular irreducible.
Actually, the following condition is satisfied: for every $x,y\in \Z^d$, for every $k,j\in \{1,...,N\}$, there exists $n\in \N^*$ such that $p^{(n)}_{k,j}(0,x)>0$ and $p^{(n)}_{k,j}(0,y)>0$.
We will use this in the proof of our local limit theorem (see Proposition~\ref{theorem1spitzer}).

It may happen that at each level $k$, the sum $\sum_{x,j}p_{k,j}(0,x)$ equals 1.
In this case, the chain is Markov.
If it is sub-Markov and if at least for one of the level $k$, this sum is smaller than 1, then the chain is transient.
In that case, the Green function is finite.
Recall that
$$G((x,k)(y,j))=\sum_{n\in \N}p^{(n)}_{k,j}(x,y)=\sum_{n\in \N}p^{(n)}_{k,j}(0,y-x)=:G_{k,j}(x,y),$$
where $p^{(0)}(x,y)=1$ if $x= y$ and 0 otherwise.
In any case, one can define the Green function but it can take the value $+\infty$.

Our goal is to understand the Martin kernel associated to the transition kernel $p$.
For a random walk on the lattice $\Z^d$, (minimal) harmonic functions are of the form $x\in \Z^d\mapsto C\mathrm{e}^{u\cdot x}$, with $u$ satisfying the condition $\sum_{x\in \Z^d}p(0,x)\mathrm{e}^{u\cdot x}=1$.
Actual harmonic functions are linear combinations of those.
These harmonic functions are useful to understand the Martin kernel.
P.~Ney and F.~Spitzer study them to fully describe the Martin boundary in \cite{NeySpitzer}.
It turns out, as we will see, that on the thickened lattice $\Z^d\times \{1,...,N\}$, harmonic functions have a similar form.
Thus, for $u\in \R^d$, on defines the modified transition kernel
$$p_{k,j;u}(0,x)=p_{k,j}(0,x)\mathrm{e}^{u\cdot x},\text{ } p_{k,j;u}(x,y)=p_{k,j;u}(0,y-x).$$
The new chain is again strongly irreducible.
Furthermore, if the first one has finite support, so does the new one.
We will also denote by $P_u(0,x)$ the associated matrix, with entries $p_{k,j;u}(0,x)$ and $G_{k,j;u}$ the associated Green function.
One can easily check that $p^{(n)}_{k,j;u}(x,y)=p^{(n)}_{k,j}(x,y)\mathrm{e}^{u\cdot(y-x)}$,
so that $G_{k,j;u}(x,y)=G_{k,j}(x,y)\mathrm{e}^{u\cdot (y-x)}$.
Thus, the new Green function is finite if and only if the first one is.

We now define a matrix which plays a fundamental role in the following.
Let $F(u)$ be the $N\times N$ matrix with entries
$$F_{k,j}(u)=\sum_{x\in \Z^d}p_{k,j;u}(0,x)=\sum_{x\in \Z^d}p_{k,j}(0,x)\mathrm{e}^{u\cdot x}.$$

A priori, $F_{k,j}(u)$ takes values in $[0,\infty]$.
Denote by $\mathcal{F}$ the interior of the set of $u$ such that $F_{k,j}(u)<\infty$ for all $k$ and $j$.
The set $\mathcal{F}$ is convex since the exponential function is convex,
but it depends on the tail of the chain.
We will later impose some condition on the tail to ensure $F$ is finite on a sufficiently large set (see Assumption~\ref{hyp1}).

\begin{lem}
Let $F(u)^n$ be the $n$th power of $F(u)$.
The entries of $F(u)^n$ are given by
$$F_{k,j}(u)^n=\sum_{x\in \Z^d}p^{(n)}_{k,j;u}(0,x)=\sum_{x\in \Z^d}p^{(n)}_{k,j}(0,x)\mathrm{e}^{u\cdot x}.$$
\end{lem}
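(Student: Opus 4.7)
The plan is a one-line induction on $n$, with the base case $n=1$ being just the definition of $F(u)$.

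For the inductive step, I would first note (or verify by a direct unraveling of the convolution formula) the identity
$$p^{(n)}_{k,j;u}(0,x)=p^{(n)}_{k,j}(0,x)\mathrm{e}^{u\cdot x},$$
which the author has already pointed out for the $u$-twisted kernel; it follows because each factor $p_{k_i,k_{i+1};u}(x_i,x_{i+1})$ in the convolution picks up the exponential weight $\mathrm{e}^{u\cdot(x_{i+1}-x_i)}$, and these weights telescope. Hence the two sums in the statement are trivially equal to each other, and the only remaining content is the first equality, linking them to $F(u)^n$.

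Now write $F(u)^{n+1}=F(u)^n F(u)$, so that by the induction hypothesis
$$F_{k,j}(u)^{n+1}=\sum_{\ell=1}^N F_{k,\ell}(u)^n\,F_{\ell,j}(u)=\sum_{\ell=1}^N\Bigl(\sum_{x\in\Z^d}p^{(n)}_{k,\ell;u}(0,x)\Bigr)\Bigl(\sum_{y\in\Z^d}p_{\ell,j;u}(0,y)\Bigr).$$
Using $\Z^d$-invariance (so that $\sum_{y}p_{\ell,j;u}(0,y)=\sum_{z}p_{\ell,j;u}(x,z)$ for every $x$), I would recombine the double sum as
$$\sum_{z\in\Z^d}\sum_{\ell=1}^N\sum_{x\in\Z^d}p^{(n)}_{k,\ell;u}(0,x)\,p_{\ell,j;u}(x,z)=\sum_{z\in\Z^d}p^{(n+1)}_{k,j;u}(0,z),$$
which is exactly what we wanted. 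The only subtlety, and the only place one could go wrong, is the change of variable $z=y+x$ inside the sums, which is legitimate precisely because of translation invariance; everything else is purely formal manipulation of absolutely convergent sums (finiteness being guaranteed by $u\in\mathcal{F}$).
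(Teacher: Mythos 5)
Your proof is correct and is essentially the paper's own argument: the paper verifies the case $n=2$ by exactly the same change of variables justified by $\Z^d$-invariance (recognizing the inner sum as the convolution $p^{(2)}$) and then concludes by induction, which is precisely your inductive step phrased with the twisted kernel $p_u$. The only cosmetic difference is that you invoke the previously noted identity $p^{(n)}_{k,j;u}=p^{(n)}_{k,j}\mathrm{e}^{u\cdot x}$ to split the statement into two equalities, and your appeal to absolute convergence is not even needed since all terms are non-negative.
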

\begin{proof}
By definition,
\begin{align*}
    F_{k,j}(u)^2&=\sum_{x,y\in \Z^d}\sum_{l\in \{1,...,N\}}p_{k,l}(0,x)p_{l,j}(0,y)\mathrm{e}^{u\cdot(x+y)}\\
    &=\sum_{x,y\in \Z^d}\sum_{l\in \{1,...,N\}}p_{k,l}(0,x)p_{l,j}(x,x+y)\mathrm{e}^{u\cdot(x+y)}.
\end{align*}

The change of variables $y=z-x$ gives
$$F_{k,j}(u)^2=\sum_{x,z\in \Z^d}\sum_{l\in \{1,...,N\}}p_{k,l}(0,x)p_{l,j}(x,z)\mathrm{e}^{u\cdot z}=\sum_{z\in \Z^d}p^{(2)}_{k,j}(0,z)\mathrm{e}^{u\cdot z},$$
which is our formula for $n=2$.
We conclude by an induction argument.
\end{proof}

Thus, $F(u)$ is a non-negative matrix and it is strongly irreducible, meaning that there exists $n$ such that every entry of $F(u)^n$ is positive.
According to Theorem~\ref{PerronFrob1}, whenever $F(u)<\infty$, it admits a unique dominant eigenvalue, which is real and simple.
We denote it by $\lambda(u)$.
It is actually this eigenvalue that will play the role of the functions 
$u\mapsto \sum_{x\in \Z^d}p(0,x)\mathrm{e}^{u\cdot x}$
studied by P.~Ney and F.~Spitzer in \cite{NeySpitzer}.
Thus, to ensure that this eigenvalue is well defined, we have to assume strong irreducibility instead of weak irreducibility.

In \cite{NeySpitzer}, the authors focus on the set where those functions take value 1.
By analogy, we introduce the set where $\lambda(u)=1$.
We denote this set by $H$:
$$H=\{u\in \mathcal{F}, \lambda(u)=1\}.$$

In the following, we will define some functions depending on $u$.
It will be implicit that they are defined on the set of $u\in \R^d$ such that $F(u)$ is finite.
Denote by $C(u)$ a right eigenvector and $\nu(u)$ a left eigenvector for $F(u)$, both associated to the eigenvalue $\lambda(u)$.
Since $F$ is strongly irreducible, one can choose these eigenvectors to have positive coordinates.
We impose the following normalization.
First choose $\nu(0)$ so that $\sum \nu(0)_j=1$, then impose that $\nu(0)\cdot C(u)=1$ and
finally, impose that $\nu(u)\cdot C(u)=1$ for every $u$.
Seeing $\nu(u)$ as a line vector and $C(u)$ as a column vector, $C(u)\cdot \nu(u)$ is an $N\times N$ matrix.
It is actually the spectral projector onto the eigenspace associated to $\lambda(u)$.
We will denote it by $\pi(u)$.

\begin{lem}
With these notations, the functions $\nu$, $C$ and $\lambda$ are analytic in $u\in \mathcal{F}$.
\end{lem}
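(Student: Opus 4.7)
The plan is to deduce the analyticity of $\lambda$, $\nu$, and $C$ from Proposition~\ref{propanalycity} via the analytic map $u \mapsto F(u)$. First I would verify that the entries of $F(u)$ depend analytically on $u \in \mathcal{F}$. Since $\mathcal{F}$ is the interior of the domain of finiteness of $F$, any $u_0 \in \mathcal{F}$ is contained in some closed real box $u_0 + [-\epsilon,\epsilon]^d \subset \mathcal{F}$ on which each series $F_{k,j}(u) = \sum_x p_{k,j}(0,x)\mathrm{e}^{u\cdot x}$ is finite. Bounding $|\mathrm{e}^{z\cdot x}| = \mathrm{e}^{\mathrm{Re}(z)\cdot x}$ in terms of the (finitely many) corner values $\mathrm{e}^{(u_0 \pm \epsilon e_i)\cdot x}$, one gets absolute and uniform convergence of the defining series on a complex polydisc around $u_0$. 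Hence each $F_{k,j}$ extends to a holomorphic function on a complex neighborhood, so $u \mapsto F(u)$ is real analytic on $\mathcal{F}$.

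Next, because $F(u)$ is strongly irreducible, Theorem~\ref{PerronFrob1} guarantees that $\lambda(u)$ is a simple eigenvalue of $F(u)$ strictly dominating the modulus of every other eigenvalue. Consequently, at each $u_0 \in \mathcal{F}$, the hypotheses of Proposition~\ref{propanalycity} are satisfied at the matrix $F(u_0)$: there is a matrix-space neighborhood $\mathcal{U}$ of $F(u_0)$ on which $T \mapsto \lambda_T$ and $T \mapsto \pi_T$ are analytic. Composing with $u \mapsto F(u)$, the maps $u \mapsto \lambda(u)$ and $u \mapsto \pi(u) = C(u)\cdot \nu(u)$ are analytic in a neighborhood of $u_0$, hence on all of $\mathcal{F}$.

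Finally, I would recover $\nu(u)$ and $C(u)$ separately from the rank-one factorization of $\pi(u)$ and the chosen normalizations. Multiplying the identity $\pi(u) = C(u)\cdot \nu(u)$ on the left by the fixed row vector $\nu(0)$ and using the normalization $\nu(0)\cdot C(u) = 1$ gives $\nu(u) = \nu(0)\cdot \pi(u)$, which is analytic as a linear combination of the analytic entries of $\pi(u)$. By Perron-Frobenius applied to $F(u)^T$, every entry of $\nu(u)$ is strictly positive throughout $\mathcal{F}$; in particular $\nu(u)_1$ is a non-vanishing analytic function. The fact that the first column of $\pi(u)$ equals $\nu(u)_1\cdot C(u)$ then yields $C(u) = \pi(u)e_1/\nu(u)_1$, which is analytic as a ratio of an analytic vector by a non-vanishing analytic function.

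No serious obstacle arises: the argument is essentially a chain-rule assembly of the two cited results, and the only point requiring any technical care is the analyticity of $F$ itself, where one must justify the interchange of summation and differentiation through absolute and locally uniform convergence in a complex neighborhood of $\mathcal{F}$.
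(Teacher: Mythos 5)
Your proof is correct and follows essentially the same route as the paper: compose the analytic map $u\mapsto F(u)$ with the analyticity of the dominant eigenvalue and spectral projector from Proposition~\ref{propanalycity}, then recover $\nu(u)=\nu(0)\pi(u)$ from the normalization and extract $C(u)$ by applying $\pi(u)$ to a fixed vector and dividing by a positive analytic scalar (the paper uses $C(0)$ where you use $e_1$, an immaterial difference). Your explicit justification of the analyticity of $F$ on $\mathcal{F}$ via locally uniform convergence of the series in a complex neighborhood is a detail the paper leaves implicit, and it is handled correctly.
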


\begin{proof}
Proposition~\ref{propanalycity} states that $\lambda$ is analytic in $F$.
Since $F$ is itself analytic in $u$, $\lambda$ is analytic in $u$.
Furthermore, $\pi(u)$ is analytic in $u$ and
$$\nu(u)=\nu(0)\cdot C(u)\cdot \nu(u)=\nu(0)\pi(u)$$
and
$$C(u)=\frac{1}{\nu(u)\cdot C(0)}C(u)\cdot \nu(u)\cdot C(0)=\frac{1}{\nu(u)\cdot C(0)}\pi(u)C(0),$$
so that $\nu$ and $C$ also are analytic.
\end{proof}

\begin{rem}
The eigenvectors $\nu$ and $C$ are only determined up to multiplication by a constant.
Obviously, one can choose a non-analytic dependency on $u$.
The normalization we chose forces analycity and will be convenient in the proofs below.
\end{rem}

Thanks to this lemma, we are able to differentiate the functions $\nu$, $C$ and $F$.
In the following, we will use the notation $\nabla$ to denote a gradient and $\nabla^2$ to denote a Hessian quadratic form.
Let us say a few words about the type of formulae we will get.
When differentiating in $u$, the gradient $\nabla$ denotes a vector in $\R^d$.
For example, differentiating $C(u)$, one gets a vector of $\R^d$ vectors.
Thus, the notation $\nabla C(u)$ is purely formal and one should understand that every coefficient of $C(u)$ is differentiated.
In the same spirit, when differentiating $F(u)$, one gets a matrix of vectors.
When differentiating a product, for example $\nabla [\nu(u) \cdot C(u)]$, one multiplies $\R^N$ vectors or $N\times N$ matrices and the result is a vector or matrix of $\R^d$ vectors.
Precisely, in this example, one has
$$\nabla [\nu(u) \cdot C(u)]=\sum_{k\in \{1,...,N\}}\nabla [\nu(u)(k)C(u)(k)].$$
Using a similar notation, $\nabla^2F$ is a matrix of quadratic forms.
Precisely, $\nabla F(u)$ is the matrix with entries
$$\nabla F_{k,j}(u)=\sum_{x\in \Z^d}xp_{k,j}(0,x)\mathrm{e}^{u\cdot x}$$
and $\nabla^2F(u)$ is the matrix with entries the quadratic forms
$$\nabla^2 F_{k,j}(u)(\theta)=\sum_{x\in \Z^d}(x\cdot \theta)^2p_{k,j}(0,x)\mathrm{e}^{u\cdot x}, \theta \in \R^d.$$

Since $\nu(u)$ and $C(u)$ are left and right eigenvectors, $\lambda(u)=\nu(u)F(u)C(u)$.
Differentiating this formula, one gets
\begin{align*}
    \nabla \lambda (u)&=\nabla \nu (u) F(u) C(u)+\nu(u) \nabla F(u)C(u)+\nu(u)F(u)\nabla C(u)\\
    &=\lambda(u)(\nabla \nu (u) C(u)+\nu(u)\nabla C(u))+\nu(u)\nabla F(u)C(u).
\end{align*}
Differentiating the equality $\nu(u)\cdot C(u)=1$, one gets $\nabla \nu(u)C(u)+\nu(u)\nabla C(u)=0$.
Thus,
\begin{equation}\label{equationlambdaprime}
    \nabla \lambda (u)=\nu(u)\nabla F(u)C(u).
\end{equation}

\begin{prop}\label{strictconvexity}
If the chain is strongly irreducible, the function $\lambda:u\mapsto \lambda(u)$ is strictly convex on the convex set $\mathcal{F}$.
\end{prop}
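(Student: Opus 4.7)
The plan is to prove that $\log\lambda$ is strictly convex on $\mathcal{F}$; since $\exp$ is strictly convex and strictly increasing, this will give strict convexity of $\lambda=\exp(\log\lambda)$.

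First I would establish that $\log\lambda$ is convex. Fix any pair of indices $(k_0,j_0)$. By Theorem~\ref{theoremKato}, $(F(u)^n)_{k_0,j_0}=\lambda(u)^n\pi(u)_{k_0,j_0}+O(\tilde\lambda(u)^n)$ with $\pi(u)_{k_0,j_0}=C(u)(k_0)\nu(u)(j_0)>0$ (both eigenvectors being positive), so
$$\log\lambda(u)=\lim_{n\to\infty}\frac{1}{n}\log (F(u)^n)_{k_0,j_0}.$$
The entry $(F(u)^n)_{k_0,j_0}=\sum_{x\in\Z^d}p^{(n)}_{k_0,j_0}(0,x)\,e^{u\cdot x}$ is the moment generating function of a finite positive measure on $\Z^d$, hence log-convex in $u$ by H\"older's inequality. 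A pointwise limit of convex functions being convex, $\log\lambda$ is convex on $\mathcal{F}$. By Proposition~\ref{propanalycity} it is moreover analytic on the connected open set $\mathcal{F}$, so strict convexity reduces to showing it is not affine on any line segment.

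Suppose for contradiction that $\log\lambda(u_0+t\theta)=at+b$ for $t$ in an interval $I$ around $0$, with $u_0\in\mathcal{F}$ and $\theta\in\R^d\setminus\{0\}$. The $h$-transform
$$\tilde p_{k,j}(0,x)=\frac{p_{k,j}(0,x)\,e^{u_0\cdot x}\,C(u_0)(j)}{\lambda(u_0)\,C(u_0)(k)}$$
is a probability transition kernel whose Perron eigenvalue is $v\mapsto\lambda(u_0+v)/\lambda(u_0)$; after applying it, we may assume $u_0=0$, $\lambda(0)=1$, $F(0)$ row-stochastic, $C(0)=\mathbf{1}$, and $\lambda(t\theta)=e^{at}$ on $I$. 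Let $(Y_n,K_n)$ be the resulting Markov chain started from $K_0\sim\nu_0$. The identity $\Ee_{\nu_0}[e^{t\theta\cdot Y_n}]=\sum_{k,j}\nu_0(k)(F(t\theta)^n)_{k,j}$ combined with Theorem~\ref{theoremKato} shows that $e^{-nat}\Ee_{\nu_0}[e^{t\theta\cdot Y_n}]$ stays bounded as $n\to\infty$ for every $t\in I$. Applying the inequality $\cosh(x)\ge 1+x^2/2$ to $x=t(\theta\cdot Y_n-na)$ then forces $\mathrm{Var}_{\nu_0}(\theta\cdot Y_n)=O(1)$. Strong irreducibility should however make this impossible, since it provides, in any direction $\theta\neq 0$, arbitrarily many $\Z^d$-displacements in the support of $p^{(n)}_{k,j}(0,\cdot)$ that are not all concentrated on a hyperplane orthogonal to $\theta$.

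The main obstacle is precisely this last step: establishing a linear lower bound $\mathrm{Var}_{\nu_0}(\theta\cdot Y_n)\gtrsim n$ for an irreducible chain without appealing to the central limit theorem developed later in Section~\ref{SectionSub-Markovchainsonathickenedlattice}, which would risk circularity. The cleanest alternative, which I would ultimately pursue, is to bypass the contradiction argument entirely: compute $\nabla^2\log\lambda(u)$ directly by second-order spectral perturbation, using $\nabla\lambda=\nu\nabla F\,C$ and $\nabla\nu\cdot C+\nu\cdot\nabla C=0$ together with a Neumann-series inversion of $F(u)-\lambda(u)I$ on the complement of the Perron eigenspace; then identify the resulting quadratic form with a covariance of the twisted Markov chain and use strong irreducibility to exhibit enough linearly independent $\Z^d$-displacements to force positive definiteness.
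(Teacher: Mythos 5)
Your first paragraph (convexity of $\log\lambda$ via H\"older applied to the entries $(F(u)^n)_{k_0,j_0}$ and the limit $\frac1n\log(F(u)^n)_{k_0,j_0}\to\log\lambda(u)$) is fine, and the reduction from strict convexity of $\log\lambda$ to that of $\lambda$ is correct. But the actual content of Proposition~\ref{strictconvexity} is the strictness, and neither of your two routes establishes it. In the first route you yourself flag the missing step, and the heuristic you offer for it is not the right criterion: boundedness of $\mathrm{Var}_{\nu_0}(\theta\cdot Y_n)$ (equivalently, vanishing of the second derivative of $\log\lambda$ along $\theta$) is \emph{not} contradicted by the supports of $p^{(n)}_{k,j}(0,\cdot)$ failing to lie in a hyperplane orthogonal to $\theta$. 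The degenerate case is exactly that $\theta\cdot x=a+c_k-c_j$ for every $x$ in the support of $p_{k,j}(0,\cdot)$, for some constant $a$ and some level-dependent corrections $c_k$ (a coboundary coming from the Perron eigenvector, solving a Poisson equation); the supports then lie on a family of parallel hyperplanes with level-dependent offsets, which your stated condition does not exclude. Ruling this out is where the work is: one must telescope the relation along admissible paths, using $c_k-c_k=0$ and the cocycle additivity of the offsets, and use (strong) irreducibility to produce, for each basis vector $e_i$, paths from $(0,k)$ back to level $k$ ending at $0$ and at $e_i$ with the \emph{same} number of steps, forcing $a=0$ and then $\theta\cdot e_i=0$ for all $i$, a contradiction. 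Your second route (``compute $\nabla^2\log\lambda$ by spectral perturbation and identify it with a covariance'') is in spirit exactly the paper's proof, but you leave the decisive positive-definiteness step as an assertion, and it runs into the same coboundary issue.

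For comparison, the paper argues directly on $\lambda$ rather than $\log\lambda$: after translating so that $u=0$, it conjugates $F(u)$ by the diagonal matrix $D_u$ with entries $\mathrm{e}^{\nabla C(0)_k\cdot u/C(0)_k}$, which replaces each displacement $x$ by $x+\beta_{k,j}$ with $\beta_{k,j}=\frac{\nabla C(0)_j}{C(0)_j}-\frac{\nabla C(0)_k}{C(0)_k}$ and shows $\lambda(u)=\nu(0)\widehat F(u)C(0)+O(u^3)$; the Hessian in direction $\theta$ is then the manifestly nonnegative sum $\sum_{k,j,x}\nu(0)_kC(0)_j[(x+\beta_{k,j})\cdot\theta]^2p_{k,j}(0,x)$, and vanishing of this sum is excluded by precisely the path-telescoping argument described above (using $\beta_{k,k}=0$, $\beta_{k,j}+\beta_{j,l}=\beta_{k,l}$, and irreducibility applied to paths from $(0,k)$ to $(e_i,k)$). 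So the gap in your proposal is concrete: the degeneracy analysis with the level-dependent coboundary terms is missing, and without it neither the contradiction via bounded variance nor the claimed positive definiteness of the perturbative Hessian goes through.
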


\begin{proof}
Everything is deduced from irreducibility of $p$.
One has to show that for every $u$, the Hessian quadratic form $\nabla^2\lambda(u)$ is positive definite.
First, we show we can assume that $u=0$.
Indeed, assume we know that $\nabla^2 \lambda(0)$ is positive definite, fix $u$ and consider the transition kernel $\tilde{p}$, defined as
$$\tilde{p}(0,x)=p(0,x)\mathrm{e}^{u\cdot x}.$$
The associated matrix $\tilde{F}$ has entries
$$\tilde{F}_{k,j}(v)=\sum_{x\in \Z^d}\tilde{p}_{k,j}(0,x)\mathrm{e}^{v\cdot x}=\sum_{x\in \Z^d}p_{k,j}(0,x)\mathrm{e}^{u\cdot x}\mathrm{e}^{v\cdot x}=F_{k,j}(u+v).$$
Thus, the associated eigenvalue $\tilde{\lambda}$ satisfies $\tilde{\lambda}(v)=\lambda(u+v)$.
In particular, differentiating twice, $\nabla^2\tilde{\lambda}(0)=\nabla^2\lambda(u).$
Since $\tilde{p}$ also is irreducible, $\nabla^2\tilde{\lambda}(0)$ is positive definite and so is $\nabla^2\lambda(u)$.

Now, let us show that $\nabla^2\lambda(0)$ is positive definite.
Recall that $\nu(0)\cdot C(u)=1$ and that $\nu(u)\cdot C(u)=1$ for every $u$.
Using Taylor-Young, one has
$$C(u)=C(0)+\nabla C(0)\cdot u+O(u^2).$$
Denote by $C(u)_k$ the coordinates of $C(u)$ and rewrite Taylor-Young for each coordinate:
$$C(u)_k=C(0)_k+\nabla C(0)_k\cdot u + O(u^2).$$
Let $D_u$ be the diagonal matrix
$$D_u=\begin{pmatrix}
\mathrm{e}^{\frac{\nabla C(0)_1\cdot u}{C(0)_1}} & & \\
& \ddots & \\
& & \mathrm{e}^{\frac{\nabla C(0)_N\cdot u}{C(0)_N}}
\end{pmatrix}$$
and let $\widehat{F}(u)$ be the matrix $\widehat{F}(u):=D_u^{-1}F(u)D_u$.
Since it is conjugate to $F(u)$, it has the same eigenvalues.
Besides, $D_u^{-1}C(u)$ is an eigenvector for $\widehat{F}(u)$ associated to $\lambda(u)$.
Notice that
$D_u^{-1}C(u)=C(0)+O(u^2)$ so that $\nu(0)\cdot (D_u^{-1}C(u))=1+O(u^2)$.
Define then $\widehat{C}(u)$ by the formula
$$\widehat{C}(u):=\frac{1}{\nu(0)\cdot (D_u^{-1}C(u))}D_u^{-1}C(u)$$
so that
$$\widehat{C}(u)=\frac{1}{1+O(u^2)}(C(0)+O(u^2))=C(0)+O(u^2).$$
Then, $\nu(0)\cdot \widehat{C}(u)=1$ so that
$$\lambda(u)=\nu(0)\widehat{F}(u)\widehat{C}(u)=\nu(0)\widehat{F}(u)C(0)+\nu(0)\widehat{F}(u)[\widehat{C}(u)-C(0)].$$
Moreover, $\nu(0)F(0)=\lambda(0)\nu(0)$, and so
$$\nu(0)F(0)\widehat{C}(u)=\nu(0)F(0)C(0)=\lambda(0).$$
Thus,
$$\lambda(u)=\nu(0)\widehat{F}(u)C(0)+\nu(0)[\widehat{F}(u)-F(0)][\widehat{C}(u)-C(0)]=\nu(0)\widehat{F}(u)C(0)+O(u^3).$$
Showing that the Hessian quadratic form of $\widehat{\lambda}(u):=\nu(0)\widehat{F}(u)C(0)$ is positive definite at 0 is enough to conclude.
It is simpler, since one multiplies $\widehat{F}(u)$ on the left and on the right with constant vectors.

By definition
$$\widehat{F}_{k,j}(u)=\sum_{x\in \Z^d}p_{k,j}(0,x)\mathrm{e}^{u\cdot x}\mathrm{e}^{u\cdot \left (\frac{\nabla C(0)_j}{C(0)_j}-\frac{\nabla C(0)_k}{C(0)_k}\right )}.$$
Differentiating twice this formula, one gets
\begin{align*}
    \nabla^2 \widehat{F}_{k,j}(u)(\theta)=\sum_{x\in \Z^d}\left [\left (x+\frac{\nabla C(0)_j}{C(0)_j}-\frac{\nabla C(0)_k}{C(0)_k}\right )\cdot \theta\right ]^2&p_{k,j}(0,x)\mathrm{e}^{u\cdot x}\\
    &\mathrm{e}^{u\cdot \left (\frac{\nabla C(0)_j}{C(0)_j}-\frac{\nabla C(0)_k}{C(0)_k}\right )}, \theta \in \R^d.
\end{align*}

Assume that there exists $\theta\neq 0\in \R^d$ such that for every $k,j$, $\nabla^2\widehat{F}_{k,j}(0)(\theta)=0$
and write $E=\R\theta$.
Denote by $e_1,...,e_d$ the canonical basis of $\R^d$.
To simplify formulae, also use the notation $\beta_{k,j}=\frac{\nabla C(0)_j}{C(0)_j}-\frac{\nabla C(0)_k}{C(0)_k}$.
Notice then that for every $k,j,l$, $\beta_{k,k}=0$ and $\beta_{k,j}+\beta_{j,l}=\beta_{k,l}$.
For every $k,j$, $p_{k,j}(0,x)\neq 0$ only happens if $x+\beta_{k,j}\in E^{\perp}$.
Let us fix $k$.
Since the chain is irreducible, there exists a path $(0,k),(x_1,j_1),(x_2,j_2),...,(x_n,j_n),(e_1,k)$ such that $p_{k,j_1}(0,x_1)\neq 0$, $p_{j_m,j_{m+1}}(x_m,x_{m+1})\neq 0$ and $p_{j_n,k}(x_n,e_1)\neq0$.
We deduce from this fact that $x_1+\beta_{k,j_1}\in E^{\perp},x_2-x_1+\beta_{j_1,j_2}\in E^{\perp},...,e_1-x_n+\beta_{j_n,k}\in E^{\perp}$.
Summing all these vectors, one gets that $e_1\in E^{\perp}$.
Similarly, every other vector $e_i$ is in $E^{\perp}$, which is a contradiction.

Thus, for every $\theta\neq 0$, there exist some $k$ and $j$ such that
$\nabla^2\widehat{F}_{k,j}(0)(\theta)>0.$
Averaging this with the positive vectors $\nu(0)$ and $C(0)$, one has
$$\nabla^2\lambda(0)(\theta)=\nu(0)\nabla^2\widehat{F}(0)(\theta)C(0)>0,$$ so that $\lambda$ is positive definite at 0.
\end{proof}

\begin{rem}
We only used weak irreducibility of $p$ in the proof, but to ensure that $\lambda(u)$ is well defined, we did use strong irreducibility.
Actually, we could have defined strong irreducibility only for the $\{1,...,N\}$ part,
meaning that for every $k,j$, there exists $n_0$ such that for every $n\geq n_0$, there exist $x,y$ such that $p^{(n)}_{k,j}(x,y)>0$.
With this assumption, the matrix $F(u)$ is strongly irreducible, hence $\lambda(u)$ is well defined.
However, we will not use such technical distinctions in the following, so we stated everything with strong irreducibility of $p$ as in Definition \ref{Defirred}.
\end{rem}

In the following (precisely in the proof of Lemma~\ref{lemmaTCL}), we will need to consider a transition kernel that is not supported on the lattice $\Z^d$, but on some translated lattice.
In this situation, the dominant eigenvalue is also strictly convex.
Precisely, one has the following.

\begin{prop}\label{strictconvexity2}
Let $\alpha\in \R^d$ be a vector with non-necessarily integer coordinates.
Let $\tilde{F}(v),v\in \R^d$ be an $N\times N$ matrix with entries $\tilde{F}_{k,j}(v)$ of the form
$$\tilde{F}_{k,j}(v)=\sum_{x\in \Z^d}a_{k,j}(x)\mathrm{e}^{v\cdot (x+\alpha)}$$
with $a_{k,j}(x)\geq 0$.
Assume that the chain defined by the entries $a_{k,j}(x)$ is strongly irreducible, meaning that for every $x,y$, for every $k,j$, there exists $n_0$ such that if $n\geq n_0$, then one can find $x=x_1,x_2,...,x_n=y$ and $k=k_0,k_1,...,k_n=j$ with $a_{k_i,k_{i+1}}(x_{i+1}-x_i)>0$.
Then, $\tilde{F}$ is strongly irreducible. Denote by $\tilde{\lambda}$ its dominant eigenvalue.
Then, $\tilde{\lambda}$ is strictly convex on the interior of the set where $\tilde{F}$ is finite.
\end{prop}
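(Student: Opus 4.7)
My plan is to follow the proof of Proposition~\ref{strictconvexity} almost verbatim, with a single new ingredient --- based on closed loops of the underlying chain --- to absorb the shift by $\alpha$ in the exponents.

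First I would verify the spectral setup. Factoring $\tilde{F}(v) = e^{v\cdot\alpha} A(v)$ with $A_{k,j}(v) = \sum_{x\in\Z^d} a_{k,j}(x) e^{v\cdot x}$, strong irreducibility of the chain $(a_{k,j})$ ensures that $A(v)$, and hence $\tilde F(v)$, is strongly irreducible. Theorem~\ref{PerronFrob1} then provides a simple dominant eigenvalue $\tilde\lambda(v)$ with positive left/right eigenvectors $\tilde\nu(v),\tilde C(v)$, depending analytically on $v$ in the interior of the finiteness domain by Proposition~\ref{propanalycity}; I would normalize them as in Section~\ref{Sectiontransitionkernelonathickenedlattice}. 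To establish strict convexity it suffices to prove that $\nabla^2 \tilde\lambda(v_0)$ is positive definite for each $v_0$, and the shift trick (replace $a_{k,j}(x)$ by $a_{k,j}(x) e^{v_0\cdot(x+\alpha)}$, which has the same support and thus still defines a strongly irreducible chain) reduces this to the case $v_0 = 0$.

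Next I would repeat the conjugation trick of Proposition~\ref{strictconvexity}. Letting $D_v = \mathrm{diag}(e^{\nabla \tilde C(0)_k\cdot v/\tilde C(0)_k})$ and $\widehat{\tilde F}(v) = D_v^{-1}\tilde F(v) D_v$, the same computation --- using only $\tilde\nu(0)\cdot\tilde C(0)=1$, $\tilde\nu(0)\tilde F(0) = \tilde\lambda(0)\tilde\nu(0)$, and $D_v^{-1}\tilde C(v) = \tilde C(0) + O(v^2)$ --- yields the expansion $\tilde\lambda(v) = \tilde\nu(0)\widehat{\tilde F}(v)\tilde C(0) + O(v^3)$. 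With $\tilde\beta_{k,j} = \nabla \tilde C(0)_j/\tilde C(0)_j - \nabla \tilde C(0)_k/\tilde C(0)_k$ (still satisfying $\tilde\beta_{k,k} = 0$ and $\tilde\beta_{k,j} + \tilde\beta_{j,l} = \tilde\beta_{k,l}$), the entries become $\widehat{\tilde F}_{k,j}(v) = \sum_x a_{k,j}(x) e^{v\cdot(x+\alpha+\tilde\beta_{k,j})}$, so
\[
\nabla^2 \tilde\lambda(0)(\theta) = \sum_{k,j} \tilde\nu(0)_k\,\tilde C(0)_j \sum_{x\in\Z^d} a_{k,j}(x) \left[(x+\alpha+\tilde\beta_{k,j})\cdot\theta\right]^2.
\]

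The main obstacle, and the only real departure from Proposition~\ref{strictconvexity}, is ruling out degeneracy of this quadratic form. If it vanished at some $\theta\neq 0$, positivity of $\tilde\nu(0)$ and $\tilde C(0)$ would force $(x+\alpha+\tilde\beta_{k,j})\cdot\theta = 0$ whenever $a_{k,j}(x) > 0$. Summing along an arbitrary path, as in Proposition~\ref{strictconvexity}, no longer gives an immediate contradiction because the $n$ copies of $\alpha\cdot\theta$ do not cancel. My plan is to handle this in two passes. First, summing the identity along a closed loop at $(0,k)$ --- which exists by strong irreducibility --- the $x$-increments telescope to $0$ and the $\tilde\beta$'s telescope to $\tilde\beta_{k,k} = 0$, leaving $n(\alpha\cdot\theta) = 0$ and hence $\alpha\cdot\theta = 0$. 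Second, once $\alpha\cdot\theta = 0$, summing the identity along a path $(0,k)\to(e_l,k)$ for each canonical basis vector $e_l$ yields $e_l\cdot\theta = 0$, hence $\theta = 0$, a contradiction. Once this loop step absorbs $\alpha$, the rest of the argument is identical to that of Proposition~\ref{strictconvexity}.
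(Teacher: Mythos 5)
Your proposal is correct and follows the same route as the paper, whose entire proof of this proposition is the remark that it is ``exactly the same'' as that of Proposition~\ref{strictconvexity}. Your closed-loop step (equivalently, comparing two paths of different lengths from $(0,k)$ to $(e_1,k)$, both available by strong irreducibility) is precisely the small supplement needed to absorb the $n$ copies of $\alpha\cdot\theta$ that a verbatim transcription would leave behind; with $\alpha\cdot\theta=0$ established, the degeneracy argument and the rest of the proof proceed exactly as in Proposition~\ref{strictconvexity}.
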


The proof is exactly the same.

\subsection{Asymptotic of characteristic functions}\label{SectionAsymptoticofcharacteristicfunctions}
We now introduce the characteristic function associated to our chain.
For $\xi \in \R^d$ and for $k,j\in \{1,...,N\}$ and $n\in \N$, one defines
$$\psi^{(n)}_{k,j}(\xi)=\sum_{x\in \Z^d}p^{(n)}_{k,j}(0,x)\mathrm{e}^{ix\cdot \xi}.$$
As announced, it will be easier in the following to deal with matrices, as in \cite{Woess1} (see paragraphs 6.A and 8.B).
Denote by $\psi(\xi)$ the matrix with entries $\psi^{(1)}_{k,j}(\xi)=:\psi_{k,j}(\xi)$.

In this section, we show a technical result which will lead to a central limit theorem in next section (see Theorem~\ref{TCL2}).
This technical result states convergence of the characteristic matrices $\psi^n(\frac{\xi}{\sqrt{n}})$ to Gaussian-like functions.
This is precisely Proposition~\ref{TCL}.
It will also allow us to show a local limit theorem (see Proposition~\ref{theorem1spitzer} below) which will be used to describe the Martin boundary.
The proofs of these will be based on Fourier analysis
and that is the reason for introducing the matrix $\psi(\xi)$ which is an analog of Fourier transforms.

\begin{lem}
The entries of the $n$th power $\psi^n(\xi)$ are $\psi^{(n)}_{k,j}(\xi)$.
\end{lem}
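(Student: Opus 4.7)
The plan is to argue by induction on $n$, mirroring the proof of the analogous lemma for $F(u)^n$ given earlier. The case $n=1$ holds by definition. For the inductive step, the key computation is for $n=2$: by the definition of matrix multiplication,
\[
(\psi^2(\xi))_{k,j} = \sum_{l=1}^N \psi_{k,l}(\xi)\psi_{l,j}(\xi) = \sum_{l=1}^N \sum_{x,y\in\Z^d} p_{k,l}(0,x)p_{l,j}(0,y)\,\mathrm{e}^{i(x+y)\cdot\xi}.
\]
Using $\Z^d$-invariance, $p_{l,j}(0,y)=p_{l,j}(x,x+y)$, and then performing the change of variables $z=x+y$ yields
\[
(\psi^2(\xi))_{k,j} = \sum_{z\in\Z^d}\Bigl(\sum_{l=1}^N\sum_{x\in\Z^d} p_{k,l}(0,x)p_{l,j}(x,z)\Bigr)\mathrm{e}^{iz\cdot\xi} = \sum_{z\in\Z^d} p^{(2)}_{k,j}(0,z)\,\mathrm{e}^{iz\cdot\xi} = \psi^{(2)}_{k,j}(\xi),
\]
which settles $n=2$.

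For the inductive step from $n$ to $n+1$, I would write $(\psi^{n+1}(\xi))_{k,j} = \sum_l (\psi^n(\xi))_{k,l}\psi_{l,j}(\xi)$, apply the induction hypothesis to the first factor, expand both sums using the definitions, and then apply exactly the same change-of-variable trick together with $\Z^d$-invariance to collapse the double sum over $\Z^d\times\Z^d$ into a single sum over $\Z^d$ of $p^{(n+1)}_{k,j}(0,\cdot)$ against $\mathrm{e}^{i\,\cdot\,\xi}$.

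There is no real obstacle here; the statement is the Fourier-transform counterpart of the earlier identity for $F(u)^n$, and the only point to watch is that one uses $\Z^d$-invariance of $p$ to rewrite $p_{l,j}(0,y)$ as $p_{l,j}(x,x+y)$ so that the $n$-step transition kernel appears after the change of variable. Since the series defining $\psi_{k,j}(\xi)$ are absolutely convergent as soon as $p$ has finite mass on each row (which is assumed), Fubini justifies all rearrangements of the sums, and the induction goes through without complication.
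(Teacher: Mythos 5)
Your proposal is correct and follows essentially the same route as the paper: verify the case $n=2$ by expanding $p^{(2)}_{k,j}$ (or equivalently the matrix product), using $\Z^d$-invariance and a change of variables, and then conclude by induction, exactly as in the analogous computation for $F(u)^n$. The added remark on absolute convergence and Fubini is a harmless (and reasonable) extra justification.
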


\begin{proof}
One has
\begin{align*}
    \psi^{(2)}_{k,j}(\xi)=\sum_{x\in \Z^d}p^{(2)}_{k,j}(0,x)\mathrm{e}^{ix\cdot \xi}&=\sum_{x\in \Z^d}\sum_{l\in \{1,...,N\}}\sum_{y\in \Z^d}p_{k,l}(0,y)p_{l,j}(y,x)\mathrm{e}^{i(x+y-y)\cdot \xi}\\
    &=\sum_{l\in \{1,...,N\}}\psi_{k,l}(\xi)\psi_{l,j}(\xi),
\end{align*}
which is the desired conclusion for $n=2$.
One concludes by induction.
\end{proof}

Let $\psi_{u}(\xi)$ be the characteristic matrix associated to the transition kernel $p_u$ and $\psi_{k,j;u}$ its entries.
By definition, 
$$\psi_{k,j;u}(\xi)=\sum_{x\in \Z^d}p_{k,j;u}(0,x)\mathrm{e}^{ix\cdot \xi}=\sum_{x\in \Z^d}p_{k,j}(0,x)\mathrm{e}^{u\cdot x}\mathrm{e}^{ix\cdot \xi}.$$

To prove our results below, it will be easier to center our chain.
Precisely, we define
$$\chi_{u}(\xi)=\psi_{u}(\xi)\mathrm{e}^{-i\nabla \lambda (u)\cdot\xi}.$$ 
This is the good choice of centering.
Indeed, interpreting the chain as a random walk in $\Z^d$ (that might jump from one level to another), we will see that this random walk goes in average in the direction of the spatial derivative of $\lambda$.
This is an analog of the law of large numbers (see Proposition~6.7 in \cite{Woess3}).

Let us now study the matrix $\chi_u$.
A direct calculation shows that $\chi_{u}(0)=F(u)$ and $\nabla_{\xi}\chi_{u}(0)=i\nabla F(u)-i\nabla \lambda (u)F(u)$.
The Hessian quadratic form at 0 is given by
$\nabla_{\xi}^2\chi_u(0)=-\nabla^2F(u)-(\nabla \lambda(u))^2F(u)+2\nabla \lambda (u)\nabla F(u)$, meaning that
\begin{align*}
[\nabla_{\xi}^2\chi_u(0)]_{k,j}(\theta)&=\sum_{x\in \Z^d}-(x\cdot \theta)^2p_{k,j;u}(0,x)-(\nabla \lambda(u)\cdot \theta)^2p_{k,j;u}(0,x)\\
&+2(x\cdot \theta)(\nabla \lambda (u)\cdot \theta)p_{k,j;u}(0,x)\\
&=-\sum_{x\in \Z^d}[(x-\nabla \lambda (u))\cdot \theta]^2p_{k,j}(0,x)\mathrm{e}^{u\cdot x}, \theta \in \R^d.
\end{align*}

The main result of this section is the following.
\begin{prop}\label{TCL}
With these notations, for every $\xi \in \R^d$ and for every $u\in H$, $[\chi_u(\frac{\xi}{\sqrt{n}})]^n$ converges to the matrix
$\mathrm{e}^{-\frac{1}{2}Q_u(\xi)}C(u)\cdot \nu(u)$
as $n$ tends to infinity, where $Q_u$ is a positive definite quadratic form.
Furthermore, the convergence is uniform in $\xi$ on compact sets of the form $\{\xi\in \R^d,\|\xi \|\leq A\}$.
It is also uniform in $u$ lying in a compact subset of $H$.
\end{prop}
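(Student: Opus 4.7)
The plan is to apply the spectral perturbation theorem (Theorem~\ref{theoremKato}) to the analytic family $\xi\mapsto \chi_u(\xi)$, viewed as a perturbation of $\chi_u(0)=F(u)$. Since $u\in H$ means $\lambda(u)=1$, the matrix $F(u)$ has a simple dominant eigenvalue equal to $1$, with spectral projector $\pi(u)=C(u)\cdot\nu(u)$. By continuity of the spectrum and by Proposition~\ref{propanalycity}, there is a neighborhood of $K\times\{0\}$ (for any compact $K\subset H$) on which $\chi_u(\xi)$ admits a simple dominant eigenvalue $\Lambda_u(\xi)$, an associated spectral projector $\Pi_u(\xi)$, and a remainder $R_u(\xi)$ satisfying
$$\chi_u(\xi)^n=\Lambda_u(\xi)^n\,\Pi_u(\xi)+R_u(\xi)^n,\qquad \|R_u(\xi)^n\|\leq C\widetilde{\lambda}^{\,n}$$
for some $\widetilde{\lambda}<1$, uniformly in $(u,\xi)$ on a small enough neighborhood. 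All three objects $\Lambda_u(\xi),\Pi_u(\xi),R_u(\xi)$ depend analytically on $(u,\xi)$.

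Substituting $\xi/\sqrt{n}$ for $\xi$, the remainder term $R_u(\xi/\sqrt n)^n$ vanishes uniformly, and $\Pi_u(\xi/\sqrt n)\to\Pi_u(0)=C(u)\cdot\nu(u)$ by continuity. The whole problem therefore reduces to computing the asymptotics of $\Lambda_u(\xi/\sqrt n)^n$. To that end, I will identify $\Lambda_u$ analytically. Observe that
$$\psi_{k,j;u}(\xi)=\sum_{x}p_{k,j}(0,x)\mathrm{e}^{(u+i\xi)\cdot x}=F_{k,j}(u+i\xi),$$
where $F$ is extended analytically to complex arguments (the series converges absolutely since $|\mathrm{e}^{(u+i\xi)\cdot x}|=\mathrm{e}^{u\cdot x}$ and $u\in\mathcal{F}$). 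By Proposition~\ref{propanalycity}, $\lambda$ admits an analytic continuation to a complex neighborhood of $u$, and this continuation is precisely the dominant eigenvalue of $F(u+i\xi)$ for $|\xi|$ small. Hence
$$\Lambda_u(\xi)=\lambda(u+i\xi)\,\mathrm{e}^{-i\nabla\lambda(u)\cdot\xi}.$$

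A direct Taylor expansion now gives $\Lambda_u(0)=\lambda(u)=1$, and the product rule applied to $g(\xi)=\lambda(u+i\xi)$ and $h(\xi)=\mathrm{e}^{-i\nabla\lambda(u)\cdot\xi}$ yields
$$\nabla_\xi\Lambda_u(0)=i\nabla\lambda(u)-i\nabla\lambda(u)=0,$$
$$\nabla^2_\xi\Lambda_u(0)=-\nabla^2\lambda(u)+2\nabla\lambda(u)^{\otimes 2}-\nabla\lambda(u)^{\otimes 2}=-\bigl(\nabla^2\lambda(u)-\nabla\lambda(u)^{\otimes 2}\bigr).$$
Setting $Q_u:=\nabla^2\lambda(u)-\nabla\lambda(u)^{\otimes 2}=\nabla^2\log\lambda(u)$ (using $\lambda(u)=1$), one gets $\Lambda_u(\xi)=1-\frac{1}{2}Q_u(\xi)+O(|\xi|^3)$, so
$$\Lambda_u(\xi/\sqrt n)^n=\bigl(1-\tfrac{1}{2n}Q_u(\xi)+O(n^{-3/2})\bigr)^n\longrightarrow \mathrm{e}^{-\frac{1}{2}Q_u(\xi)},$$
uniformly on compact sets in $\xi$ and in $u\in K$. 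Combining the three pieces yields the claimed limit.

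The one delicate point, which I expect to be the main obstacle, is proving that $Q_u$ is positive definite. Strict convexity of $\lambda$ from Proposition~\ref{strictconvexity} only shows that $\nabla^2\lambda(u)$ is positive definite, and it is not a priori obvious that this survives the subtraction of $\nabla\lambda(u)^{\otimes 2}$. To handle this I would pass to the $h$-transformed chain $\tilde{p}_{k,j}(0,x)=\frac{C(u)_j}{C(u)_k}p_{k,j;u}(0,x)$, which for $u\in H$ is a genuine, strongly irreducible, $\Z^d$-invariant Markov chain on $\Z^d\times\{1,\dots,N\}$. Its characteristic matrix is conjugate to $\psi_u(\xi)$ and its average horizontal displacement is $\nabla\lambda(u)$ (which is precisely why that is the correct centering). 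Then $Q_u$ is the asymptotic covariance matrix of the horizontal component of this Markov chain, and its non-degeneracy follows from the fact that the increments of the chain genuinely span $\R^d$, which is a consequence of the irreducibility of $p$ (the same spanning argument as in the last part of the proof of Proposition~\ref{strictconvexity}, where irreducibility is used to produce paths realizing every standard basis vector).
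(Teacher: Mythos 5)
Your overall architecture coincides with the paper's: the spectral decomposition $\chi_u(\xi)^n=\Lambda_u(\xi)^n\Pi_u(\xi)+R_u(\xi)^n$ from Theorem~\ref{theoremKato}, the uniformly exponentially small remainder, continuity of the projector giving $\Pi_u(\xi/\sqrt n)\to C(u)\cdot\nu(u)$, and the reduction to a second-order expansion of the dominant eigenvalue so that $\Lambda_u(\xi/\sqrt n)^n\to \mathrm{e}^{-\frac12 Q_u(\xi)}$ (this is exactly the content of Lemma~\ref{lemmaTCL} and Proposition~\ref{TCLvp}). Your identification $\Lambda_u(\xi)=\lambda(u+i\xi)\,\mathrm{e}^{-i\nabla\lambda(u)\cdot\xi}$, which yields $\nabla_\xi\Lambda_u(0)=0$ and the explicit formula $Q_u=\nabla^2\lambda(u)-\nabla\lambda(u)^{\otimes 2}=\nabla^2\log\lambda(u)$ by the product rule, is a legitimate streamlining of the paper's computation (the paper obtains $\nabla_\xi\lambda_u(0)=0$ from the eigenvector normalization, and the second-order term through the auxiliary matrix $\tilde F(i\xi)$); it only needs the one-line justification that the perturbed eigenvalue of $F(u+i\xi)$ is the analytic continuation of $\lambda$, since both are analytic and agree at real arguments near $u$.

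The gap is precisely where you flagged it: positive definiteness of $Q_u$. You reduce it to the claim that $Q_u$ is the asymptotic covariance of the horizontal component of the Doob transform $\tilde p_{k,j}(0,x)=\frac{C(u)_j}{C(u)_k}p_{k,j;u}(0,x)$, with non-degeneracy ``because the increments span $\R^d$.'' Two problems. First, the identification of $Q_u$ with that covariance is essentially the statement being proved (it is the variance appearing in Theorem~\ref{TCL2}); unless you establish it independently, e.g.\ by expanding $\log\mathds{E}\bigl[\mathrm{e}^{t\theta\cdot X_n}\bigr]=n\log\hat\lambda_u(t\theta)+O(1)$ from the same spectral decomposition and differentiating twice at $t=0$, the argument is circular. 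Second, for a Markov-modulated walk, spanning of the increments is not the right criterion: the asymptotic variance in direction $\theta$ vanishes if and only if $\theta\cdot x=\theta\cdot\nabla\lambda(u)+f(j)-f(k)$ on the support of the kernel, i.e.\ a drift constant plus a coboundary in the level variable, and increments can span $\R^d$ while such a relation holds. What actually rules this out is the telescoping-path argument in the proof of Proposition~\ref{strictconvexity} (the $\beta_{k,j}$ there are exactly a coboundary $f(j)-f(k)$), applied to paths reaching $(e_i,k)$ together with closed paths of the same length to eliminate the constant $\theta\cdot\nabla\lambda(u)$ --- this is where (strong) irreducibility enters. The paper bypasses the probabilistic interpretation altogether: it recenters the kernel by $-\nabla\lambda(u)$, which produces a chain on a translated lattice, and invokes Proposition~\ref{strictconvexity2} to get strict convexity of its dominant eigenvalue $\tilde\lambda$, whose Hessian at $0$ is exactly $Q_u$. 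Your route is completable along these lines, but as written the key inequality $Q_u>0$ is not established.
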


To prove this proposition, we will first study the behavior of the dominant eigenvalue.
If $\xi\in \R^d$ is small enough, then the eigenvalue of $\chi_u(\xi)$ which has maximal absolute value is simple and every other eigenvalue has smaller absolute value.
Indeed, it is the case for $\xi=0$, since $\chi_u(0)=F(u)$.
Moreover, the eigenvalues of a matrix $A$ are continuous in $A$, so it is the case for small enough $\xi$.
Let $\lambda_u(\xi)$ be this dominant eigenvalue.
By definition, $\lambda_u(0)=\lambda(u)$.

As for $\lambda$, we use the fact that $\lambda_u(\xi)$ is simple to state that it is an analytic function in $\xi$, for small enough $\xi$.
Similarly, (right and left) eigenvectors associated to $\lambda_u(\xi)$ are analytic.
Let $C(\chi_u(\xi))=C_u(\xi)$ (on the right) and $\nu(\chi_u(\xi))=\nu_u(\xi)$ (on the left) be these eigenvectors.
We choose a normalization by declaring that
$\nu_u(0)\cdot C_u(\xi)=1$ and $\nu_u(\xi)\cdot C_u(\xi)=1$ so that $\lambda_u(\xi)=\nu_u(\xi)\chi_u(\xi)C_u(\xi)$.

\begin{lem}\label{lemmaTCL}
With these notations, if $u\in H$, $\lambda_u(\xi)=1-\frac{1}{2}Q_u(\xi)+o(\xi^2)$, where $Q_u$ is a positive definite quadratic form.
\end{lem}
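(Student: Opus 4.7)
The plan is to exploit the identity $\psi_{u}(\xi)=F(u+i\xi)$ in order to transport the perturbation from the matrix level to the scalar eigenvalue level, and then Taylor expand. Indeed, directly from the definitions,
\[
\psi_{k,j;u}(\xi)=\sum_{x\in\Z^d}p_{k,j}(0,x)e^{(u+i\xi)\cdot x}=F_{k,j}(u+i\xi),
\]
where $F$ is understood as its analytic continuation to complex arguments (the series converges absolutely since $u\in\mathcal{F}$). Hence $\chi_u(\xi)=F(u+i\xi)\,e^{-i\nabla\lambda(u)\cdot\xi}$. Since $\lambda(u)$ is a simple eigenvalue of $F(u)=\chi_u(0)$, simplicity and dominance persist on a complex neighborhood of $u$ by continuity of the spectrum, and the unique eigenvalue of $F(u+i\xi)$ that extends $\lambda(u)$ depends analytically on $\xi$ for $\xi$ small. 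Multiplication by the scalar $e^{-i\nabla\lambda(u)\cdot\xi}$ preserves dominance, so
\[
\lambda_u(\xi)=\lambda(u+i\xi)\,e^{-i\nabla\lambda(u)\cdot\xi}
\]
for $\xi$ in a neighborhood of $0$.

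Next, I would Taylor expand both factors at $\xi=0$, using $\lambda(u)=1$ because $u\in H$:
\[
\lambda(u+i\xi)=1+i\nabla\lambda(u)\cdot\xi-\tfrac{1}{2}\nabla^2\lambda(u)(\xi)+o(\|\xi\|^2),
\]
\[
e^{-i\nabla\lambda(u)\cdot\xi}=1-i\nabla\lambda(u)\cdot\xi-\tfrac{1}{2}(\nabla\lambda(u)\cdot\xi)^2+o(\|\xi\|^2).
\]
Multiplying and collecting terms, the first-order imaginary contributions cancel and one obtains
\[
\lambda_u(\xi)=1-\tfrac{1}{2}Q_u(\xi)+o(\|\xi\|^2),\qquad Q_u(\xi):=\nabla^2\lambda(u)(\xi)-(\nabla\lambda(u)\cdot\xi)^2.
\]
One recognizes $Q_u$ as the Hessian at $u$ of $\log\lambda$, which is the quantity whose positive definiteness needs to be established.

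The main obstacle is precisely this positive definiteness: Proposition~\ref{strictconvexity} gives strict convexity of $\lambda$, but subtracting the rank-one term $(\nabla\lambda(u)\cdot\xi)^2$ is not automatically harmless. My plan is to reduce to the stochastic case by a Doob transform. Setting $D_u=\mathrm{diag}(C(u))$ and $\tilde F(v):=D_u^{-1}F(u+v)D_u$, the matrix $\tilde F(v)$ is conjugate to $F(u+v)$, so its dominant eigenvalue is again $\lambda(u+v)$; moreover $\tilde F(0)$ applied to the all-ones vector gives the all-ones vector, since $F(u)C(u)=\lambda(u)C(u)=C(u)$. Thus $\tilde F(v)=\sum_x \tilde p_{k,j}(0,x)e^{v\cdot x}$ is the Laplace transform of the strongly irreducible $\Z^d$-invariant genuine Markov kernel $\tilde p_{k,j}(0,x)=p_{k,j}(0,x)e^{u\cdot x}C(u)_j/C(u)_k$. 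The function $\log\tilde\lambda(v)=\log\lambda(u+v)$ has the same Hessian at $v=0$ as $\log\lambda$ at $u$, namely $Q_u$.

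Finally, to conclude that $Q_u$ is positive definite on this reduced Markov chain, I would adapt the combinatorial argument of Proposition~\ref{strictconvexity}. Assume $Q_u(\theta)=0$ for some $\theta\neq 0$; interpreting $Q_u$ as the covariance (with respect to an invariant measure) of the projection onto $\theta$ of the one-step increment of $\tilde p$, this forces every increment $x$ with $\tilde p_{k,j}(0,x)>0$ to satisfy $(x-\nabla\lambda(u))\cdot\theta\in E^{\perp}$-type relations, just as in the proof of Proposition~\ref{strictconvexity}. Strong irreducibility of $\tilde p$ then produces a path reaching every basis vector $e_i$, forcing $\theta\perp e_i$ for all $i$, contradicting $\theta\neq 0$. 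This gives $Q_u>0$ and completes the proof; uniformity in $u$ on compact subsets of $H$ comes for free, since all the analytic objects $F$, $\lambda$, $\nabla\lambda$, $\nabla^2\lambda$ depend continuously on $u$.
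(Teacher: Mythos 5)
Your proposal is correct and follows essentially the paper's own route: the factorization $\lambda_u(\xi)=\lambda(u+i\xi)\,\mathrm{e}^{-i\nabla\lambda(u)\cdot\xi}$ is the same device the paper uses when it writes $\chi_u(\xi)=\tilde{F}(i\xi)$, hence $\lambda_u(\xi)=\tilde{\lambda}(i\xi)$, and your positive-definiteness step (Doob transform by $\mathrm{diag}(C(u))$ plus the combinatorial degeneracy argument under strong irreducibility) is exactly the content of Proposition~\ref{strictconvexity2}, which the paper invokes for the kernel with shifted increments $x-\nabla\lambda(u)$; your first-order cancellation by expanding the product, rather than differentiating $\nu_u(\xi)\chi_u(\xi)C_u(\xi)$ as the paper does, is only a cosmetic difference. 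The one loose phrase --- calling $Q_u$ the covariance of the one-step increment --- is harmless because you defer to the proof of Proposition~\ref{strictconvexity}: $Q_u$ is the asymptotic covariance of the Markov-additive functional, so its vanishing in direction $\theta$ forces the centered increments to satisfy a coboundary relation (the $\beta_{k,j}$ terms), not to vanish individually, and one then concludes by the telescoping argument along two paths of a common length $n$, which strong irreducibility provides.
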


\begin{proof}
This is just Taylor-Young theorem applied to $\lambda_u$ at $\xi=0$.
One has
$$\lambda_u=\lambda_u(0)+\nabla_{\xi} \lambda_u(0)\cdot \xi+\frac{1}{2}\nabla_{\xi}^2\lambda_u(0)(\xi)+o(\xi^2).$$
Since $u\in H$, $\lambda_u(0)=1$.
For the other terms, first write $\lambda_u(\xi)=\nu_u(\xi)\chi_u(\xi)C_u(\xi)$.
Then,
\begin{align*}\nabla_{\xi} \lambda_u(0)\cdot \xi=\nu_u(0)(\nabla_{\xi}\chi_u(0)\cdot \xi) C_u(0)&+(\nabla_{\xi}\nu_u(0)\cdot \xi)\chi_u(0)C_u(0)\\
&+\nu_u(0)\chi_u(0)(\nabla_{\xi}C_u(0)\cdot \xi).
\end{align*}
Replacing $\chi_u(0)$ with $F(u)$ and $\nabla_{\xi}\chi_u(0)$ with $i(\nabla F(u)-\nabla \lambda(u)F(u))$, as well as $\nu_u(0)$ and $C_u(0)$ with $\nu(u)$ and $C(u)$, one gets
\begin{align*}
    \nabla_{\xi} \lambda_u(0)&=i[\nu(u)\nabla F(u) C(u)-\nabla \lambda(u)\nu(u)F(u)C(u)]\\
    &+\lambda(u)[(\nabla_{\xi}\nu_u(0)C(u)+\nu(u)\nabla_{\xi}C_u(0)].
\end{align*}
Recall that $\nabla \lambda(u)=\nu(u)\nabla F(u)C(u)$ (see Equation~(\ref{equationlambdaprime})), so that the first term of
$\nabla_{\xi} \lambda_u(0)$ is zero.
Besides, differentiating $\nu_u(\xi)\cdot C_u(\xi)=1$ in $\xi$, one has
$$\nabla_{\xi}\nu_u(0)C(u)+\nu(u)\nabla_{\xi}C_u(0)=0,$$ so the second term is also zero.
Thus, $\nabla_{\xi} \lambda_u(0)=0$.

Now, for the second order term,
define $Q_u=-\nabla^2_{\xi}\lambda_u(0)$.
Fix $u\in H$ and
let $\tilde{F}(v)$ be the matrix with entries
$$\tilde{F}_{k,j}(v)=\sum_{x\in \Z^d}p_{k,j}(0,x)\mathrm{e}^{u\cdot x}\mathrm{e}^{v\cdot (x-\nabla \lambda(u))}.$$
The matrix $\tilde{F}(v)$ is strongly irreducible and one can apply Proposition~\ref{strictconvexity2}.
Denote by $\tilde{\lambda}(v)$ its dominant eigenvalue, then $\tilde{\lambda}(v)$ is strictly convex.
By definition, one has $\chi_u(\xi)=\tilde{F}(i\xi)$ so that if $\xi$ is small enough, then
$\lambda_u(\xi)=\tilde{\lambda}(i\xi)$.
Since $\lambda_u$ and $\tilde{\lambda}$ are analytic, one deduces that $\nabla^2_{\xi}\lambda_u(0)=-\nabla^2\tilde{\lambda}(0)$,
which ensures that $\nabla^2_{\xi}\lambda_u(0)$ is negative definite.
In other words, $Q_u$ is positive definite.
\end{proof}

We deduce from this lemma the following.
\begin{prop}\label{TCLvp}
With the same notations, for every $\xi \in \R^d$, for every $u\in H$, $[\lambda_u(\frac{\xi}{\sqrt{n}})]^n$ converges to $\mathrm{e}^{-\frac{1}{2}Q_u(\xi)}$.
The convergence is uniform in $\xi$ on compact sets of the form $\{\xi\in \R^d,\|\xi \|\leq A\}$.
It is also uniform in $u$ lying in a compact subset of $H$.
Furthermore, for large enough $n$, uniformly in $u$ lying in a compact subset $H_0$ of $H$ and in $\xi \in \{\xi\in \R^d,\|\xi\|\leq a\}$ (where $a$ is small enough and depends on $H_0$),  $|\lambda_u(\xi)|^n\leq \mathrm{e}^{-n\frac{1}{4}Q_u(\xi)}$.
\end{prop}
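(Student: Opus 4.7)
The plan is to read off everything from the Taylor expansion $\lambda_u(\xi) = 1 - \tfrac12 Q_u(\xi) + r_u(\xi)$ with $r_u(\xi) = o(\|\xi\|^2)$ furnished by Lemma~\ref{lemmaTCL}. The main technical point is to upgrade this pointwise expansion to a uniform one in $u$ on a compact subset $H_0 \subset H$. For this I would use that $\chi_u(\xi)$ depends jointly analytically on $(u,\xi)$ and that the dominant eigenvalue $\lambda(u)$ of $\chi_u(0) = F(u)$ is simple, so Proposition~\ref{propanalycity} produces an analytic extension $(u,\xi) \mapsto \lambda_u(\xi)$ on a neighborhood of $H_0 \times \{0\}$. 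Compactness of $H_0$ then yields constants $a_0, C > 0$ with $|r_u(\xi)| \leq C\|\xi\|^3$ for all $u \in H_0$ and $\|\xi\| \leq a_0$, together with $c > 0$ such that $Q_u(\xi) \geq c\|\xi\|^2$ on $H_0$ (here I use that $Q_u = -\nabla_\xi^2 \lambda_u(0)$ depends continuously on $u$ and is positive definite pointwise).

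For the convergence, fix $A > 0$ and take $n$ large enough that $A/\sqrt{n} \leq a_0$. Then uniformly in $u \in H_0$ and $\|\xi\| \leq A$,
\begin{equation*}
\lambda_u(\xi/\sqrt{n}) = 1 - \tfrac{1}{2n}Q_u(\xi) + O(n^{-3/2}),
\end{equation*}
and this quantity stays in a fixed small neighborhood of $1$. Taking the principal branch of the complex logarithm and expanding $\log(1+z) = z + O(z^2)$ gives
\begin{equation*}
n \log \lambda_u(\xi/\sqrt{n}) = -\tfrac12 Q_u(\xi) + O(n^{-1/2}),
\end{equation*}
uniformly on $H_0 \times \{\|\xi\| \leq A\}$. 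Exponentiating finishes the first statement.

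For the upper bound, I would shrink $a \leq a_0$ so that $Ca \leq c/8$ and $\tfrac12 Q_u(\xi) \leq 1$ whenever $\|\xi\| \leq a$ and $u \in H_0$. For such $(u,\xi)$, the bound $|r_u(\xi)| \leq C\|\xi\|^3 \leq (Ca/c) Q_u(\xi) \leq \tfrac18 Q_u(\xi)$ combines with $|1 - \tfrac12 Q_u(\xi)| = 1 - \tfrac12 Q_u(\xi)$ to give
\begin{equation*}
|\lambda_u(\xi)| \leq 1 - \tfrac12 Q_u(\xi) + \tfrac18 Q_u(\xi) = 1 - \tfrac38 Q_u(\xi) \leq e^{-Q_u(\xi)/4},
\end{equation*}
using $1 - x \leq e^{-x}$. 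Raising to the $n$-th power yields the second claim. The only step requiring genuine care is establishing the uniform remainder estimate, which is why the joint analyticity of $\lambda_u(\xi)$ in $(u,\xi)$ is the real engine of the proof.
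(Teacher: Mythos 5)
Your proof is correct and follows essentially the same route as the paper: both rest on the Taylor expansion of Lemma~\ref{lemmaTCL}, a uniform lower bound $Q_u(\xi)\geq c\|\xi\|^2$ for $u$ in a compact $H_0$, a bound of the form $|\lambda_u(\xi)|\leq 1-\mathrm{const}\cdot Q_u(\xi)\leq \mathrm{e}^{-\frac{1}{4}Q_u(\xi)}$, and a uniformly chosen branch of the complex logarithm to pass to the $n$th power. The only (harmless) difference is how uniformity of the remainder is secured: you invoke joint analyticity of $(u,\xi)\mapsto\lambda_u(\xi)$ to get a cubic bound $|r_u(\xi)|\leq C\|\xi\|^3$, whereas the paper splits $\lambda_u(\xi)$ into real and imaginary parts and uses that the $o(\xi^2)$ term depends continuously on $u$.
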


\begin{proof}
For every $u\in H$, $Q_u$ is a positive definite quadratic form.
In particular, there exists $\alpha_u>0$ such that for every $\xi\in \mathcal{Q}$, $Q_u(\xi)\geq \alpha_u\|\xi\|^2$.
Let $H_0$ be a compact subset of $H$.
For $u\in H_0$, one can choose $\alpha$ independent of $u$: for every $u\in H_0$, for every $\xi\in \mathcal{Q}$, $Q_u(\xi)\geq \alpha\|\xi\|^2$.
One deduces from Lemma~\ref{lemmaTCL} that
$$\frac{1-\lambda_u(\xi)}{Q_u(\xi)}\rightarrow 1/2,\xi \rightarrow 0.$$
This convergence is uniform in $u\in H_0$, for the lower bound $Q_u(\xi)\geq \alpha\|\xi\|^2$ is uniform in $u\in H_0$ and the term $o(\xi^2)$ in the Taylor series depends continuously on $u$.

Using real and imaginary parts $\Re$ and $\Im$, one gets
$$\frac{1-\Re(\lambda_u(\xi))}{Q_u(\xi)}\rightarrow 1/2,\xi \rightarrow 0, \text{ and }
\frac{\Im(\lambda_u(\xi))}{Q_u(\xi)}\rightarrow 0,\xi \rightarrow 0.$$
Since these limits are uniform in $u\in H_0$, one can choose small enough $a$, not depending on $u$, such that if $\|\xi\|\leq a$ and $u\in H_0$, then
$$\frac{1-\Re(\lambda_u(\xi))}{Q_u(\xi)}\geq\frac{1}{3}, \text{ and }
\frac{|\Im(\lambda_u(\xi))|}{Q_u(\xi)}\leq \frac{1}{12}.$$
One deduces that $0\leq \Re(\lambda(\xi))\leq 1-\frac{1}{3}Q_u(\xi)$ and that $|\Im(\lambda_u(\xi))|\leq \frac{1}{12}Q_u(\xi)$.
Combining those, one gets $|\lambda_u(\xi)|\leq 1-\frac{1}{4}Q_u(\xi)\leq \mathrm{e}^{-\frac{1}{4}Q_u(\xi)}$.
Thus, $|\lambda_u(\xi)|^n\leq~\mathrm{e}^{-n\frac{1}{4}Q_u(\xi)}$.

Finally, for large enough $n$, not depending on $u\in H_0$, and not depending on $\xi\in \{\xi\in \R^d,\|\xi \|\leq A\}$,
$\lambda_u(\frac{\xi}{\sqrt{n}})$ is well defined and stays in a ball centered at 1 with radius $r<1$.
In particular, one can choose uniformly in $u$ and in $\xi$ a complex logarithm determination and apply it to $\lambda_u(\frac{\xi}{\sqrt{n}})$.
Denote by $\mathrm{Log}$ such a logarithm.
Since $\lambda_u(\xi)=1-\frac{1}{2}Q_u(\xi)+o(\xi^2)$ with the $o(\xi^2)$ depending continuously on $u$, one has
$\lambda_u(\frac{\xi}{\sqrt{n}})=1-\frac{1}{2}\frac{1}{n}Q_u(\xi)+o(\frac{1}{n})$,
with the $o(\frac{1}{n})$ being uniform in $u$ and $\xi$.
In particular, $\mathrm{Log}([\lambda_u(\frac{\xi}{\sqrt{n}})]^n)$ uniformly converges to $-\frac{1}{2}Q_u(\xi)$ and so $[\lambda_u(\frac{\xi}{\sqrt{n}})]^n$ uniformly converges to
$\mathrm{e}^{-\frac{1}{2}Q_u(\xi)}$.
\end{proof}

We can now prove Proposition~\ref{TCL}
\begin{proof}
Fix a compact set $\{\xi\in \R^d,\|\xi \|\leq A\}$ and a compact subset $H_0$ of $H$.
For large enough $n$, independently of $u\in H_0$,
$\lambda_u(\frac{\xi}{\sqrt{n}})$ is well defined.
Denote by $\pi_u(\frac{\xi}{\sqrt{n}})$ the spectral projection on the associated eigenspace.
With our previous notations, $\pi_u(\frac{\xi}{\sqrt{n}})=C(\frac{\xi}{\sqrt{n}})\cdot \nu(\frac{\xi}{\sqrt{n}})$.
Use Theorem~\ref{theoremKato}
to write
$$\left [\chi_u\left (\frac{\xi}{\sqrt{n}}\right )\right ]^n=\left [\lambda_u\left (\frac{\xi}{\sqrt{n}}\right )\right ]^n\pi_u\left (\frac{\xi}{\sqrt{n}}\right )+\left [R_u\left (\frac{\xi}{\sqrt{n}}\right )\right ]^n,$$
where $R_u^n$ is a remainder whose norm is bounded, uniformly in $\xi$, by some $C\tilde{\lambda}_u^n$, with $0<\tilde{\lambda}_u<\lambda_u(0)$.
For $u\in H$, by definition, $\lambda_u(0)=1$.
Thus, since $H_0$ is compact, there exists $\tilde{\lambda}<1$, independent of $u\in H_0$, such that the norm of $R_u^n$ is bounded by $C\tilde{\lambda}^n$.
The projection $\pi$ is continuous with respect to $u$ and with respect to $\xi$, so that $\pi_u(\frac{\xi}{\sqrt{n}})$ uniformly converges to $\pi_u(0)=C(u)\cdot \nu(u)$.
Using Proposition~\ref{TCLvp}, one gets that $\left [\chi_u\left (\frac{\xi}{\sqrt{n}}\right )\right ]^n$ converges to $\mathrm{e}^{-\frac{1}{2}Q_u(\xi)}C(u)\cdot \nu(u)$.
\end{proof}

Proposition~\ref{TCL} is the result we will really use in the following.
We will also deduce from it a central limit theorem in Section~\ref{SectionMarkovchainsonathickenedlattice}.

\subsection{Martin compactification}\label{SectionMartincompactification}
The goal of this section is to fully describe the Martin boundary of our transition kernel on $\Z^d\times \{1,...,N\}$.
We will adapt the proof of P.~Ney and F.~Spitzer, given in \cite{NeySpitzer}.
There will be several steps.
First, we find an asymptotic of $p_{k,j;u}^{(n)}(0,x)$ when $n$ goes to infinity.
We then deduce an asymptotic of the Green function $G_{k,j;u}(x,y)$ when $y$ tends to infinity.
Finally, we show that $G((x,k),(y,j))/G((x_0,k_0),(y,j))$ converges when $y$ tends to infinity and converges in direction, i.e. $\frac{y}{\|y\|}$ converges to some point on the sphere.
Besides, the limit is independent of $j$ and is continuous with respect to every variable.

Rather than working with each entry $p_{k,j;u}^{(n)}(0,x)$, we will work with matrices, as is suggested by Proposition~\ref{TCL}.
Since the quadratic form $Q_u$ is positive definite, the corresponding symmetric matrix $\tilde{Q}_u$ is invertible.
Let $\tilde{\Sigma}_u$ be its inverse. It is again a symmetric matrix and we can define the associated quadratic form $\Sigma_u$.
Denote by $|Q_u|$ and $|\Sigma_u|$ the corresponding determinants.
Recall that $P_u(0,x)$ is the matrix with entries $p_{k,j}(0,x)\mathrm{e}^{u\cdot x}$ and define $P^n_u(0,x)$ as the matrix with entries $p^{(n)}_{k,j}(0,x)\mathrm{e}^{u\cdot x}$.
Recall that $\mathcal{F}$ is the interior of the set of $u\in \R^d$ such that every entry of the matrix $F(u)$ is finite and recall that
$$H=\{u\in \mathcal{F},\lambda(u)=1\}.$$

\begin{prop}\label{theorem1spitzer}
Let $p$ be a strongly irreducible $\Z^d$-invariant transition kernel on $\Z^d\times \{1,...,N\}$.
With the same notations as before, one defines, for $u\in H$ and $x\in \Z^d$,
$$A_n(x,u)=(2\pi n)^{\frac{d}{2}}P_u^{(n)}(0,x)-|Q_u|^{-\frac{1}{2}}\mathrm{e}^{-\frac{1}{2n}\Sigma_u(x-n\nabla \lambda (u))}C(u)\cdot \nu(u).$$
Then, for every $x\in \Z^d$ and every $u\in H$, $A_n(x,u)$ converges to 0 when $n$ tends to infinity.
Furthermore, the convergence is uniform in $x\in \Z^d$ and in $u$ lying in a compact subset of $H$.
\end{prop}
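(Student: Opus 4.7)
The approach is Fourier inversion on the torus $[-\pi,\pi]^d$, in the matrix-valued setting. Since $u \in H \subset \mathcal{F}$ the sums defining $\psi_u$ converge absolutely, and Fourier inversion gives
\begin{equation*}
P_u^n(0,x) \;=\; \frac{1}{(2\pi)^d}\int_{[-\pi,\pi]^d}\psi_u^n(\xi)\, e^{-ix\cdot\xi}\, d\xi.
\end{equation*}
Using the centering identity $\psi_u(\xi) = \chi_u(\xi)\, e^{i\nabla\lambda(u)\cdot\xi}$, setting $y = x - n\nabla\lambda(u)$ and rescaling $\eta = \xi\sqrt{n}$, one obtains
\begin{equation*}
(2\pi n)^{d/2} P_u^n(0,x) \;=\; (2\pi)^{-d/2}\int_{\|\eta\|_\infty \le \pi\sqrt{n}} \chi_u^n(\eta/\sqrt{n})\, e^{-iy\cdot\eta/\sqrt{n}}\, d\eta,
\end{equation*}
while the classical Gaussian identity gives
\begin{equation*}
|Q_u|^{-1/2} e^{-\Sigma_u(y)/(2n)}\,C(u)\cdot\nu(u) \;=\; (2\pi)^{-d/2}\int_{\R^d} e^{-Q_u(\eta)/2}\,C(u)\cdot\nu(u)\, e^{-iy\cdot\eta/\sqrt{n}}\, d\eta.
\end{equation*}
Thus $A_n(x,u)$ is the difference of the two integrals; since $|e^{-iy\cdot\eta/\sqrt{n}}|=1$, any pointwise integrand bound will automatically be uniform in $y$, hence in $x$.

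I would split the integration domain into three zones, using the radius $a > 0$ of Proposition~\ref{TCLvp} and an auxiliary large $A > 0$. On the central ball $\{\|\eta\| \le A\}$, Proposition~\ref{TCL} yields uniform convergence of $\chi_u^n(\eta/\sqrt{n})$ to $e^{-Q_u(\eta)/2}\,C(u)\cdot\nu(u)$ in $\eta$ on this compact set and in $u$ on a fixed compact $H_0 \subset H$, so the contribution of this zone to $A_n(x,u)$ tends to $0$ uniformly. On the annulus $\{A < \|\eta\| \le a\sqrt{n}\}$, the spectral decomposition $\chi_u^n = \lambda_u^n\pi_u + R_u^n$ from Theorem~\ref{theoremKato}, combined with the pointwise bound $|\lambda_u(\eta/\sqrt{n})|^n \le e^{-Q_u(\eta)/4}$ from Proposition~\ref{TCLvp}, majorizes the principal term by $M\int_{\|\eta\| > A} e^{-Q_u(\eta)/4}\, d\eta$, which is arbitrarily small for $A$ large; the remainder contributes $O(\tilde\lambda^n\, n^{d/2})$ with $\tilde\lambda < 1$; and the Gaussian tail of the target integral is handled in the same way.

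The main obstacle is the outer zone $\{\|\xi\| > a\}$ in the torus, where Proposition~\ref{TCLvp} no longer applies. Here I would prove the following aperiodicity lemma: for every $\xi \in [-\pi,\pi]^d\setminus\{0\}$, the spectral radius of $\psi_u(\xi)$ is strictly less than $\lambda(u) = 1$. The argument is Perron--Frobenius: the matrix $\psi_u(\xi)$ is dominated entrywise in modulus by $F(u)$, so by Theorem~\ref{PerronFrob1} together with a Collatz--Wielandt comparison, an equality of spectral radii would force the phases $e^{ix\cdot\xi}$, for $x$ ranging over $\mathrm{supp}\, p_{k,j}(0,\cdot)$, to be coherent across all $k,j$. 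The strong-irreducibility consequence noted after Definition~\ref{Defirred} --- for each $x,y\in \Z^d$ and each $k,j$ there exists $n$ with $p_{k,j}^{(n)}(0,x) > 0$ and $p_{k,j}^{(n)}(0,y) > 0$ simultaneously --- then propagates the coherence to force $\xi\cdot(x-y) \in 2\pi\Z$ for arbitrary $x,y \in \Z^d$, hence $\xi \in 2\pi\Z^d$, contradicting $0 < \|\xi\| \le \pi\sqrt{d}$. By compactness of $\{\|\xi\| \ge a\}\cap [-\pi,\pi]^d$ and continuity of the dominant eigenvalue in $(u,\xi)$ on $H_0\times\{\|\xi\|\ge a\}\cap[-\pi,\pi]^d$, there exists $\rho<1$ (depending only on $H_0$ and $a$) such that $\|\psi_u^n(\xi)\| \le C\rho^n$ uniformly, so this zone contributes $O(n^{d/2}\rho^n)$, again exponentially small. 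Assembling the three estimates yields $A_n(x,u) \to 0$ uniformly in $x \in \Z^d$ and in $u$ on compact subsets of $H$.
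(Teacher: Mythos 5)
Your plan is essentially the paper's own proof: Fourier inversion with the same decomposition (exact Gaussian identity, central ball handled by Proposition~\ref{TCL}, Gaussian tail, annulus handled by the Kato decomposition plus the bound $|\lambda_u(\xi)|^n\leq \mathrm{e}^{-nQ_u(\xi)/4}$ from Proposition~\ref{TCLvp}, and the outer torus zone), with uniformity in $x$ obtained exactly as you say by taking absolute values of the integrands. Your aperiodicity lemma for the outer zone, proved by comparing $\psi_u(\xi)$ (equivalently $\chi_u(\xi)$) entrywise with $F(u)$ via Perron--Frobenius and then using the strong-irreducibility consequence noted after Definition~\ref{Defirred} to rule out phase coherence, is precisely the argument the paper uses to bound $I_4$, so the proposal is correct and follows the same route.
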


\begin{proof}
Fix a compact subset $H_0$ of $H$.
Since $\psi_{k,j;u}$ is the characteristic function associated to $p_{k,j;u}$,
the Fourier inversion formula gives
$$p_{k,j;u}^{(n)}(0,x)=\frac{1}{(2\pi)^d}\int_{\mathbf{C}}\psi_{k,j;u}^{(n)}(\xi)\mathrm{e}^{-i\xi\cdot x}\mathrm{d}\xi,$$
where $\mathbf{C}$ is the unit cube $\{\xi\in \R^d,\xi=(\xi_1,...,\xi_d),|\xi_j|\leq \pi,j=1,...,d\}$.
In the following, if $T$ is a matrix whose entries are functions $t_{k,j}$, we will denote by
$\displaystyle \int T$ the matrix whose entries are $\displaystyle \int t_{k,j}$.
With this notation, one thus has
$$P_u^{(n)}(0,x)=\frac{1}{(2\pi)^d}\int_{\mathbf{C}}\psi_u^{(n)}(\xi)\mathrm{e}^{-i\xi \cdot x}\mathrm{d}\xi=\frac{1}{(2\pi)^d}\frac{1}{n^{\frac{d}{2}}}\int_{\sqrt{n}\mathbf{C}} \psi_u^{(n)}\left (\frac{\xi}{\sqrt{n}}\right )\mathrm{e}^{-i\frac{\xi\cdot x}{\sqrt{n}}}\mathrm{d}\xi.$$
Replacing $\psi_u$ with the centered characteristic matrix $\chi_u$, one gets
$$(2\pi n)^{\frac{d}{2}}P_u^{(n)}(0,x)=\frac{1}{(2\pi)^{\frac{d}{2}}}\int_{\sqrt{n}\mathbf{C}} \left [\chi_u\left (\frac{\xi}{\sqrt{n}}\right )\right ]^n\mathrm{e}^{-i\frac{\xi\cdot (x-n\nabla\lambda(u))}{\sqrt{n}}}\mathrm{d}\xi.$$

As in \cite{NeySpitzer}, we cut this integral in five parts.
Let $A\geq 0$ be some non-negative real number and $0<\alpha<1$ another real number.
We write
$$(2\pi n)^{\frac{d}{2}}P_u^{(n)}(0,x)=I_0(n)+I_1(n,A)+I_2(n,A)+I_3(n,A,\alpha)+I_4(n,\alpha),$$
with
$$I_0(n)=\frac{1}{(2\pi)^{\frac{d}{2}}}\int_{\R^d}\mathrm{e}^{-\frac{1}{2}Q_u(\xi)}C(u)\cdot \nu(u)\mathrm{e}^{-i\frac{\xi\cdot (x-n\nabla\lambda(u))}{\sqrt{n}}}\mathrm{d}\xi,$$
$$I_1(n,A)=\frac{1}{(2\pi)^{\frac{d}{2}}}\int_{\|\xi\|\leq A}\left ( \left [\chi_u\left (\frac{\xi}{\sqrt{n}}\right )\right ]^n-\mathrm{e}^{-\frac{1}{2}Q_u(\xi)}C(u)\cdot \nu(u) \right )\mathrm{e}^{-i\frac{\xi\cdot (x-n\nabla\lambda(u))}{\sqrt{n}}}\mathrm{d}\xi,$$
$$I_2(n,A)=\frac{-1}{(2\pi)^{\frac{d}{2}}}\int_{\|\xi\|>A}\mathrm{e}^{-\frac{1}{2}Q_u(\xi)}C(u)\cdot \nu(u)\mathrm{e}^{-i\frac{\xi\cdot (x-n\nabla\lambda(u))}{\sqrt{n}}}\mathrm{d}\xi,$$
$$I_3(n,A,\alpha)=\frac{1}{(2\pi)^{\frac{d}{2}}}\int_{A< \|\xi\|\leq \sqrt{n}\alpha} \left [\chi_u\left (\frac{\xi}{\sqrt{n}}\right )\right ]^n\mathrm{e}^{-i\frac{\xi\cdot (x-n\nabla\lambda(u))}{\sqrt{n}}}\mathrm{d}\xi,$$
$$I_4(n,\alpha)=\frac{1}{(2\pi)^{\frac{d}{2}}}\int_{\sqrt{n}\alpha<\|\xi\|,\xi\in \sqrt{n}\mathbf{C}} \left [\chi_u\left (\frac{\xi}{\sqrt{n}}\right )\right ]^n\mathrm{e}^{-i\frac{\xi\cdot (x-n\nabla\lambda(u))}{\sqrt{n}}}\mathrm{d}\xi.$$
A direct calculation shows that
$$I_0(n)=|Q_u|^{-\frac{1}{2}}\mathrm{e}^{-\frac{1}{2n}\Sigma_u(x-n\nabla \lambda (u))}C(u)\cdot \nu(u).$$
What is left to do is showing that the integrals $I_1,I_2,I_3$ and $I_4$ can be bounded by some arbitrary $\epsilon>0$, uniformly in $x\in \Z^d$ and in $u\in H_0$.
Actually, we will prove the same thing but replacing integrands with their absolute value, so that uniformity with respect to $x$ is obvious.
Precisely, for fixed $\epsilon>0$, we will choose numbers $A$ and $\alpha$
so that each integral is smaller than $\epsilon$ (with absolute values) and for large enough $n$, say $n\geq n_0$.
It will then suffice to choose these numbers $A$, $\alpha$ and $n_0$ independently of $u\in H_0$ to conclude.

We will work on integrals $I_1,I_2,I_3$ and $I_4$ in this order, but the choice of $A$, $\alpha$ and $n_0$ will finally be as follows.
One chooses $\alpha$ small enough and $A$ large enough to bound $|I_3|$, then $A$ large enough to bound $|I_2|$, then $n_0$ large enough to bound $|I_1|$ and  $|I_4|$.

We first deal with the integral $I_1(n,A)$.
According to Proposition~\ref{TCL}, we have that $[\chi_u(\frac{\xi}{\sqrt{n}})]^n-\mathrm{e}^{-\frac{1}{2}Q_u(\xi)}C(u)\cdot \nu(u)$ converges to 0 when $n$ tends to infinity, uniformly in $u\in H_0$ and uniformly on $\{\xi\in \R^d,\|\xi \|\leq A\}$.
Thus, $|I_1(n)|\leq \epsilon$ for $n\geq n_1$, with $n_1$ not depending on $u\in H_0$.

To study $I_2(n,A)$, recall that since $Q_u$ is positive definite, $Q_u(\xi)\geq a \|\xi\|^2$, with $a$ independent of $u\in H_0$.
We get an upper bound for $I_2(n,A)$:
$$|I_2(n,A)|\leq K_0\int_{\|\xi\|>A}\mathrm{e}^{-\frac{1}{2}a\|\xi\|^2}\mathrm{d}\xi$$
with $K_0$ a real number.
The integrand is then an integrable and continuous function, so that for large enough $A$, $|I_2(n,A)|$ is bounded by $\epsilon$.

In $I_3(n,A,\alpha)$,
we can choose $\alpha$ small enough so that the dominant eigenvalue $\lambda_u(\frac{\xi}{\sqrt{n}})$ is well defined for every $u\in H_0$ and for every $\xi$ such that $\|\xi\|\leq \sqrt{n}\alpha$.
Let us use again Theorem~\ref{theoremKato}
to write
$$\left [\chi_u\left (\frac{\xi}{\sqrt{n}}\right )\right ]^n=\left [\lambda_u\left (\frac{\xi}{\sqrt{n}}\right )\right ]^n\pi_u\left (\frac{\xi}{\sqrt{n}}\right )+\left [R_u\left (\frac{\xi}{\sqrt{n}}\right )\right ]^n$$
where $R_u^n$ is a remainder whose norm is bounded, uniformly in $\xi$, by some $C\tilde{\lambda}_u^n$, with $0<\tilde{\lambda}_u<\lambda_u(0)=1$.
Since $H_0$ is compact, there exists $\tilde{\lambda}<1$ independent of $u$ such that the norm of $R_u^n$ is bounded by $C\tilde{\lambda}^n$.
Thus, we have
\begin{equation}\label{equationI3}
\begin{split}
    |I_3(n,A,\alpha)|&\leq \int_{A<\|\xi\|\leq \sqrt{n}\alpha}\left [\lambda_u\left (\frac{\xi}{\sqrt{n}}\right )\right ]^n\left \|\pi_u\left (\frac{\xi}{\sqrt{n}}\right )\right \|\mathrm{d}\xi\\
    &+\int_{A<\|\xi\|\leq \sqrt{n}\alpha}\left \|\left [R_u\left (\frac{\xi}{\sqrt{n}}\right )\right ]^n\right \|\mathrm{d}\xi.
\end{split}
\end{equation}
Using Proposition~\ref{TCLvp}, we bound the first integral by
$$K_1\int_{A<\|\xi\|}\mathrm{e}^{-\frac{1}{4}Q_u(\xi)}\mathrm{d}\xi,$$
where $K_1$ is a real number.
Again, since $Q_u(\xi)\geq a \|\xi\|^2$,
we can bound this integral by
$$K_1\int_{A<\|\xi\|}\mathrm{e}^{-\frac{1}{4}a \|\xi\|^2}\mathrm{d}\xi.$$
We then bound the second integral in (\ref{equationI3}) by the volume of the ball of radius $\sqrt{n}\alpha$ multiplied by $\tilde{\lambda}^n$, i.e. we bound it by $K_2\sqrt{n}^d\tilde{\lambda}^n$.
We can choose $n_2$ and $A$ independently of $u\in H_0$ such that $|I_3(n,A,\alpha)|\leq \epsilon$ for $n\geq n_2$.

Finally, in the last integral, we cannot speak about a dominant eigenvalue,
but we can still speak about an eigenvalue that has maximal absolute value.
Let $\lambda$ be such an eigenvalue.
We first write
$$I_4(n,\alpha)=n^{\frac{d}{2}}\frac{1}{(2\pi)^{\frac{d}{2}}}\int_{\alpha<\|\xi\|,\xi\in \mathbf{C}} [\chi_u(\xi)]^n\mathrm{e}^{-i\xi\cdot (x-n\nabla\lambda(u))}\mathrm{d}\xi.$$

We prove by contradiction that $|\lambda|< 1$, for $\xi \neq0, \xi \in \mathbf{C}$.
Assume on the contrary that $|\lambda|\geq 1$.
Let $v$ be a left eigenvector associated to $\lambda$ and $|v|$ be the vector whose coordinates are the absolute values of those of $v$.
We show that $|v|$ is a left eigenvector for $\chi_u(0)=F(u)$.
Indeed, one has $|v|\leq |\lambda||v|\leq |v|F(u)$, i.e. these inequalities are true for every coordinate.
Recall that $C(u)$ is a right eigenvector for $F(u)$, associated to $\lambda(u)$.
Every coordinate of $C(u)$ is positive.
We use the norm on $\R^d$ defined by the formula $\|w\|=|w|\cdot C(u)$.
If $u\in H$, that is if $\lambda(u)=1$, one always has $\|wF(u)\|\leq\|w\|$.
Since $|v|\leq |v|F(u)$, one necessarily has $|v|F(u)=|v|$.
In particular, every coordinate of $|v|$ is positive and since $|v|\leq |\lambda ||v|\leq |v|F(u)=|v|$, one necessarily has $|\lambda|=1$.

We now use strong irreducibility.
Fix $\xi\neq 0$ and $\xi\in \mathbf{C}$.
There exists at least one vector $e_j$ in the canonical basis of $\R^d$ such that $e_j\cdot \xi \neq 0$.
Without loss of generality, we can assume that $e_1\cdot \xi \neq 0$.
Fix two indices $k$ and $j$ and an integer $n$ such that $p_{k,j}^{(n)}(0,0)\neq 0$, $p_{k,j}^{(n)}(0,e_1)\neq 0$.
Write $v\chi_u(\xi)^n=\lambda^n v$ and $|v|\chi_u(0)^n=|v|$.
One has
$$|v_j|=\left |\sum_kv_k[\chi_u(\xi)]^n_{k,j}\right |=\left |\sum_k\sum_{x\in \Z^d}v_kp_{k,j;u}^{(n)}(0,x)\mathrm{e}^{ix\cdot \xi}\right |,$$
so
$$|v_j|\leq \sum_k|v_k|\left |\sum_{x\in \Z^d}p_{k,j;u}^{(n)}(0,x)\mathrm{e}^{ix\cdot\xi}\right |.$$
Since the arguments of $\mathrm{e}^{0}$ and $\mathrm{e}^{ie_1\cdot\xi}$ are different,
$$\left |\sum_{x\in \Z^d}p_{k,j;u}^{(n)}(0,x)\mathrm{e}^{ix\cdot\xi}\right |<\sum_{x\in \Z^d}p_{k,j;u}^{(n)}(0,x).$$
Since $|v_k|\neq 0$, one deduces that
$$|v_j|<\sum_k|v_k|\sum_xp_{k,j;u}^{(n)}(0,x)=|v_j|.$$
This is a contradiction, so that $|\lambda|<1$.

Thus, the spectral radius of $\chi_u(\xi)$ is smaller than 1, for $\xi\in \mathbf{C}$, $\xi\neq0$.
Using compactness, for $u\in H_0$ and $\|\xi\|\geq \alpha$, one can uniformly bound the norm of $[\chi_u(\xi)]^n$ by some $C\delta^n$, with $\delta<1$.
Thus, $|I_4(n,\alpha)|\leq (2\pi n)^{\frac{d}{2}}\delta^n$, so $|I_4(n,\alpha)|\leq \epsilon$ for $n\geq n_3$, with $n_3$ independent of $u$.
\end{proof}

\begin{rem}\label{remarkassumptions}
We really used strong irreducibility when bounding the integral $I_4$.
\end{rem}

The following proposition is a slight refinement of Proposition~\ref{theorem1spitzer}.
\begin{prop}\label{theorem1'spitzer}
Let $p$ be a strongly irreducible $\Z^d$-invariant transition kernel on $\Z^d\times \{1,...,N\}$.
Let $\gamma_0\geq 0$.
If $\gamma$ is a real number between 0 and $\gamma_0$ and if $u\in H$ and $x\in \Z^d$, one defines
$$\tilde{A}_n(x,u,\gamma)=\left ( \frac{\|x-n\nabla \lambda (u)\|}{\sqrt{n}} \right )^{\gamma}A_n(x,u).$$
Then, for every $x\in \Z^d$, for every $u\in H$ and for every $\gamma \in [0,\gamma_0]$, $\tilde{A}_n(x,u,\gamma)$ converges to 0 when $n$ tends to infinity.
Furthermore, the convergence is uniform in $x\in \Z$, in $u$ lying in a compact subset of $H$ and in $\gamma\in [0,\gamma_0]$.
\end{prop}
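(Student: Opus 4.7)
The plan is to reduce to the case of an integer $\gamma$ and then to treat that case by integration by parts in the Fourier representation underlying the proof of Proposition~\ref{theorem1spitzer}.

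Writing $y = x - n\nabla\lambda(u)$ and $t = \|y\|/\sqrt{n}$, the elementary inequality $t^\gamma \leq 1 + t^m$ for $\gamma \in [0, m]$ with $m := \lceil \gamma_0 \rceil$, combined with Proposition~\ref{theorem1spitzer}, reduces the claim to showing that $t^m A_n(x, u) \to 0$ uniformly in $x \in \Z^d$ and in $u$ in a compact subset $H_0$ of $H$. By equivalence of norms on $\R^d$, this is in turn implied by the estimate $(y/\sqrt n)^\alpha A_n(x, u) \to 0$ uniformly for each multi-index $\alpha$ with $|\alpha| = m$.

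To handle a fixed such $\alpha$, I integrate by parts. Using the identity $(y/\sqrt{n})^\alpha \mathrm{e}^{-i\xi \cdot y/\sqrt{n}} = i^{|\alpha|}\partial_\xi^\alpha \mathrm{e}^{-i\xi\cdot y/\sqrt{n}}$ in both the Fourier integral expressing $(2\pi n)^{d/2} P_u^n(0,x)$ and in the integral expressing the Gaussian term $I_0(n)$, I observe that the boundary terms vanish: those on $\partial(\sqrt n \mathbf{C})$ because the integrand $[\psi_u(\xi/\sqrt n)]^n \mathrm{e}^{-i\xi \cdot x /\sqrt n}$ is $2\pi\sqrt n$-periodic in each coordinate of $\xi$, and those at infinity for the Gaussian because of its Schwartz decay. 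Thus $(y/\sqrt n)^\alpha A_n(x, u)$ can be rewritten as a difference of two Fourier-type integrals whose integrands are the $\partial_\xi^\alpha$-derivatives of the original ones. I then repeat the five-part decomposition $I_1 + I_2 + I_3 + I_4$ of the proof of Proposition~\ref{theorem1spitzer} and estimate each piece for the differentiated integrands.

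The essential new technical input is controlling $\partial_\xi^\alpha [\chi_u(\xi/\sqrt n)]^n$ in each regime. Near the origin, I differentiate the Kato decomposition $\chi_u(\eta)^n = \lambda_u(\eta)^n \pi_u(\eta) + R_u(\eta)^n$: by Fa\`a di Bruno, the derivatives of $\lambda_u(\xi/\sqrt n)^n$ are polynomials in $\xi$ of degree at most $|\alpha|$ (with coefficients controlled via Lemma~\ref{lemmaTCL} and the identity $\nabla_\xi \lambda_u(0) = 0$) multiplied by $\lambda_u(\xi/\sqrt n)^{n - O(1)}$, which inherits the Gaussian-type decay $\mathrm{e}^{-Q_u(\xi)/4}$ from Proposition~\ref{TCLvp}; the derivatives of $R_u(\xi/\sqrt n)^n$ are bounded by a polynomial factor in $n$ times $\tilde\lambda^n$ and thus contribute negligibly. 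In the tail regime $\|\xi\| \geq \sqrt n \alpha$ (the region of $I_4$), I apply Cauchy's integral formula to the analytic extension of $\chi_u$ (which exists in a complex strip thanks to the exponential moment hypothesis) combined with the spectral radius bound established in the proof of Proposition~\ref{theorem1spitzer} to obtain $\|\partial_\xi^\alpha[\chi_u(\xi)^n]\| \leq C \delta^n$. The main obstacle will be keeping all these bounds genuinely uniform in $u \in H_0$ and in $\xi$ in the various regions; this is handled by compactness and the analytic dependence on $u$ established in Proposition~\ref{propanalycity}, exactly as in the proof of Proposition~\ref{theorem1spitzer}.
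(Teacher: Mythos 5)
Your proposal is correct and follows essentially the same route as the paper: reduce to integer exponents, integrate by parts in the Fourier representation (boundary terms vanishing by periodicity), and re-run the five-part decomposition $I_1,\dots,I_4$ with the differentiated integrands, controlling the derivatives near the origin through the Kato decomposition $\chi_u^n=\lambda_u^n\pi_u+R_u^n$ together with Proposition~\ref{TCLvp}, and by exponential smallness of $\chi_u(\xi)^n$ away from $0$. The only cosmetic differences are that you work with multi-index derivatives $\partial_\xi^\alpha$ and the bound $t^\gamma\leq 1+t^m$ (plus Cauchy's formula for the tail region), where the paper reduces to even $\gamma$ and uses iterated Laplacians $\Delta_\xi^k$ with direct differentiation.
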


\begin{rem}
This is a generalization of \cite[Theorem~2.2]{NeySpitzer}.
In this article, the authors only deal with $\gamma \in [0,2d]$.
Actually, they do not need to use the fact that $\gamma\leq 2d$ in their proof.
However, we will only use the result for $\gamma \in [0,2d]$ in the following.
\end{rem}

\begin{proof}
Fix a compact subset $H_0$ of $H$.
Notice that if $0\leq \gamma \leq \gamma_0$, then
$$\mathrm{min}[\tilde{A}_n(x,u,0),\tilde{A}_n(x,u,\gamma_0)]\leq \tilde{A}_n(x,u,\gamma)\leq \mathrm{max}[\tilde{A}_n(x,u,0),\tilde{A}_n(x,u,\gamma_0)].$$
It thus suffices to show that $\tilde{A}_n(x,u,0)$ and $\tilde{A}_n(x,u,\gamma_0)$ both converge to 0, uniformly in $x$ and $u\in H_0$.
Since $\tilde{A}_n(x,u,0)=A_n(x,u)$, one only has to deal with $\tilde{A}_n(x,u,\gamma_0)$.
In other words, to get uniform convergence in $\gamma$, we only have to prove convergence for fixed $\gamma$.
Moreover, the inequality above shows that if one gets convergence of $\gamma_0$, then one gets convergence for every $\gamma \leq \gamma_0$.
It will be more convenient to deal with $\gamma \in 2\N$ in the following, which is sufficient to conclude.
We thus assume that $\gamma=2k,k\in \N$.
Denote by $\Delta_{\xi}$ the Laplace operator $\sum\frac{\partial^2}{\partial\xi_j^2}$ and $\Delta_{\xi}^k$ the Laplace operator iterated $k$ times.

Again, we use an integral formula.
We have to study
$$(2\pi n)^{\frac{d}{2}}\left ( \frac{\|x-n\nabla \lambda (u)\|}{\sqrt{n}} \right )^{\gamma}P_u^{(n)}(0,x).$$
We write
\begin{align*}
    (2\pi n)^{\frac{d}{2}}&\left ( \frac{\|x-n\nabla \lambda (u)\|}{\sqrt{n}} \right )^{\gamma}P_u^{(n)}(0,x)\\
    &=\frac{1}{(2\pi)^{\frac{d}{2}}}\left ( \frac{\|x-n\nabla \lambda (u)\|}{\sqrt{n}} \right )^{\gamma}\int_{\sqrt{n}\mathbf{C}} \left [\chi_u\left (\frac{\xi}{\sqrt{n}}\right )\right ]^n\mathrm{e}^{-i\frac{\xi\cdot (x-n\nabla\lambda(u))}{\sqrt{n}}}\mathrm{d}\xi.
\end{align*}
Since $\gamma=2k$, an integration by parts shows that
\begin{align*}
    (2\pi n)^{\frac{d}{2}}&\left ( \frac{\|x-n\nabla \lambda (u)\|}{\sqrt{n}} \right )^{\gamma}P_u^{(n)}(0,x)\\
    &=\frac{1}{(2\pi)^{\frac{d}{2}}}\int_{\sqrt{n}\mathbf{C}} \Delta_{\xi}^k \left (\left [\chi_u\left (\frac{\xi}{\sqrt{n}}\right )\right ]^n\right )\mathrm{e}^{-i\frac{\xi\cdot (x-n\nabla\lambda(u))}{\sqrt{n}}}\mathrm{d}\xi.
\end{align*}

We then have to show some avatar of Proposition~\ref{theorem1spitzer}, but replacing $I_0$, $I_1$, $I_2$, $I_3$ and $I_4$ with integrals $J_0$, $J_1$, $J_2$, $J_3$ and $J_4$ in which we replace $[\chi_u(\frac{\xi}{\sqrt{n}})]^n$ with $\Delta_{\xi}^k([\chi_u(\frac{\xi}{\sqrt{n}})]^n)$ and $\mathrm{e}^{-\frac{1}{2}Q_u(\xi)}$ with $\Delta_{\xi}^k(\mathrm{e}^{-\frac{1}{2}Q_u(\xi)})$.
We need the same estimates on $\Delta_{\xi}^k([\chi_u(\frac{\xi}{\sqrt{n}})]^n)$ than the ones we used on $[\chi_u(\frac{\xi}{\sqrt{n}})]^n$, proving Proposition~\ref{theorem1spitzer}.
For simplicity, we deal with $k=1$, the general case is similar.

Let us first show that $\Delta_{\xi} ([\lambda_u(\frac{\xi}{\sqrt{n}})]^n)$ uniformly converges to $\Delta_{\xi} (\mathrm{e}^{-\frac{1}{2}Q_u(\xi)})$.
Indeed, differentiating twice, we get
\begin{equation}\label{equationlaplaceeigenvalue}
    \Delta_{\xi}([\lambda_u(\frac{\xi}{\sqrt{n}})]^n)=(n-1)\lambda_u(\frac{\xi}{\sqrt{n}})^{n-2}\|\nabla_{\xi} \lambda_u(\frac{\xi}{\sqrt{n}})\|^2+\lambda_u(\frac{\xi}{\sqrt{n}})^{n-1}\Delta_{\xi} \lambda_u(\frac{\xi}{\sqrt{n}}).
\end{equation}
However,
$$\lambda_u(\frac{\xi}{\sqrt{n}})=1-\frac{1}{2}Q_u(\frac{\xi}{\sqrt{n}})+o(\frac{\xi^2}{n}),$$
so that
$n\|\nabla_{\xi} \lambda_u(\frac{\xi}{\sqrt{n}})\|^2$ converges to $\frac{1}{4}\|\nabla_{\xi} Q_u(\xi)\|^2$ when $n$ tends to infinity, uniformly in $u\in H_0$ and in $\xi$ lying in a compact set.
Similarly, $\Delta_{\xi} \lambda_u(\frac{\xi}{\sqrt{n}})$ uniformly converges to $\Delta_{\xi} Q_u(\xi)$, which allows us to conclude.

Besides, Equation~\ref{equationlaplaceeigenvalue}
and Proposition~\ref{TCLvp} together show that
$$|\Delta_{\xi}([\lambda_u(\frac{\xi}{\sqrt{n}})]^n)|\leq K_0\mathrm{e}^{-\frac{1}{4}Q_u(\xi)},$$
for $\xi$ lying in a compact set and for large enough $n$, uniformly in $u\in H_0$.
We then use Theorem~\ref{theoremKato} to write
$$\left [\chi_u\left (\frac{\xi}{\sqrt{n}}\right )\right ]^n=\left [\lambda_u\left (\frac{\xi}{\sqrt{n}}\right)\right ]^n\pi_u\left (\frac{\xi}{\sqrt{n}}\right )+\left [R_u\left (\frac{\xi}{\sqrt{n}}\right )\right ]^n.$$
Differentiating twice, we get
\begin{align*}
    \Delta_{\xi}\left (\left [\chi_u\left (\frac{\xi}{\sqrt{n}}\right )\right ]^n\right )={}&\Delta_{\xi}\left (\left [\lambda_u\left (\frac{\xi}{\sqrt{n}}\right )\right ]^n\right )\pi_u\left (\frac{\xi}{\sqrt{n}}\right )\\
    &+\nabla_{\xi}\left (\left [\lambda_u\left (\frac{\xi}{\sqrt{n}}\right )\right ]^n\right )\cdot \nabla_{\xi}\left (\pi_u\left (\frac{\xi}{\sqrt{n}}\right )\right )\\
    &+ \left [\lambda_u\left (\frac{\xi}{\sqrt{n}}\right )\right ]^n\Delta_{\xi}\left (\pi_u\left (\frac{\xi}{\sqrt{n}}\right )\right )+ \Delta_{\xi} \left (\left [R_u\left (\frac{\xi}{\sqrt{n}}\right )\right ]^n\right ).
\end{align*}
Moreover,
$$\nabla_{\xi}([\lambda_u(\frac{\xi}{\sqrt{n}})]^n)=n[\lambda_u(\frac{\xi}{\sqrt{n}})]^{n-1}\frac{1}{\sqrt{n}}\nabla_{\xi} \lambda_u(\frac{\xi}{\sqrt{n}})$$
and
$$\nabla_{\xi} (\pi_u(\frac{\xi}{\sqrt{n}}))=\frac{1}{\sqrt{n}}\nabla_{\xi} \pi_u(\frac{\xi}{\sqrt{n}}),$$
so that the multiplication of these terms gives
$[\lambda_u(\frac{\xi}{\sqrt{n}})]^{n-1} \nabla_{\xi} \lambda_u(\frac{\xi}{\sqrt{n}})\cdot \nabla_{\xi} \pi_u(\frac{\xi}{\sqrt{n}})$.
In particular, it converges to $\mathrm{e}^{-\frac{1}{2}Q_u(\xi)}\nabla_{\xi} \lambda_u(0)\cdot \nabla_{\xi} \pi_u(0)$.
However, $\nabla_{\xi} \lambda_u(0)=0$, so it converges to 0.
We also have $\Delta_{\xi}(\pi_u(\frac{\xi}{\sqrt{n}}))=\frac{1}{n}\Delta_{\xi}\pi_u(\frac{\xi}{\sqrt{n}})$.
Moreover, the Laplace operator applied to $R_u^n$ still converges to 0 exponentially fast.
Thus, $\Delta_{\xi}([\chi_u(\frac{\xi}{\sqrt{n}})]^n)$ uniformly converges to $\Delta_{\xi} (\mathrm{e}^{-\frac{1}{2}Q_u(\xi)})\pi_u(0)$.

We can then bound $J_1$ in the same manner as we bounded $I_1$.

Integral $J_2$ can be bounded exactly in the same manner as $I_2$. Since $Q_u$ is positive definite, $\Delta_{\xi} (\mathrm{e}^{-\frac{1}{2}Q_u(\xi)})$ is integrable, uniformly in $u\in H_0$.

Moreover, since $|\Delta_{\xi}([\lambda_u(\frac{\xi}{\sqrt{n}})]^n)|\leq K_0\mathrm{e}^{-\frac{1}{4}Q_u(\xi)}$ and since $ \Delta_{\xi} ([R_u(\frac{\xi}{\sqrt{n}})]^n)$ converges to 0 exponentially fast, we can bound $J_3$ as we bounded $I_3$.

Finally, for $J_4$, notice that $\Delta_{\xi}([\chi_u(\theta)]^n)$ is bounded in norm by $K_1\|[\chi_u(\theta)]^n\|$
which is exponentially small if $\theta$ is bounded away from 0.
Thus, we can bound $J_4$ as we bounded $I_4$.

We deal similarly with the $k$ times iterated Laplace operator to conclude.
\end{proof}

We now make two assumptions about our chain.
Recall that we assume that the chain is strongly irreducible and we denote
$$H=\{u\in \mathcal{F},\lambda(u)=1\}.$$
Define also
$$D=\{u\in \mathcal{F},\lambda(u)\leq 1\}.$$

\begin{hyp}\label{hyp1}
The set $D$ is compact.
\end{hyp}

\begin{lem}\label{lemmafinitesupport}
If $p$ has finite support, then Assumption~\ref{hyp1} is satisfied.
\end{lem}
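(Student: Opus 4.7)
The plan is to check the two defining properties of compactness separately. When $p$ has finite support, each entry $F_{k,j}(u) = \sum_{x} p_{k,j}(0,x)\mathrm{e}^{u\cdot x}$ is a finite sum of exponentials, so $\mathcal{F} = \R^d$ and $\lambda$ is continuous (even analytic) on all of $\R^d$; hence $D$ is closed in $\R^d$. The real content is to show that $D$ is bounded, for which I would prove the coercivity statement $\lambda(u) \to \infty$ as $\|u\|\to \infty$.

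To obtain coercivity I would exploit strong irreducibility in order to lower-bound the entries of powers of $F(u)$. Fix any $k\in\{1,\dots,N\}$. By Definition~\ref{Defirred}, for every $z\in\Z^d$ there exists an integer $n=n(z)$ such that $p^{(n)}_{k,k}(0,z)>0$; write the corresponding mass as $c_z>0$. Then, for any $u\in\R^d$,
$$
(F(u)^n)_{k,k} \;=\; \sum_{y\in\Z^d} p^{(n)}_{k,k}(0,y)\,\mathrm{e}^{u\cdot y} \;\geq\; c_z\,\mathrm{e}^{u\cdot z}.
$$
On the other hand, because $F(u)$ is non-negative, every eigenvalue of $F(u)$ has modulus at most the Perron--Frobenius eigenvalue $\lambda(u)$, so
$$
(F(u)^n)_{k,k} \;\leq\; \mathrm{trace}(F(u)^n) \;=\; \sum_{i=1}^{N} \lambda_i(F(u))^n \;\leq\; N\,\lambda(u)^n.
$$
Combining the two inequalities gives the key bound $\lambda(u)^n \geq c_z\,\mathrm{e}^{u\cdot z}/N$ for every $z\in\Z^d$ (with $n$ depending on $z$).

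I would finish by a direction-dependent choice of $z$. Given a sequence $(u_m)$ with $\|u_m\|\to\infty$, I would extract a subsequence with $u_m/\|u_m\|\to \theta\in\Ss^{d-1}$ and pick any $z\in\Z^d$ with $z\cdot\theta>0$. Then $u_m\cdot z \to +\infty$ along the subsequence, and the bound above yields $\lambda(u_m)\to\infty$. Since every subsequence of $(u_m)$ has a sub-subsequence along which $\lambda(u_m)\to\infty$, the full sequence $\lambda(u_m)$ tends to infinity. This coercivity rules out $\{u:\lambda(u)\leq 1\}$ being unbounded, so $D$ is bounded, hence compact.

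The only mildly subtle point is that the useful $z$ depends on the limit direction $\theta$, so the argument must be phrased along convergent subsequences rather than by a uniform choice; otherwise everything reduces to a direct application of strong irreducibility (which is exactly what produces loops at $k$ with arbitrary $\Z^d$-displacement) together with the elementary spectral lower bound $(F(u)^n)_{k,k}\leq N\lambda(u)^n$.
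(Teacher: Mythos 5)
Your proof is correct and follows essentially the same route as the paper: coercivity of $\lambda$, obtained by using strong irreducibility to lower-bound entries of $F(u)^n$ by a multiple of $\mathrm{e}^{u\cdot z}$ with $z$ chosen according to the limiting direction of $u$ (along subsequences), which forces $\lambda(u)\to\infty$ and hence boundedness of $D$. The only harmless difference is how you pass from an entry of $F(u)^n$ to $\lambda(u)^n$: you use the trace together with the fact that $\lambda(u)$ dominates all eigenvalues in modulus, whereas the paper contracts $F(u)^n$ against the positive eigenvectors $\nu(u)$ and $C(u)$ with the normalization $\nu(u)\cdot C(u)=1$.
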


\begin{proof}
If $p$ has finite support, then $F$ is defined everywhere, that is $\mathcal{F}=\R^d$.
To show that $D$ is compact, we only have to show that $\lambda(u)$ tends to infinity when $u$ tends to infinity.
Now, if $\|u\|$ tends to infinity, one can assume that $u$ converges in direction, meaning that $\frac{u}{\|u\|}$ converges to some point $\theta$ on the unit sphere.
Thus, one can find $x\in \Z^d$ such that $u\cdot x$ tends to infinity (It suffices to choose $x\cdot \theta>0$).
Using strong irreducibility, one can choose some integer $n$ such that for every $k,j$, $p^{(n)}_{k,j}(0,x)> \delta>0$.
Then, every entry of $F(u)^n$ is larger than $\delta \mathrm{e}^{u\cdot x}$.
Since $\lambda(u)^n=\nu(u)F(u)^nC(u)$, $\lambda(u)^n> \delta \mathrm{e}^{u\cdot x} \nu(u)C(u)=\delta \mathrm{e}^{u\cdot x}$,
so that $\lambda(u)^n$ tends to infinity and so does $\lambda(u)$.
\end{proof}

Since the chains we study in this article have a finite support, Assumption~\ref{hyp1} will always be satisfied according to Lemma~\ref{lemmafinitesupport}.
However, as in \cite{NeySpitzer}, one does not need that the support is finite (see Condition~1.4 in \cite{NeySpitzer}).
This could be useful in another context.

\begin{rem}
Notice that since $D$ is compact, $H$ is compact.
We stated several results of convergence for functions depending on a parameter $u\in H$ and stated that the convergence was uniform in $u$ lying in a compact subset of $H$.
Uniformity is now true for $u\in H$.
\end{rem}

The second assumption is the following.
\begin{hyp}\label{hyp2}
The minimum of the function $\lambda$ is strictly smaller than 1.
\end{hyp}

\begin{rem}
Assumptions~\ref{hyp1} and~\ref{hyp2} only make sense if $\lambda(u)$ is well defined, which is the case if the chain is strongly irreducible.
\end{rem}

\begin{rem}\label{remarktransience}
As we will see in the following (see Proposition~\ref{corotransience} below), this assumption ensures that the Green function is finite.
\end{rem}

Recall that $H$ is the set of $u\in \R^d$ such that $\lambda(u)=1$
and $D$ is the set of $u\in \R^d$ such that $\lambda(u)\leq 1$.
Since 1 is not the minimum of $\lambda$ and since this function is strictly convex, the gradient of $\lambda$, which is denoted by $\nabla \lambda$ is non-zero on $H$.
Moreover, under these assumptions, $H$ is non-empty and is homeomorphic to the unit sphere $\Ss^{d-1}$.

\begin{lem}\label{explicithomeo}
An explicit homeomorphism is given by
$$u\in H\mapsto \frac{\nabla \lambda (u)}{\|\nabla \lambda (u)\|}\in \Ss^{d-1}.$$
\end{lem}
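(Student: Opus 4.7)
The plan is to show that $\Phi:u\mapsto \nabla\lambda(u)/\|\nabla\lambda(u)\|$ is a continuous bijection from the compact space $H$ to the Hausdorff space $\Ss^{d-1}$; it will then automatically be a homeomorphism. The main ingredients are strict convexity of $\lambda$ (Proposition~\ref{strictconvexity}), compactness of $D$ (Assumption~\ref{hyp1}), and the fact that $\min\lambda<1$ (Assumption~\ref{hyp2}).

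First I would observe that $D=\{\lambda\le 1\}$ is a compact convex body with non-empty interior: convexity is immediate from convexity of $\lambda$, compactness is Assumption~\ref{hyp1}, and the interior contains the non-empty open set $\{\lambda<1\}$ by Assumption~\ref{hyp2}. Continuity of $\lambda$ then gives $H=\partial D$, so $H$ is compact and non-empty. To see that $\Phi$ is well-defined on $H$, I would argue by contradiction: if $\nabla\lambda(u_0)=0$ for some $u_0\in H$, then $u_0$ is a critical point of $\lambda$, hence its unique minimizer since $\lambda$ is strictly convex, giving $\min\lambda=\lambda(u_0)=1$, contradicting Assumption~\ref{hyp2}. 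Thus $\nabla\lambda$ is analytic and non-vanishing on $H$, and $\Phi$ is continuous.

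For injectivity, suppose $u_1,u_2\in H$ with $\Phi(u_1)=\Phi(u_2)$, i.e.\ $\nabla\lambda(u_1)=t\,\nabla\lambda(u_2)$ for some $t>0$, and assume $u_1\neq u_2$. Applying the subgradient inequality of the strictly convex differentiable function $\lambda$ at each of the two points with $\lambda(u_1)=\lambda(u_2)=1$ gives
$$\nabla\lambda(u_1)\cdot(u_2-u_1)<0\quad\text{and}\quad\nabla\lambda(u_2)\cdot(u_2-u_1)>0.$$
Substituting $\nabla\lambda(u_1)=t\,\nabla\lambda(u_2)$ into the first and dividing by $t>0$ yields $\nabla\lambda(u_2)\cdot(u_2-u_1)<0$, contradicting the second inequality. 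Hence $u_1=u_2$.

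For surjectivity, fix $\theta\in\Ss^{d-1}$ and maximize the linear function $u\mapsto\theta\cdot u$ over the compact convex body $D$. The maximum is attained at some $u^\star\in\partial D=H$, and first-order optimality for the smooth constraint $\lambda\le 1$ (which is active at $u^\star$ with $\nabla\lambda(u^\star)\neq 0$) forces $\theta=c\,\nabla\lambda(u^\star)$ for some $c\ge 0$; since $\|\theta\|=1$ we have $c>0$, so $\theta=\Phi(u^\star)$. Finally, $\Phi$ is a continuous bijection from the compact space $H$ to the Hausdorff space $\Ss^{d-1}$, hence a homeomorphism. The only step that requires some care is the surjectivity argument, namely correctly identifying the outward normal cone to the smooth convex body $D$ at a boundary point as the positive ray spanned by $\nabla\lambda$; everything else is a formal consequence of strict convexity and compactness.
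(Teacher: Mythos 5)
Your proof is correct and takes essentially the same route as the paper, which merely records that $\nabla\lambda$ is non-vanishing on $H$ and that $\lambda$ is strictly convex on a convex neighborhood of the compact set $D$, and then refers to \cite[Proposition~II.4.4]{Hennequin} for exactly the convex-analysis argument (injectivity via the strict gradient inequality, surjectivity via supporting hyperplanes, compactness plus continuous bijection) that you have written out in full. The only cosmetic point is that the inclusion $H\subseteq\partial D$ follows from strict convexity (an interior point of $D$ with $\lambda=1$ would be an interior local maximum), not from continuity alone, but none of your subsequent steps actually rely on that inclusion.
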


\begin{proof}
It is a convex analysis argument.
It can be deduced from the fact that $\nabla \lambda$ is non-zero on $H$ and that $\lambda$ is strictly convex and defined on a convex neighborhood of $D$.
For more details, we refer to \cite[Proposition~II.4.4]{Hennequin}.
\end{proof}

\begin{rem}
Assumptions~\ref{hyp1} and~\ref{hyp2} are a bit technical, since their statement involve the set $\mathcal{F}$, which is the interior of the set of $u\in \R^d$ such that every entry of $F$ is finite.
Actually, the assumption we really want to hold is the conclusion of Lemma~\ref{explicithomeo}.
Assumption~\ref{hyp1} should be compared with Condition~1.4 in \cite{NeySpitzer} which states that every point $u$ of $H$ has a neighborhood in $\R^d$ in which the matrix $F(u)$ is finite.
Both of these assumptions are generalizations of the condition that the chain is finitely supported.

In \cite{NeySpitzer}, the authors deduce from Condition~1.4 the conclusion of Lemma~\ref{explicithomeo}, although their conclusion is wrong
and one really needs the formulation of Assumption~\ref{hyp1}.
Indeed consider the strictly convex function $f:x\mapsto (x+\frac{1}{\sqrt{2}})^2+\frac{1}{2}$ defined on $\{x\in \R,x\geq -1\}$ whose graph is given by the following picture.

\begin{center}
\begin{tikzpicture}
\draw plot[domain=0:2] (\x,\x^2+1/4-\x);
\draw (-1,-1)--(2.5,-1);
\draw (3/2,-1.2)--(3/2,2.8);
\draw[dashed] (-1,1)--(2.5,1);
\end{tikzpicture}
\end{center}

The sub-level set $\{x,f(x)\leq 1\}$ is compact, as well as the level set $\{x,f(x)=1\}$, but the interior of the set on which $f$ is well defined is the set $\{x>-1\}$.
The analog of the set $D$, which is $\{x>-1,f(x)\leq 1\}$ is not compact.
Here, the analog of the set $H$, $\{x>-1,f(x)=1\}$ is not homeomorphic to the sphere $\Ss^0\simeq\{-1,1\}$ since it contains only one point.
This example seems artificial, since $f$ could be extended to $\R$, but actually, depending on the chain $p$, the set of $u\in \R^d$ such that the matrix $F(u)$ has finite entries could be contained in a proper closed subset of $\R^d$ and such a situation could arise.
\end{rem}

\begin{prop}\label{corotransience}
Under Assumptions~\ref{hyp1} and \ref{hyp2}, the Green functions is finite.
\end{prop}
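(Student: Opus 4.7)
The plan is to exploit the pointwise bound $p_{k,j}^{(n)}(0,x) e^{u \cdot x} \leq F_{k,j}(u)^n$ together with the spectral decay of $F(u)^n$ when $\lambda(u) < 1$.

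First, I would use Assumption~\ref{hyp1} and Assumption~\ref{hyp2} to produce a point $u_0 \in \mathcal{F}$ with $\lambda(u_0) < 1$. Since $D = \{u \in \mathcal{F},\, \lambda(u) \leq 1\}$ is compact and $\lambda$ is continuous, $\lambda$ attains its minimum on $D$, and by Assumption~\ref{hyp2} this minimum is strictly less than $1$, so such a $u_0$ exists.

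Next, recall the identity
$$F_{k,j}(u_0)^n = \sum_{x \in \Z^d} p_{k,j}^{(n)}(0,x)\, \mathrm{e}^{u_0 \cdot x}.$$
Every term on the right hand side is non-negative, so for each fixed $x, k, j$ and $n$ we get the pointwise inequality
$$p_{k,j}^{(n)}(0,x) \leq F_{k,j}(u_0)^n\, \mathrm{e}^{-u_0 \cdot x}.$$

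Now I would control $F_{k,j}(u_0)^n$ by its dominant eigenvalue. Since $F(u_0)$ is strongly irreducible with dominant eigenvalue $\lambda(u_0)$, Theorem~\ref{theoremKato} applied to $F(u_0)$ gives
$$F(u_0)^n = \lambda(u_0)^n \pi(u_0) + R(u_0)^n,$$
where $\|R(u_0)^n\| \leq C \tilde{\lambda}^n$ with $\tilde{\lambda} < \lambda(u_0) < 1$. In particular, each entry satisfies $F_{k,j}(u_0)^n \leq C' \lambda(u_0)^n$ for some constant $C'$ depending only on $u_0$. Combining with the preceding inequality,
$$p_{k,j}^{(n)}(0,x) \leq C'\, \mathrm{e}^{-u_0 \cdot x}\, \lambda(u_0)^n.$$
Summing the resulting geometric series,
$$G_{k,j}(0,x) = \sum_{n \geq 0} p_{k,j}^{(n)}(0,x) \leq \frac{C'\, \mathrm{e}^{-u_0 \cdot x}}{1 - \lambda(u_0)} < \infty,$$
and by $\Z^d$-invariance $G_{k,j}(y,x) = G_{k,j}(0, x-y)$ is finite for all $y \in \Z^d$ and all $k,j$.

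There is no real obstacle here: once one has $\lambda(u_0) < 1$, the exponential weight $\mathrm{e}^{-u_0 \cdot x}$ absorbs the single point $x$ and the geometric decay of $\lambda(u_0)^n$ handles the sum over $n$. The only mild subtlety is checking that Assumptions~\ref{hyp1} and \ref{hyp2} together do guarantee an interior $u_0$ with $\lambda(u_0) < 1$ (rather than just an infimum not attained), which is immediate from compactness of $D$.
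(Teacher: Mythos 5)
Your proof is correct, but it is a genuinely different and more elementary argument than the one in the paper. The paper fixes $u\in H$ (so $\lambda(u)=1$ and $\nabla\lambda(u)\neq 0$, via Lemma~\ref{explicithomeo}) and invokes the refined local limit theorem, Proposition~\ref{theorem1'spitzer} with $\gamma=2d$, to show that $p^{(n)}_{k,j;u}(0,x)$ is the sum of a term decaying exponentially in $n$ (the Gaussian factor, since $x-n\nabla\lambda(u)$ grows linearly) and an error $O(n^{-3d/2})$, hence summable; finiteness of $G_{k,j}$ then follows by undoing the tilt. You instead work at an interior point $u_0$ with $\lambda(u_0)<1$, whose existence follows from Assumption~\ref{hyp2} (compactness of $D$ from Assumption~\ref{hyp1} guarantees the minimum over $D$ is attained, though strictly speaking Assumption~\ref{hyp2} already gives you some point with $\lambda<1$), and you use only the identity $F_{k,j}(u_0)^n=\sum_x p^{(n)}_{k,j}(0,x)\mathrm{e}^{u_0\cdot x}$ plus geometric decay of $F(u_0)^n$, summing a geometric series. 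Your route buys simplicity: it bypasses the Fourier-analytic machinery entirely and needs only Perron--Frobenius (Theorem~\ref{PerronFrob1}) and the spectral splitting of Theorem~\ref{theoremKato} -- and even the latter is avoidable, since $\sum_j F_{k,j}(u_0)^n C(u_0)_j=\lambda(u_0)^nC(u_0)_k$ with $C(u_0)$ positive already gives $F_{k,j}(u_0)^n\leq \lambda(u_0)^n C(u_0)_k/C(u_0)_j$. The paper's route buys nothing extra for this particular statement; its advantage is purely economical, since Proposition~\ref{theorem1'spitzer} is needed anyway for the Green function asymptotics along directions in $H$, and the same estimate delivers transience as a by-product.
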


\begin{proof}
Let us now use Assumption~\ref{hyp2}.
Fix $u\in H$, which is non-empty by Lemma~\ref{explicithomeo}. Then $\nabla \lambda(u)$ is non-zero.
For $\gamma=2d$, $\tilde{A}_n(x,u,\gamma)$ converges to 0, uniformly in $x$.
In particular, for large enough $n$,
$$\left \|(2\pi n)^{\frac{d}{2}}P_u^{(n)}(0,x)-|Q_u|^{-\frac{1}{2}}\mathrm{e}^{-\frac{1}{2n}\Sigma_u(x-n\nabla \lambda (u))}C(u)\cdot \nu(u)\right \|\leq \frac{C_1}{n^{d}},$$
where $C_1$ is some real number.
Now fix $x\in \Z^d$.
For large enough $n$, we have
$$\left | p_{k,j;u}^{(n)}(0,x)-\frac{C_2}{n^{\frac{d}{2}}}\mathrm{e}^{-C_3n}\right |\leq \frac{C'_1}{n^{\frac{3d}{2}}},$$
where $C'_1, C_2$ are real numbers and $C_3$ is a positive real number.
It shows that the family $(p_{k,j;u}^{(n)}(0,x))$ is summable, from which
we deduce that the Green function $G_{k,j;u}(0,x)$ is finite.
Recall that $G_{k,j;u}(x,y)=G_{k,j}(x,y)\mathrm{e}^{u\cdot (y-x)}$, so that the Green function $G_{k,j}(0,x)$ also is finite.
\end{proof}

We now deduce from Propositions~\ref{theorem1spitzer} and \ref{theorem1'spitzer} asymptotic estimates of the Green function.
If $v\in \R^d$, we denote by $\langle v\rangle$ the closest $\Z^d$-vector from $v$.
Actually, if one of the coordinates of $v$ is exactly of the form $m+1/2$, $m$ being an integer, the choice of $\langle v\rangle$ does not matter for what we intend to do.
For example, we choose $m$ for the corresponding coordinate of $\langle v\rangle$.
We will focus on vectors $\langle t\nabla \lambda (u)\rangle$, where $t$ is some real number.

\begin{prop}\label{theorem2spitzer}
Let $p$ be a strongly irreducible transition kernel on $\Z^d\times \{1,...,N\}$ which is $\Z^d$-invariant and satisfies Assumptions~\ref{hyp1} and \ref{hyp2}.
Recall that $G_{k,j;u}$ is the Green function associated to $p_u$.
Then, for $x\in \Z^d$ and for $u\in H$,
$$(2\pi t)^{\frac{d-1}{2}}G_{k,j;u}(x,\langle t\nabla \lambda (u)\rangle)\underset{t\rightarrow +\infty}{\longrightarrow}\frac{C(u)_k\nu(u)_j}{\sqrt{|Q_u|\Sigma_u(\nabla \lambda(u))}}.$$
When $x$ is fixed, the convergence is uniform in $u\in H$.
\end{prop}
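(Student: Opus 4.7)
The plan is to deduce the Green function asymptotic from the local limit theorem (Proposition~\ref{theorem1spitzer}) and its refined version (Proposition~\ref{theorem1'spitzer}), via the Ney–Spitzer summation lemma (Lemma~\ref{lemmaNeySpitzer}). By $\Z^d$-invariance, $G_{k,j;u}(x,\langle t\nabla\lambda(u)\rangle)=\sum_{n\geq 0}p_{k,j;u}^{(n)}(0,\langle t\nabla\lambda(u)\rangle-x)$, so up to a harmless translation of the target I may assume $x=0$ and study $\sum_{n\geq 0} p_{k,j;u}^{(n)}(0,\langle t\nabla\lambda(u)\rangle)$.

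First I substitute the main term provided by Proposition~\ref{theorem1spitzer}: for $y$ close to $t\nabla\lambda(u)$,
$$p_{k,j;u}^{(n)}(0,y)=\frac{|Q_u|^{-1/2}}{(2\pi n)^{d/2}}\,\mathrm{e}^{-\frac{1}{2n}\Sigma_u(y-n\nabla\lambda(u))}\,C(u)_k\nu(u)_j+\frac{\varepsilon_n(y,u)}{(2\pi n)^{d/2}},$$
where $\varepsilon_n(y,u)\to 0$ uniformly in $y$ and in $u\in H$. Setting $y=\langle t\nabla\lambda(u)\rangle$, the argument of the Gaussian becomes, up to a bounded correction,
$$\tfrac{1}{2n}\Sigma_u((t-n)\nabla\lambda(u))=\tfrac{(t-n)^2}{2n}\,\Sigma_u(\nabla\lambda(u)).$$
This is peaked at $n=t$ with width of order $\sqrt{t}$, so the contribution of the main term to the sum should be, for $n$ near $t$ (where $n^{d/2}\sim t^{d/2}$),
$$\frac{C(u)_k\nu(u)_j\,|Q_u|^{-1/2}}{(2\pi t)^{d/2}}\sum_{n}\mathrm{e}^{-\frac{(t-n)^2}{2t}\Sigma_u(\nabla\lambda(u))}.$$
Approximating the Gaussian sum by the corresponding integral yields $\sqrt{2\pi t/\Sigma_u(\nabla\lambda(u))}$, and combining gives exactly the announced equivalent
$$\frac{C(u)_k\nu(u)_j}{(2\pi t)^{(d-1)/2}\sqrt{|Q_u|\,\Sigma_u(\nabla\lambda(u))}}.$$

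The serious work is to make this heuristic rigorous by controlling contributions from $n$ far from $t$ and the error term $\varepsilon_n$. This is where Proposition~\ref{theorem1'spitzer} is essential: picking $\gamma=2d$ (as in Ney–Spitzer), the enhanced bound
$$\Bigl(\tfrac{\|y-n\nabla\lambda(u)\|}{\sqrt{n}}\Bigr)^{2d}|A_n(y,u)|\to 0\quad\text{uniformly},$$
together with the Gaussian decay of the main term in $|t-n|/\sqrt{n}$, gives a summable majorant on the discrepancy between $p_{k,j;u}^{(n)}(0,\langle t\nabla\lambda(u)\rangle)$ and its Gaussian proxy, and also handles the tails $|n-t|\gg\sqrt{t}$, where the Gaussian is polynomially small enough in $|t-n|$ to make the tail of the series negligible compared to $t^{-(d-1)/2}$. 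The cleanest packaging of this discrete Laplace-type summation, from the pointwise local limit theorem to the Green asymptotic along the ray $\langle t\nabla\lambda(u)\rangle$, is precisely the content of the cited Lemma~\ref{lemmaNeySpitzer}: I apply it termwise with the dominating Gaussian profile to conclude.

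Finally, uniformity in $u\in H$ follows because $H$ is compact (a consequence of Assumption~\ref{hyp1}), and every ingredient—the local limit theorem of Proposition~\ref{theorem1spitzer}, its weighted version Proposition~\ref{theorem1'spitzer}, the continuity of $Q_u$, $\Sigma_u$, $\nabla\lambda(u)$, $C(u)$, $\nu(u)$, and the positive lower bound on $\Sigma_u(\nabla\lambda(u))$ coming from strict convexity of $\lambda$ and $\nabla\lambda(u)\neq 0$ on $H$—is uniform on $H$. The main obstacle in the argument is therefore the simultaneous control of the $n$-sum on two scales: in the $\sqrt{t}$-window around $t$ one needs the leading-order Gaussian behavior, while outside this window one needs the polynomial decay supplied by the weighted proposition with $\gamma=2d$; matching these two regimes while keeping track of the factor $n^{d/2}\sim t^{d/2}$ is what makes the summation delicate, but is exactly what Lemma~\ref{lemmaNeySpitzer} is designed to handle.
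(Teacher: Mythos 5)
Your proposal is correct and follows essentially the same route as the paper: the Green function asymptotic is deduced entrywise from the weighted local limit theorem (Proposition~\ref{theorem1'spitzer} with $\gamma\in[0,2d]$) by invoking the Ney--Spitzer summation result, Lemma~\ref{lemmaNeySpitzer}. The Laplace-type heuristic you spell out (Gaussian peak at $n=t$ of width $\sqrt t$, tails controlled by the $\gamma=2d$ weight) is exactly what that lemma packages, so your write-up just makes explicit what the paper delegates to the citation.
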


\begin{proof}
This is an avatar of Theorem~2.2 from \cite{NeySpitzer}.
In their proof, the authors only use the asymptotic they find in their Theorem~2.1, which is analogous to Proposition~\ref{theorem1'spitzer}.
Actually, what they really prove is the following.
\begin{lem}\label{lemmaNeySpitzer}
Let $p_n(x)$ be a sequence of real numbers, depending on $x\in \Z^d$.
Let $u$ be a parameter that lies in some compact set $H$ and let $\alpha_u\in \R$, $\beta_u \in \R^d$,$\beta_u\neq0$ depend continuously on $u$. Let $\Sigma_u$ be a positive definite quadratic form that depends continuously on $u$. Define
$$a_n(x,u,\gamma)=\left ( \frac{\|x-n\beta_u\|}{\sqrt{n}} \right )^{\gamma}\left ((2\pi n)^{\frac{d}{2}}p_n(x)-\alpha_u\mathrm{e}^{-\frac{1}{2n}\Sigma_u(x-n\beta_u)}\right ).$$
and denote by $g(x)$ the sum over $n$ of the $p_n(x)$.
If $a_n$ converges to 0, uniformly in $x\in \Z^d$, $u\in H$ and $\gamma\in [0,2d]$, then, for $x\in \Z^d$ and for $u\in H$,
$$(2\pi t)^{\frac{d-1}{2}}g(\langle t\beta_u\rangle-x)\mathrm{e}^{\langle t\beta(u)\rangle-x}\underset{t\rightarrow +\infty}{\longrightarrow}\frac{\alpha_u}{\Sigma_u(\beta_u)}.$$
When $x$ is fixed, the convergence is uniform in $u\in H$.
\end{lem}
Using this lemma for every entry $G_{k,j;u}$ of the Green matrix, and using Proposition~\ref{theorem1'spitzer}, we deduce Proposition~\ref{theorem2spitzer}.
\end{proof}

We now describe the Martin boundary of our chain, using Proposition~\ref{theorem2spitzer}.
Fix some point $(x_0,k_0)\in \Z^d\times \{1,...,N\}$.
Recall that the Martin kernel $K$ is defined as a quotient of two Green functions.
Here, for $(x,k),(y,j)\in \Z^d\times \{1,...,N\}$, we have
$$K((x,k),(y,j))=\frac{G((x,k),(y,j))}{G((x_0,k_0)(y,j))}=\frac{G_{k,j}(x,y)}{G_{k_0,j}(x_0,y)}=:K_{k,j}(x,y).$$
Also recall that we have a homeomorphism given by
$$u\in H\mapsto \frac{\nabla \lambda (u)}{\|\nabla \lambda (u)\|}\in \Ss^{d-1}.$$

Denote by $\partial (\Z^d\times \{1,...,N\})$ the $\mathrm{CAT}(0)$ boundary of $\Z^d\times \{1,...,N\}$ (see Section~\ref{SectionGeometricboundaries} for the precise definition).
Recall that a sequence $((x_n,k_n))$ converges to a point $\tilde{x}$ if
\begin{itemize}[label=$\cdot$]
    \item either $(x_n)$ converges to $x$ in $\Z^d$ and $(k_n)$ converges to $k$ in $\{1,...,N\}$, in which case $\tilde{x}=(x,k)$,
    \item or $(x_n)$ converges to $x'$ in the $\mathrm{CAT}(0)$ boundary of $\Z^d$, in which case $\tilde{x}=x'$.
\end{itemize}
In particular, $\partial (\Z^d\times \{1,...,N\})=\partial \Z^d$. It is a sphere at infinity which does not depend on the thickening  $\{1,...,N\}$.

We thus have a homeomorphism $\varphi:\tilde{x}\in \partial (\Z^d\times \{1,...,N\})\mapsto u\in H$, where $u$ is the point in $H$ such that
$$\tilde{x}\in \Ss^{d-1}=\frac{\nabla \lambda (u)}{\|\nabla \lambda (u)\|}.$$

We show that $\partial (\Z^d\times \{1,...,N\})$ is the Martin boundary of our chain.
Precisely, we have the following.

\begin{prop}\label{latticeMartinboundary}
Let $p$ be a strongly irreducible transition kernel on $\Z^d\times \{1,...,N\}$ which is $\Z^d$-invariant and satisfies Assumptions~\ref{hyp1} and \ref{hyp2}.
If $y_n\in \Z^d$ converges to $\tilde{y}\in \partial\Z^d$, let $u=\varphi(\tilde{y})$.
Then, for every $x\in \Z^d$ and for every $k,j\in \{1,...,N\}$,
$K_{k,j}(x,y_n)$ converges to $\frac{C(u)_k}{C(u)_{k_0}}\mathrm{e}^{u\cdot (x-x_0)}$.
\end{prop}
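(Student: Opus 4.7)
The plan is to reduce the statement directly to Proposition~\ref{theorem2spitzer}, using the homeomorphism of Lemma~\ref{explicithomeo} to reparametrize the sequence $(y_n)$ in terms of the parameter $u \in H$. First, for $n$ large enough, $y_n \neq 0$ and $y_n/\|y_n\|$ is a well-defined point on $\Ss^{d-1}$. Using the continuous inverse $\varphi^{-1}: \Ss^{d-1} \to H$ of the homeomorphism from Lemma~\ref{explicithomeo}, define
$$u_n := \varphi^{-1}\left(\frac{y_n}{\|y_n\|}\right) \in H, \qquad t_n := \frac{\|y_n\|}{\|\nabla \lambda(u_n)\|}.$$
By construction $y_n = t_n \nabla \lambda(u_n)$, so in particular $y_n = \langle t_n \nabla \lambda(u_n) \rangle$ trivially (since $y_n$ already lies in $\Z^d$). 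Moreover $u_n \to u$ by continuity of $\varphi^{-1}$, and since $\|y_n\| \to \infty$ while $\|\nabla \lambda(u_n)\| \to \|\nabla \lambda(u)\| > 0$, we have $t_n \to \infty$.

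Next, I apply Proposition~\ref{theorem2spitzer} to this sequence. The statement provides uniform convergence in $u \in H$ for fixed $x$, so for any $x \in \Z^d$ and indices $k,j$,
$$(2\pi t_n)^{\frac{d-1}{2}} G_{k,j;u_n}(x, y_n) \longrightarrow \frac{C(u)_k \nu(u)_j}{\sqrt{|Q_u|\, \Sigma_u(\nabla \lambda(u))}},$$
using uniform convergence together with the continuity of the explicit right-hand side in $u$. The same convergence holds for the pair $(x_0, k_0)$ in place of $(x, k)$, with the same limit up to replacing $C(u)_k$ by $C(u)_{k_0}$; notably the factor $\nu(u)_j$ and the denominator are common.

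Then the key algebraic manipulation is the identity $G_{k,j;u}(x,y) = G_{k,j}(x,y)\, \mathrm{e}^{u \cdot (y-x)}$ which allows us to write
$$K_{k,j}(x, y_n) = \frac{G_{k,j}(x, y_n)}{G_{k_0,j}(x_0, y_n)} = \mathrm{e}^{u_n \cdot (x - x_0)} \cdot \frac{G_{k,j;u_n}(x, y_n)}{G_{k_0,j;u_n}(x_0, y_n)}.$$
The factor $(2\pi t_n)^{(d-1)/2}$ cancels in the ratio of Green functions, as does $\nu(u)_j$ and the square-root denominator, yielding in the limit $C(u)_k / C(u)_{k_0}$. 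The exponential prefactor converges to $\mathrm{e}^{u \cdot (x - x_0)}$ by continuity, and combining these two limits gives the desired formula.

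The only subtle point is arguing that Proposition~\ref{theorem2spitzer}, which gives the limit $t \to \infty$ for each fixed $u$ uniformly in $u \in H$, applies along the diagonal sequence $(t_n, u_n)$ with $u_n \to u$. This is straightforward: uniform convergence in $u$ means $\sup_{u \in H} |f_t(u) - L(u)| \to 0$, so $|f_{t_n}(u_n) - L(u_n)| \to 0$, and continuity of $L$ gives $L(u_n) \to L(u)$. I do not expect any other genuine difficulty; the main technical content has already been absorbed into Proposition~\ref{theorem2spitzer}.
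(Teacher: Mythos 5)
Your proposal is correct and follows essentially the same route as the paper: reparametrize $y_n$ as $\langle t_n\nabla\lambda(u_n)\rangle$ via the homeomorphism of Lemma~\ref{explicithomeo}, apply Proposition~\ref{theorem2spitzer} along the diagonal $(t_n,u_n)$ using uniform convergence in $u\in H$ and continuity of the limit, and conclude by taking the ratio of Green functions together with the identity $G_{k,j;u}(x,y)=G_{k,j}(x,y)\mathrm{e}^{u\cdot(y-x)}$. Your explicit justification of the diagonal argument is a welcome clarification of a step the paper leaves implicit.
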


Denote by $K((x,k),\tilde{y})$ the extension of the Martin kernel thus defined, that is
$$K((x,k),\tilde{y})=\frac{C(u)_k}{C(u)_{k_0}}\mathrm{e}^{u\cdot (x-x_0)}.$$
Notice that the limit does not depend on $j$, and so, it does not depend on the level on which $(y_n,j)$ asymptotically lies.
It shows the Martin boundary does not depend on the thickening.
When a sequence $(y_n,j_n)$ tends to infinity, the Martin kernel does not record the changes of levels of $j_n$, but only the asymptotic direction of $y_n$.

\begin{proof}
Recall that the chain is strongly irreducible.
Let $(y_n)$ be a sequence of $\Z^d$ which converges to $\tilde{y}\in \partial\Z^d$.
In particular, $\theta_n=\frac{y_n}{\|y_n\|}$ is well defined, up to taking $n$ large enough, since $y_n$ tends to infinity.
Denote by $u_n\in H$ the corresponding point.
Since $y_n$ converges to $\tilde{y}$, $\theta_n$ converges to $\theta$ and so $u_n$ converges to $u$.
Using continuity, $\lambda(u_n)$ converges to $\lambda(u)$, $Q_{u_n}$ to $Q_u$ and so $\Sigma_{u_n}$ converges to $\Sigma_u$ and $|Q_{u_n}|$ to $|Q_u|$.
Finally, $\|\nabla\lambda(u_n)\|$ converges to $\|\nabla\lambda(u)\|$.

Let $x\in \Z^d$.
Recall that $G_{k,j;u_n}(x,y_n)=G_{k,j}(x,y_n)\mathrm{e}^{u_n\cdot (y_n-x)}$.
Besides, defining
$$t_n=\frac{\|y_n\|}{\|\nabla \lambda (u_n)\|},$$
$t_n\in \R$ and $t_n$ tends to infinity.
Furthermore, $y_n=\langle t_n\nabla \lambda (u_n)\rangle$.

From Proposition~\ref{theorem2spitzer}, we deduce that
$$\frac{G_{k,j}(x,y_n)}{G_{k,j}(x_0,y_n)} \mathrm{e}^{u_n\cdot(x_0-x)}\underset{n\rightarrow \infty}{\longrightarrow}\frac{C(u)_k}{C(u)_{k_0}},$$
that is
\begin{equation*}
    K_{k,j}(x,y_n)\underset{n\rightarrow \infty}{\longrightarrow}\frac{C(u)_k}{C(u)_{k_0}}\mathrm{e}^{u\cdot (x-x_0)}.\qedhere
\end{equation*}
\end{proof}

We now show that $\Z^d\times \{1,...,N\}$ separates points on the boundary.
\begin{prop}\label{theoremseparation}
If $\tilde{y}_1\neq \tilde{y}_2$ are two points on the boundary, then there exists a sequence $(x_n,k_n)\in \Z^d\times \{1,...,N\}$ such that $K((x_n,k_n),\tilde{y}_1)$ tends to infinity and
$K((x_n,k_n),\tilde{y}_2)$ converges to 0.
In particular, there exists $(x,k)\in \Z^d\times \{1,...,N\}$ such that $K((x,k),\tilde{y}_1)\neq K((x,k),\tilde{y}_2)$
\end{prop}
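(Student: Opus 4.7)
The plan is to exploit the explicit formula for the Martin kernel from Proposition~\ref{latticeMartinboundary}. Setting $u_i = \varphi(\tilde{y}_i)$ via the homeomorphism of Lemma~\ref{explicithomeo}, the assumption $\tilde{y}_1 \neq \tilde{y}_2$ translates to $u_1 \neq u_2$ in $H$, and
$$
K((x,k),\tilde{y}_i) \;=\; \frac{C(u_i)_k}{C(u_i)_{k_0}}\,\mathrm{e}^{u_i \cdot (x-x_0)}.
$$
Because the Perron--Frobenius vectors $C(u_i)$ have strictly positive entries and $k$ ranges over the finite set $\{1,\dots,N\}$, the prefactor is bounded in a compact subinterval of $(0,+\infty)$ uniformly in the level; hence the behaviour of $K((x_n,k_n),\tilde{y}_i)$ along any sequence is dictated entirely by the exponent $u_i \cdot x_n$. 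I therefore reduce the problem to producing a sequence $x_n \in \Z^d$ with $u_1 \cdot x_n \to +\infty$ and $u_2 \cdot x_n \to -\infty$. Taking $x_n = nv$ for a single $v \in \Z^d$ with $u_1 \cdot v > 0$ and $u_2 \cdot v < 0$ does the job, and since these sign conditions are open it suffices to find such $v$ in $\R^d$ and then round a large multiple to the lattice.

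The main obstacle is showing that the open cone $\mathcal{C} = \{w \in \R^d : u_1 \cdot w > 0,\ u_2 \cdot w < 0\}$ is non-empty. Elementary convex geometry says $\mathcal{C}$ is empty precisely when $u_1$ is a positive scalar multiple of $u_2$ (assuming both are non-zero), so the heart of the proof is to exclude this configuration. For that I invoke the strict convexity of $\lambda$ on $\mathcal{F}$ (Proposition~\ref{strictconvexity}): restricted to the line $\R u_1$, the map $t \mapsto \lambda(t u_1)$ is a strictly convex function of $t$, and if it took the value $1$ at both $t = 1$ and $t = \alpha$ with $\alpha > 0$, $\alpha \neq 1$, strict convexity would force $\lambda < 1$ strictly between the two values and $\lambda > 1$ outside, yielding in particular $\lambda(0) > 1$. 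But in the sub-Markov setting to which this section applies, $F(0)$ is sub-stochastic, so its Perron--Frobenius eigenvalue satisfies $\lambda(0) \leq 1$, i.e.\ $0 \in D$; contradiction.

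Once collinearity through the origin is ruled out, exhibiting $w \in \mathcal{C}$ is routine. If $u_1$ and $u_2$ are linearly independent, the four open quadrants cut out of the plane $\mathrm{span}(u_1, u_2)$ by the hyperplanes $u_1^{\perp}$ and $u_2^{\perp}$ are all non-empty, so one of them consists of vectors with the required signs. If instead $u_2 = -\beta u_1$ with $\beta > 0$, then $w = u_1$ directly satisfies $u_1 \cdot w = \|u_1\|^2 > 0$ and $u_2 \cdot w = -\beta \|u_1\|^2 < 0$. Rescaling and rounding produces $v \in \Z^d$, and the sequence $(x_n, k_n) = (nv, k_n)$ for any choice of $k_n$ delivers the claimed divergences. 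The ``in particular'' separation of Martin kernels follows at once by taking a single term of this sequence with $n$ large enough.
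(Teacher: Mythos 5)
Your proof is correct and takes essentially the same route as the paper's: both read off the limit kernel $K((x,k),\tilde{y}_i)=\frac{C(u_i)_k}{C(u_i)_{k_0}}\mathrm{e}^{u_i\cdot(x-x_0)}$ from Proposition~\ref{latticeMartinboundary}, note that the positive prefactors are bounded above and below over the finitely many levels, and let $x_n$ escape to infinity along a fixed lattice direction $v$ with $u_1\cdot v>0>u_2\cdot v$. The only difference is that you justify the existence of such a direction, which the paper simply asserts from $u_1\neq u_2$: your strict-convexity argument ruling out $u_2\in\R_{>0}u_1$ is exactly the missing step, with two caveats worth recording — it invokes $\lambda(0)\leq 1$ (sub-stochasticity of $F(0)$), which is not among the hypotheses formally stated in this section (only Assumptions~\ref{hyp1} and~\ref{hyp2}) but does hold in every application and is genuinely needed, since under Assumptions~\ref{hyp1} and~\ref{hyp2} alone $H$ can contain two positively proportional points; and, exactly as in the paper's proof, the degenerate possibility $u_1=0$ or $u_2=0$ (which does occur for Markov chains, where $\lambda(0)=1$ puts $0\in H$) is left untreated — for that boundary point the kernel is bounded and the divergence claim cannot hold, although the final separation assertion still follows directly from the formula.
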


\begin{proof}
Define $u_1\in H$ that corresponds to $\tilde{y}_1$ and $u_2$ that corresponds to $\tilde{y}_2$, so that
$$K((x,k),\tilde{y}_i)=\frac{C(u)_k}{C(u)_{k_0}}\mathrm{e}^{u_i\cdot (x-x_0)}.$$

Since $\tilde{y}_1\neq \tilde{y}_2$, we have $u_1\neq u_2$.
We can then find $\theta \in \Ss^{d-1}$ such that $\theta\cdot u_1>0$ and $\theta\cdot u_2< 0$.
Then, if $x_n$ is a sequence of $\Z^d$ converging in direction to $\theta$ and if $k\in \{1,...,N\}$,
$K((x_n,k),\tilde{y}_1)$ tends to infinity, whereas $K((x_n,k),\tilde{y}_2)$ converges to 0.
We can then find $n$ such that those two quantities are not the same.
\end{proof}

We can now summarize all the technical results of this section into the following proposition.
\begin{prop}\label{propsummarize}
Let $p$ be a strongly irreducible transition kernel on $\Z^d\times \{1,...,N\}$ which is $\Z^d$-invariant and satisfies Assumptions~\ref{hyp1} and \ref{hyp2}.
Then, the Martin compactification coincides with the $\mathrm{CAT}(0)$ compactification.
\end{prop}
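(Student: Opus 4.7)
The plan is to verify the three criteria listed in Section~\ref{SectionMarkovchainsandMartinboundary} that characterize the Martin compactification: $E = \Z^d\times\{1,\dots,N\}$ is open and dense in the candidate compactification, the Martin kernel extends continuously, and the extension separates points. The candidate space is the $\mathrm{CAT}(0)$ compactification $\overline{E}=E\cup\partial(\Z^d\times\{1,\dots,N\})$ described in Section~\ref{SectionGeometricboundaries}; by construction this is compact and $E$ sits in it as an open dense subset, so the topological prerequisite is immediate.

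Before invoking the Martin kernel one needs to know that the chain is transient so that $K$ is well-defined. This is exactly the content of Proposition~\ref{corotransience}, whose hypotheses (strong irreducibility, $\Z^d$-invariance, Assumptions~\ref{hyp1} and~\ref{hyp2}) match those of the current proposition. Hence $G_{k,j}(x,y)<\infty$ and the Martin kernel $K_{k,j}(x,y)=G_{k,j}(x,y)/G_{k_0,j}(x_0,y)$ makes sense.

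Continuous extension of $K$ from $E\times E$ to $E\times\overline{E}$ is then proved in two parts. For interior convergence, namely $(y_n,j_n)\to(y,j)\in E$, pointwise convergence of $K(\cdot,(y_n,j_n))$ is automatic since $E$ is discrete. For a boundary sequence $(y_n,j_n)$ with $y_n\to\tilde y\in\partial\Z^d$, Proposition~\ref{latticeMartinboundary} states that for every $(x,k)\in E$,
$$K_{k,j_n}(x,y_n)\longrightarrow \frac{C(u)_k}{C(u)_{k_0}}\mathrm{e}^{u\cdot(x-x_0)},$$
where $u=\varphi(\tilde y)\in H$ is the point associated to $\tilde y$ via the homeomorphism of Lemma~\ref{explicithomeo}. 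In particular, the limit depends only on $\tilde y$, not on the levels $j_n$, so the kernel genuinely extends to a continuous function $K(\cdot,\tilde y)$ on $\overline{E}$.

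Finally, separation of boundary points is Proposition~\ref{theoremseparation}: if $\tilde y_1\neq\tilde y_2$ in $\partial(\Z^d\times\{1,\dots,N\})$, then $\varphi(\tilde y_1)\neq\varphi(\tilde y_2)$ in $H$, and one exhibits $(x,k)\in E$ with $K((x,k),\tilde y_1)\neq K((x,k),\tilde y_2)$ by choosing a direction $\theta\in\Ss^{d-1}$ separating the two values of $u$. Separation between an interior point and a boundary point follows from the same sequence argument, since boundary kernels are unbounded along suitable directions while interior kernels are bounded. Combining these three ingredients with the universal property recalled in Section~\ref{SectionMarkovchainsandMartinboundary} identifies $\overline{E}$ as the Martin compactification of $E$. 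No step is really the main obstacle at this stage: all the analytic difficulty has been absorbed into Propositions~\ref{theorem1'spitzer}, \ref{theorem2spitzer}, \ref{latticeMartinboundary} and~\ref{theoremseparation}, so the remaining argument is a clean assembly of those pieces.
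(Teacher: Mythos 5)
Your assembly is exactly the argument the paper intends: the paper gives no separate proof of Proposition~\ref{propsummarize} but presents it as the summary of Proposition~\ref{corotransience} (transience, so the kernel is defined), Proposition~\ref{latticeMartinboundary} (continuous extension, independent of the level $j$), and Proposition~\ref{theoremseparation} (separation on the boundary), checked against the three criteria of Section~\ref{SectionMarkovchainsandMartinboundary}. Your proposal is correct and follows essentially the same route.
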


The following lemma shows that if the chain is only irreducible, then we can reduce to the case of strong irreducibility.
It was already used in similar contexts in \cite{NeySpitzer} and \cite{Spitzer} (see \cite[Proposition~26.1]{Spitzer}).

If $p$ is a chain on a countable space $E$, define the modified chain $\tilde{p}$ by
\begin{equation}\label{modifiedchain}
    \tilde{p}(x,y)=(1-\alpha)\delta(x,y)+\alpha p(x,y),
\end{equation}
where $\delta(x,y)=0$ if $x\neq y$ and 1 otherwise and where $0<\alpha <1$ is fixed.
Denote by $\tilde{p}^{(n)}$ the $n$th convolution power of $\tilde{p}$, with $\tilde{p}^{(0)}=\delta(x,y)$. Also denote by $\tilde{G}$ the associated Green function:
$$\tilde{G}(x,y)=\sum_{n\geq 0}\tilde{p}^{(n)}(x,y).$$

\begin{lem}\label{Spitzertrick}
With these notations, $\alpha \tilde{G}(x,y)=G(x,y)$ and thus the Martin kernels are the same.
\end{lem}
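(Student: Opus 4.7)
The plan is to expand $\tilde{p}^{(n)}$ using the binomial theorem, which is legitimate because the identity kernel $\delta$ commutes (under convolution) with $p$. Writing $\tilde{p} = (1-\alpha)\delta + \alpha p$, I get
\[
\tilde{p}^{(n)}(x,y) = \sum_{k=0}^{n} \binom{n}{k} (1-\alpha)^{n-k} \alpha^k p^{(k)}(x,y),
\]
with the convention $p^{(0)} = \delta$.

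Next I would sum over $n \geq 0$ and swap the order of summation, letting $m = n-k$. Since all terms are non-negative, Fubini applies and I obtain
\[
\tilde{G}(x,y) = \sum_{k \geq 0} \alpha^k p^{(k)}(x,y) \sum_{m \geq 0} \binom{m+k}{k} (1-\alpha)^m.
\]
The inner sum is the standard generating function identity $\sum_{m \geq 0} \binom{m+k}{k} z^m = (1-z)^{-(k+1)}$, evaluated at $z = 1-\alpha$, which gives $\alpha^{-(k+1)}$. Substituting back yields
\[
\tilde{G}(x,y) = \frac{1}{\alpha} \sum_{k \geq 0} p^{(k)}(x,y) = \frac{1}{\alpha} G(x,y),
\]
i.e.\ $\alpha \tilde{G}(x,y) = G(x,y)$.

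Finally, the Martin kernels agree trivially: for any base point $x_0$,
\[
\tilde{K}(x,y) = \frac{\tilde{G}(x,y)}{\tilde{G}(x_0,y)} = \frac{\alpha^{-1} G(x,y)}{\alpha^{-1} G(x_0,y)} = K(x,y).
\]
There is no real obstacle; the only mild subtlety is justifying the interchange of summations, which is immediate by non-negativity. One should also note that the chain $\tilde{p}$ is strongly irreducible whenever $p$ is irreducible, since $\tilde{p}(x,x) \geq 1-\alpha > 0$ guarantees positive self-loops, so that if $p^{(n)}(x,y) > 0$ then $\tilde{p}^{(m)}(x,y) > 0$ for all $m \geq n$. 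This is the reason this reduction lets us pass from (weak) irreducibility to strong irreducibility without changing the Martin boundary.
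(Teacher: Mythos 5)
Your proof is correct and follows essentially the same route as the paper: expand $\tilde{p}^{(n)}$ by the binomial formula, interchange the (non-negative) sums, and evaluate the inner sum $\sum_{n\geq k}\binom{n}{k}(1-\alpha)^{n-k}\alpha^k=1/\alpha$ — the paper does this via the $k$th derivative of $x\mapsto 1/(1-x)$, which is the same computation as your generating-function identity. Nothing further is needed.
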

As announced, it was already used in \cite{NeySpitzer} and \cite{Spitzer} but without a proof.
We provide one for the convenience of the reader.
\begin{proof}
By induction, one shows that
$$\tilde{p}^{(n)}(x,y)=\sum_{k=0}^n\begin{pmatrix}n \\ k\end{pmatrix} (1-\alpha)^{n-k}\alpha^kp^{(k)}(x,y).$$
Thus,
\begin{align*}
    \tilde{G}(x,y)&=\sum_{n\geq 0}\sum_{k=0}^n\begin{pmatrix}n \\ k\end{pmatrix} (1-\alpha)^{n-k}\alpha^kp^{(k)}(x,y)\\
    &=\sum_{k\geq 0}p^{(k)}(x,y)\sum_{n\geq k}\begin{pmatrix}n \\ k\end{pmatrix} (1-\alpha)^{n-k}\alpha^k.
\end{align*}

What is left to show is that
$$\sum_{n\geq k}\begin{pmatrix}n \\ k\end{pmatrix} (1-\alpha)^{n-k}\alpha^k=\frac{1}{\alpha}.$$
Write
$$\sum_{n\geq k}\begin{pmatrix}n \\ k\end{pmatrix} (1-\alpha)^{n-k}\alpha^k=\frac{\alpha^k}{k!}\sum_{n\geq k}\frac{n!}{(n-k)!}(1-\alpha)^{n-k}.$$
Recognize the $k$th derivative of the function
$x\mapsto \frac{1}{1-x},$
so that
\begin{equation*}\sum_{n\geq k}\begin{pmatrix}n \\ k\end{pmatrix} (1-\alpha)^{n-k}\alpha^k=\frac{\alpha^k}{k!}\left (\frac{1}{1-x}\right )^{(k)}(1-\alpha)=\frac{1}{\alpha}.\qedhere \end{equation*}
\end{proof}

Replacing $p$ with $\tilde{p}$, we can assume that $p(0,0)>0$.
If $p$ is irreducible and satisfies $p(0,0)>0$, then $p$ is strongly irreducible.
We can thus state the following.

\begin{theorem}
Let $p$ be an irreducible transition kernel on $\Z^d\times \{1,...,N\}$ which is $\Z^d$-invariant and such that the strongly irreducible chain $\tilde{p}$ given by~(\ref{modifiedchain}) satisfies Assumptions~\ref{hyp1} and \ref{hyp2}.
Then, the Martin compactification coincides with the $\mathrm{CAT}(0)$ compactification.
\end{theorem}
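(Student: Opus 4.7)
The plan is to reduce to the strongly irreducible case already handled by Proposition~\ref{propsummarize}, using Lemma~\ref{Spitzertrick} as the bridge. First I would form the lazy kernel $\tilde p$ defined by equation~(\ref{modifiedchain}). This is automatically $\Z^d$-invariant, since both $\delta$ and $p$ are, and it inherits irreducibility from $p$ because $\tilde p^{(n)} \geq \alpha^n p^{(n)}$ coefficient-wise. The crucial gain is that $\tilde p((x,k),(x,k)) \geq 1-\alpha > 0$ for every state: combined with irreducibility, this upgrades irreducibility to strong irreducibility in the sense of Definition~\ref{Defirred}, as pointed out in the paragraph preceding the statement.

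Next I would apply Lemma~\ref{Spitzertrick} to get $\alpha\tilde G = G$, which implies that the Martin kernels of $p$ and of $\tilde p$ are \emph{literally the same} function on $(\Z^d\times\{1,\ldots,N\})^2$, since the constant factor $\alpha$ cancels between numerator and denominator. Consequently the Martin compactifications of $p$ and $\tilde p$ coincide: a sequence converges in one if and only if it converges in the other, with the same limit Martin kernel.

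Finally, since by hypothesis $\tilde p$ is strongly irreducible and satisfies Assumptions~\ref{hyp1} and~\ref{hyp2}, Proposition~\ref{propsummarize} applies directly to $\tilde p$ and identifies its Martin compactification with the $\mathrm{CAT}(0)$ compactification of $\Z^d\times\{1,\ldots,N\}$. Transferring this identification back to $p$ via the previous paragraph finishes the proof.

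There is essentially no obstacle: the theorem is just a packaging statement that extends Proposition~\ref{propsummarize} from strong to mere irreducibility. The only minor subtlety is verifying that laziness preserves the hypotheses well enough for the reduction to be clean, which is precisely the content of Lemma~\ref{Spitzertrick}.
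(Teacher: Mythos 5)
Your proposal is correct and follows essentially the same route as the paper: pass to the lazy kernel $\tilde p$ of~(\ref{modifiedchain}), which is strongly irreducible, use Lemma~\ref{Spitzertrick} to see that the Green functions differ by the constant factor $\alpha$ so the Martin kernels (and hence the Martin compactifications) of $p$ and $\tilde p$ coincide, and then apply Proposition~\ref{propsummarize} to $\tilde p$. Nothing further is needed.
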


Finally, let us say a few words about finitely generated virtually abelian groups.
Let $\Gamma$ be such a group.
There is a subgroup of $\Gamma$ isomorphic to $\Z^d$ and with finite index.
Denote by $L$ the quotient $\Gamma/\Z^d$, which is a finite set.
Any section $L\rightarrow \Gamma$ provides an identification between $\Gamma$ and a set $\Z^d\times \{1,...,N\}$ and the $\mathrm{CAT}(0)$ compactification of $\Gamma$ does not depend on the choices of the abelian subgroup and the section $L\rightarrow \Gamma$.
Since we were able to describe the Martin boundary of $\Z^d\times L'$, when $L'$ is finite, whether it is a group or not, we have the following.

\begin{prop}\label{generalabeliangroups}
Let $\Gamma$ be a finitely generated virtually abelian group.
Let $p$ be an irreducible transition kernel on $\Gamma \times \{1,...,N\}$
and assume that if $p$ is seen as a chain on $\Z^d\times \{1,...,N'\}$, the strongly irreducible chain $\tilde{p}$ given by~(\ref{modifiedchain})satisfies Assumptions~\ref{hyp1} and \ref{hyp2}.
Then, the Martin compactification coincides with the $\mathrm{CAT}(0)$ compactification.
\end{prop}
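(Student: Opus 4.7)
The plan is to reduce the statement for $\Gamma \times \{1,\dots,N\}$ to the previously established result on the thickened lattice $\Z^d \times \{1,\dots,N'\}$ by choosing a concrete identification and checking it is compatible with the structures involved.

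First I would fix a finite-index subgroup $A \simeq \Z^d$ of $\Gamma$ and write $L = \Gamma/A$, with $M = |L|$. Choosing any section $s : L \to \Gamma$ produces a bijection $\Gamma \simeq \Z^d \times L$ given by $\gamma \mapsto (x, \ell)$ where $\ell = \gamma A$ and $x = s(\ell)^{-1}\gamma \in A$. Composing with the obvious bijection $L \times \{1,\dots,N\} \simeq \{1,\dots,MN\}$, this upgrades to an identification $\Phi : \Gamma \times \{1,\dots,N\} \longrightarrow \Z^d \times \{1,\dots,N'\}$ with $N' = MN$. The key point is that left translation by an element of $A$ acts on the $\Z^d$-coordinate only, leaving the $L$-coordinate untouched; therefore $\Gamma$-invariance (or even just $A$-invariance) of $p$ translates, under $\Phi$, into $\Z^d$-invariance of the pushed-forward kernel on $\Z^d \times \{1,\dots,N'\}$. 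Irreducibility obviously survives any bijection.

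Next I would invoke Lemma~\ref{Spitzertrick} on the pushed kernel: the modified chain $\tilde p$ defined by~(\ref{modifiedchain}) is irreducible and satisfies $\tilde p(0,0) > 0$, hence is strongly irreducible, and by hypothesis it satisfies Assumptions~\ref{hyp1} and~\ref{hyp2}. Lemma~\ref{Spitzertrick} also guarantees that the Martin kernels of $p$ and $\tilde p$ coincide, so the Martin compactification of $p$ on $\Z^d \times \{1,\dots,N'\}$ is the same as that of $\tilde p$. Proposition~\ref{propsummarize} applied to $\tilde p$ then identifies this Martin compactification with the $\mathrm{CAT}(0)$ compactification of $\Z^d \times \{1,\dots,N'\}$.

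Finally I would transfer back through $\Phi$. By the discussion in Section~\ref{SectionGeometricboundaries}, the $\mathrm{CAT}(0)$ boundary of $\Gamma \times \{1,\dots,N\}$ is defined precisely as the $\mathrm{CAT}(0)$ boundary of $\Z^d \times \{1,\dots,N'\}$ and is independent of the choice of finite-index abelian subgroup $A$ and of the choice of section $s$. Hence the homeomorphism $\Phi$ extends to a homeomorphism of compactifications carrying one Martin compactification to the other, giving the desired identification. The only subtle point is checking the well-definedness claim: that applying the hypothesis for one choice of $(A,s)$ yields the conclusion independently of that choice. This is immediate since the hypothesis is imposed directly on the pushed-forward kernel and the conclusion is formulated in terms of the $\mathrm{CAT}(0)$ compactification of $\Gamma \times \{1,\dots,N\}$, which is intrinsic. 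I do not anticipate a serious obstacle: the whole content lies in verifying that the $\Z^d$-invariance survives the identification, which boils down to the fact that $A$ acts trivially on the finite quotient $L$.
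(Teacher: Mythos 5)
Your argument is essentially the paper's own proof: identify $\Gamma\times\{1,\dots,N\}$ with $\Z^d\times\{1,\dots,N'\}$ through a finite-index subgroup $A\simeq\Z^d$ and a section, then apply Lemma~\ref{Spitzertrick} and Proposition~\ref{propsummarize}, the $\mathrm{CAT}(0)$ compactification of $\Gamma\times\{1,\dots,N\}$ being by definition that of $\Z^d\times\{1,\dots,N'\}$, independently of the choices made. One small imprecision in your aside on invariance: with your left-coset parametrization $\gamma=s(\ell)x$, left translation by $a\in A$ fixes the $L$-coordinate only if $A$ is taken normal, and it shifts the lattice coordinate by $s(\ell)^{-1}as(\ell)$ rather than by $a$, so $A$-invariance of $p$ does not by itself give invariance under the standard $\Z^d$-translations when the conjugation action on $A$ is nontrivial (e.g.\ for the infinite dihedral group); this does not damage the proof, since, as you observe, the hypothesis of the proposition is imposed directly on the chain viewed on $\Z^d\times\{1,\dots,N'\}$, but the cleaner transfer uses the parametrization $x=\gamma s(\ell)^{-1}$ coming from right cosets, for which left multiplication by $a$ acts as the untwisted translation by $a$.
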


\section{Markov chains on a thickened lattice}\label{SectionMarkovchainsonathickenedlattice}

We now show Theorems~\ref{theorem2} and \ref{theorem3}.
We will actually prove that the Martin boundary is minimal in Theorem~\ref{theorem2} in Section~\ref{SectionMinimalMartinboundary} and we focus here on showing that the Martin boundary coincides with the geometric boundary.

We consider a $\Z^d$-invariant Markov chain on $\Z^d\times \{1,...,N\}$.
It is defined by the transition kernel
$$p((x,k),(y,j))=p_{k,j}(x,y)=p_{k,j}(0,y-x),x,y\in \Z^d,k,j\in \{1,...,N\},$$
$$\forall k\in \{1,...,N\}, \sum_{x\in \Z^d}\sum_{1\leq j\leq N}p_{k,j}(0,x)= 1.$$
We assume that $p$ is strongly irreducible, which is enough to prove Theorems~\ref{theorem2} and \ref{theorem3} according to Lemma~\ref{Spitzertrick}.
To $p$, we associate the matrix $F(u),u\in \R^d$ as previously, whose entries are defined by
$$F_{k,j}(u)=\sum_{x\in \Z^d}p_{k,j;u}(0,x)=\sum_{x\in \Z^d}p_{k,j}(0,x)\mathrm{e}^{u\cdot x}.$$
Denote by $\nu(u)$ and $C(u)$ left and right eigenvectors for $F(u)$ as previously.
Since $p$ is Markov, $F(0)$ is stochastic, so that $C(0)=(c,c,...,c),c\in \R,c\neq 0$.
As $\sum_j\nu(0)_j=1$ by assumption, $c=1$.

The vector $\nu(0)$ coincides with the vector $\nu_0$ defined in the introduction.
Assume that the chain has finite support and is non-centered, i.e.
$$\sum_{x\in \Z^d}\sum_{k,j}\nu(0)_kxp_{k,j}(0,x)\neq0.$$

Under these assumptions, we prove that we can directly apply Proposition~\ref{propsummarize}.
Indeed, according to Lemma~\ref{lemmafinitesupport}, Assumptions~\ref{hyp1} holds.
Furthermore, the following holds.

\begin{lem}\label{noncentered}
Assumption~\ref{hyp2} holds if and only if the Markov chain is non-centered.
More precisely, recalling the definition
$$\overrightarrow{p}=\sum_{x\in \Z^d}\sum_{k,j}\nu(0)_kxp_{k,j}(0,x),$$
one has
$$\nabla \lambda (0)=\overrightarrow{p}.$$
\end{lem}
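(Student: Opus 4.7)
The plan is to start from the identity $\nabla\lambda(u)=\nu(u)\nabla F(u)C(u)$ established earlier in Equation~(\ref{equationlambdaprime}) and specialize it at $u=0$, and then use strict convexity of $\lambda$ (Proposition~\ref{strictconvexity}) together with $\lambda(0)=1$ to convert the condition on the minimum of $\lambda$ into a condition on the gradient at~$0$.

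First I would compute the right-hand side of $\nabla\lambda(0)=\nu(0)\nabla F(0)C(0)$ explicitly. Since $p$ is Markov, $F(0)$ is stochastic, so the constant vector $C(0)=(1,\dots,1)^T$ is a right eigenvector for the eigenvalue $\lambda(0)=1$, and $\nu(0)$ is, by construction, the stationary distribution of $F(0)$, i.e.\ exactly the vector $\nu_0$ of the introduction. The entries of $\nabla F(0)$ are
\begin{equation*}
\nabla F_{k,j}(0)=\sum_{x\in\Z^d}xp_{k,j}(0,x),
\end{equation*}
so multiplying on the left by $\nu(0)$ and on the right by $C(0)$ gives
\begin{equation*}
\nabla\lambda(0)=\sum_{k,j}\nu(0)_k\sum_{x\in\Z^d}xp_{k,j}(0,x)=\overrightarrow{p}.
\end{equation*}
This proves the second (and main) assertion of the lemma.

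Next I would deduce the equivalence. By Proposition~\ref{strictconvexity}, $\lambda$ is strictly convex on $\mathcal{F}$, so it attains its minimum at a unique point, characterized by the vanishing of its gradient (when such a point exists in~$\mathcal{F}$). Since $\lambda(0)=1$, the following dichotomy holds: if $\nabla\lambda(0)\neq 0$, then $0$ is not a critical point, so by strict convexity one can move slightly from $0$ in the direction $-\nabla\lambda(0)$ (staying inside the open set $\mathcal{F}$) to obtain a point $u$ with $\lambda(u)<1$, and therefore the minimum of $\lambda$ is strictly less than~$1$; conversely, if $\nabla\lambda(0)=0$, then $0$ is the unique minimizer of $\lambda$ by strict convexity, so the minimum of $\lambda$ equals $\lambda(0)=1$ and Assumption~\ref{hyp2} fails. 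Combining this equivalence with the formula $\nabla\lambda(0)=\overrightarrow{p}$ just obtained yields that Assumption~\ref{hyp2} holds if and only if $\overrightarrow{p}\neq 0$, i.e.\ if and only if the chain is non-centered.

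There is no real obstacle here: the identity $\nabla\lambda(u)=\nu(u)\nabla F(u)C(u)$ does all the work once one observes that in the Markov case $C(0)$ is constant and $\nu(0)$ is the stationary measure. The only mild point to be careful about is to invoke strict convexity and the fact that $\mathcal{F}$ is open when ruling out the boundary; both are already available in the paper.
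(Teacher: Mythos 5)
Your proof is correct and follows essentially the same route as the paper: evaluate the identity $\nabla\lambda(u)=\nu(u)\nabla F(u)C(u)$ at $u=0$ using $C(0)=(1,\dots,1)$ and $\nu(0)=\nu_0$ to get $\nabla\lambda(0)=\overrightarrow{p}$, then use strict convexity of $\lambda$ together with $\lambda(0)=1$ to identify Assumption~\ref{hyp2} with $\nabla\lambda(0)\neq 0$. Your spelled-out dichotomy (moving in the direction $-\nabla\lambda(0)$ inside the open set $\mathcal{F}$) is just a slightly more explicit version of the paper's one-line convexity argument.
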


\begin{proof}
Since $F(0)$ is stochastic, $\lambda(0)=1$, i.e.\ $0\in H$.
We also have
$$\nabla F_{k,j}(0)=\sum_{x\in \Z^d}xp_{k,j}(0,x).$$
Also recall that $\nabla \lambda(u)=\nu(u)\nabla F(u)C(u)$.
Thus,
$$\nabla \lambda(0)=\sum_{x\in \Z^d}\sum_{k,j}\nu(k)xp_{k,j}(0,x).$$

We deduce from this equality that $\nabla \lambda (0)\neq 0$ if and only if the Markov chain is non-centered.
Since $\lambda$ is strictly convex, the minimum of $\lambda$ is not reached at 0 if and only if $\nabla \lambda (0)\neq 0$, i.e.\ 1 is not the minimum of $\lambda$ if and only if the Markov chain is non-centered.
\end{proof}

The two assumptions of the last section are then satisfied and one can apply Proposition~\ref{propsummarize}.
Now, Lemma~\ref{Spitzertrick} shows that we can reduce to the case of a strongly irreducible Markov chain, so
we deduce from this Theorem~\ref{theorem2}.
\qed

We now show that in the centered case, the Martin boundary is trivial.
\begin{prop}\label{trivialMartincentered}
Consider an irreducible, $\Z^d$-invariant, finitely supported Markov chain on the thickened lattice $\Z^d\times \{1,...,N\}$ and assume that it is centered.
Then, the Martin compactification coincides with the one-point compactification.
\end{prop}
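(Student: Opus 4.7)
The plan is to apply Propositions~\ref{theorem1spitzer} and~\ref{theorem1'spitzer} at the distinguished point $u=0$, exploiting the centering condition. By Lemma~\ref{Spitzertrick} we may replace $p$ by the modified chain $\tilde{p}=(1-\alpha)\delta+\alpha p$, which is still centered (the centering condition is linear in $p$ once one notices $\nu(0)$ is unchanged) and is strongly irreducible since $\tilde{p}(0,0)>0$. By Theorem~\ref{theorem3}, transience forces $d\geq 3$, so we work in that regime. By Lemma~\ref{noncentered}, centering means $\nabla\lambda(0)=\overrightarrow{p}=0$; combined with the strict convexity of $\lambda$ (Proposition~\ref{strictconvexity}) and $\lambda(0)=1$, this says that $0$ is the unique minimizer of $\lambda$ and $H=\{0\}$. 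Note that Assumption~\ref{hyp2} fails here, so Proposition~\ref{theorem2spitzer} is not directly available; we must redo the final summation by hand.

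Since $F(0)$ is stochastic, the normalization forces $C(0)=(1,\dots,1)$, so Proposition~\ref{theorem1spitzer} at $u=0$ reads
$$p^{(n)}_{k,j}(0,x)=\frac{\nu(0)_j}{(2\pi n)^{d/2}\sqrt{|Q_0|}}\,e^{-\Sigma_0(x)/(2n)}+\varepsilon_n(x),$$
and Proposition~\ref{theorem1'spitzer} with $\gamma=2d$ gives a uniform bound of the form
$$|\varepsilon_n(x)|\leq \frac{o(1)}{n^{d/2}}\cdot\Bigl(1+\tfrac{\|x\|}{\sqrt n}\Bigr)^{-2d}.$$
Summing the main term over $n\geq 1$ and performing the substitution $t=\Sigma_0(x)/(2n)$ gives
$$\sum_{n\geq 1}\frac{\nu(0)_j}{(2\pi n)^{d/2}\sqrt{|Q_0|}}\,e^{-\Sigma_0(x)/(2n)}\sim \nu(0)_j\cdot\frac{\Gamma(d/2-1)}{2\pi^{d/2}\sqrt{|Q_0|}}\,\Sigma_0(x)^{(2-d)/2},$$
the integral being convergent precisely because $d\geq 3$. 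The error sum $\sum_n|\varepsilon_n(x)|$ is $o(\|x\|^{2-d})$ by splitting at $n\asymp\Sigma_0(x)$ (small $n$ controlled by the $(1+\|x\|/\sqrt n)^{-2d}$ factor, large $n$ by the $o(1)/n^{d/2}$ factor). Thus we obtain, with the constant $\kappa_d(Q_0):=\Gamma(d/2-1)/(2\pi^{d/2}\sqrt{|Q_0|})$,
$$G_{k,j}(x,y)\sim \nu(0)_j\,\kappa_d(Q_0)\,\Sigma_0(y-x)^{(2-d)/2}\qquad\text{as }\|y-x\|\to\infty.$$

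The factor $\nu(0)_j$ and the constant $\kappa_d(Q_0)$ cancel in the Martin kernel:
$$K_{k,j}(x,y)=\frac{G_{k,j}(x,y)}{G_{k_0,j}(x_0,y)}\sim\Bigl(\frac{\Sigma_0(y-x_0)}{\Sigma_0(y-x)}\Bigr)^{(d-2)/2}.$$
Since $x,x_0$ are fixed while $\|y\|\to\infty$, the ratio $\Sigma_0(y-x)/\Sigma_0(y-x_0)$ tends to $1$ independently of the direction in which $y$ leaves every finite set, and similarly the dependence on the level index $j$ vanishes. Hence $K((x,k),(y_n,j_n))\to 1$ for every sequence $(y_n,j_n)$ going to infinity in $\Z^d\times\{1,\dots,N\}$, proving that the boundary consists of a single point. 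The main obstacle is the error analysis in the summation step: unlike Lemma~\ref{lemmaNeySpitzer}, where the drift $\nabla\lambda(u)$ supplies a natural scale $n\asymp t$, in the centered case the Gaussian concentrates only at scale $n\asymp\Sigma_0(x)$, so the splitting of the sum and the use of the $(1+\|x\|/\sqrt n)^{-2d}$ factor from Proposition~\ref{theorem1'spitzer} must be set up carefully to kill the error uniformly.
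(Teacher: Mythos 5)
Your argument is correct, but it is a genuinely different proof from the one in the paper. The paper argues via harmonic functions: positive minimal harmonic functions on $\Z^d\times\{1,\dots,N\}$ are of the form $(x,k)\mapsto C(u)_k\mathrm{e}^{u\cdot x}$ with $\lambda(u)=1$; by Lemma~\ref{noncentered} centering means $\nabla\lambda(0)=0$, so by strict convexity of $\lambda$ the value $1$ is attained only at $u=0$, there is a unique positive (minimal) harmonic function up to scalar, and the Poisson--Martin representation then forces the boundary to be a single point. You instead carry out the ``centered Ney--Spitzer'' computation directly: you apply Propositions~\ref{theorem1spitzer} and~\ref{theorem1'spitzer} at $u=0$ (legitimate, since these only need strong irreducibility and $0\in H$, not Assumption~\ref{hyp2}), sum over $n$, and obtain $G_{k,j}(x,y)\sim \nu(0)_j\,\kappa_d(Q_0)\,\Sigma_0(y-x)^{(2-d)/2}$ in the transient regime $d\geq 3$ (correctly isolated via Theorem~\ref{theorem3}), from which all Martin kernels tend to $1$. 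Your error analysis is sound: the split of $\sum_n|\varepsilon_n(x)|$ at $n\asymp\|x\|^2$, using $\gamma=2d$ for small $n$ and $\gamma=0$ for large $n$, gives $o(\|x\|^{2-d})$, the initial segment $n<n_0$ is superpolynomially small by finite support, and the constant $\Gamma(d/2-1)/(2\pi^{d/2}\sqrt{|Q_0|})$ is right (the sum-to-integral comparison deserves one line, since the summand's maximum is $O(\Sigma_0(x)^{-d/2})=o(\Sigma_0(x)^{(2-d)/2})$, but this is routine). The reduction via Lemma~\ref{Spitzertrick} preserving centering because $\nu(0)$ is unchanged is also correct. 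What each approach buys: the paper's proof is shorter and dimension-free, but leans on the unproved assertion that minimal harmonic functions have the exponential form (``the same arguments as in Sawyer''); yours is heavier but self-contained given the local limit theorems already proved, makes the transience/dimension issue explicit, and yields an explicit Green function asymptotic for centered chains on the thickened lattice as a byproduct.
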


\begin{proof}
For a random walk on the abelian group, positive minimal harmonic functions are of the form
$$x\in \Z^d\mapsto \mathrm{e}^{u\cdot x},$$ with
$$\sum_{x\in \Z^d}p(0,x)\mathrm{e}^{u\cdot x}=1$$
(see for example \cite[Theorem~7.1]{Sawyer}).
In our situation, the same arguments as in \cite{Sawyer} show that positive minimal harmonic functions in $\Z^d\times \{1,...,N\}$ are of the form
$$(x,k)\in \Z^d\times \{1,...,N\}\mapsto C(u)_k\mathrm{e}^{u\cdot x},$$ with
$$\sum_k\sum_{x\in \Z^d}p_{k,j}(0,x)\mathrm{e}^{u\cdot x}C(u)_j=1,$$
which exactly means that $C(u)$ is a right eigenvector for $F(u)$ with eigenvalue $\lambda(u)=1$.
This was actually the reason for introducing the matrix $F$.
If the Markov chain on $\Z^d\times \{1,...,N\}$ is centered, then $\nabla\lambda(0)=0$, according to Lemma~\ref{noncentered}, which means that $\lambda(0)$ is the minimum of $\lambda$.
Thus, the value 1 is only reached at 0 and there is only one positive minimal harmonic function up to multiplication by a constant.
Since every positive harmonic function is a linear combination of positive minimal harmonic functions (see Section~\ref{SectionMarkovchainsandMartinboundary}),
we see that there is only one positive harmonic function up to multiplication by a constant.
In particular, the Martin boundary is trivial.
\end{proof}

We now prove Theorem~\ref{theorem3}.
Thanks to Lemma~\ref{Spitzertrick}, we can assume that the chain is strongly irreducible.
First, if the random walk is non-centered, then Assumptions~\ref{hyp1} and \ref{hyp2} from the last section are satisfied.
We can then apply Proposition~\ref{corotransience} to conclude.
On the contrary, assume that $\nabla \lambda (0)=0$.
Since the chain is strongly irreducible,
we can use Proposition~\ref{theorem1spitzer}.
For $u=0$, we get that
$(2\pi n)^{\frac{d}{2}}P^{(n)}(0,x)-|Q_0|^{-\frac{1}{2}}\mathrm{e}^{-\frac{1}{2n}\Sigma_0(x)}C(0)\cdot \nu(0)$
uniformly converges to 0.
Since $\mathrm{e}^{-\frac{1}{2n}\Sigma_0(x)}$ converges to 1,
$P_{k,j}^{(n)}(0,x)$ is equivalent to $C n^{-\frac{d}{2}}$, where $C$ is a positive real number.
This quantity is summable if and only if $d\geq 3$.
\qed

We now deduce from Proposition~\ref{TCL} a central limit theorem in the context of Markov chains on $\Z^d\times \{1,...,N\}$.
First, we can adapt Lévy's continuity theorem (see \cite[Theorem~26.3]{Billingsley}) to prove the following lemma.
Let $\overline{X}=(X,K)$ be a random variable in $\R^d\times \{1,...,N\}$ and denote by $\mu_j$ the restriction of the law of $\overline{X}$ to the level $K=j$, that is
$$\mathds{P}(X\in A\cap K=j)=\int_{A}\mathrm{d}\mu_j(x),\text{ for } A \text{ a Borel set in }\R^d.$$
Notice that $\mu_j$ is not a probability measure.
Then, $\mu=\sum \mu_j$ is the law of $X$.

Define the characteristic vector $\psi(\xi)$ of $\overline{X}$ as the vector of $\R^N$ whose $j$th coordinate is given by
$$\psi(\xi,j)=\int_{\R^d}\mathrm{e}^{ix\cdot \xi}\mathrm{d}\mu_j(x).$$
If $\overline{\mu}$ is the law of $\overline{X}$, then
$$\psi(\xi,j)=\int_{\R^d}\mathrm{e}^{ix\cdot \xi}\mathds{1}_{k=j}\mathrm{d}\overline{\mu}(x,k).$$
\begin{lem}\label{ThickenedLevy}
Let $\overline{X}_n=(X_n,K_n)$ a sequence of random variables in $\R^d\times \{1,...,N\}$ and let $\overline{X}=(X,K)$ be a random variable in $\R^d\times \{1,...,N\}$.
Denote by $\psi_n(\xi)$ and $\psi(\xi)$ the characteristic vectors of $\overline{X}_n$ and $\overline{X}$.
Then, $\overline{X}_n$ converges to $\overline{X}$ in law if and only if $\psi_n(\xi)$ converges pointwise to $\psi(\xi)$, meaning that for every $j\in \{1,...,N\}$,
$\psi_n(\xi,j)$ converges pointwise to $\psi(\xi,j)$.
\end{lem}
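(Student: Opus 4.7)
The plan is to reduce the statement to the classical one-dimensional Lévy continuity theorem applied at each level $j\in\{1,\dots,N\}$ separately, exploiting the fact that the second factor $\{1,\dots,N\}$ is a finite discrete space. The whole point is that the coordinates $\psi_n(\xi,j)$ of the characteristic vector are exactly the Fourier transforms of the sub-probability measures $\mu_{n,j}$, so everything decouples over $j$.

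First I would observe that since $\{1,\dots,N\}$ carries the discrete topology, a bounded continuous function on $\R^d\times\{1,\dots,N\}$ is the same data as an $N$-tuple of bounded continuous functions on $\R^d$. Consequently $\overline{X}_n\to\overline{X}$ in law if and only if, for every $j\in\{1,\dots,N\}$, the finite measure $\mu_{n,j}$ converges weakly to $\mu_j$ as a finite (sub-probability) measure on $\R^d$. Thus the lemma reduces to the statement: $\mu_{n,j}\to\mu_j$ weakly for every $j$ if and only if the Fourier transforms $\widehat{\mu_{n,j}}(\xi)=\psi_n(\xi,j)$ converge pointwise to $\widehat{\mu_j}(\xi)=\psi(\xi,j)$ for every $j$.

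For the direct implication, I would simply note that $x\mapsto\mathrm{e}^{ix\cdot\xi}$ is bounded continuous, so weak convergence of $\mu_{n,j}$ to $\mu_j$ immediately yields pointwise convergence of the Fourier transforms. The converse is the substantive direction. Fix $j$. Evaluating at $\xi=0$ gives $\mu_{n,j}(\R^d)=\psi_n(0,j)\to\psi(0,j)=\mu_j(\R^d)$, so the total masses converge. If $\mu_j(\R^d)=0$, then the $\mu_{n,j}$ have masses tending to $0$ and therefore converge weakly to the zero measure, which is $\mu_j$. If $\mu_j(\R^d)>0$, set $c_{n,j}=\psi_n(0,j)$ and $c_j=\psi(0,j)$; for $n$ large $c_{n,j}>0$, and the probability measures $c_{n,j}^{-1}\mu_{n,j}$ have characteristic functions $c_{n,j}^{-1}\psi_n(\cdot,j)$ which converge pointwise to $c_j^{-1}\psi(\cdot,j)$, a function continuous at $0$ (being $c_j^{-1}$ times the Fourier transform of the finite measure $\mu_j$). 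Classical Lévy continuity (\cite[Theorem~26.3]{Billingsley}) then gives weak convergence of $c_{n,j}^{-1}\mu_{n,j}$ to $c_j^{-1}\mu_j$, and multiplying back by $c_{n,j}\to c_j$ yields $\mu_{n,j}\to\mu_j$ weakly as finite measures.

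The main (and only real) obstacle is this passage from probability to sub-probability measures: one must be careful that the limits $\psi(\cdot,j)$ need not be characteristic functions of probability measures, so the classical theorem cannot be applied verbatim. The trick above, of normalizing by the mass and observing that masses converge because $\xi=0$ is in the domain of pointwise convergence, bypasses this cleanly. Once the component-wise weak convergence $\mu_{n,j}\to\mu_j$ is established for each $j$, the reduction in the first paragraph finishes the proof.
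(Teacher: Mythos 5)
Your proof is correct and follows essentially the same route as the paper, which simply invokes L\'evy's continuity theorem level by level on the finite discrete factor $\{1,\dots,N\}$. The only difference is that you spell out the detail the paper leaves implicit, namely that the level measures $\mu_{n,j}$ are only sub-probabilities, which you handle correctly by noting that the masses $\psi_n(0,j)$ converge and normalizing before applying the classical theorem.
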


The lemma readily follows from Lévy's theorem on each level.
Now, $p$ is a probability transition kernel, so that $\lambda(0)$ is well defined and $C(0)$ is of the form $(1,...,1)$.
Denote by $\nu_j$ the $j$th coordinate of the left eigenvector $\nu(0)$.
Also denote by $\overline{X}_{n}^{(k)}=(X_{n}^{(k)},K_{n}^{(k)})$ the law of the Markov chain at time $n$, starting at $(0,k)$.
Direct calculation shows that $\left [\chi_0(\frac{\xi}{\sqrt{n}})\right ]^{(n)}_{k,j}$ is the $j$th coordinate of the characteristic vector of the random variable $\overline{Y}_n^{(k)}=(Y_n^{(k)},K_n^{(k)})$, where $Y_n^{(k)}=\frac{X_{n}^{(k)}-n\nabla\lambda(0)}{\sqrt{n}}$.

Also denote by $\tilde{Q}_0$ the symmetric matrix associated to the (positive definite) quadratic form $Q_0$, by $\tilde{\Sigma}_0$ the inverse of $\tilde{Q}_0$ and by $\Sigma_0$ the quadratic form associated to $\tilde{\Sigma}_0$.
Let $X$ be a random variable in $\R^d$ following the Gaussian law associated to $\Sigma_0$, that is
$$\mathds{E}[f(X)]=\int_{\R^d}f(x)\mathrm{e}^{-\frac{1}{2}\Sigma_0(x)}\mathrm{d}x.$$
Again, direct calculation shows that $\mathrm{e}^{-\frac{1}{2}Q_u(\xi)} \nu_j$ is the $j$th coordinate of the characteristic vector of the random variable $\overline{X}=(X,\nu)$, whose law is defined by
$$\mathds{E}[f(\overline{X})]=\sum_j\nu_j\int_{\R^d}f((x,j))\mathrm{e}^{-\frac{1}{2}\Sigma_0(x)}\mathrm{d}x.$$
Thus, the random variable $\overline{X}$ follows an averaged Gaussian law.

\begin{theorem}\label{TCL2}
If the chain is strongly irreducible and has an exponential moment,
then $\overline{Y}_n^{(k)}$ converges to $\overline{X}$ in law.
\end{theorem}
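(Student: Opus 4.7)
The plan is to assemble the ingredients already prepared in the excerpt; the theorem is essentially a reading-off of Proposition~\ref{TCL} at the parameter $u=0$, followed by Lemma~\ref{ThickenedLevy}. The role of the exponential moment hypothesis is precisely to guarantee that $F(u)$ is finite on a neighborhood of $0$, i.e.\ that $0$ lies in the interior $\mathcal{F}$ of the finiteness domain of $F$, so that the analytic machinery of Section~\ref{SectionSub-Markovchainsonathickenedlattice} (differentiability of $\lambda$, spectral perturbation, Taylor expansions) is available at $u=0$. Strong irreducibility, which the theorem explicitly assumes, is what makes Perron--Frobenius and thus all of Section~\ref{SectionSub-Markovchainsonathickenedlattice} applicable.

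First I would check that $0\in H$. Since $p$ is a probability transition kernel, $F(0)$ is a stochastic matrix, so by Theorem~\ref{PerronFrob1} its dominant eigenvalue is $\lambda(0)=1$; hence $0\in H$ and the constant vector $C(0)=(1,\dots,1)$ is a right Perron eigenvector, with $\nu(0)=(\nu_1,\dots,\nu_N)$ the stationary vector of $F(0)$.

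Next I would apply Proposition~\ref{TCL} with $u=0$, which yields, for every $\xi\in\R^d$,
$$\left[\chi_0\!\left(\tfrac{\xi}{\sqrt{n}}\right)\right]^{n}\;\xrightarrow[n\to\infty]{}\;e^{-\frac{1}{2}Q_0(\xi)}\,C(0)\cdot\nu(0),$$
the convergence being uniform on compact sets in $\xi$. Reading the $(k,j)$-entry and using $C(0)_k=1$, this becomes
$$\left[\chi_0\!\left(\tfrac{\xi}{\sqrt{n}}\right)\right]^{n}_{k,j}\;\xrightarrow[n\to\infty]{}\;e^{-\frac{1}{2}Q_0(\xi)}\,\nu_j.$$

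By the identifications recorded just before the theorem, the left-hand side is the $j$-th coordinate of the characteristic vector of $\overline{Y}_n^{(k)}=(Y_n^{(k)},K_n^{(k)})$, while the right-hand side is the $j$-th coordinate of the characteristic vector of $\overline{X}=(X,\nu)$. Pointwise convergence of characteristic vectors on every level $j\in\{1,\dots,N\}$ is precisely the hypothesis of Lemma~\ref{ThickenedLevy}, so we conclude that $\overline{Y}_n^{(k)}$ converges in law to $\overline{X}$.

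There is no real obstacle here: the whole content of the theorem is already carried by Proposition~\ref{TCL}, and the only conceptual point is to note that the exponential moment hypothesis implies $0\in\mathcal{F}$, so that Proposition~\ref{TCL} is applicable at $u=0$. If one wanted to avoid assuming even an exponential moment, the argument would have to be recast purely in Fourier-theoretic terms on $\R^d\times\{1,\dots,N\}$ without recourse to the analytic continuation $F(u)$; but under the stated assumptions the proof is a direct combination of Proposition~\ref{TCL} and Lemma~\ref{ThickenedLevy}.
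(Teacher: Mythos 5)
Your proof is correct and follows essentially the same route as the paper: the paper's own proof is exactly the observation that the exponential moment and strong irreducibility give $0\in H$, after which Proposition~\ref{TCL} and Lemma~\ref{ThickenedLevy} yield the convergence in law. Your write-up merely spells out the identification of matrix entries with characteristic vectors, which the paper records in the paragraphs immediately preceding the theorem.
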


\begin{proof}
If the chain is strongly irreducible and has an exponential moment, then $0\in H$ and we can apply Proposition~\ref{TCL} and Lemma~\ref{ThickenedLevy} to conclude.
\end{proof}

\begin{rem}
The limit law is independent of the level $k$ of the starting point.
\end{rem}

This convergence in law is true if the chain has some finite exponential moment and if it is strongly irreducible.
With other techniques, one could prove convergence in law under finiteness of second moments and weaken the assumption of strong irreducibility to irreducibility.


\section{Random walks in free products}\label{SectionRandomwalksinfreeproducts}
In this section, we prove Theorem~\ref{theorem1}.
Again, we will only prove here that the Martin boundary coincides with the geometric boundary and we will show that the boundary is minimal in the next section.
We consider an irreducible random walk on the free product $\Z^{d_1}\star \Z^{d_2}$.
Actually, if $d_1=d_2=1$, then $\Z^{d_1}\star \Z^{d_2}$ is nothing else than the free group of rank 2.
In this particular case, the theorem was proved by Y.~Derriennic in \cite{Derriennic}.
The proof we give below also works in this setting, but it becomes much simpler.

Let $e$ be the neutral element of $\Z^{d_1}\star \Z^{d_2}$.
Recall that an element $g$ of $\Z^{d_1}\star \Z^{d_2}$ that differs from $e$ can be uniquely written as
$g=a_1b_1...a_nb_n$, with $a_i\in \Z^{d_1}$, $b_i\in \Z^{d_2}$ and $a_i\neq 0$ except maybe $a_1$, $b_i\neq 0$ except maybe $b_n$.
We say that the sequence $a_1,b_1,...,a_n,b_n$ represents $g$.
We call \emph{length} of $g$ (or distance between $g$ and $e$) the quantity
$$|g|:=\|a_1\|_1+\|b_1\|_1+...+\|a_n\|_1+\|b_n\|_1,$$
where $\|v\|_1=|v_1|+...+|v_d|$, if $v=(v_1,...,v_d)$.
We can define a metric on $\Z^{d_1}\star \Z^{d_2}$ declaring that $d(g,h)=|g^{-1}h|$. We call this metric the \emph{word metric}
Finally, an \emph{infinite word} is an infinite sequence $a_1,b_1,...,a_n,b_n,...$ alternating elements of $\Z^{d_1}$ and elements of $\Z^{d_2}$, such that every one of them, except maybe $a_1$ differs from the neutral element.

The transition kernel of the Markov chain can now be written as
$$p(g,h)=p(e,g^{-1}h)=\mu(g^{-1}h),$$
where $\mu$ is a probability measure on $\Z^{d_1}\star \Z^{d_2}$.
The equality $p(g,h)=p(e,g^{-1}h)$ means that the Markov chain is invariant under translation: it is a random walk.
Finite support for the random walk means finite support for $\mu$.
Let $r(\mu)$ be the supremum of $|g|$ over $g\in \Z^{d_1}\star \Z^{d_2}$ such that $\mu(g)>0$.
It is the minimal radius of a ball in which the support of $\mu$ is included.

\bigskip
\emph{Thanks to Lemma~\ref{Spitzertrick}, we can and will assume in all this section that the random walk is strongly irreducible and satisfies $\mu(e)>0$.}

\subsection{Transitional sets}\label{Sectiontransitionalsets}
We adapt to our situation the notion of transitional set, as defined in \cite{Derriennic}.
\begin{definition}
Let $g\in \Z^{d_1}\star \Z^{d_2}$ be an element represented by $a_1,b_1,...,a_n,b_n$.
The size of $g$ is the number of non-zero elements among $a_1,...,b_n$.
It is denoted by $s(g)$.
\end{definition}

In other words, the size thus defined is the number of changes of $\Z^{d_i}$ factors.
It differs from the length of a geodesic from $e$ to $g$.

\begin{definition}
Let $g=a_1b_1...a_nb_n\in \Z^{d_1}\star \Z^{d_2}$.
If $p\leq s(g)$, the prefix of size $p$ of $g$ is the element $h=a_1b_1...a_kb_k\in\Z^{d_1}\star \Z^{d_2}$,
where $k\leq n$ and with $s(h)=p$.
\end{definition}

We now define transitional sets.
\begin{definition}\label{deftransitionalset}
If $g\neq h\in \Z^{d_1}\star \Z^{d_2}$, write $g^{-1}h=a_1b_1...a_nb_n$.
Let $x=a_1b_1...a_kb_k$ be a prefix of $g^{-1}h$ ($k\leq n$).
A transitional set between $g$ and $h$ is a set of the form
$$V=g\cdot B(x,r(\mu)),$$
where $B(x,r(\mu))$ is the ball of center $x$ and radius $r(\mu)$ (for the word metric).
\end{definition}

The following lemma justifies the name transitional set.

\begin{lem}\label{lemmatransitionalsets}
Let $g,h\in \Z^{d_1}\star \Z^{d_2}$.
Assume there exists a transitional set $V$ between $g$ and $h$.
The random walk starting at $g$ cannot reach $h$ before visiting $V$.
\end{lem}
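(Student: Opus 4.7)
The plan is to reduce to a unit-step argument in the Cayley graph and then exploit the Bass--Serre tree of the free product $\Z^{d_1}\star\Z^{d_2}$.

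By left translation we may assume $g=e$, so that $h=a_1b_1\cdots a_nb_n$ is the normal form of $g^{-1}h$ and $x=a_1b_1\cdots a_kb_k$ is a prefix with $k\le n$. Fix a realisation $e=g_0,g_1,\dots,g_m=h$ of the trajectory; by definition of $r(\mu)$ one has $|g_i^{-1}g_{i+1}|\le r(\mu)$. Since the word metric comes from the unit generators of the two $\Z^{d_i}$ factors, each jump $g_i\to g_{i+1}$ admits a filling by a Cayley-graph geodesic of at most $r(\mu)$ unit steps, and concatenating these fillings produces a unit-step path $e=z_0,z_1,\dots,z_M=h$ that passes through every $g_i$. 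The whole lemma reduces to the geometric claim that \emph{every such unit-step path must contain $x$ as one of its vertices}: if $z_p=x$ lies in the block refining the jump $g_j\to g_{j+1}$, then $d(g_j,x)\le r(\mu)$, so $g_j\in B(x,r(\mu))$, and the original trajectory visits $V=g\cdot B(x,r(\mu))$.

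To establish the claim I would use the Bass--Serre tree $T$ of the free product: its vertices are the left cosets $v\Z^{d_1}$ and $v\Z^{d_2}$, and each $v\in\Z^{d_1}\star\Z^{d_2}$ labels an edge $\bar v$ joining $v\Z^{d_1}$ to $v\Z^{d_2}$; different group elements label different edges because $\Z^{d_1}\cap\Z^{d_2}=\{e\}$ in a free product. The normal form of $h$ unfolds along the tree geodesic $\Z^{d_1}\to a_1\Z^{d_2}\to a_1b_1\Z^{d_1}\to\cdots\to h\Z^{d_1}$, whose successive edges are $\bar e,\bar{a_1},\overline{a_1b_1},\dots,\bar h$, so $\bar x$ sits on the edge-geodesic from $\bar e$ to $\bar h$. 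A unit step $z_i\to z_{i+1}$, made by a generator of one of the two factors $\Z^{d_j}$, preserves the coset $z_i\Z^{d_j}$, so the edges $\bar{z_i}$ and $\bar{z_{i+1}}$ share that vertex of $T$; the concatenated unit-step path therefore becomes an edge-walk in $T$ in which consecutive edges share a vertex, running from $\bar e$ to $\bar h$. Removing the edge $\bar x$ splits $T$ into two subtrees whose edges are pairwise non-adjacent (any vertex of $T$ belongs to exactly one subtree), and $\bar e,\bar h$ lie in opposite subtrees; hence the edge-walk must use $\bar x$ at some step, forcing $z_i=x$.

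The main obstacle is this last tree-theoretic step, specifically translating the standard cut-edge argument into the edge-walk setting and checking that passing through $\bar x$ corresponds to $z_i=x$ exactly rather than merely being close to $x$. Both points are enabled by the fact that in the Bass--Serre tree of a free product every group element labels its own distinct edge, so landing on the edge $\bar x$ really means pausing at the element $x$ itself. For $d_1=d_2=1$ the Cayley graph is already a tree and this specialises to Derriennic's original argument; for $d_i\ge 2$ the Cayley graph contains Euclidean flats, and the coset-level Bass--Serre tree is the correct substitute for the missing tree structure of the Cayley graph.
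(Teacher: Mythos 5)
Your argument is correct, and it is in fact more self-contained than the paper's treatment, which only gives a heuristic sketch and refers to Derriennic's Lemma~III.1 for the formal proof. Both arguments rest on the same two ingredients, namely that every jump of the walk has word length at most $r(\mu)$ and that the prefix point $gx$ separates $g$ from $h$, but they implement the separation differently. The paper (following Derriennic) tracks the first time $l$ at which $gx$ lies on a geodesic from $g$ to the current position $X_l$ and derives a contradiction with $d(X_{l-1},X_l)\leq r(\mu)$ if neither $X_{l-1}$ nor $X_l$ lies in $V$; you instead refine each jump into at most $r(\mu)$ unit steps and prove that $x$ is an honest cut vertex of the Cayley graph by passing to the Bass--Serre tree, where each group element labels its own edge (edge stabilizers are trivial since $\Z^{d_1}\cap\Z^{d_2}=\{e\}$) and removing the open edge $\bar{x}$ disconnects $\bar{e}$ from $\bar{h}$, so the induced edge-walk must use $\bar{x}$, i.e.\ some unit-step vertex equals $x$ and the corresponding jump endpoint lies in $B(x,r(\mu))$. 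Your route has the advantage of being complete as written and of working verbatim for a free product of two arbitrary finitely generated groups (nothing specific to $\Z^{d_i}$ is used), which is what Theorem~1.1' requires. The only detail to add is the degenerate prefixes: when $x=e$ or $x=g^{-1}h$ the edge $\bar{x}$ coincides with $\bar{e}$ or $\bar{h}$ and the separation statement as you phrase it does not apply, but then $g\in V$ (respectively $h\in V$) and the conclusion is immediate --- this is exactly how the paper disposes of the case $l=0$.
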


For formal proof, we refer to \cite[Lemma~III.1]{Derriennic}.
We only give a heuristic explanation.
Let $x$ be a prefix of $g^{-1}h$ such that $V=g\cdot B(x,r(\mu))$.
By definition, $x$ is of the form $x=a_1b_1...a_kb_k$.
To go from $g$ to $h$, the random walks goes through some path $X_0=g,X_1,...,X_m=h$.
Let $l$ be the first time such that $gx$ is on a geodesic between $g$ and $X_l$.
If $l=0$, i.e.\ if $x$ is trivial, then $g\in V$.
Assume that $l>0$.
If $X_{l-1}$ or $X_l$ is in $V$, then the random walk visits $V$.
If not, $d(X_{l-1},X_l)>r(\mu)$, since $gx$ is one of the points where the geodesic from $g$ to $h$ changes cosets.
This is a contradiction.

\bigskip
We use notations of \cite{Derriennic}. They will be useful in the following.
Let $g\in \Z^{d_1}\star \Z^{d_2}$ and let $V\subset \Z^{d_1}\star \Z^{d_2}$ and $v\in V$.
We denote by $p_{g}^V(v)$ the probability that the first visit to $V$ of the random walk starting at $g$ is at $v$.
We can see $\{p_{g}^V(v)\}_{v\in V}$ as a vector whose coordinates are indexed by $V$.
We denote by $p_g^V$ this vector.

If $V\subset \Z^{d_1}\star \Z^{d_2}$ and $W\subset\Z^{d_1}\star \Z^{d_2}$,
we denote by $P_{V}^W$ the matrix whose lines are indexed by $V$, columns by $W$, and whose entries are
$$(P_{V}^W)_{v,w}=p_v^W(w).$$
Those matrices are sub-stochastic.

Recall that for $g,h \in \Z^{d_1}\star \Z^{d_2}$, $\mathds{P}(g\rightarrow h)$ is the probability that the random walk starting at $g$ reaches $h$.
If $g\in \Z^{d_1}\star \Z^{d_2}$ and if $V\subset \Z^{d_1}\star \Z^{d_2}$, denote by $p_{V}^g$ the vector whose coordinates are $\{\mathds{P}(v\rightarrow g)\}_{v\in V}$.
If $V$ is a transitional set between $g$ and $h$, then the random walk from $g$ to $h$ goes through $V$.
We rewrite this fact as
$$\mathds{P}(g\rightarrow h)=\sum_{v\in V}p_{g}^V(v)\mathds{P}(v\rightarrow h)=p_g^V\cdot p_V^h.$$

Assume that $V_1,...,V_{n+1}$ are $n+1$ disjoint transitional sets between $g$ and $h$.
We can order them according to the geodesic from $g$ to $h$:
$d(g,V_i)<d(g,V_{i+1})$.
Denote by $P_i$ the matrix $P_{V_i}^{V_{i+1}}$.
Then, the random walk from $g$ to $h$ has to go through each set $V_i$ successively
This translates into the following equality
$$\mathds{P}(g\rightarrow h)=p_g^{V_1}\cdot P_1\dots P_np_{V_{n+1}}^h.$$

We will now apply Theorem~\ref{PerronFrob2} to this last equality.
The matrices $P_V^W$ have non-negative entries.
If $V$ and $W$ are two transitional sets, the matrix $P_V^W$ is a square matrix of size the cardinal of the ball centered at $e$ of radius $r(\mu)$.
Denote by $k(\mu)$ this cardinal.
Then, $P_V^W$ is a $k(\mu)\times k(\mu)$ matrix.
To ensure positivity properties of this matrix, we will use the following lemma.
\begin{lem}\label{lemmaRmu}
Let $g\in \Z^{d_1}\star \Z^{d_2}$.
Let $B$ be the ball of center $g$ and radius $r(\mu)$.
There exists an integer $R(\mu)$ such that for every $h\in B$, the probability that the random walk starting at $g$ reaches $h$ before leaving the ball of center $g$ and radius $R(\mu)$ is positive.
\end{lem}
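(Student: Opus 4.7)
The plan is to reduce immediately to the case $g=e$ using translation invariance of the random walk, and then to exploit irreducibility together with the fact that word-length balls in $\Z^{d_1}\star\Z^{d_2}$ are finite. Precisely, since $\mu(g^{-1}h)=p(g,h)$, the probability of any fixed trajectory is the same when all vertices are translated on the left by $g^{-1}$, so it suffices to produce $R(\mu)$ for the ball $B=B(e,r(\mu))$.

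Next I would observe that $B$ is a finite subset of $\Z^{d_1}\star\Z^{d_2}$. Indeed, any $h\in B$ has a normal form $a_1b_1\dots a_nb_n$ with $\|a_1\|_1+\|b_1\|_1+\dots+\|a_n\|_1+\|b_n\|_1\le r(\mu)$. This forces $n\le r(\mu)$ and forces each $a_i$ (resp.\ $b_i$) to lie in the $\ell^1$-ball of radius $r(\mu)$ inside $\Z^{d_1}$ (resp.\ $\Z^{d_2}$), so only finitely many normal forms are possible.

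Now for each $h\in B$, irreducibility (which is in force under our standing assumptions, strengthened to strong irreducibility and $\mu(e)>0$) yields a finite sequence $e=x_0,x_1,\dots,x_{m_h}=h$ with $\mu(x_{i-1}^{-1}x_i)>0$ for $i=1,\dots,m_h$. The probability that the random walk follows exactly this trajectory is $\prod_{i=1}^{m_h}\mu(x_{i-1}^{-1}x_i)>0$. Since $\{x_0,\dots,x_{m_h}\}$ is finite, there exists $R_h\in\N$ such that this entire trajectory is contained in the ball $B(e,R_h)$. Taking
\[
R(\mu):=\max_{h\in B}R_h,
\]
which is finite because $B$ is finite, we conclude that for every $h\in B$ the random walk starting at $e$ reaches $h$ before leaving $B(e,R(\mu))$ with positive probability. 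Translating back by $g$ gives the statement of the lemma.

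I do not expect a genuine obstacle here: the only thing one has to check carefully is that $B(e,r(\mu))$ is finite, which follows from the explicit description of the word metric on $\Z^{d_1}\star\Z^{d_2}$ given just before the lemma; once that is in hand, irreducibility plus finiteness of trajectories does all the work.
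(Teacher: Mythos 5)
Your proof is correct and follows essentially the same route as the paper: translate to $g=e$ by invariance, use irreducibility to pick for each $h$ in the (finite) ball a positive-probability path contained in some ball $B(e,R_h)$, and take the maximum of the $R_h$. The only difference is that you spell out the finiteness of $B(e,r(\mu))$, which the paper leaves implicit.
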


\begin{proof}
Since the random walk is invariant under translation, we can assume that $g=e$.
Since it is irreducible, for every $h \in B$, there is a path from $e$ to $h$ such that the probability of following this path, starting at $e$, is positive.
This path stays in some ball of radius $R(h)$.
It then suffices to define $R(\mu)=\mathrm{sup}\{R(h),h\in B\}$.
\end{proof}

We deduce from this lemma that
if one of the entry of a matrix $P_V^W$ is zero, then the whole column is 0, if $d(V,W)>R(\mu)$.

Let $T$ be a positive matrix whose columns are either zero or positive.
Such a matrix acts on the cone of positive vectors of $\R^{k_{\mu}}$.
Let $C_{\mu}$ be this cone and let $C'_{\mu}$ be the intersection between this cone and the unit sphere: $C'_{\mu}=C_{\mu}\cap \Ss^{k_{\mu}-1}$.
Such a matrix $T$ also acts on $C'_{\mu}$ via
$$T:v\in C'_{\mu}\mapsto \frac{Av}{\|Av\|}\in C'_{\mu}.$$
Denote by $d_{\mathcal{H}}$ the Hilbert distance on $C'_{\mu}$, as defined in Section~\ref{SectionPerronFrobenius}.
The diameter of $T$ is by definition the diameter of the set $T\cdot C'_{\mu}=\{\frac{Tv}{\|Tv\|}, v\in C'_{\mu}\}$ for the distance $d_{\mathcal{H}}$.
Denote it by $\Delta(T)$.

\begin{prop}\label{propalpha1}
Let $V,W$ be two disjoint transitional sets between two elements $g$ and $h$.
Assume that $d(V,W)>R(\mu)$.
Then, there exists a real number $0<\alpha<1$ independent of $V$ and $W$ (and of $g$ and $h$) such that for every $v,v'\in V$ and for every $w\in W$,
$P_V^W(v,w)\geq \alpha P_V^W(v',w)$.
\end{prop}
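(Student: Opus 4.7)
\medskip
\noindent\textbf{Plan for Proposition~\ref{propalpha1}.} The strategy is to use the strong Markov property applied at the hitting time of $v$, combined with the observation that every transitional set $V=g\cdot B(x,r(\mu))$ is a translate of $B(e,r(\mu))$ and therefore has bounded diameter $2r(\mu)$ and bounded cardinality $k(\mu)$, independent of $V$.

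First I would invoke (a mild strengthening of) Lemma~\ref{lemmaRmu}, enlarging $R(\mu)$ if necessary so that for every pair $u,u'$ with $d(u,u')\leq 2r(\mu)$, the probability that the walk started at $u'$ reaches $u$ without leaving the ball $B(u',R(\mu))$ is positive. Since there are only finitely many such pairs $(u,u')$ up to translation and the walk is irreducible, these probabilities admit a common positive lower bound $\alpha>0$ depending only on $\mu$. This is essentially the same argument as in the original Lemma~\ref{lemmaRmu}, just applied to balls of radius $2r(\mu)$ instead of $r(\mu)$.

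Next, I use the hypothesis $d(V,W)>R(\mu)$: for every $v'\in V$, any point of $B(v',R(\mu))$ lies within distance $R(\mu)$ of $V$, hence is not in $W$. Therefore the path from $v'$ to $v$ provided by the previous step avoids $W$ entirely. Fix $v,v'\in V$ and $w\in W$, and consider the walk started at $v'$. On the event $A$ that this walk reaches $v$ while staying in $B(v',R(\mu))$ --- which has probability at least $\alpha$ --- the walk has not yet visited $W$. Applying the strong Markov property at the first hitting time of $v$, the conditional probability that the subsequent first visit to $W$ occurs at $w$ is exactly $p_v^W(w)$. Thus
$$p_{v'}^W(w)\geq \mathds{P}_{v'}(A)\cdot p_v^W(w)\geq \alpha\, p_v^W(w).$$
Swapping the roles of $v$ and $v'$ gives the reverse comparison, and the proposition follows (replacing $\alpha$ by $\min(\alpha,1/2)$ if one insists on $\alpha<1$).

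The only mildly delicate point is the calibration of $R(\mu)$ so that both the excursion from $v'$ to $v$ stays inside $B(v',R(\mu))$ and this ball is disjoint from $W$. This is a purely combinatorial adjustment of Lemma~\ref{lemmaRmu} using irreducibility and the finiteness of $B(e,2r(\mu))$; no genuinely new ingredient is needed.
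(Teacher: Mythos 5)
Your proof is correct and follows essentially the same route as the paper: you compare $p_{v'}^W(w)$ with $p_v^W(w)$ via an excursion from $v'$ to $v$ confined to a ball of radius $R(\mu)$, which misses $W$ because $d(V,W)>R(\mu)$, and you obtain a uniform $\alpha$ from finiteness of the ball and translation invariance, exactly as in the paper's argument. Your explicit enlargement of $R(\mu)$ to handle pairs of points at distance up to $2r(\mu)$ is a minor but legitimate tightening of the paper's direct appeal to Lemma~\ref{lemmaRmu}, which as stated only covers points within $r(\mu)$ of the center.
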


\begin{proof}
The number $P_V^W(v,w)$ is the probability that the random walk starting at $v$ first visits $W$ at $w$.
Denote by $P(v,v';R(\mu))$ the probability that the random walk starting at $v$ reaches $v'$ before leaving the ball of center $v$ and radius $R(\mu)$.
According to Lemma~\ref{lemmaRmu}, $P(v,v';R(\mu))>0$.
Furthermore, since $d(V,W)>R(\mu)$, $P_V^W(v,w)\geq P(v,v';R(\mu))P_V^W(v',w)$.

Since there is a finite number of $v$ and $v'$ inside $V$, one can get a uniform lower bound for $P(v,v';R(\mu))$.
Besides, the random walk is invariant under translation, so that this lower bound can be chosen independently of $V$.
Thus, one can find $\alpha\in (0,1)$, such that $P(v,v';R(\mu))\geq\alpha$.
\end{proof}

\begin{coro}\label{coroalpha}
Let $V$ and $W$ be two disjoint transitional sets between $g$ and $h$.
Assume that $d(V,W)>R(\mu)$.
The diameter of $P_V^W$ is bounded, independently of $V$ and $W$: there exists $\Delta>0$ such that $\Delta(P_V^W)\leq \Delta$.
\end{coro}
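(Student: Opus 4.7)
The plan is to combine Proposition~\ref{propalpha1} with the fact that any compact subset of the open positive cone has finite Hilbert diameter. I will show that, regardless of $V$ and $W$, the image $P_V^W\cdot C'_\mu$ lies in a fixed compact subset of $C'_\mu$ depending only on $\alpha$ and $k(\mu)$, and then invoke Remark~\ref{remarktopologies}.

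First I would verify that the zeros of $P_V^W$ are divided into columns, so that the cone action described in Section~\ref{SectionPerronFrobenius} is defined. Applying Proposition~\ref{propalpha1} with the roles of $v$ and $v'$ exchanged shows that $P_V^W(v,w)$ and $P_V^W(v',w)$ vanish simultaneously for all $v,v' \in V$; hence each column is either identically zero or entirely strictly positive.

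Next, for any $v_0 \in C'_\mu$, the coordinates of $u = P_V^W v_0$ satisfy $u_v \geq \alpha u_{v'}$ for all $v, v' \in V$, obtained by summing the inequality of Proposition~\ref{propalpha1} weighted by the positive coefficients $v_0(w)$. Strict positivity of $u$ follows from irreducibility: from any $v \in V$ the random walk reaches $W$ with positive probability (since $W$ is non-empty), so some column $w_\ast$ of $P_V^W$ is non-zero, hence strictly positive by the column structure; then $u_v \geq P_V^W(v,w_\ast)v_0(w_\ast)>0$ for every $v$. After normalization, $P_V^W \cdot v_0$ lies in
$$K_\alpha = \{u \in C'_\mu \,:\, u_i \geq \alpha\, u_j \text{ for all } i, j \in \{1,\ldots,k(\mu)\}\},$$
a set depending only on $\alpha$ and $k(\mu)$, not on $V$ or $W$.

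Finally, every $u \in K_\alpha$ has all coordinates bounded below by $\alpha/\sqrt{k(\mu)}$, so $K_\alpha$ is a compact subset of the open positive cone. By Remark~\ref{remarktopologies} the Hilbert and Euclidean topologies agree on $C'_\mu$, and the Hilbert distance is continuous in the interior (it only blows up at the boundary of $C_\mu$). Thus $K_\alpha$ has finite Hilbert diameter $\Delta=\Delta(\alpha,k(\mu))$, which yields $\Delta(P_V^W)\leq \Delta$ uniformly in $V$ and $W$. The main subtlety in the argument is confirming strict positivity of the image vectors: one must ensure that no normalized image ever degenerates toward the boundary of the cone, which is what ties together irreducibility, the non-emptiness of $W$, and the all-or-nothing column structure from Proposition~\ref{propalpha1}.
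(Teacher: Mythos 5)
Your proof is correct and takes essentially the same route as the paper: you use Proposition~\ref{propalpha1} to show that the normalized image $P_V^W\cdot C'_{\mu}$ lies in the fixed compact subset of $C'_{\mu}$ where every coordinate is at least $\alpha/\sqrt{k_{\mu}}$, and then invoke Remark~\ref{remarktopologies} to conclude that this set has finite Hilbert diameter, giving a bound independent of $V$ and $W$. Your extra verification of the all-or-nothing column structure and of strict positivity of the image is sound, and merely makes explicit what the paper records just after Lemma~\ref{lemmaRmu}.
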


\begin{proof}
Let $x\in C'_{\mu}$.
The vector $x$ can be indexed by elements of $W$.
The coordinate of the vector $P_V^W(x)$ which corresponds to $v\in V$ is
$$[P_V^W(x)]_v=\sum_{w\in W}p_v^W(w)x_w.$$
Let $v_0\in V$.
Then, for every $v\in V$, $p_v^W(w)\leq \frac{1}{\alpha} p_{v_0}^W(w)$, so thanks to Proposition~\ref{propalpha1}, $[P_V^W(x)]_v\leq \frac{1}{\alpha}[P_V^W(x)]_{v_0}$.
In particular,
$$\|P_V^W(x)\|\leq \frac{\sqrt{k_{\mu}}}{\alpha}[P_V^W(x)]_{v_0}.$$

Let $y$ be the vector $\frac{P_V^W(x)}{\|P_V^W(x)\|}$ of $C'_{\mu}$, i.e. $y=P_V^W\cdot x$.
Then, $y_{v_0}=\frac{[P_V^W(x)]_{v_0}}{\|P_V^W(x)\|}$, so $y_{v_0}\geq \frac{\alpha}{\sqrt{k_{\mu}}}=\beta$.
Thus, the range of $P_V^W$ lies inside the set
$$\{y\in C'_{\mu}, y_k\geq \beta, k=1,...,k_{\mu}\}.$$
This is a compact set of $C'_{\mu}$ with respect to the Euclidean distance, so it is compact for the Hilbert distance, since both distances induce the same topology (see Remark~\ref{remarktopologies}).
\end{proof}

\subsection{Convergence of Martin kernels}\label{SectionconvergenceofMartinkernelsfreeproducts}
To study the Martin kernels $K(g,h)$, the base point is $e$.
There will be several steps for proving Theorem~\ref{theorem1}.
We first show that if $(g_n)$ converges to some infinite word, then the Martin kernel $K(\cdot, g_n)$ converges to some function.
Recall that convergence to an infinite word is defined as follows:
a sequence $(g_n)$ converges to an infinite word $\tilde{g}$ if for every $p$, there exists $n_0$ such that for every $n\geq n_0$, prefixes of size $p$ of $g_n$ and $\tilde{g}$ both exist and match.

\begin{prop}\label{freeproductMartinboundary1}
Let $(g_n)$ be a sequence in $\Z^{d_1}\star \Z^{d_2}$ and $g\in \Z^{d_1}\star \Z^{d_2}$.
Assume that $(g_n)$ converges to some infinite word $\tilde{g}$.
Then, $K(g,g_n)$ converges to some quantity that we denote by $K(g,\tilde{g})$.
\end{prop}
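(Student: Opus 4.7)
The plan is to express $K(g,g_n)$ as a ratio of two first-visit probabilities, decompose both along a common collection of transitional sets sitting along the infinite word $\tilde g$, and then apply the Hilbert-metric contraction theorem (Theorem \ref{PerronFrob2}) to recover convergence. By translation invariance $G(g_n,g_n)=G(e,e)$, so
\[
K(g,g_n)=\frac{G(g,g_n)}{G(e,g_n)}=\frac{\mathds{P}(g\to g_n)}{\mathds{P}(e\to g_n)},
\]
and this is the object I would study.

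First, I would fix once and for all an infinite sequence $V_1,V_2,\ldots$ of pairwise disjoint transitional sets sitting along the tail of $\tilde g$, chosen sufficiently far apart that $d(V_i,V_{i+1})>R(\mu)$; this is possible because $\tilde g$ is infinite. Since $g_n\to\tilde g$, for every fixed $k$ the prefixes of size $k$ of $g_n$ agree with those of $\tilde g$ for $n$ large, and because the tail of $g^{-1}g_n$ coincides, beyond a bounded initial portion depending only on $g$, with the tail of $\tilde g$, the same sets $V_i$ also appear as transitional sets between $g$ and $g_n$ (with their defining prefix $g^{-1}x_i$ of $g^{-1}g_n$ corresponding to the defining prefix $x_i$ of $g_n$). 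Thus for $n$ large there exists $k_n$, with $k_n\to\infty$, such that $V_1,\ldots,V_{k_n}$ are disjoint transitional sets simultaneously between $e$ and $g_n$ and between $g$ and $g_n$.

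Iterating Lemma \ref{lemmatransitionalsets}, both probabilities split as
\[
\mathds{P}(h\to g_n)=p_h^{V_1}\cdot P_1P_2\cdots P_{k_n-1}\cdot p_{V_{k_n}}^{g_n},\qquad h\in\{e,g\},
\]
with $P_i=P_{V_i}^{V_{i+1}}$. Setting $y_n=P_1\cdots P_{k_n-1}\,p_{V_{k_n}}^{g_n}$, a positive column vector indexed by $V_1$, the ratio rewrites as
\[
K(g,g_n)=\frac{p_g^{V_1}\cdot y_n}{p_e^{V_1}\cdot y_n}=\frac{p_g^{V_1}\cdot(y_n/\|y_n\|)}{p_e^{V_1}\cdot(y_n/\|y_n\|)}.
\]
By Corollary \ref{coroalpha}, the Hilbert diameters $\Delta(P_i)$ are bounded by a constant independent of $i$, since $d(V_i,V_{i+1})>R(\mu)$. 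Applying Theorem \ref{PerronFrob2} to the fixed sequence $(P_i)$, the images $P_1\cdots P_{k_n-1}\cdot C'_\mu$ are nested subsets of $C'_\mu$ whose Hilbert diameters tend to $0$, so $y_n/\|y_n\|$ converges in $C'_\mu$ to a limit $\tilde y$ that does not depend on the varying initial vector $p_{V_{k_n}}^{g_n}/\|\cdot\|$. Since the entries of $p_e^{V_1}$ are all positive (by irreducibility together with the assumption $\mu(e)>0$) and those of $\tilde y$ too, the denominator stays bounded away from $0$, and passing to the limit yields $K(g,g_n)\to (p_g^{V_1}\cdot\tilde y)/(p_e^{V_1}\cdot\tilde y)$, which one takes as the definition of $K(g,\tilde g)$.

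The main obstacle is the combinatorial bookkeeping of the second step: one has to verify that a single choice of transitional sets along $\tilde g$ can indeed serve both pairs $(e,g_n)$ and $(g,g_n)$, with their number tending to infinity. This relies on the stability of the free-product normal form under left-multiplication by a fixed element $g$ once one is sufficiently far along the word. Once this is in place, the algebraic decomposition and the contraction machinery from Section \ref{SectionPerronFrobenius} do essentially all the remaining work.
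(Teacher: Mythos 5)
Your proposal is correct and follows essentially the same route as the paper: decompose $\mathds{P}(g\to g_n)$ and $\mathds{P}(e\to g_n)$ through a common chain of well-separated transitional sets, write $K(g,g_n)$ as a ratio of linear functionals of the normalized matrix product, and conclude by the uniform diameter bound of Corollary~\ref{coroalpha} together with the contraction Theorem~\ref{PerronFrob2}; fixing the sets $V_i$ along $\tilde{g}$ in advance even absorbs the paper's separate verification that the limit does not depend on the particular sequence $(g_n)$. The only nitpick is that you do not need (and should not claim without argument) that \emph{every} entry of $p_e^{V_1}$ is positive: it suffices that $p_e^{V_1}$ is a nonzero non-negative vector while the limit $\tilde{y}\in C'_{\mu}$ has all positive coordinates, which already keeps the denominator away from $0$.
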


\begin{proof}
By definition, $K(g,g_n)=\frac{G(g,g_n)}{G(e,g_n)}=\frac{\mathds{P}(g\rightarrow g_n)}{\mathds{P}(e\rightarrow g_n)}$.
We first study $\mathds{P}(g\rightarrow g_n)$.

If $g_n$ converges to $\tilde{g}$, in particular, $s(g_n)$ tends to infinity.
Thus, one can find disjoint transitional sets $V_1,...,V_{\varphi(n)}$ between $g$ and $g_n$, such that $\varphi(n)$ tends to infinity.
One can even assume that $d(V_i,V_{i+1})>R(\mu)$ and $d(V_{\varphi(n)},g_n)>R(\mu)$.
Denote by $P_i$ the matrix $P_{V_i}^{V_{i+1}}$ for every transitional sets $V_i$ and $V_{i+1}$.
Then,
$$\mathds{P}(g\rightarrow g_n)=p_g^{V_1}\cdot P_1\dots P_{\varphi(n)-1}p_{V_{\varphi(n)}}^{g_n}.$$

Up to some finite prefix, $g_n$ and $g^{-1}g_n$ begin with the same letters.
Thus, taking $d(g,V_1)$ large enough, one can assume that $V_1,...,V_{\varphi(n)}$ are also transitional sets between $e$ and $g_n$.
Then,
$$\mathds{P}(e\rightarrow g_n)=p_e^{V_1}\cdot P_1\dots P_{\varphi(n)-1}p_{V_{\varphi(n)}}^{g_n}.$$

Combining those,
$$K(g,g_n)=\frac{p_g^{V_1}\cdot P_1\dots P_{\varphi(n)-1}p_{V_{\varphi(n)}}^{g_n}}{p_e^{V_1}\cdot P_1\dots P_{\varphi(n)-1}p_{V_{\varphi(n)}}^{g_n}}.$$
According to Theorem~\ref{PerronFrob2}, $P_1\dots P_{\varphi(n)-1}p_{V_{\varphi(n)-1}}^{g_n}$ converges in norm to some vector $f$, which may depend on the sequence $(g_n)$.
Thus, $K(g,g_n)$ converges to $\frac{p_g^{V_1}\cdot f}{p_e^{V_1}\cdot f}$.

To conclude one has to show that this limit does not depend on $(g_n)$.
Let $(g'_n)$ be another sequence converging to $\tilde{g}$.
When $n$ tends to infinity, $g_n$ and $g'_n$ have a common prefix of arbitrarily large size.
Then, one can choose transitional sets for both $g_n$ and $g'_n$, so that the matrices $P_i$ for $g_n$ and $g'_n$ are the same.
According to Corollary~\ref{coroalpha} and Theorem~\ref{PerronFrob2}, the limits of $P_1\dots P_{\varphi(n)-1}\cdot p_{V_{\varphi(n)-1}}^{g_n}$ and $P_1\dots P_{\varphi(n)-1}\cdot p_{V_{\varphi(n)-1}}^{g'_n}$ are the same.

Since neither $f$ nor $V_1$ depend on the sequence $(g_n)$, the limit $K(g,g_n)$ does not depend on it, which concludes the proof.
\end{proof}

We now deal with sequences $(g_n)$ converging to infinity in some $\Z^{d_i}$ factor.
Recall that such a sequence converges in a $\Z^{d_1}$ factor $a_1b_1...a_kb_k\Z^{d_1}$ if there exists $n_0$ such that
for every $n\geq n_0$, $g_n$ has a prefix of the form $a_1b_1...a_kb_ka_{k+1,n}$ with $a_{k+1,n}\in \Z^{d_1}$ converging in the $\mathrm{CAT}(0)$ boundary of $\Z^{d_1}$.
That is, $\|a_{k+1,n}\|$ tends to infinity and $\frac{a_{k+1,n}}{\|a_{k+1,n}\|}$ converges to some point on the sphere. Denote by $\theta$ this point on the sphere and
denote by $a_1b_1...a_kb_k\theta$ the limit of $(g_n)$.
Convergence in a $\Z^{d_2}$ factor is defined similarly.

\begin{prop}\label{freeproductMartinboundary2}
Let $(g_n)$ be a sequence in $\Z^{d_1}\star \Z^{d_2}$ and $g\in \Z^{d_1}\star \Z^{d_2}$.
Assume that $(g_n)$ converges in a $\Z^{d_1}$ factor to a limit
$\tilde{g}=a_1b_1...a_kb_k\theta$.
Then, $K(g,g_n)$ converges to some quantity that we denote by $K(g,\tilde{g})$.
\end{prop}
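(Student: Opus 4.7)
The plan is to reduce the analysis of $K(g,g_n)$ to the Martin boundary of a $\Z^{d_1}$-invariant sub-Markov chain on a thickened lattice, and then invoke Proposition~\ref{latticeMartinboundary}.

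Write $w=a_1b_1\cdots a_kb_k$. Since $g_n$ converges in the $\Z^{d_1}$-factor attached to $w$, for all large $n$ the words $g^{-1}g_n$ and $g_n$ share arbitrarily long common suffixes, and as in the proof of Proposition~\ref{freeproductMartinboundary1} one can find a ball $V$ of radius $r(\mu)$ centered near $w$ that is simultaneously a transitional set between $g$ and $g_n$, and between $e$ and $g_n$. The transitional set identity then gives
$$K(g,g_n)=\frac{\sum_{v\in V}p_g^V(v)\mathds{P}(v\rightarrow g_n)}{\sum_{v\in V}p_e^V(v)\mathds{P}(v\rightarrow g_n)}.$$
Since $V$ is finite, it suffices to show that the ratios $\mathds{P}(v\rightarrow g_n)/\mathds{P}(v_0\rightarrow g_n)$ converge as $n\rightarrow\infty$ for some fixed $v_0\in V$.

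To control these ratios, I would introduce the sub-Markov chain $\tilde p$ of first return to a suitable thickening $\mathcal{N}$ of the coset $w\Z^{d_1}$. The width of $\mathcal{N}$ is chosen large enough (comparable to $r(\mu)$) so that each excursion outside $\mathcal{N}$ stays in a single $\Z^{d_2}$-coset attached to a fixed point of $w\Z^{d_1}$, which forces $\tilde p$ to have finite support; left-translation by $w$ identifies $\mathcal{N}$ with $\Z^{d_1}\times\{1,\dots,N\}$ for some $N$, so $\tilde p$ becomes a $\Z^{d_1}$-invariant, strongly irreducible, finitely supported, strictly sub-Markov chain. The strict sub-Markov property comes from transience of the walk on $\Z^{d_1}\star\Z^{d_2}$ combined with the existence of boundary points outside $\partial(w\Z^{d_1})$: from at least one $v\in\mathcal{N}$, with positive probability the walker converges to such a point and never returns to $\mathcal{N}$. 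In particular $\tilde\lambda(0)<1$, so Assumption~\ref{hyp2} holds; Assumption~\ref{hyp1} is given by Lemma~\ref{lemmafinitesupport}. A standard occupation-measure argument then shows that the Green function $\tilde G$ of $\tilde p$ agrees with $G$ on $\mathcal{N}\times\mathcal{N}$, so for $v,v_0\in V\subset\mathcal{N}$,
$$\frac{\mathds{P}(v\rightarrow g_n)}{\mathds{P}(v_0\rightarrow g_n)}=\frac{G(v,g_n)}{G(v_0,g_n)}=\frac{\tilde G(v,g_n)}{\tilde G(v_0,g_n)}=\frac{\tilde K(v,g_n)}{\tilde K(v_0,g_n)},$$
where $\tilde K$ is the Martin kernel of $\tilde p$ normalized at $v_0$. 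The image of $g_n$ in $\Z^{d_1}\times\{1,\dots,N\}$ converges to $\theta$ in the $\mathrm{CAT}(0)$-boundary, so Proposition~\ref{latticeMartinboundary} yields convergence of $\tilde K(v,g_n)$ for each $v\in V$, whence $K(g,g_n)$ converges.

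The main technical obstacle I expect is the finite support of $\tilde p$, which is precisely why one thickens $w\Z^{d_1}$ rather than working on the coset itself. The key structural point is that once the walker has left $\mathcal{N}$ it enters a specific $\Z^{d_2}$-coset attached to some $a\in w\Z^{d_1}$, and the tree-like geometry of the free product prevents it from reaching any other $\Z^{d_i}$-coset without first crossing back through a bounded neighbourhood of $a$; combined with the bound $r(\mu)$ on step length, this forces the first return point to $\mathcal{N}$ to lie at bounded distance from the point of exit. The remaining checks (strong irreducibility, $\Z^{d_1}$-invariance) translate directly from the corresponding properties of the walk on $\Z^{d_1}\star\Z^{d_2}$.
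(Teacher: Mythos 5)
There is a genuine gap: your argument implicitly assumes that $g_n$ eventually lies in the thickening $\mathcal{N}$ of the coset $w\Z^{d_1}$ (with your $w=a_1b_1\cdots a_kb_k$), but convergence in a $\Z^{d_1}$ factor only requires that $g_n$ have a prefix $wa_{k+1,n}$ with $a_{k+1,n}$ converging to a boundary point of $\Z^{d_1}$; the suffix after $a_{k+1,n}$ is arbitrary and its length may tend to infinity, so $(g_n)$ can leave every bounded neighbourhood of $w\Z^{d_1}$. In that situation $g_n$ has no image in $\Z^{d_1}\times\{1,\dots,N\}$, the quantities $\tilde G(v,g_n)$ and $\tilde K(v,g_n)$ are simply not defined (your Green-function identification only holds on $\mathcal{N}\times\mathcal{N}$), and Proposition~\ref{latticeMartinboundary} cannot be invoked. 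The paper treats exactly this case by a separate argument: it takes the transitional sets $V_n=wa_nB(e,r(\mu))$ at the exit point $wa_n$, writes $K(g,g_n)$ as the ratio of $\sum_{v\in V_n}\mathds{P}(g\rightarrow v)\mathds{P}(v\rightarrow g_n;V_n^c)$ and $\sum_{v\in V_n}\mathds{P}(e\rightarrow v)\mathds{P}(v\rightarrow g_n;V_n^c)$, and then applies the bounded-neighbourhood case to the points $v\in V_n$ (which do stay at bounded distance from the coset and converge to $\tilde g$), using uniformity of that convergence over $v\in V_n$ to squeeze $K(g,g_n)$ to the same limit. Your proposal contains no substitute for this step, so it only proves the statement for sequences staying at bounded distance from $w\Z^{d_1}$, which is essentially the first half of the paper's proof.

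Two smaller points. First, your initial reduction through a fixed transitional set $V$ ``centered near $w$'' is not always available: if $g$ itself lies in or near the cone based at $w$ (for instance $g\in w\Z^{d_1}$), the walk from $g$ to $g_n$ need not pass anywhere near $w$, so no ball near $w$ is transitional between $g$ and $g_n$; the paper avoids this by applying the lattice result directly to $K(g,g_n)=G(g,g_n)/G(e,g_n)$ after choosing the thickening large enough that both $e$ and $g$ lie in it. Second, the claim that an excursion outside $\mathcal{N}$ ``stays in a single $\Z^{d_2}$-coset'' is too strong: the walker may travel arbitrarily deep into further cosets; what is true, and what the paper uses, is that its projection onto $w\Z^{d_1}$ does not move while it is outside the thickening, so with width at least $r(\mu)$ the return point has horizontal coordinate at bounded distance from that of the exit point, which is what gives finite support of $\tilde p$. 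These two points are repairable, but the missing treatment of sequences drifting away from the coset is an essential part of the proof.
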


\begin{proof}
There are essentially two cases.
Either $g_n$ stays in a bounded neighborhood of $a_1b_1...a_kb_k\Z^{d_1}$, or it leaves every such neighborhood.

Let us begin with the first case.
Let $k_1$ be an integer and denote by $E_{k_1}$ elements $\gamma$ of $\Z^{d_1}\star \Z^{d_2}$ that belong to the $k_1$-neighborhood of $a_1b_1...a_kb_k\Z^{d_1}$.
Every such element can be written as
$\gamma=a_1b_1...a_kb_kah$,
where $a\in \Z^{d_1}$ and $h$ either is trivial or is a word which starts with a non-trivial element of $\Z^{d_2}$ and which lies in the ball of center $e$ and radius $k_1$.
Actually, $a_1b_1...a_kb_ka$ is the projection of $\gamma$ onto the factor $a_1b_1...a_kb_k\Z^{d_1}$.
The ball of radius $k_1$, centered at $e$ is finite. Denote by $h_1,...,h_N$ the elements in this ball that either are trivial or begin with a non-trivial element of $\Z^{d_2}$. We can thus identify $\gamma=a_1b_1...a_kb_kah_j$ with $(a,j)$.
This gives an identification between $E_{k_1}$ and $\Z^{d_1}\times \{1,...,N\}$.

Since $g_n$ stays in some bounded neighborhood of $a_1b_1...a_kb_k\Z^{d_1}$, there exists $k_1$ such that for every $n$, $g_n\in E_{k_1}$.
Thus, $(g_n)$ can be seen as a sequence of $\Z^{d_1}\times \{1,...,N\}$ that converges in the $\mathrm{CAT}(0)$ boundary of $\Z^{d_1}\times \{1,...,N\}$.
Also, picking $k_1$ large enough, assume that $e\in E_{k_1}$ and $g \in E_{k_1}$ for simplicity.

If $\gamma,\gamma'\in E_{k_1}$, denote by $\tilde{p}(\gamma,\gamma')$ the probability that the random walks starting at $\gamma$ first returns to $E_{k_1}$ at $\gamma'$.
This defines a sub-Markov chain on $\Z^{d_1}\times \{1,...,N\}$.

\begin{lem}\label{lemmasubmarkov}
If $k_1$ is large enough, this chain is strongly irreducible and satisfies Assumptions~\ref{hyp1} and \ref{hyp2} from Section~\ref{SectionSub-Markovchainsonathickenedlattice}.
Furthermore, it is strictly sub-Markov.
\end{lem}
\begin{proof}
First, the random walk on $\Z^{d_1}\star \Z^{d_2}$ is irreducible.
If $\gamma,\gamma'\in E_{k_1}$, there exists some path $\gamma_1=\gamma,...,\gamma_l=\gamma'$ in $\Z^{d_1}\star \Z^{d_2}$ such that the probability that the random walk follows this path is positive.
Exclude from this path every $\gamma_j$ that does not lie in $E_{k_1}$.
If one could go from $\gamma_j$ to $\gamma_{j'}$ visiting $\gamma_{j_1},...,\gamma_{j_m}$ and if $\gamma_{j_1},...,\gamma_{j_m}$ were excluded, then the probability of first returning to $E_{k_1}$ at $\gamma_{j'}$, starting at $\gamma_j$ is positive.
This provides a path from $\gamma$ to $\gamma'$ that has positive probability with respect to $\tilde{p}$,
so that the new chain is again irreducible and since we assumed that $\mu(e)>0$, it is strongly irreducible.

Secondly, the random walk on $\Z^{d_1}\star \Z^{d_2}$ has finite support.
Let us now show that $\tilde{p}$ still has finite support, if $k_1$ is large enough.
If one leaves some $\Z^{d_i}$ factor at some point $\gamma$, a geodesic going back to this factor has to go through $\gamma$.
However, the random walk is not nearest neighbor, and one could avoid $\gamma$ with big jumps.
We can prevent this from happening assuming that $k_1\geq r(\mu)$, where $r(\mu)$ is the radius of the support of the random walk.
Indeed, if at time $n$, the random walk is not in $E_{k_1}$ and if $\gamma$ is its projection onto the $\Z^{d_i}$ factor, then at time $n+1$, its projection still is $\gamma$.
Thus, $\tilde{p}$ has finite support and we deduce from Lemma~\ref{lemmafinitesupport} that Assumption~\ref{hyp1} is satisfied.

Finally, let $\gamma$ be on the boundary of $E_{k_1}$, i.e. $\gamma=a_1b_1...a_kb_kah$ with $h$ beginning with a non-trivial element of $\Z^{d_2}$ and lying on the sphere of center $e$ and radius $k_1$.
The random walk on $\Z^{d_1}\star \Z^{d_2}$ is transient since the group is non-amenable, so that the probability that the random walk starting at $\gamma$ never goes back to $E_{k_1}$ is positive.
In other words,
$$\sum_{y\in E_{k_1}}\tilde{p}(x,y)<1,$$
which ensures that the chain is strictly sub-Markov.
Moreover, with notations of Section~\ref{SectionSub-Markovchainsonathickenedlattice}, it means that the matrix $F(0)$ is strictly sub-stochastic.
In particular, $\lambda(0)<1$ and so 1 is not the minimum of $\lambda$: Assumption~\ref{hyp2} is satisfied.
\end{proof}

Since all the trajectories from $\gamma$ to $\gamma'$ with respect to $\tilde{p}$ actually come from trajectories of the random walk on $\Z^{d_1}\star \Z^{d_2}$, the Green functions associated to the original random walk and associated to $\tilde{p}$ are the same.
Thus, we can apply Proposition~\ref{latticeMartinboundary}.
Denote by $x$ the projection of $g$ onto the $\Z^{d_1}$ factor, i.e. $g=(x,j)$ when identifying $E_{k_1}$ with $\Z^{d_1}\times \{1,...,N\}$.
Also denote by $x_0$ the projection of $e$.
Then,
$$K(g,g_n)\underset{n\rightarrow +\infty}{\longrightarrow}\alpha_N(g)\mathrm{e}^{u_N\cdot (x-x_0)},$$
where $u_N$ only depends on $\tilde{g}$ and a priori on $N$ and where $\alpha_N$ a priori depends on $N$ and on $g$.
The dependency on $N$ is the same as the dependency on $k_1$.

Let us show that the limit does not depend on $N$.
First, according to the formula in Proposition~\ref{latticeMartinboundary}, $\alpha_N(g)$ depends on $j$ but not on $x$ when writing $g=(x,j)$.
Assume that $k_1$ and $k'_1$ are two large enough integers to apply Lemma~\ref{lemmasubmarkov} and denote by $N$ and $N'$ the corresponding integers.
Since the Green functions associated to $\tilde{p}$ on $\Z^{d_1}\times \{1,...,N\}$ and to the same chain defined on $\Z^{d_1}\times \{1,...,N'\}$ are the same,
one deduces that
$\alpha_N\mathrm{e}^{u_N\cdot (x-x_0)}=\alpha_{N'}\mathrm{e}^{u_{N'}\cdot (x-x_0)}$.
Change the point $g$, or more accurately, change its projection $x$.
Then, the equality above still holds so it holds for every $x\in \Z^{d_1}$.
In particular, applying it to $x=x_0$, one gets that $\alpha_N=\alpha_{N'}$.
Thus, $u_N=u_{N'}$.
Denote by $K(g,\tilde{g})$ the quantity thus defined, which does not depend on $N$.

\medskip
We still have to deal with the second case, that is $g_n$ leaves every neighborhood of $a_1b_1...a_kb_k\Z^{d_1}$.
Assume that $g_n=a_1b_1...a_kb_ka_nc_n$, where $a_n\in \Z^{d_1}$, $c_n$ begins with a non-trivial element of $\Z^{d_2}$ and $a_1b_1...a_kb_ka_n$ converges to $\tilde{g}$.
The length of $c_n$ tends to infinity.
We will now show that $K(g,g_n)$ still converges to $K(g,\tilde{g})$, which is the limit defined in the first case.
Define the set $V_n=a_1b_1...a_kb_ka_nB(e,r(\mu))$.
For large enough $n$, it is a transitional set between $g$ and $g_n$ and also between $e$ and $g_n$.
Then,
$$K(g,g_n)=\frac{\sum_{v\in V_n}\mathds{P}(g\rightarrow v)\mathds{P}(v\rightarrow g_n;V_n^c)}{\sum_{v\in V_n}\mathds{P}(e\rightarrow v)\mathds{P}(v\rightarrow g_n;V_n^c)},$$
where $\mathds{P}(v,g_n;V_n^c)$ is the probability of going from $v$ to $g_n$ without passing through $V_n$, except at $v$.
Points in $V_n$ also converge to $\tilde{g}$ and stay in a bounded neighborhood of $a_1b_1...a_kb_k\Z^{d_1}$,
so that we can apply the previous case to those points.

Let $\epsilon>0$. For large enough $n$, for every $v$ in $V_n$,
$$\left |\frac{\mathds{P}(g\rightarrow v)}{\mathds{P}(e\rightarrow v)}-K(g,\tilde{g})\right |\leq \epsilon,$$
that is,
$$|\mathds{P}(g\rightarrow v)-\mathds{P}(e\rightarrow v)K(g,\tilde{g})|\leq \epsilon \mathds{P}(e\rightarrow v).$$
Indeed, since $V_n$ are balls of the same radius, all points in $V_n$ are a uniformly bounded distance away from each other.
Thus, the choice of large enough $n$ is uniform in $v\in V_n$.
Then,
\begin{align*}
    |K(g,g_n)-K(g,\tilde{g})|&=\left | \frac{\sum_{v\in V_n}(\mathds{P}(g\rightarrow v)-K(g,\tilde{g})\mathds{P}(e\rightarrow v))\mathds{P}(v\rightarrow g_n;V_n^c)}{\sum_{v\in V_n}\mathds{P}(e\rightarrow v)\mathds{P}(v\rightarrow g_n;V_n^c)}\right |\\
    &\leq \frac{\sum_{v\in V_n}\left |\mathds{P}(g\rightarrow v)-K(g,\tilde{g})\mathds{P}(e\rightarrow v)\right |\mathds{P}(v\rightarrow g_n;V_n^c)}{\sum_{v\in V_n}\mathds{P}(e\rightarrow v)\mathds{P}(v\rightarrow g_n;V_n^c)}\leq \epsilon.
\end{align*}
Thus, $K(g,g_n)$ converges to $K(g,\tilde{g})$, which concludes the proof.
\end{proof}

Similarly, one can prove the following.
\begin{prop}\label{freeproductMartinboundary3}
Let $(g_n)$ be a sequence in $\Z^{d_1}\star \Z^{d_2}$ and $g\in \Z^{d_1}\star \Z^{d_2}$.
Assume that $(g_n)$ converges in a $\Z^{d_2}$ factor to a limit
$\tilde{g}=a_1b_1...a_k\theta$.
Then, $K(g,g_n)$ converges to some quantity that we denote by $K(g,\tilde{g})$.
\end{prop}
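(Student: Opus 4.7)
The plan is to mimic the proof of Proposition~\ref{freeproductMartinboundary2} verbatim after interchanging the roles of $\Z^{d_1}$ and $\Z^{d_2}$; the entire argument is symmetric in the two free factors. As in that proof, I would split into two cases according to whether $(g_n)$ remains in a bounded neighborhood of the factor $a_1 b_1 \ldots a_k \Z^{d_2}$ or leaves every such neighborhood.

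In the first case, fix $k_1 \geq r(\mu)$ and let $E_{k_1}$ denote the $k_1$-neighborhood of $a_1 b_1 \ldots a_k \Z^{d_2}$. Writing each element of $E_{k_1}$ as $a_1 b_1 \ldots a_k b h$ with $b \in \Z^{d_2}$ and $h$ either trivial or starting with a non-trivial element of $\Z^{d_1}$ of length $\leq k_1$, one identifies $E_{k_1}$ with $\Z^{d_2} \times \{1,\dots,N\}$ where $N$ is the number of admissible $h$'s. The first-return kernel $\tilde p$ to $E_{k_1}$ then defines a $\Z^{d_2}$-invariant sub-Markov chain on the thickened lattice. Repeating Lemma~\ref{lemmasubmarkov} gives strong irreducibility (using $\mu(e)>0$), finite support (using $k_1 \geq r(\mu)$, which prevents jumps from bypassing the projection point), and strict sub-Markov behaviour (using transience of the random walk on the non-amenable group $\Z^{d_1}\star\Z^{d_2}$). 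Hence Assumptions~\ref{hyp1} and~\ref{hyp2} both hold, Proposition~\ref{latticeMartinboundary} applies, and since $\tilde p$-trajectories are restrictions of genuine random-walk trajectories the two Green functions agree on $E_{k_1}\times E_{k_1}$. We conclude that $K(g,g_n)$ converges to a quantity of the form $\alpha_N(g)\mathrm{e}^{u_N\cdot(x-x_0)}$, where $x,x_0$ are the $\Z^{d_2}$-projections of $g,e$; the same argument comparing two thickenings $k_1,k_1'$ shows that the limit depends neither on $N$ nor on the choice of thickening, and we denote it $K(g,\tilde g)$.

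For the second case, write $g_n = a_1 b_1 \ldots a_k b_n c_n$ with $b_n\in\Z^{d_2}$ converging in the $\mathrm{CAT}(0)$ sense to $\tilde g$ and $c_n$ beginning with a non-trivial element of $\Z^{d_1}$ of length tending to infinity. For large $n$, the set $V_n = a_1 b_1 \ldots a_k b_n B(e,r(\mu))$ is a transitional set between $g$ and $g_n$ as well as between $e$ and $g_n$, so
\[
K(g,g_n) = \frac{\sum_{v\in V_n}\mathds{P}(g\to v)\mathds{P}(v\to g_n;V_n^c)}{\sum_{v\in V_n}\mathds{P}(e\to v)\mathds{P}(v\to g_n;V_n^c)}.
\]
Every $v\in V_n$ lies in a fixed bounded neighborhood of $a_1 b_1 \ldots a_k \Z^{d_2}$ and the sequences $(v)_n$ for any choice of $v\in V_n$ converge to $\tilde g$ in that factor. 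Because the $V_n$ all have the same diameter $\leq 2r(\mu)$, the convergence $\mathds{P}(g\to v)/\mathds{P}(e\to v)\to K(g,\tilde g)$ furnished by the first case is uniform in $v\in V_n$, giving $|\mathds{P}(g\to v)-K(g,\tilde g)\mathds{P}(e\to v)|\leq\varepsilon\,\mathds{P}(e\to v)$ for $n$ large. Substituting this estimate into the numerator of the displayed fraction yields $|K(g,g_n)-K(g,\tilde g)|\leq\varepsilon$, as required.

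I do not anticipate any genuine obstacle: the only points that could a priori behave asymmetrically are those depending on properties of the ambient group, but non-amenability, transience, and the bound $r(\mu)$ on the step are all symmetric in the two factors. The argument is therefore a literal transcription of the proof of Proposition~\ref{freeproductMartinboundary2} with $\Z^{d_1}$ replaced by $\Z^{d_2}$ throughout.
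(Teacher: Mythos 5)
Your proposal is correct and is exactly the argument the paper intends: the paper itself gives no separate proof of this proposition, stating only that it follows ``similarly'' from the proof of Proposition~\ref{freeproductMartinboundary2}, and your write-up is the faithful transcription of that proof with the two free factors interchanged (thickened neighborhood of the $\Z^{d_2}$ factor, first-return kernel satisfying Assumptions~\ref{hyp1} and~\ref{hyp2}, Proposition~\ref{latticeMartinboundary}, then the transitional-set estimate for sequences leaving every neighborhood). No gaps.
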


\subsection{Continuity and separation}\label{Sectioncontinuityandseparationfreeproducts}
We defined $K(g,\tilde{g})$ for $g\in \Z^{d_1}\star \Z^{d_2}$ and $\tilde{g}$ in the $\mathrm{CAT}(0)$ compactification of $\Z^{d_1}\star \Z^{d_2}$.
What is left to do in order to prove Theorem~\ref{theorem1} (except for the minimality assertion, which will be proved in next section) is to show that functions $K(g,\tilde{g})$ are continuous with respect to $\tilde{g}$ and that if $\tilde{g}_1\neq \tilde{g}_2$,
one can find $g$ such that $K(g,\tilde{g}_1)\neq K(g,\tilde{g}_2)$.

Since $\Z^{d_1}\star \Z^{d_2}$ is dense into its compactification, to show continuity, one only has to deal with sequences $(g_n)$ in $\Z^{d_1}\star \Z^{d_2}$ that converge to some $\tilde{g}$ in the boundary.
This is contained in Propositions~\ref{freeproductMartinboundary1}, \ref{freeproductMartinboundary2} and \ref{freeproductMartinboundary3}.

To conclude, let us show that $\Z^{d_1}\star \Z^{d_2}$ separates points on the boundary.
Let $\tilde{g}_1\neq \tilde{g}_2$ be two points in the boundary and assume first that those two points correpond to two different ends, that is
\begin{enumerate}
    \item either $\tilde{g}_1$ is an infinite word and $\tilde{g}_2$ is in some $\Z^{d_i}$ factor,
    \item or $\tilde{g}_2$ is an infinite word and $\tilde{g}_1$ is in some $\Z^{d_i}$ factor,
    \item or $\tilde{g}_1$ and $\tilde{g}_2$ are in two different $\Z^{d_i}$ factors,
    \item or $\tilde{g}_1$ and $\tilde{g}_2$ are two different infinite words.
\end{enumerate}

\begin{lem}\label{lemmaseparationfreeproduct}
In those cases, if $(g_n)$ tends to infinity in the end of $\tilde{g}_1$, then $K(g_n,\tilde{g}_2)$ converges to 0, when $n$ tends to infinity.
\end{lem}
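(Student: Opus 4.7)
\emph{Plan.} The plan is to decompose both $\mathds{P}(e\to h_k)$ and $\mathds{P}(g_n\to h_k)$ through a common transitional set $V$ placed on the $\tilde g_2$-side of the divergence between $\tilde g_1$ and $\tilde g_2$, and to reduce the statement to showing that $\mathds{P}(g_n\to V)\to 0$ as $n\to\infty$.

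First, I would fix a sequence $h_k\to\tilde g_2$, so that $K(g_n,\tilde g_2)=\lim_k K(g_n,h_k)$. Because $\tilde g_1\neq\tilde g_2$ belong to different ends of the $\mathrm{CAT}(0)$ boundary, I can find a single translate $V$ of the ball $B(e,r(\mu))$ which is a transitional set between $e$ and $h_k$ and, simultaneously, between $g_n$ and $h_k$, for all $n$ and $k$ sufficiently large; the location of $V$ is chosen near the branching point of $\tilde g_1$ and $\tilde g_2$ in case (4), and at a suitable position along the path toward $\tilde g_2$ in the remaining cases. By Lemma~\ref{lemmatransitionalsets}, this yields
\[
\mathds{P}(e\to h_k)=\sum_{v\in V}p_e^V(v)\mathds{P}(v\to h_k),\qquad
\mathds{P}(g_n\to h_k)=\sum_{v\in V}p_{g_n}^V(v)\mathds{P}(v\to h_k).
\]
Applying Proposition~\ref{propalpha1} with a second transitional set $V'$ lying strictly beyond $V$ (at distance more than $R(\mu)$), the probabilities $\mathds{P}(v\to h_k)$ for $v\in V$ are pairwise comparable with ratio at most $1/\alpha$, uniformly in $k$. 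Combining, one obtains
\[
K(g_n,h_k)\;\le\;\frac{1}{\alpha}\cdot\frac{\mathds{P}(g_n\to V)}{\mathds{P}(e\to V)}.
\]
Passing to the limit $k\to\infty$ and using that $\mathds{P}(e\to V)>0$ is a fixed positive constant, the proof reduces to showing $\mathds{P}(g_n\to V)\to 0$.

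For this last step I would split into two sub-cases according to the type of $\tilde g_1$. When $\tilde g_1$ is an infinite word, between $g_n$ and $V$ there sit $m_n$ disjoint transitional sets $U_1^{(n)},\dots,U_{m_n}^{(n)}$ (one per letter of the common prefix that $g_n$ shares with $\tilde g_1$), with $m_n\to\infty$. Writing $\mathds{P}(g_n\to V)$ as a product of first-passage matrices $P_{U_{i+1}^{(n)}}^{U_i^{(n)}}$ applied to suitable vectors, the uniform Hilbert-diameter estimate of Corollary~\ref{coroalpha} together with uniform strict sub-stochasticity of these matrices---coming from transience of the walk and translation invariance, which together provide a positive lower bound on the probability of escaping without returning to the next transitional set---forces geometric decay in $m_n$. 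When $\tilde g_1$ is a point at infinity of some $\Z^{d_i}$ factor, I would restrict the walk to a thickening of that factor as in the proof of Proposition~\ref{freeproductMartinboundary2}, producing a $\Z^{d_i}$-invariant sub-Markov chain on $\Z^{d_i}\times\{1,\dots,N\}$; the asymptotic Green function decay of Proposition~\ref{theorem2spitzer} then yields $G(g_n,v)\to 0$ for each fixed $v\in V$, hence $\mathds{P}(g_n\to v)\to 0$ and $\mathds{P}(g_n\to V)\to 0$.

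The main obstacle is the first sub-case above, namely establishing the uniform strict sub-stochasticity of the first-passage matrices between consecutive transitional sets separating $g_n$ from $V$: since the letters of the infinite word $\tilde g_1$ may have arbitrarily large norm, one must argue that the escape probability admits a uniform positive lower bound over all translates of the relevant configurations, which ultimately relies on the tree-like combinatorial structure and non-amenability of the free product.
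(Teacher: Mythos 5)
Your first half follows the paper's proof: fix $h_k\to\tilde g_2$, route both $\mathds{P}(e\to h_k)$ and $\mathds{P}(g_n\to h_k)$ through a single transitional set $V$ that works for all large $n,k$, compare the factors $\mathds{P}(v\to h_k)$, $v\in V$, up to a uniform constant (the paper does this even more cheaply, via $p_V^{h_k}(v)\le \mathds{P}(v\to h_k)\le \mathds{P}(e\to h_k)/\mathds{P}(e\to v)$, without needing a second transitional set $V'$), and thereby reduce everything to showing that the probability of reaching $V$ from $g_n$ tends to $0$. Up to that point the argument is sound and essentially identical to the paper's.

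The gap is precisely at the step you flag as ``the main obstacle'', and it is a genuine one: you never prove that $\mathds{P}(g_n\to V)\to 0$. The machinery you propose for it is both incomplete and unnecessary. Incomplete, because the claimed uniform strict sub-stochasticity of the first-passage matrices between consecutive transitional sets is exactly what would need a proof (as you note, the letters of $\tilde g_1$ can have arbitrarily large norm, so there is no finite list of configurations to appeal to), and ``tree-like structure and non-amenability'' is left as a gesture. Unnecessary, because $V$ is a \emph{fixed finite} set and $d(g_n,V)\to\infty$, so non-amenability alone finishes the job: the spectral radius of the walk is strictly less than $1$, hence $\mathds{P}(g_n\to v)=G(g_n,v)/G(v,v)\to 0$ for each of the finitely many $v\in V$, and $p_{g_n}^V(v)\le \mathds{P}(g_n\to v)$. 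This one-line observation is exactly how the paper concludes, with no case distinction on the type of $\tilde g_1$ and no product of matrices. Your second sub-case has additional problems: Proposition~\ref{theorem2spitzer} gives the asymptotics of $G_{k,j;u}(x,\langle t\nabla\lambda(u)\rangle)$ as the \emph{target} goes to infinity along a prescribed direction, not as the source $g_n$ does, and restricting to a thickening $\Z^{d_i}\times\{1,\dots,N\}$ presupposes that $g_n$ stays in a bounded neighborhood of the $\Z^{d_i}$ factor, which is not guaranteed for a sequence merely tending to infinity in the end of $\tilde g_1$. Replacing the entire second half by the non-amenability argument repairs the proof.
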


\begin{proof}
Let $(h_l)$ be a sequence converging to $\tilde{g}_2$.
Then, $K(g_n,\tilde{g}_2)$ is the limit of $K(g_n,h_l)$ when $l$ tends to infinity.
If $l$ and $n$ are large enough, one can find a transitional set $V$ between $g_n$ and $h_l$, and one can actually choose $V$ independent of $n$ and $l$.
Thus,
$$K(g_n,h_l)=\frac{p_{g_n}^V\cdot p_V^{h_l}}{\mathds{P}(e\rightarrow h_l)}.$$
Since the group $\Z^{d_1}\star \Z^{d_2}$ is non-amenable, if $v\in V$, $\mathds{P}(g_n\rightarrow v)$ converges to 0.
In particular, $p_{g_n}^V(v)$ converges to 0.
Let $\epsilon>0$. For large enough $n$,
$$K(g_n,h_l)\leq \frac{\sum_{v\in V}p_V^{h_l}(v)}{\mathds{P}(e\rightarrow h_l)}\epsilon.$$
Besides, one can find $\alpha$ such that for every $v\in V$, $p_V^{h_l}(v)\leq \alpha \mathds{P}(e\rightarrow h_l)$.
Indeed,
$$p_V^{h_l}(v)\leq \mathds{P}(v\rightarrow h_l)\leq \frac{1}{\mathds{P}(e\rightarrow v)}\mathds{P}(e\rightarrow h_l).$$
Thus, for large enough $n$, $K(g_n,h_l)\leq \alpha \epsilon$,
so that $K(g_n,\tilde{g}_2)\leq \alpha \epsilon$.
\end{proof}

Using Proposition~\ref{theoremseparation}, we can show that if $\tilde{g}_1$ lies in some $\Z^{d_i}$ factor, there exists a sequence $(g_n)$ lying in the $\Z^{d_i}$ factor such that
$K(g_n,\tilde{g}_1)$ tends to infinity.
Since $K(g_n,\tilde{g}_2)$ converges to 0, one can find $n$ such that those two quantities differ.
Thus, there exists $g$ such that $K(g,\tilde{g}_1)\neq K(g,\tilde{g}_2)$.
Similarly, one can find such a $g$ if $\tilde{g}_2$ lies in some $\Z^{d_2}$ factor.
This shows that $\Z^{d_1}\star \Z^{d_2}$ separates points on the boundary in the three first cases.

Assume now that $\tilde{g}_1$ and $\tilde{g}_2$ are two different infinite words.
Let $(h_l)$ be a sequence converging to $\tilde{g}_1$.
Fix transitional sets $V_1,...,V_{\varphi(l)}$ between $e$ and $h_l$, with $\varphi(l)$ that tends to infinity.
Then, for every integer $n$, $\mathds{P}(e\rightarrow h_l)=p_{e}^{V_n}\cdot p_{V_n}^{h_l}$.
Define then a sequence $(g_n)$ choosing one $g_n$ in each $V_n$.
Proposition~\ref{propalpha1} shows that there exists $\alpha>0$ such that for every $v\in V_n$, $p_{e}^{V_n}(v)\leq \frac{1}{\alpha} \mathds{P}(e\rightarrow g_n)$.
In particular, if $l$ is large enough,
$$K(g_n,h_l)\geq \frac{\alpha}{\mathds{P}(e\rightarrow g_n)},$$
and so
$$K(g_n,\tilde{g}_1)\geq \frac{\alpha}{\mathds{P}(e\rightarrow g_n)}.$$
Since $(g_n)$ converges to $\tilde{g}_1$ and since the random walk is transient, $\mathds{P}(e\rightarrow g_n)$ converges to 0.
Thus, $K(g_n,\tilde{g}_1)$ tends to infinity and $K(g_n,\tilde{g}_2)$ converges to 0, which suffices to conclude as above.

To end the proof of Theorem~\ref{theorem1}, we have to deal with $\tilde{g}_1\neq \tilde{g}_2$ in the boundary, lying on the same $\Z^{d_i}$ factor.
But in that case, we only have to deal with a transition kernel on $\Z^d\times \{1,...,N\}$ and we can apply
Proposition~\ref{theoremseparation}.
\qed

\medskip
We thus proved Theorem~\ref{theorem1}.
The proof of Theorem~1.1' is similar.
Since a finitely generated virtually abelian group can be identified with $\Z^d\times L$, where $L$ is finite (see Section~\ref{SectionGeometricboundaries}), we can apply the same strategy, replacing $\Z^{d_i}\times \{1,...,N\}$ with
$\Gamma_i\times \{1,...,N\}$ and then identifying $\Gamma_i\times \{1,...,N\}$ with $\Z^{d_i}\times \{1,...,N'\}$ for some other integer $N'$.
We can use Proposition~\ref{generalabeliangroups} to conclude. \qed

\medskip
As stated above, free products $\Gamma_1\star \Gamma_2$ are hyperbolic groups relative to the subgroups $\Gamma_1$ and $\Gamma_2$.
A first generalization of our results would be to more general relatively hyperbolic groups.
Such groups arise in nature in many situations.
For example, the fundamental group of a geometrically finite manifold of negative pinched sectional curvature is hyperbolic relative to its cusp subgroups.
In constant curvature, the cusp subgroups are virtually abelian, but in variable cuvature, they can be virtually nilpotent.
That is the reason for mentioning nilpotent groups in the introduction.
The first step would be to identify the Martin boundary of such groups (for non-centered transition kernels).


\section{Minimal Martin boundary}\label{SectionMinimalMartinboundary}
In this last section, we prove that the Martin boundary is minimal in every situation encountered above, ending the proofs of Theorems~\ref{theorem1} and~\ref{theorem2}.
For any detail on the minimal Martin boundary, we refer to \cite{Sawyer}.

\begin{prop}\label{minimalMartin1}
The Martin boundary of an irreducible, $\Z^d$-invariant, finitely supported Markov chain on $\Z^d\times \{1,...,N\}$ is minimal.
\end{prop}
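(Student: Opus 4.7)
The plan is to reduce to the non-centered case, for which Theorem~\ref{theorem2} supplies an explicit parametrization of the Martin boundary, and then for each boundary point $\tilde y$ to establish minimality of $h := K(\cdot,\tilde y)$ by proving almost-sure convergence of the associated $h$-process to $\tilde y$. If the chain is centered, Proposition~\ref{trivialMartincentered} collapses the Martin boundary to a single point and there is nothing to prove; otherwise, by Lemma~\ref{Spitzertrick} I may further assume strong irreducibility. Under these hypotheses, Theorem~\ref{theorem2} together with Lemma~\ref{explicithomeo} identifies the boundary with $H$ and yields the formula
$$K_u(x,k) = \frac{C(u)_k}{C(u)_{k_0}}\,\mathrm{e}^{u\cdot (x-x_0)}, \qquad u \in H.$$

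Fix $u \in H$, set $h = K_u$ and form the $h$-transform
$$p^h((x,k),(y,j)) = \frac{p_{k,j;u}(x,y)\,C(u)_j}{C(u)_k}.$$
The eigenvector identity $F(u)C(u)=C(u)$ makes $p^h$ a genuine probability kernel; finite support, irreducibility and $\Z^d$-invariance are immediate (for the latter, use $h(x+z,k) = \mathrm{e}^{u\cdot z}h(x,k)$). I would then check that the stationary measure of the induced level chain under $p^h$ is $\nu^h_k = \nu(u)_k C(u)_k$ (using $\nu(u)\cdot C(u) = 1$), which together with Equation~(\ref{equationlambdaprime}) gives the horizontal drift
$$\overrightarrow{p^h} = \nu(u)\,\nabla F(u)\,C(u) = \nabla\lambda(u).$$
By strict convexity of $\lambda$ (Proposition~\ref{strictconvexity}) and the non-centering assumption, the value $1$ is not the minimum of $\lambda$, so $\nabla\lambda(u) \neq 0$ for every $u \in H$.

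The irreducible, $\Z^d$-invariant chain $p^h$ therefore satisfies Woess's strong law of large numbers \cite[Theorem~6.7]{Woess1}: the horizontal component $Y_n$ of the $h$-process satisfies $n^{-1}Y_n \to \nabla\lambda(u) \neq 0$ almost surely, so $Y_n/\|Y_n\| \to \nabla\lambda(u)/\|\nabla\lambda(u)\|$ almost surely. By the $\mathrm{CAT}(0)$ description of the Martin compactification and Lemma~\ref{explicithomeo}, this means the $h$-process exits almost surely at the single boundary point $\tilde y$ corresponding to $u$. I would then conclude with the standard criterion (see \cite[Theorem~4.4]{Sawyer}) that almost-sure convergence of the $h$-process to a unique boundary point is equivalent to minimality of $h$: any positive harmonic function $\phi \leq h$ produces a bounded $p^h$-harmonic function $\tilde\phi = \phi/h$, and $\tilde\phi(X_n)$ is a bounded martingale whose almost sure limit is determined by the deterministic exit point, forcing $\tilde\phi$ to be constant and hence $\phi$ proportional to $h$. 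The step I expect to be the most delicate is invoking this minimality criterion cleanly; the point is that the $h$-process exits to a deterministic rather than random boundary point, which is precisely what makes the martingale argument transparent.
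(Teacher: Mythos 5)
Your argument is essentially correct, but it follows a genuinely different route from the paper. The paper's proof (following the strategy of \cite{Haissinsky}) is soft and representation-theoretic: it notes that $h=K(\cdot,\tilde y)$ is harmonic (Lemma~\ref{lemma1minimalMartin}), takes its representing measure $\mu$ on $\partial_m(\Z^d\times\{1,\dots,N\})$, and uses a uniform refinement of the separation estimate (Proposition~\ref{prop1minimalMartin}: $K((x_n,k_n),\cdot)\to\infty$ uniformly on a neighborhood of any $\tilde z\neq\tilde y$ while $K((x_n,k_n),\tilde y)\to 0$) to force $\mu(U)=0$ for such neighborhoods, so $\mu=\delta_{\tilde y}$ and $\tilde y$ is minimal; no $h$-processes or limit theorems are needed, and the same scheme is reused verbatim for the free product. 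You instead use the explicit kernel from Proposition~\ref{latticeMartinboundary}, form the Doob transform, compute (correctly) that its level-stationary measure is $\nu(u)_kC(u)_k$ and its drift is $\nabla\lambda(u)\neq 0$, and invoke the law of large numbers of \cite[Theorem~6.7]{Woess1} to get almost sure convergence of the $h$-process to the deterministic boundary point $\tilde y$; this is the classical Ney--Spitzer-flavoured argument and yields extra probabilistic information, with the same preliminary reductions (Proposition~\ref{trivialMartincentered}, Lemma~\ref{Spitzertrick}) as the paper. The one step you should tighten is exactly the one you flag: boundedness of $\tilde\phi=\phi/h$ plus deterministic convergence $X_n\to\tilde y$ in the Martin topology does not by itself make the martingale limit $\lim\tilde\phi(X_n)$ deterministic, since $\tilde\phi$ need not extend continuously to the boundary. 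The clean way to close this is the Doob--Hunt convergence theorem for $h$-processes (the statement behind your citation of \cite{Sawyer}): under $\mathds{P}^h_x$ the chain converges a.s.\ to a random point of $\partial_m$ whose law is $K(x,\cdot)\,\mathrm{d}\mu_h/h(x)$, where $\mu_h$ is the representing measure of $h$; your LLN computation then forces this exit law to be $\delta_{\tilde y}$, hence $\mu_h=\delta_{\tilde y}$, hence $\tilde y\in\partial_m$ and $h$ is minimal. With that formulation substituted for the informal martingale sentence, your proof is complete; the trade-off is that it leans on this heavier boundary-convergence machinery, which the paper's argument avoids.
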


\begin{prop}\label{minimalMartin2}
The Martin boundary of an irreducible, finitely supported random walk on the free product $\Z^{d_1}\star \Z^{d_2}$ is minimal.
\end{prop}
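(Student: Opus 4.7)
The strategy is Doob's criterion for minimality: a boundary point $\tilde{g}\in\partial M$ is minimal if and only if, under the Doob $h$-transform $P^h_e$ with $h = K(\cdot,\tilde{g})$, the chain $X_n$ converges $P^h_e$-almost surely to $\tilde{g}$ in the Martin compactification. By Theorem~\ref{theorem1} this compactification coincides with the CAT(0) one, and Doob's general boundary convergence theorem ensures that $X_n$ converges $P^h_e$-a.s.\ to \emph{some} limit $X_\infty\in\partial_m M$, so the task reduces to identifying $X_\infty = \tilde{g}$. I handle the two types of CAT(0) boundary points separately.

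\emph{Infinite word case.} Let $(V_n)$ be pairwise disjoint transitional sets between $e$ and $\tilde{g}$ arranged along a geodesic to $\tilde{g}$, with $V_n$ contained in the $r(\mu)$-ball around the prefix of length $n$ of $\tilde{g}$. For $g_m\to\tilde{g}$ past $V_n$, Lemma~\ref{lemmatransitionalsets} gives $\mathds{P}(e\to g_m)=\sum_{v\in V_n} p_e^{V_n}(v)\mathds{P}(v\to g_m)$; dividing by $\mathds{P}(e\to g_m)$ and letting $m\to\infty$ yields $\sum_v p_e^{V_n}(v) K(v,\tilde{g}) = K(e,\tilde{g}) = 1$. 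Since the Radon--Nikodym density of $P^h_e$ with respect to $P_e$ on $\mathcal{F}_k$ is $h(X_k)/h(e)$, this is precisely the statement that the $P^h_e$-probability of hitting $V_n$ equals $1$. Intersecting over $n$, the walk enters every $V_n$ almost surely; since any such sequence converges to $\tilde{g}$ in the CAT(0) topology, $X_\infty=\tilde{g}$ a.s., so $\tilde{g}$ is minimal.

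\emph{Sphere at infinity case.} By translation invariance of Martin minimality we may assume $\tilde{g}$ lies on the sphere at infinity of the coset $\Z^{d_1}$ through $e$. Fix a thickening $E_{k_1}\cong\Z^{d_1}\times\{1,\ldots,N\}$ and the induced sub-Markov chain $\tilde{p}$ from the proof of Proposition~\ref{freeproductMartinboundary2}. Green functions of $p$ and $\tilde{p}$ coincide on $E_{k_1}$, so $K(\cdot,\tilde{g})|_{E_{k_1}}$ equals the $\tilde{p}$-Martin kernel $\tilde{K}(\cdot,\tilde{g})$, minimal $\tilde{p}$-harmonic by Proposition~\ref{minimalMartin1}. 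A telescoping first-return computation shows that the chain sampled from $P^h_e$ at successive visits to $E_{k_1}$ is the Doob $\tilde{K}(\cdot,\tilde{g})$-transform of $\tilde{p}$, which (by Proposition~\ref{minimalMartin1} combined with the Doob criterion used in Case 1) converges almost surely to $\tilde{g}$ in $\partial\Z^{d_1}$. This forces $X_\infty=\tilde{g}$, provided the $P^h_e$-walk returns to $E_{k_1}$ infinitely often.

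\emph{Main obstacle.} The hardest step is precisely this recurrence to $E_{k_1}$; it amounts to showing that $h|_{E_{k_1}}$ is actually $\tilde{p}$-harmonic. Decomposing $h(x) = E_x[h(X_T)\mathbf{1}_{T<\infty}] + E_x[\lim_n h(X_n)\mathbf{1}_{T=\infty}]$ for $T$ the first return time to $E_{k_1}$, the second term vanishes: on $\{T=\infty\}$ the walk escapes into a subtree complementary to the $\Z^{d_1}$ coset through $e$ and converges to a point in a different end from $\tilde{g}$, so Lemma~\ref{lemmaseparationfreeproduct} forces $h(X_n)\to 0$, and the $P_x$-martingale $h(X_n)$ has limit $0$ on this event. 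Hence $h|_{E_{k_1}}$ is $\tilde{p}$-harmonic, so under $P^h_x$ (any $x\in E_{k_1}$) the walk returns to $E_{k_1}$ with probability one, and iterating yields returns infinitely often $P^h_e$-a.s. The parallel case where $\tilde{g}$ lies on a non-trivial coset $a_1b_1\cdots a_nb_n\Z^{d_i}$ is handled by translation, which preserves the random walk.
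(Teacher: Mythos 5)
Your approach is genuinely different from the paper's. The paper (following Ha\"issinsky) never introduces $h$-processes: it takes the representing measure $\mu$ of $K(\cdot,\tilde{g})$ on the minimal boundary and shows its support is $\{\tilde{g}\}$, using the Green-function bound of Proposition~\ref{prop2minimalMartin} together with Ancona's Lemma~\ref{lemma2minimalMartin} when the two boundary points lie in different ends, and the uniform separation of Proposition~\ref{prop1minimalMartin} when they lie in the same $\Z^{d_i}$ factor. Your infinite-word case is essentially sound: the first-passage decomposition through a transitional set does give $\sum_{v\in V_n}p_e^{V_n}(v)K(v,\tilde{g})=1$, hence $P^h_e(\text{hit } V_n)=1$, and since the hitting times of the $V_n$ tend to infinity, the a.s.\ limit of the $h$-process must be $\tilde{g}$. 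But note that the very criterion you use (a.s.\ convergence of the $K(\cdot,\tilde{g})$-process to $\tilde{g}$ implies minimality) rests on the general theorem that an $h$-process converges a.s.\ to the minimal Martin boundary with limit law equal to the representing measure of $h$; this is standard but is an external input nowhere needed in the paper, and should be cited. You should also invoke Lemma~\ref{Spitzertrick} at the outset, since the transitional-set machinery of Section~5 is set up under strong irreducibility and $\mu(e)>0$.

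The genuine gap is in the step you yourself single out as the main obstacle. The identity $h(x)=E_x[h(X_T)\mathds{1}_{T<\infty}]+E_x[\lim_n h(X_n)\mathds{1}_{T=\infty}]$ is not available: from $h(x)=E_x[h(X_{n\wedge T})]$, monotone convergence on $\{T\leq n\}$ and Fatou on $\{T>n\}$ only yield $h(x)\geq E_x[h(X_T)\mathds{1}_{T<\infty}]$, i.e.\ $\tilde{p}$-superharmonicity, which is automatic; knowing that $h(X_n)\to 0$ a.s.\ on $\{T=\infty\}$ (via Lemma~\ref{lemmaseparationfreeproduct}) does not give $E_x[h(X_n)\mathds{1}_{T>n}]\to 0$ without uniform integrability of $h(X_{n\wedge T})$, which is essentially the statement you are trying to prove. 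Fortunately the desired conclusion follows at once from facts you have already put in place: since the Green functions of $p$ and $\tilde{p}$ agree on $E_{k_1}$, the restriction $h|_{E_{k_1}}$ is a boundary Martin kernel of the kernel $\tilde{p}$, which by Lemma~\ref{lemmasubmarkov} is finitely supported, irreducible and transient, so Lemma~\ref{lemma1minimalMartin} gives $\tilde{p}$-harmonicity directly, with no martingale decomposition. Two further points need repair in the same case: Proposition~\ref{minimalMartin1} is stated for probability kernels, whereas $\tilde{p}$ is strictly sub-Markov, so you must use its sub-Markov analogue, obtained by running the proof of Proposition~\ref{minimalMartin1} with Proposition~\ref{prop1minimalMartin} under Assumptions~\ref{hyp1} and~\ref{hyp2} (which hold for $\tilde{p}$ by Lemma~\ref{lemmasubmarkov}); and the forward direction of the Doob criterion (minimality of $\tilde{K}(\cdot,\tilde{g})$ implies a.s.\ convergence of the transformed induced chain to $\tilde{g}$) is again the external convergence theorem mentioned above, not something proved in the paper.
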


To prove these propositions, we will need the two results below.
They are refinements of separation properties we used (Proposition~\ref{theoremseparation} and lemma~\ref{lemmaseparationfreeproduct}).

\begin{prop}\label{prop1minimalMartin}
Let $p$ be a strongly irreducible transition kernel on $\Z^d\times \{1,...,N\}$ which is $\Z^d$-invariant and satisfies Assumptions~\ref{hyp1} and \ref{hyp2} of section~\ref{SectionSub-Markovchainsonathickenedlattice}.
Let $\tilde{y}_1\neq \tilde{y}_2$ be two points in the Martin boundary $\partial \Z^d$.
There exists a neighborhood $U$ of $y_1$ in $\partial \Z^d$ and a sequence $((x_n,k_n))$ of $\Z^d\times \{1,...,N\}$ such that
for every $\tilde{y}$ in $U$, $K((x_n,k_n),\tilde{y})$ tends to infinity, uniformly in $\tilde{y}$ and $K((x_n,k_n),\tilde{y}_2)$ converges to 0.
\end{prop}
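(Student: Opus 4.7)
The plan is to leverage the explicit description of the Martin kernel on the boundary given by Proposition~\ref{latticeMartinboundary}. Writing $u_i = \varphi(\tilde{y}_i)$ for $i=1,2$, every boundary point $\tilde y$ corresponds to some $u = \varphi(\tilde y) \in H$, and
$$K((x,k),\tilde{y}) = \frac{C(u)_k}{C(u)_{k_0}}\,\mathrm{e}^{u\cdot(x-x_0)}.$$
So one only has to produce a sequence $(x_n,k_n)$ along which the linear form $u \mapsto u \cdot x_n$ is large and positive for every $u$ in a neighborhood of $u_1$ in $H$, while being large and negative at $u_2$.

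The first step is to pick $\theta \in \Z^d$ with $\theta \cdot u_1 > 0 > \theta \cdot u_2$. Since $u_1 \neq u_2$, this is possible exactly as in the proof of Proposition~\ref{theoremseparation}: choose first a real vector with this property, then approximate it by a rational vector and clear denominators; the two strict inequalities are preserved. The second step is a continuity-and-compactness argument: the map $\varphi^{-1}\colon \partial\Z^d \to H$ is a homeomorphism and $u \mapsto \theta \cdot u$ is continuous, so there exist an open neighborhood $U$ of $\tilde y_1$ in $\partial \Z^d$ and a constant $\alpha > 0$ with $\theta \cdot \varphi(\tilde y) \geq \alpha$ for every $\tilde y \in U$. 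Shrinking $U$ we may arrange that $\overline{U}$ does not contain $\tilde y_2$ and corresponds, via $\varphi$, to a compact subset $\overline{V} \subset H$.

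The third step is to set $k_n = k_0$ and $x_n = n\theta$. Choosing the level $k_n = k_0$ makes the prefactor $C(u)_{k_n}/C(u)_{k_0}$ equal to $1$, so the explicit formula gives
$$K((n\theta, k_0), \tilde y) = \mathrm{e}^{u\cdot(n\theta - x_0)} \geq \mathrm{e}^{n\alpha - M}$$
for every $\tilde y \in U$, where $M$ is an upper bound for $|u \cdot x_0|$ on the compact set $\overline{V}$. This diverges uniformly in $\tilde y \in U$. In parallel, $\theta \cdot u_2 < 0$ gives
$$K((n\theta, k_0), \tilde y_2) = \mathrm{e}^{u_2 \cdot (n\theta - x_0)} \xrightarrow[n\to\infty]{} 0$$
exponentially fast.

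The only point that is not purely mechanical is the initial construction of the separating $\theta$; this is the convex-geometric fact that two distinct points on $H$ can be strictly separated by a hyperplane through the origin. Once $\theta$ is in hand, the uniform divergence follows from compactness of $H$, continuity of $C(\cdot)$ with positive entries, and the lower bound $\theta \cdot u \geq \alpha$ on $\overline{V}$.
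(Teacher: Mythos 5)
Your proof is correct and takes essentially the same route as the paper, whose own proof is just the remark that the argument of Proposition~\ref{theoremseparation} applies verbatim (pick $\theta$ with $\theta\cdot u_1>0>\theta\cdot u_2$ for $u_i=\varphi(\tilde y_i)$ and let $x_n$ go to infinity in direction $\theta$), the uniformity being declared obvious. Your only additions --- taking $\theta$ integral, fixing $k_n=k_0$ so the prefactor $C(u)_{k_n}/C(u)_{k_0}$ is $1$, and making the uniform lower bound $\theta\cdot\varphi(\tilde y)\geq\alpha$ on $U$ explicit via continuity and compactness --- are precisely the details the paper leaves implicit.
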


The proof is exactly the same as the proof of Proposition~\ref{theoremseparation}, as uniformity is obvious.

\begin{prop}\label{prop2minimalMartin}
Consider an irreducible, finitely supported random walk on the free product $\Z^{d_1}\star \Z^{d_2}$.
Let $\tilde{g}_1$ and $\tilde{g}_2$ be two points in the Martin boundary that correspond to two different ends.
Then, there exists a neighborhood $U$ of $\tilde{g}_1$ in the Martin compactification $\Z^{d_1}\star \Z^{d_2}\cup \partial (\Z^{d_1}\star \Z^{d_2})$ and a constant $M\geq 0$ such that
for every $g\in U\cap (\Z^{d_1}\star \Z^{d_2})$, $K(g,\tilde{g}_2)\leq MG(g,e)$.
\end{prop}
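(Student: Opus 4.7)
The plan is to refine the argument of Lemma~\ref{lemmaseparationfreeproduct} into a quantitative estimate. Let $g_*$ be the longest common prefix (in normal form) of $\tilde{g}_1$ and $\tilde{g}_2$; since they lie in two different ends, $g_*$ is a finite element of $\Z^{d_1}\star\Z^{d_2}$ and their representations diverge past $g_*$. A case-by-case inspection of the four situations in Lemma~\ref{lemmaseparationfreeproduct} shows that one can find a finite set $W$, namely a ball of radius $r(\mu)$ centered at a point determined by $\tilde{g}_1$ and $\tilde{g}_2$, together with a neighborhood $U$ of $\tilde{g}_1$ in the Martin compactification, such that $W$ is simultaneously a transitional set in the sense of Definition~\ref{deftransitionalset} between every $g\in U\cap(\Z^{d_1}\star\Z^{d_2})$ and every $h$ in a suitable neighborhood of $\tilde{g}_2$. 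The key point is that, for sufficiently small $U$, the appropriate prefix of $g^{-1}h$ sends $g$ to the same fixed element for every $g\in U$.

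For such $g$ and $h$, Lemma~\ref{lemmatransitionalsets} gives
$$\mathds{P}(g\ra h)=\sum_{w\in W}p_g^W(w)\,\mathds{P}(w\ra h).$$
I would then establish two uniform Harnack-type estimates. From $\mathds{P}(g\ra e)\ge \mathds{P}(g\ra w)\mathds{P}(w\ra e)$ and $\mathds{P}(e\ra h)\ge \mathds{P}(e\ra w)\mathds{P}(w\ra h)$, together with positive uniform lower bounds on $\mathds{P}(w\ra e)$ and $\mathds{P}(e\ra w)$ for $w$ in the finite set $W$ (provided by irreducibility), one obtains constants $C_1,C_2>0$ with $\mathds{P}(g\ra w)\le C_1\mathds{P}(g\ra e)$ and $\mathds{P}(w\ra h)\le C_2\mathds{P}(e\ra h)$ for every $w\in W$. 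Substituting both bounds into the decomposition (using $p_g^W(w)\le\mathds{P}(g\ra w)$) yields $\mathds{P}(g\ra h)\le C_1 C_2\,|W|\,\mathds{P}(g\ra e)\,\mathds{P}(e\ra h)$, hence
$$K(g,h)=\frac{\mathds{P}(g\ra h)}{\mathds{P}(e\ra h)}\le \frac{C_1 C_2\,|W|}{G(e,e)}\,G(g,e).$$
Continuity of $K(g,\cdot)$ on the Martin compactification, proved in Propositions~\ref{freeproductMartinboundary1}, \ref{freeproductMartinboundary2} and~\ref{freeproductMartinboundary3}, then allows one to let $h\to\tilde{g}_2$ and conclude with $M=C_1 C_2\,|W|/G(e,e)$.

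The main obstacle is constructing the uniform transitional set $W$, since $W$ seems \emph{a priori} to depend on $g$. The subtle subcase is when $\tilde{g}_1\in g_*\Z^{d_i}$, so that $g\in U$ can have an arbitrarily large first letter past $g_*$ in the direction $\theta_1$ of $\tilde{g}_1$; one resolves this by checking, via an explicit computation with the normal form of $g^{-1}h$, that the shortest non-trivial prefix of $g^{-1}h$ sends $g$ either to $g_*$ itself (when $\tilde{g}_2$ lies in the complementary factor $\Z^{d_j}$) or to a fixed translate $g_*\alpha$ determined by the first letter of $\tilde{g}_2$ past $g_*$. In the remaining subcases, shrinking $U$ forces the first letter of $g$ past $g_*$ to match that of $\tilde{g}_1$, which directly stabilizes $W$. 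Once $W$ is fixed and finite, the Harnack estimates and continuity argument apply verbatim.
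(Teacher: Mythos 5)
Your proposal is correct and follows essentially the same route as the paper: a fixed transitional set valid for all $g$ near $\tilde{g}_1$ and all $h$ near $\tilde{g}_2$, the first-visit decomposition $\mathds{P}(g\ra h)=\sum_{w}p_g^W(w)\mathds{P}(w\ra h)$, Harnack-type comparisons over the finite set $W$ to produce the factorization $\mathds{P}(g\ra h)\leq M\,\mathds{P}(g\ra e)\,\mathds{P}(e\ra h)$, and a passage to the limit $h\to\tilde{g}_2$ using the convergence of Martin kernels. Your verification that the transitional set can be chosen with a fixed center ($g_*$, $g_*\alpha$ or the first letter of $\tilde{g}_1$ past $g_*$) is in fact more detailed than the paper, which simply asserts the existence of such a uniform $V$.
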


\begin{proof}
Let $(g_n)$ be a sequence converging to $\tilde{g}_2$.
Let $U$ be a neighborhood of $\tilde{g}_1$ such that for every $g\in U\cap (\Z^{d_1}\star \Z^{d_2})$, there exists a fixed transitional set $V$ between $g$ and $g_n$, for large enough $n$
(see Definition~\ref{deftransitionalset} for transitional sets).
Fix $v_0\in V$.
According to Lemma~\ref{lemmatransitionalsets}, the random walk from $g\in U\cap (\Z^{d_1}\star \Z^{d_2})$ to $g_n$ has to go through $V$, so we have
$$\mathds{P}(g\rightarrow g_n)=\sum_{v\in V}\mathds{P}(g\rightarrow v;V^c)\mathds{P}(v\rightarrow g_n),$$
where $\mathds{P}(g\rightarrow v;V^c)$ is the probability to go from $g$ to $v$ without passing through $V$ before $v$.
Since the size of $V$ is fixed, we also have
$$\mathds{P}(g\rightarrow v;V^c)\leq M_0\mathds{P}(g\rightarrow v_0)$$ and
$$\mathds{P}(v\rightarrow g_n)\leq M_1\mathds{P}(v_0\rightarrow g_n),$$
for some $M_0,M_1\geq0$.
Thus,
$$\mathds{P}(g\rightarrow g_n)\leq M_2\mathds{P}(g\rightarrow v_0)\mathds{P}(v_0\rightarrow g_n).$$
Now, $\mathds{P}(g\rightarrow v_0)\mathds{P}(v_0\rightarrow e)\leq \mathds{P}(g\rightarrow e)$ and $\mathds{P}(e\rightarrow v_0)\mathds{P}(v_0\rightarrow g_n)\leq \mathds{P}(e\rightarrow g_n)$, so that
$\mathds{P}(g\rightarrow g_n)\leq M_3\mathds{P}(g\rightarrow e)\mathds{P}(e\rightarrow g_n)$ and thus $K(g,g_n)\leq M_3\mathds{P}(g\rightarrow e)$.
All these inequalities are satisfied for large enough $n$, so $K(g,\tilde{g}_2)\leq M_3\mathds{P}(g\rightarrow e)$.
\end{proof}

We will also need the two following lemmas.

\begin{lem}\label{lemma1minimalMartin}
Let $p$ is a finitely supported transition kernel on a countable space $E$ which is transient and irreducible.
Then for every $\tilde{y}$ in the Martin boundary, the Martin kernel $K(\cdot,\tilde{y})$ is a harmonic function.
\end{lem}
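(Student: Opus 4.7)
The plan is to first observe that for $y \in E$, the function $K(\cdot,y)$ fails to be harmonic only at $y$ itself, and then pass to the limit along a sequence $y_n \to \tilde{y}$, which is a finite sum manipulation thanks to the finite support assumption.

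First, I would compute $pG(\cdot,y)(x)$ directly. Using the definition of the Green function and Fubini,
$$pG(\cdot,y)(x) = \sum_z p(x,z)\sum_{n\geq 0}p^{(n)}(z,y) = \sum_{n\geq 0}p^{(n+1)}(x,y) = G(x,y)-p^{(0)}(x,y),$$
where $p^{(0)}(x,y)=\delta(x,y)$. Dividing by $G(x_0,y)$ gives $pK(\cdot,y)(x) = K(x,y) - \delta(x,y)/G(x_0,y)$. In particular, $pK(\cdot,y)(x) = K(x,y)$ as soon as $x\neq y$.

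Next, fix $\tilde{y}\in\partial E$ and choose a sequence $(y_n)\subset E$ converging to $\tilde{y}$ in the Martin compactification. By construction of the Martin kernel on the boundary, $K(z,y_n)\to K(z,\tilde{y})$ pointwise for every $z\in E$. Fix $x\in E$; since any sequence converging to a boundary point eventually leaves the finite set $\{x\}$, we have $y_n\neq x$ for $n$ large enough, and therefore
$$pK(\cdot,y_n)(x) = K(x,y_n) \underset{n\to\infty}{\longrightarrow} K(x,\tilde{y}).$$

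Finally, the finite support of $p$ makes the exchange of limit and sum trivial: the sum $\sum_z p(x,z)K(z,y_n)$ ranges only over the finitely many $z$ in the support of $p(x,\cdot)$, so
$$\lim_{n\to\infty} pK(\cdot,y_n)(x) = \sum_z p(x,z)\lim_{n\to\infty}K(z,y_n) = \sum_z p(x,z)K(z,\tilde{y}) = pK(\cdot,\tilde{y})(x).$$
Combining the last two displays yields $pK(\cdot,\tilde{y})(x) = K(x,\tilde{y})$ for every $x\in E$, i.e.\ $K(\cdot,\tilde{y})$ is harmonic. There is no real obstacle here; the only place where an issue could appear is the interchange of limit and sum, and the finite support hypothesis removes it completely (without it one would have to produce a uniform bound on $K(z,y_n)$ and invoke dominated convergence).
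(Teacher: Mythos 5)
Your proof is correct and follows essentially the same route as the paper: establish that $K(\cdot,y)$ is harmonic away from $y$ (you supply the Green-function computation that the paper merely asserts), then pass to the limit along $y_n\to\tilde{y}$, using that $y_n$ eventually avoids any fixed $x$ and that the finite support of $p$ justifies exchanging limit and sum. No gaps.
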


\begin{proof}
Recall that everything is defined up to the choice of a base point $x_0\in E$.
If $y\in E$, the Green function $G(\cdot,y)$ is harmonic everywhere except at $y$ and so is the function $\frac{G(\cdot, y)}{G(x_0,y)}$.
Let $x\in E$ and let $y_n$ converge to $\tilde{y}$.
Then, for large enough $n$, the function $\phi_n=\frac{G(\cdot, y_n)}{G(x_0,y_n)}$ is harmonic at $x$.
Now, $\phi_n$ converges pointwise to $K(\cdot, \tilde{y})$, so
$\sum_{z\in E}\phi_n(z)p(x,z)$ converges to $\sum_{z\in E}K(z,\tilde{y})p(x,z)$, since $p$ has finite support.
The first sum is equal to $\phi_n(x)$ for large enough $n$ and thus also converges to $K(x,\tilde{y})$, so that $K(\cdot,\tilde{y})$ is harmonic at $x$ and this holds for every $x$.
\end{proof}

\begin{lem}\label{lemma2minimalMartin}
Let $p$ be a finitely supported transition kernel on a countable space $E$ which is transient and irreducible.
Let $\phi$ be a non-negative harmonic function on $E$ and $\mu_{\phi}$ the corresponding measure on the minimal Martin boundary $\partial_mE$.
For $\mu_{\phi}$-almost every point $\tilde{y}$ in $\partial_mE$, $\frac{G(y_n,x_0)}{\phi(y_n)}$ converges to 0 when $y_n$ converges to $\tilde{y}$.
\end{lem}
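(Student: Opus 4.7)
The plan is to use a Doob $h$-transform (the $\phi$-process) and a martingale argument to show that $G(\cdot,x_0)/\phi$ decays along trajectories of that process, and then to transfer the conclusion to $\mu_\phi$-almost every boundary point via the Martin convergence theorem.

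First I would reduce to the case $\phi>0$ on $E$: if $\phi$ vanishes somewhere, harmonicity together with irreducibility of $p$ forces $\phi\equiv 0$, in which case the statement is vacuous. Assume then $\phi>0$, and consider the $\phi$-process $(Y_n)_{n\geq 0}$, the Markov chain with stochastic transition kernel $p^\phi(x,y)=p(x,y)\phi(y)/\phi(x)$. Set $h(y)=G(y,x_0)/\phi(y)$ and $N_n=\sum_{k=0}^{n-1}\mathds{1}_{Y_k=x_0}$. The identity $pG(\cdot,x_0)(x)=G(x,x_0)-\mathds{1}_{x=x_0}$, divided by $\phi(x)$ and averaged against $p^\phi$, gives
\[
\Ee^\phi[h(Y_{n+1})\mid\mathcal{F}_n]=h(Y_n)-\frac{\mathds{1}_{Y_n=x_0}}{\phi(x_0)},
\]
so $M_n:=h(Y_n)+N_n/\phi(x_0)$ is a non-negative martingale under $\Pp^\phi_x$.

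By the martingale convergence theorem, $M_n\to M_\infty$ almost surely, and by monotone convergence $N_n\uparrow N_\infty$, with $N_\infty$ almost surely finite because the $\phi$-process is transient. Hence $h(Y_n)$ converges almost surely to some $L\geq 0$. To identify $L$, I would compute the Green function of the $\phi$-process, $G^\phi(x,x_0)=G(x,x_0)\phi(x_0)/\phi(x)=\phi(x_0)h(x)$, so $\Ee^\phi_x[N_\infty]=\phi(x_0)h(x)$. Combined with $\Ee^\phi_x[M_n]=h(x)$ for all $n$ and Fatou's lemma, this forces $\Ee^\phi_x[L]\leq 0$, and since $L\geq 0$ we conclude $L=0$ almost surely.

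Finally, I would invoke the Martin convergence theorem for the $\phi$-process: under $\Pp^\phi_x$, $(Y_n)$ converges almost surely to a random point $\tilde{Y}_\infty$ of $\partial_m E$ whose distribution is absolutely continuous with respect to $\mu_\phi$, with Radon--Nikodym density $K(x,\cdot)/\phi(x)$. For $\mu_\phi$-almost every $\tilde{y}$, $K(\cdot,\tilde{y})$ is minimal, and applying the preceding martingale argument with $\phi$ replaced by $K(\cdot,\tilde{y})$ gives almost sure convergence of $h$ to $0$ along the corresponding Doob trajectory, which itself converges almost surely to $\tilde{y}$. The main obstacle is to upgrade this convergence along a particular random sequence to convergence along every sequence $y_n\to\tilde{y}$ in the Martin topology; this is handled by showing that the upper semi-continuous envelope of $h$ on $\partial_m E$ vanishes $\mu_\phi$-almost everywhere, using that $G(\cdot,x_0)$ is a potential and therefore contributes nothing to the integral representation of any minimal harmonic function on $\partial_m E$.
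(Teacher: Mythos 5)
Your Doob-transform argument is correct as far as it goes: the identity $\sum_y p(x,y)G(y,x_0)=G(x,x_0)-\mathds{1}_{x=x_0}$ does make $M_n=h(Y_n)+N_n/\phi(x_0)$ a non-negative $\Pp^\phi$-martingale, the computation $\Ee^\phi_x[N_\infty]=G^\phi(x,x_0)=\phi(x_0)h(x)$ is right, and the Fatou argument forces $\lim_n h(Y_n)=0$ almost surely for the $\phi$-process. Together with the Doob limit theorem this proves that for $\mu_\phi$-almost every $\tilde y$ there exists \emph{some} sequence $y_n\to\tilde y$ in the Martin topology along which $G(y_n,x_0)/\phi(y_n)\to 0$ — i.e.\ the fine-limit (Fatou--Na\"im--Doob type) statement.

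The lemma, however, asserts convergence along \emph{every} sequence $y_n\to\tilde y$, and this is exactly where your proposal has a genuine gap. The claim that ``the upper semi-continuous envelope of $h$ vanishes $\mu_\phi$-a.e.'' is not a step of the proof, it \emph{is} the lemma restated, and the justification you offer — that $G(\cdot,x_0)$ is a potential and hence has no harmonic part in its Riesz decomposition — does not deliver it. Vanishing of the harmonic part controls r\'eduites and minimal-fine limits: if $A_\epsilon=\{y:G(y,x_0)\geq\epsilon\phi(y)\}$, then $R^\phi_{A_\epsilon}\leq\epsilon^{-1}G(\cdot,x_0)$ is a potential, which tells you the $\phi$-process eventually leaves $A_\epsilon$, i.e.\ that $A_\epsilon$ is minimally thin at $\mu_\phi$-a.e.\ minimal point. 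But a set can be minimally thin at $\tilde y$ and still accumulate at $\tilde y$ in the Martin topology, so this does not exclude a sequence $y_n\to\tilde y$ lying in $A_\epsilon$; passing from the trajectory/fine statement to the topological one is the whole difficulty. The paper does not attempt this either: it proves the lemma by citing Ancona's Proposition~II.1.6, which is precisely this stronger statement, and whose proof requires a genuinely different (density/maximal-function type) argument rather than the Riesz-decomposition one-liner. (Incidentally, for the way the lemma is used in Proposition~\ref{minimalMartin2}, the weaker ``along some sequence'' version you did prove would in fact suffice, since the sequence produced by the $K(\cdot,\tilde g)$-process eventually enters the open neighborhood $U$; but as a proof of the lemma as stated, your argument is incomplete.)
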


\begin{proof}
This is \cite[Proposition~II.1.6]{Ancona2}.
\end{proof}

We can now prove Propositions~\ref{minimalMartin1} and~\ref{minimalMartin2}.
The demonstrations will follow the same strategy as the one presented in \cite{Haissinsky} (see Proposition~3.7 there).

\begin{proof}[Proof of Proposition~\ref{minimalMartin1}]
We consider a transition kernel on $\Z^d\times \{1,...,N\}$ having the properties stated in Proposition~\ref{minimalMartin1}.
First, in the centered case, the Martin boundary and the minimal Martin boundary are trivial, according to Proposition~\ref{trivialMartincentered}, so we can assume that the chain in non-centered.
Also, using Lemma~\ref{Spitzertrick}, we can assume that the chain is strongly irreducible.
Let $\tilde{y}$ be in the Martin boundary.
Then the Martin kernel $K(\cdot,\tilde{y})$ is harmonic.
Let $\mu$ be the corresponding measure on $\partial_m\Z^d$.
We will prove that $\{\tilde{y}\}$ is the support of $\mu$.
Let $\tilde{z}\neq \tilde{y}$ be in the Martin boundary and let $U$ be a neighborhood of $\tilde{z}$ and $((x_n,k_n))$ a sequence given by Proposition~\ref{prop1minimalMartin}, so that for every $\tilde{z}'$ in $U$, $K((x_n,k_n),\tilde{z}')$ tends to infinity, uniformly in $\tilde{z}'$, while $K((x_n,k_n),\tilde{y})$ converges to 0.
By definition,
$$K((x_n,k_n),\tilde{y})=\int_{\partial_m\Z^d}K((x_n,k_n),\tilde{w})\mathrm{d}\mu(\tilde{w})\geq \int_{U}K((x_n,k_n),\tilde{w})\mathrm{d}\mu(\tilde{w}),$$
so that $K((x_n,k_n),\tilde{y})\geq \mu(U)$ for large enough $n$ and so $\mu(U)=0$.
This proves that $\tilde{z}$ is not in the support of $\mu$.
Since this holds for every $\tilde{z}\neq \tilde{y}$, the support is reduced to $\{\tilde{y}\}$.
Notice that we did not actually prove that $\tilde{y}$ is in the support of $\mu$, but this follows from the fact that every other point cannot be in it, and $\mu$ is not zero, since $K(\cdot, \tilde{y})$ is not zero.
In particular, $\tilde{y}\in \partial_m\Z^d$ and this holds for every $\tilde{y}$.
\end{proof}

\begin{proof}[Proof of Proposition~\ref{minimalMartin2}]
We consider a random walk on $\Z^{d_1}\star \Z^{d_2}$ with the properties stated in Proposition~\ref{minimalMartin2}
and reduce to the case of a strongly irreducible chain, using Lemma~\ref{Spitzertrick}.
Let $\tilde{g}$ be in the Martin boundary and let $\mu$ be the corresponding measure on the minimal Martin boundary $\partial_m(\Z^{d_1}\star \Z^{d_2})$.
Again, we will prove that every other point $\tilde{h}$ in the Martin boundary cannot be in the support of $\mu$, which will be enough to conclude.
First, let $\tilde{h}$ be a point in the Martin boundary such that $\tilde{g}$ and $\tilde{h}$ correspond to different ends.
Proposition~\ref{prop2minimalMartin} shows that there is a neighborhood $U$ of $\tilde{h}$ such that for $g$ in $U\cap \Z^{d_1}\star \Z^{d_2}$, $K(g,\tilde{g})\leq MG(g,e)$.
However, Lemma~\ref{lemma2minimalMartin} shows that for $\mu$-almost every point $\tilde{h}'$ in $\partial_m(\Z^{d_1}\star \Z^{d_2})$, $\frac{G(g_n,e)}{K(g_n,\tilde{g})}$ converges to 0 when $g_n$ converges to $\tilde{h}'$.
This proves that $\tilde{h}$ is not in the support of $\mu$.
Now, if $\tilde{h}$ and $\tilde{g}$ correspond to the same end, then $\tilde{g}$ necessarily lies in a $\Z^{d_i}$ factor, and so does $\tilde{h}$, so that we can use Proposition~\ref{prop1minimalMartin} to conclude.
\end{proof}

\bibliographystyle{plain}
\bibliography{free_product}

\end{document}